\newtheorem{thm}{Theorem}[section]
\newtheorem{defn}[thm]{Definition}
\newtheorem{lemma}[thm]{Lemma}
\newtheorem{cor}[thm]{Corollary}
\newtheorem{remark}[thm]{Remark}
\newtheorem{example}[thm]{Example}
\newcommand{\rt}{{\rm t}}
\newcommand{\cP}{\mathcal P}
\newcommand{\bM}{{\mathbf M}}
\newcommand{\Z}{{\mathbb Z}}
\newcommand{\Q}{{\mathbb Q}}
\newcommand{\R}{{\mathbb R}}
\newcommand{\bm}{{\mathbf m}}
\newcommand{\bx}{{\mathbf x}}
\newcommand{\by}{{\mathbf y}}
\newcommand{\al}{{\alpha}}
\newcommand{\wG}{\widetilde{G}}
\numberwithin{equation}{section}
\begin{document}
\title{Q-systems, heaps, paths and cluster positivity}
\author{Philippe Di Francesco} 
\address{Institut de Physique Th\'eorique du Commissariat \`a l'Energie Atomique, 
Unit\'e de Recherche associ\'ee du CNRS,
CEA Saclay/IPhT/Bat 774, F-91191 Gif sur Yvette Cedex, 
FRANCE. e-mail: philippe.di-francesco@cea.fr}

\author{Rinat Kedem}
\address{Department of Mathematics, University of Illinois MC-382, Urbana, IL 61821, U.S.A. e-mail: rinat@illinois.edu}
\date{\today}
\begin{abstract}
We consider the cluster algebra associated to the $Q$-system for $A_r$
as a tool for relating $Q$-system solutions to all possible sets of
initial data.  Considered as a discrete integrable dynamical system,
we show that the conserved quantities are partition functions of hard
particles on certain weighted graphs determined by the choice of
initial data. This allows us to interpret the solutions of the system
as partition functions of Viennot's heaps on these graphs, or as
partition functions of weighted paths on a dual graphs.  The
generating functions take the form of finite continued fractions. In
this setting, the cluster mutations correspond to local rearrangements
of the fractions which leave their final value unchanged.  Finally,
the general solutions of the $Q$-system are interpreted as partition
functions for strongly non-intersecting families of lattice paths on
target lattices. This expresses all cluster variables as manifestly
positive Laurent polynomials of any initial data, thus proving the
cluster positivity conjecture for the $A_r$ $Q$-system. We also give
the relation to domino tilings of deformed Aztec diamonds with
defects.
\end{abstract}

\maketitle
\tableofcontents

\section{Introduction}
The $A_r$ $Q$-system is a recursion relation satisfied by characters
of special irreducible finite-dimensional modules of the Lie algebra
$A_r$ \cite{KR}. It is a discrete integrable dynamical system.  On the
other hand, the relations of the $Q$-system are mutations in a cluster
algebra defined in
\cite{Ke07}.  One of the goals of this paper is to prove the positivity
property of the corresponding cluster variables, by using the
integrability property.

We do this by solving the discrete integrable system, which can be
mapped to several different types of statistical models: path models,
heaps on graphs, or domino tilings. The choice initial conditions for
the recursion relations determines the specific model, one for each
choice of initial data. The Boltzmann weights are Laurent monomials in
the initial data. Our construction gives an explicit solution of the
$Q$-system as a function of any possible set of initial conditions.

It is a conjecture of Fomin and Zelevinsky \cite{FZI} that the cluster
variables at any seed $x$ of a cluster algebra, expressed as a
function of the cluster variables of any other seed $y$ of the
algebra, are Laurent polynomials {with non-negative
coefficients}. In the language of dynamical systems, the choice of
seed variables $y$ corresponds to the choice of initial conditions.


The methods used in this paper appear to be new in the context of the
positivity conjecture. They have the advantage that
they give explicit solutions, and extend immediately to $T$-systems
\cite{DFK09a} and to certain integrable non-commutative cluster algebras
introduced by Kontsevich \cite{Kon,DFK09b}.

\subsection{The $Q$-system}
First, let us recall some definitions.
Let $I_r=\{1,...,r\}$, and consider the family of commutative variables
$\{Q_{\alpha,n} ~:~ \alpha\in I_r, n\in \Z\}$, related by a recursion
relation of the form:
\begin{equation}\label{qsys}
Q_{\alpha,n+1}Q_{\alpha,n-1}=Q_{\alpha,n}^2-Q_{\al+1,n}Q_{\al-1,n},\quad
Q_{0,n}=Q_{r+1,n}=1,\quad
(n\in \Z,~ \alpha\in I_r).
\end{equation}
A solution of this system is specified by giving a set of {\em
  boundary conditions}. 

The original $A_r$ $Q$-system \cite{KR} is the recursion relation
\eqref{qsys} (where $n\geq 1$), together with the boundary conditions
\begin{equation}\label{KRboundary}
Q_{\alpha,0} = 1, \quad  Q_{\alpha,1}={\rm ch} V(\omega_\alpha), \quad \alpha\in I_r.
\end{equation}
Here, $V(\omega_\alpha)$ is one of the $r$ fundamental representations
of the Lie algebra $\mathfrak{sl}_{r+1}$.  The solutions, written as
functions of the initial variables, are the characters of the $A_r$
Kirillov-Reshetikhin modules \cite{KR}:
$$Q_{\al,n}={\rm ch}V(n \omega_\alpha).$$

The boundary conditions \eqref{KRboundary} are singular, in that
$Q_{\alpha,-1}=0$. All solutions are therefore polynomials in
$\{Q_{\al,1} : \al\in I_r\}$ (Lemma 4.2 of \cite{DFK08}).  In this
paper, we relax this boundary condition.  Moreover, as we are
interested in the positivity property of cluster algebras, we
renormalize the variables in \eqref{qsys}.\footnote{Alternatively, we
can introduce coefficients in the second term (see Appendix A in
\cite{DFK08}). The two approaches are equivalent in this case.}
Let
\begin{equation}
\label{redef}
R_{\alpha,n}= \epsilon_\alpha Q_{\alpha,n}, \qquad 
\epsilon_\alpha=e^{i\pi \sum_{\beta=1}^r
(C^{-1})_{\alpha,\beta}}=
e^{i\pi\alpha(r+1-\alpha)/2}
\end{equation}
where $C$ the Cartan matrix of $A_r$. Then
\begin{equation}
\label{renom}
R_{\alpha,n+1}R_{\alpha,n-1}=R_{\alpha,n}^2+R_{\al+1,n} R_{\al-1,n},
\quad R_{0,n}=R_{r+1,n}=1, \quad (\alpha\in I_r, ~ n\in \Z).
\end{equation}

In the rest of the paper we work only with the renormalized variables
in \eqref{redef}, and henceforth we refer to Equation \ref{renom} as
the $A_r$ $Q$-system.

\subsection{$Q$-systems as Cluster algebras}

Instead of specifying the boundary conditions of the form
\eqref{KRboundary}, we may specify much more general boundary conditions by
picking a set $2r$ variables in a consistent manner, and setting them
to be formal variables. Any solution is then a function of
these formal variables. In order to explain what we mean by ``a
consistent manner'', we can use the formulation of these recursion
relations as mutations in a cluster algebra.

It was shown in \cite{Ke07} that the system \eqref{renom} can be
expressed in terms of a cluster algebra \cite{FZI}:
\begin{thm}\label{Qcluster}\cite{Ke07} The
  equations \eqref{renom} for each $\al$ and $n$ are mutations
  in the cluster algebra without
  coefficients, of rank $2r$, defined by the seed $(\bx_0,B_0)$, where
  $\bx_0$ is a cluster variable and $B_0$ is an exchange matrix, with
\begin{equation}\label{initseed}
\bx_0 = (R_{1,0},...,R_{r,0};R_{1,1},...,R_{r,1}),\quad
B_0=\left(\begin{array}{cc} 0 & -C \\ C & 0 \end{array}\right),
\end{equation}
where $C$ is the Cartan matrix of $A_r$.
\end{thm}

A cluster algebra of rank $n$ without coefficients is defined as
follows \cite{FZI}.  Consider an an $n$-regular tree $\mathbb T_n$,
with nodes connected via labeled edges. Each node is attached to $n$
edges with distinct labels $1,...,n$. To each node $t$, we associate a
seed consisting of a cluster variable and an exchange matrix.
The cluster variable $\mathbf x_t$ has $n$ components,
$
\mathbf x_t = (x_{t,0},...,x_{t,n}),
$
and the exchange matrix $B_t$ is an $n\times n$ skew-symmetric
matrix. 

Cluster variables at connected
nodes are related by mutations given by the exchange matrix.
Suppose node $t$ is connected to node $t'$ by an edge labeled $k$, then the
seeds at these nodes are related by a mutation
$\mu_k$. The effect of this mutation is as follows:
\begin{eqnarray*}
\mu_k(x_{t,j})&=&x_{t',j} =\left\{ \begin{array}{ll}
 x_{t,j}^{-1}\left( \prod_{i=1}^n x_{t,i}^{[(B_t)_{ij}]_+} +
\prod_{i=1}^n x_{t,i}^{[-(B_t)_{ij}]_+}\right), & k=j\\
x_{t,j} & \hbox{otherwise}.\end{array}\right.
\end{eqnarray*}
Here, $[n]_+={\rm max}(0,n)$. 
The matrices $B_t$ and $B_{t'}$ are related via the mutation $\mu_k$
as follows.
\begin{eqnarray}
\mu_k((B_t))_{ij} &=& \left\{ \begin{array}{ll}
-(B_t)_{ij} & \hbox{if $i=k$ or $j=k$,}\nonumber \\
(B_{t'})_{ij} = (B_{t})_{ij} + {\rm sign}((B_t)_{ik}) [
(B_{t})_{ik}(B_t)_{kj}]_+ & \hbox{otherwise}\end{array}\right. \label{Bmut}
\end{eqnarray}
The cluster algebra is the commutative algebra over $\Q$ of the
cluster variables.

In the case of the cluster algebra defined in Theorem \ref{Qcluster},
there is a subgraph $\mathcal G_r$ of $\mathbb T_{2r}$, which includes
the node with seed \eqref{initseed}, and is the maximal subgraph with
the property that
the mutation of the cluster variables along each edge of the
graph is one of the 
equations \eqref{renom}.
The union of the cluster variables over all nodes of the graph
is the set $\{R_{\alpha,n}:\alpha\in I_r, n\in
\Z\}$.\footnote{Note that this graph is different from the graphs described in
\cite{Ke07,DFK08}, which were minimal rather than maximal.}

To describe the seeds in $\mathcal G_r$, note that the mutation of the
variable $R_{\al,n-1}$ into $R_{\al,n+1}$
uses the variables 
$\{R_{\al,n-1}, R_{\al-1,n}, R_{\al,n}, R_{\al+1,n}\}$. If these are
entries of the cluster variable $\bx_t$ at node $t$, then it is
possible to apply the mutation $t\to t'$ which sends
$R_{\al,n-1}\to R_{\al,n+1}$.
With our indexing convention, this is the mutation $\mu_\al$ if $n$ is
odd, $\mu_{\al+r}$ if $n$ is even.  Such a mutation will be called a
``forward'' mutation, as it increases the value of the index $n-1\to
n+1$.  (Cluster mutations are involutions, hence we also have
``backward'' mutations which send $n+1\to n-1$.)

We see that any cluster variable in $\mathcal G_r$ has the properties:
(i) The variable consists of pairs the form
$(R_{\al,m_\al},R_{\al,m_\al+1})$, $\al\in I_r$ and (ii)
$m_{\al-1}-1\leq m_\al \leq m_{\al-1}+1$ for all $\al\in I_r$.  Define
a path on the two-dimensional lattice in the $(n,\al)$-plane, by
connecting the points $\bM=\{(m_\al,\al)\}_{\al=1}^r$. Then condition
(ii) implies that $\bM$ is a Motzkin path, with steps of type $(1,1)$,
$(-1,1)$ and $(0,1)$. Therefore we have

\begin{lemma}\label{motseedgen}
The nodes of ${\mathcal G}_r$ are in bijection with
Motzkin paths on the square lattice with $r-1$ steps of the form
$(1,1)$, $(-1,1)$ or $(0,1)$, connecting vertices with integer
coordinates $(n,\al)$ such that $1\leq \al \leq r$.
\end{lemma}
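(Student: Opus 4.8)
The plan is to exhibit the bijection $\Phi$ explicitly, sending a node $t$ of $\mathcal{G}_r$ to the height vector $\bM=(m_1,\dots,m_r)$ read off from its cluster. The forward map and its target are already supplied by properties (i) and (ii) of the preceding discussion: by (i) the cluster at $t$ is exactly the set of pairs $(R_{\al,m_\al},R_{\al,m_\al+1})$, so $t$ determines $\bM$, and by (ii) the successive inequalities $m_{\al-1}-1\le m_\al\le m_{\al-1}+1$ say precisely that the broken line through $\{(m_\al,\al)\}_{\al=1}^r$ is a Motzkin path with steps $(1,1)$, $(-1,1)$, $(0,1)$. For injectivity I would note that the $R_{\al,n}$ are pairwise distinct cluster variables, so distinct height vectors give distinct clusters; together with the fact (to be checked) that $\bM$ also determines the exchange matrix, this shows $\Phi$ descends to an injection on the seeds occurring in $\mathcal{G}_r$. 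The genuine content is therefore surjectivity: that \emph{every} vector $\bM$ with $|m_\al-m_{\al-1}|\le 1$ is realized at some node.

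To prove surjectivity I would first set up the dictionary between edges of $\mathcal{G}_r$ and elementary moves on paths. A forward mutation at $\al$ replaces $(R_{\al,m_\al},R_{\al,m_\al+1})$ by $(R_{\al,m_\al+1},R_{\al,m_\al+2})$, i.e.\ raises $m_\al$ by one; by \eqref{renom} the computation of $R_{\al,m_\al+2}$ requires $R_{\al\pm1,m_\al+1}$ to lie in the cluster, which by (i) forces $m_{\al\pm1}\ge m_\al$ (with the convention $m_0=m_{r+1}=+\infty$ coming from $R_{0,n}=R_{r+1,n}=1$). Hence a forward mutation stays inside $\mathcal{G}_r$ exactly when $\al$ is a weak local minimum (a valley), and symmetrically a backward mutation is admissible exactly at a weak local maximum (a peak); one checks directly that the new vector again satisfies (ii). In path language: the edges of $\mathcal{G}_r$ raise a valley or lower a peak by one unit.

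With this dictionary, connectivity to the initial node, whose height vector is the flat path $\bM=(0,\dots,0)$ read off from $\bx_0$, is proved by a descent on the potential $\Psi(\bM)=\sum_\al|m_\al-m^\ast_\al|$ toward a target $\bM^\ast$. If $\Psi>0$ there is a deficient position $m_\al<m^\ast_\al$ or an excess position $m_\al>m^\ast_\al$. In the deficient case I would take a most-deficient position (minimizing $m_\al-m^\ast_\al$) and repeatedly step to a strictly lower neighbor; since $\bM^\ast$ is Motzkin, $m^\ast_\beta\ge m^\ast_{\al}-1$ forces each such neighbor to be at least as deficient, so the chain stays most-deficient, strictly decreases in height, and terminates at a valley $\gamma$ with $m_\gamma<m^\ast_\gamma$. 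Raising $m_\gamma$ is admissible and lowers $\Psi$ by one; the excess case is symmetric using peaks. Thus $\Psi$ is driven to zero and $\bM^\ast$ is reached, so every Motzkin path occurs as a seed of $\mathcal{G}_r$.

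The main obstacle is this surjectivity argument, and inside it the exact identification of admissible mutations: one must verify that of the $2r$ mutations available at a node exactly those at valleys and peaks preserve the defining property of $\mathcal{G}_r$, handling the boundary indices $\al=1,r$ through the convention $R_{0,n}=R_{r+1,n}=1$. A related subtlety to treat with care, already visible for $r=2$ where the mutation word $\mu_1\mu_2\mu_1\mu_2$ returns to the cluster $\{R_{1,0},R_{1,1},R_{2,0},R_{2,1}\}$, is that the height vector pins down the seed but not the formal position in $\mathbb{T}_{2r}$; consequently the stated bijection is most naturally read as one between Motzkin paths and the \emph{seeds} appearing in $\mathcal{G}_r$.
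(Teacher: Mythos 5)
Your proposal is correct, and it follows the same basic framework as the paper while supplying a step the paper leaves implicit. The paper's ``proof'' is essentially the discussion preceding the lemma: it establishes properties (i) and (ii) of the clusters in $\mathcal G_r$ (which give your forward map and show its image lands in Motzkin paths), identifies nodes sharing the same cluster, and then simply asserts the bijection, remarking only that forward mutations raise or lower an $m_\al$ and are of $Q$-system type whenever the result is again a Motzkin path. Your analysis of exactly which mutations are admissible (valleys for forward, peaks for backward, with the convention $m_0=m_{r+1}=+\infty$ coming from $R_{0,n}=R_{r+1,n}=1$) agrees with this, and your potential-descent argument on $\Psi(\bM)=\sum_\al|m_\al-m_\al^\ast|$ — walking a most-deficient position down to a valley and raising it — is a correct and genuinely new ingredient: it proves the surjectivity/connectivity statement (every Motzkin path is reached from $\bx_0$) that the paper takes for granted. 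Your closing caveat, that the bijection is really with the seeds occurring in $\mathcal G_r$ rather than with nodes of $\mathbb T_{2r}$, is also consistent with the paper, which explicitly identifies nodes sharing the same cluster variables in Figure 1.
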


We define $\bx_\bM$ to be the seed with the variables
$\{R_{\al,m_\al+i}:i=0,1, \al\in I_r \}$. 
Let $\bM_0$ denote the Motzkin path with $m_\al=0$, $\al\in I_r$.
Then $\bx_0=\bx_{\bM_0}$. Forward mutations $\mu_\al$ and
$\mu_{\al+r}$ act on Motzkin paths by increasing or
decreasing one of the indices $m_\al$. If the resulting path
is also a Motzkin path, such mutations are guaranteed to be of the type
\eqref{renom}. 


The graph $\mathcal G_r$ is an infinite strip, and we display a section of it for $r=3$
in Figure \ref{fig:slfourR}, where we have
identified the nodes sharing the same cluster variables. Most of the
results of this paper can be reduced to working with the fundamental
domain of this strip.
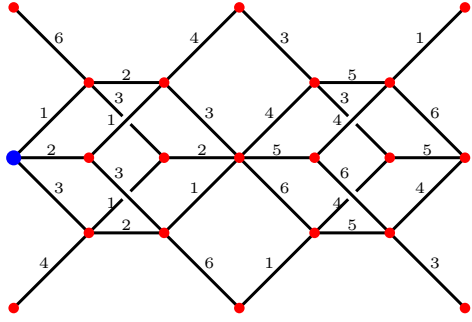
\begin{figure}
\begin{center}
\psset{unit=1mm,linewidth=.4mm,dimen=middle}

\begin{pspicture}(0,0)(60,40)
\psline(0,0)(10,10)(0,20)(10,30)(0,40)
\psline(10,10)(20,20)(10,30)
\psline[linewidth=1.5mm,linecolor=white](20,10)(10,20)(20,30)
\psline(20,10)(10,20)(20,30)
\psline(10,10)(20,10)
\psline(20,30)(10,30)
\psline(0,20)(10,20)
\psline(30,0)(20,10)(30,20)(20,30)(30,40)(40,30)(30,20)(40,10)(30,0)
\psline(40,10)(50,20)(40,30)
\psline[linewidth=1.5mm,linecolor=white](50,10)(40,20)(50,30)
\psline(50,10)(40,20)(50,30)
\psline(20,20)(40,20)
\psline(60,0)(50,10)(60,20)(50,30)(60,40)
\psline(40,10)(50,10)
\psline(50,20)(60,20)
\psline(40,30)(50,30)
\multips(0,0)(30,0){3}{\pscircle*[linecolor=red](0,0){.7}}
\multips(0,40)(30,0){3}{\pscircle*[linecolor=red](0,0){.7}}
\multips(0,20)(10,0){7}{\pscircle*[linecolor=red](0,0){.7}}
\multips(10,10)(10,0){2}{\pscircle*[linecolor=red](0,0){.7}}
\multips(40,10)(10,0){2}{\pscircle*[linecolor=red](0,0){.7}}
\multips(10,30)(10,0){2}{\pscircle*[linecolor=red](0,0){.7}}
\multips(40,30)(10,0){2}{\pscircle*[linecolor=red](0,0){.7}}
\rput(4,6){\tiny 4}
\rput(6,16){\tiny 3}
\rput(5,21){\tiny 2}
\rput(4,26){\tiny 1}
\rput(6,36){\tiny 6}
\rput(15,11){\tiny 2}
\rput(15,31){\tiny 2}
\rput(13,14){\tiny 1}
\rput(13,25){\tiny 1}
\rput(14,18){\tiny 3}
\rput(14,28){\tiny 3}
\rput(24,36){\tiny 4}
\rput(26,26){\tiny 3}
\rput(25,21){\tiny 2}
\rput(24,16){\tiny 1}
\rput(26,6){\tiny 6}
\rput(34,6){\tiny 1}
\rput(36,16){\tiny 6}
\rput(35,21){\tiny 5}
\rput(34,26){\tiny 4}
\rput(36,36){\tiny 3}
\rput(45,11){\tiny 5}
\rput(45,31){\tiny 5}
\rput(43,14){\tiny 4}
\rput(43,25){\tiny 4}
\rput(44,18){\tiny 6}
\rput(44,28){\tiny 3}
\rput(54,36){\tiny 1}
\rput(56,26){\tiny 6}
\rput(55,21){\tiny 5}
\rput(54,16){\tiny 4}
\rput(56,6){\tiny 3}
\rput(0,20){\pscircle*[linecolor=blue](0,0){1}}
\end{pspicture}
\end{center}
\caption{\small The subgraph of $Q$-system mutations for $A_3$. The blue dot stands for the node $0$.}
\label{fig:slfourR}
\end{figure}

\subsection{Outline of the paper}

The Laurent property \cite{FZLaurent} guarantees that the cluster
variables are Laurent polynomials when expressed as functions of the
cluster variables of any seed in the cluster algebra. It is a general
conjecture \cite{FZI} that the coefficients of the monomials in these
Laurent polynomials are non-negative integers.
In this paper, we prove positivity
for the cluster variables in $\mathcal G_r$.

The $Q$-system is a way of relating  the solutions of
\eqref{renom} to all possible sets of initial data.
Mutations of the cluster algebra allow us to move within the set of
possible initial data, as long as this initial data is of the form
$\bx_\bM$.  Our aim is to give an explicit combinatorial 
description of $R_{\al,n}$, for each choice of initial data
$\bx_\bM$ within this set, as the partition function for a statistical
model with positive Boltzmann weights.

We proceed using the following steps:

\noindent{\bf Step 1.} We give a combinatorial description of
$\{R_{1,n},~ n\in\Z\}$ as functions of the initial data $\bx_0$.  All
other variables $\{R_{\al,n},~ \al>1\}$ are discrete Wronskians of
$\{R_{1,n+j},~ 1-\al\leq j\leq \al-1\}$. The variables $\{R_{1,n},~ n\in
\Z\}$ satisfy a linear recursion relation with constant coefficients,
which are the conserved quantities of the $Q$-system. Here, the
integrability of the system plays a crucial role.

The conserved quantities are partition functions of hard particles on
a weighted graph $G_r$, with weights which are Laurent monomials in
the cluster variables $\bx_0$. Therefore the generating function
$F_1^{(r)}(t)=\sum_{n\geq 0}t^n R_{1,n}$ is equal to the partition
function of Viennot's heaps on $G_r$. Thus, there is a simple
expression for $F_1^{(r)}(t)$ as a finite continued fraction.

Elementary rearrangements of the continued fraction allow us to
express $F_1^{(r)}(t)$ as the generating function for heaps on
different graphs with different weights. Our goal is then to prove
that these rearrangements are mutations of the initial data
$\bx_{\bM}$.

\noindent{\bf Step 2.} We use a heap-path correspondence to re-express
$\{R_{1,n}(\bx_0)\}_{n\in \Z_+}$ as partition
functions for weighted paths on a dual graph ${\wG}_r$.

\noindent{\bf Step 3.} 
For each Motzkin path $\bM$, we construct a graph $\Gamma_\bM$ and
edge weights $y_e(\bM)$ which are Laurent monomials in the cluster
variables $\bx_\bM$. The variables $\{R_{1,n}\}_{n\in
\Z_+}$ are expressed as the generating functions for weighted paths on
$\Gamma_\bM$. This proves the positivity conjecture for $R_{1,n}$ and
the claim that continued fraction rearrangements are cluster mutations
of initial data.

\noindent{\bf Step 4.} We use the discrete Wronskian expression for
$R_{\al,n}$ to interpret these variables as partition functions for
families of {\it strongly} non-intersecting paths on $\Gamma_\bM$.
This is done by a generalization of the Lindstr\"om-Gessel-Viennot
\cite{LGV1,LGV2} determinant formula for the counting of
non-intersecting lattice paths.  This implies the positivity property
for $R_{\al,n}$ as functions of $\bx_\bM$.

The paper is organized as follows.
In Section \ref{props}, we show that the $Q$-system amounts to a
discrete Wronskian equation for the $R_{1,n}$'s, and that $R_{\al,n}$
with $\al>1$ are expressed as $\al\times \al$ discrete Wronskians of
$R_{1,n}$. We deduce that $R_{1,n}$ satisfies a linear recursion
relation with constant coefficients, interpreted as the conserved
quantities of the $Q$-system.

In Section \ref{cons}, the conserved quantities  are interpreted 
as partition functions for hard particles on a certain target graph
$G_r$, with vertex weights which depend on the initial data
$\bx_0$ of \eqref{initseed}.
We rephrase the linear recursion relation for
$R_{1,n}$ in terms of generating functions which are simple rational
functions.  

In Section \ref{posit} we re-interpret these rational
functions as
partition functions for Viennot's heaps \cite{HEAPS1} on the
same target graph $G_r$.  This gives a first proof of the positivity
of $R_{1,n}$ as a Laurent polynomial of the initial data $\bx_0$
\eqref{initseed}. We also give an explicit expression for the
generating function of $R_{1,n}$'s, as a finite continued fraction.
We show how a simple rearrangement lemma for fractions allows, by
iterative use, to rewrite the partition function for heaps on $G_r$ as
a partition function for heaps on other graphs, with weights
accordingly transformed. The proof that these rearrangements
correspond to mutations of the cluster variables appears in Section
\ref{pathinter}.

In Section \ref{heappa} we reformulate the heap partition function in
terms of weighted paths on ``dual" target graphs, the language of
which is more amenable to mutations. For each Motzkin path, we
construct a corresponding graph with edge weights which are monomials
in $\bx_\bM$. Partition functions for paths on these graphs are the
cluster variables $R_{1,n}$.
For completeness, we also explain how to construct the dual graphs for
the heap models from these graphs.

In Section \ref{pathinter} we prove that 
$R_{1,n}$ is the partition function for
weighted paths on a target graph $\Gamma_\bM$ with weights determined
by the initial data $\bx_\bM$. Using the transfer matrix formulation,
we prove the statement that fraction
rearrangements correspond to mutations of the initial data.
This leads to the main positivity theorem for the cluster
variables $R_{1,n}$, in terms of any initial data $\bx_\bM$.

The extension of this result to $R_{\al,n}$, $\al>1$ is presented in
Section \ref{secpaths}, where 
$R_{\al,n}$ is interpreted
\`a la Lindstr\"om-Gessel-Viennot as the partition function for
families of strongly non-intersecting 
weighted paths on the same target graph as for $R_{1,n}$.  This
completes the statistical-mechanical 
interpretation of all the solutions of the $Q$-system, and proves the
positivity of all the cluster variables  
of the subgraph corresponding to the $Q$-system, when expressed in
terms of cluster  
variables at any of its nodes. 

In Section \ref{asympto} we discuss the limiting case of
$A_{\infty/2}$ and present various exact and asymptotic path
enumeration results, which correspond to picking initial data $x_\bM$
with entries all equal to $1$.

Finally, since the $Q$-system is a limit of the $T$-system, aka the
octahedron equation, in Section \ref{aztec} we give the relation of
our results to the known results on the octahedron equation.  We show
how to relate weighted domino tilings of the Aztec diamond to our
weighted non-intersecting families of lattice paths, and to interpret
the result of cluster mutations in that language as weighted tilings
of suitably deformed Aztec diamonds, by means of dominos and also
pairs of square ``defects".

The appendices include explicit examples of our constructions for the
case $A_3$.

\vskip.15in

\noindent{\bf Acknowledgements:} 
We thank M. Bergvelt, C. Krattenthaler, A. Postnikov, H. Thomas, 
and particularly S. Fomin for many interesting discussions.
We thank the organizers of the semester ``Combinatorial Representation Theory"
and the Mathematical Science Research Institute, Berkeley, CA, USA
for hospitality, as well as the organizers of the program ``Combinatorics and Statistical Physics" and the
Erwin Schr\"odinger International Institute for Mathematical Physics, Vienna, Austria. R.K. acknowledges the
hospitality of the Institut des Hautes Etudes Scientifiques, Bures-sur-Yvette, France.
P. D.F. acknowledges the support of the ENIGMA research training network MRTN-CT-2004-5652,
the ANR program GIMP, and the ESF program MISGAM.
R. K.'s research is supported by NSF grants DMS 0500759 and 0802511.

\section{Properties of the $Q$-system}\label{props}
\subsection{The fundamental domain of seeds}
Our goal is give explicit expressions for $R_{\al,n}$ as a function of
any initial seed data $\bx_{\bM}$.  First, we use the symmetries of the
$Q$-system to enable us to restrict our attention to a finite
fundamental domain of initial seeds, parametrized by Motzkin paths
which have a minimum node at $0$.

There are three obvious symmetries of the system. A symmetry $\sigma:
\{R_{\al,n}\}\to \{R_{\al,n}\}$ of
\eqref{renom} is a map with the property that
$R_{\al,n}=f(\bx)$ then $\sigma(R_{\al,n})=f(\sigma(\bx))$.
Equation \eqref{renom} is invariant under
$\sigma(R_{\alpha,n})=R_{\alpha,-n+1}$. Therefore,
\begin{lemma}\label{firstrem} ``Time reversal'':
\begin{eqnarray}\label{rem1}
 {\rm if}\  R_{\al,n}&=&f(R_{1,0}, R_{2,0},...,R_{r,0};R_{1,1},R_{2,1},...,R_{r,1}),
\nonumber\\
{\rm then}\ 
  R_{\al,-n+1}&=&f(R_{1,1},R_{2,1},...,R_{r,1};R_{1,0}, R_{2,0},...,R_{r,0}).
\end{eqnarray}
for all $n\in \Z$ and $\al\in I_r$.
\end{lemma}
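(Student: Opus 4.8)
The plan is to prove the statement in two stages: first establish that $\sigma$ genuinely maps solutions of \eqref{renom} to solutions, and then extract the functional consequence \eqref{rem1} from the uniqueness of a solution given its initial data. The index bookkeeping is routine; the content is really the equivariance of the solution map under $\sigma$.

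First I would set $\widetilde{R}_{\al,n}:=R_{\al,-n+1}$ for all $\al$ and $n$, and verify directly that the family $\{\widetilde{R}_{\al,n}\}$ again satisfies the $Q$-system \eqref{renom}. Writing the candidate relation $\widetilde{R}_{\al,n+1}\widetilde{R}_{\al,n-1}=\widetilde{R}_{\al,n}^2+\widetilde{R}_{\al+1,n}\widetilde{R}_{\al-1,n}$ and substituting the definition, the left side becomes $R_{\al,-n}R_{\al,-n+2}$ and the right side $R_{\al,-n+1}^2+R_{\al+1,-n+1}R_{\al-1,-n+1}$. Setting $m=-n+1$, this is precisely \eqref{renom} at the index $(\al,m)$, which holds by hypothesis; the boundary conditions $\widetilde{R}_{0,n}=\widetilde{R}_{r+1,n}=1$ are immediate. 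Hence $\sigma$ maps any solution to a solution.

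Next I would record the initial data of the transformed family. By definition $\widetilde{R}_{\al,0}=R_{\al,1}$ and $\widetilde{R}_{\al,1}=R_{\al,0}$, so the seed of $\{\widetilde{R}_{\al,n}\}$ is exactly $\bx_0$ with its two halves interchanged. The recursion \eqref{renom}, solved for $R_{\al,n+1}$ by dividing by $R_{\al,n-1}$ and for $R_{\al,n-1}$ by dividing by $R_{\al,n+1}$, determines every $R_{\al,n}$ as one fixed rational function $f$ of the $2r$ seed variables, independent of their numerical values. Since $\{\widetilde{R}_{\al,n}\}$ obeys the same recursion, the same $f$ expresses $\widetilde{R}_{\al,n}$ in terms of its seed. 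Therefore $R_{\al,-n+1}=\widetilde{R}_{\al,n}=f(R_{1,1},\ldots,R_{r,1};R_{1,0},\ldots,R_{r,0})$, which is exactly \eqref{rem1}.

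The only delicate point, which I expect to be the main obstacle, is arguing that ``the function $f$'' is genuinely universal, i.e.\ that the rational map from a seed to the full solution is one and the same regardless of the numerical seed to which it is applied. This is what elevates the statement above a tautology. I expect it to follow cleanly from the birational (forward- and backward-invertible) nature of a single recursion step, but it is the part that deserves explicit care, in contrast to the purely formal $n\mapsto -n+1$ substitution.
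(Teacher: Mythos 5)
Your proof is correct and follows the same route the paper takes (the paper simply observes that \eqref{renom} is invariant under $n\mapsto -n+1$ and that this swaps the two halves of the seed $\bx_0$); your verification of the substitution $m=-n+1$ and of the swapped initial data is exactly the implicit content of that one-line argument. The "delicate point" you flag — that the forward and backward recursions determine a single universal rational map from seed to solution — is indeed the substance behind the notation $R_{\al,n}=f(\bx_0)$, and your appeal to the birational invertibility of one recursion step is the right justification.
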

We also have the reflection symmetry of \eqref{renom}, $\sigma(R_{\al,n})=R_{r+1-\al,n}$. Therefore, 
\begin{lemma}\label{secrem}
\begin{eqnarray}\label{rem2}
{\rm if}\ R_{\al,n}&=&f(R_{1,0},R_{2,0},...,R_{r,0};R_{1,1},R_{2,1},...,R_{r,1}) \nonumber\\
{\rm then}\ R_{r+1-\al,n}&=&
f(R_{r,0},R_{r-1,0},...,R_{1,0};R_{r,1},R_{r-1,1},...,R_{1,1}).
\end{eqnarray}
for all $n\in \Z$ and $\al\in I_r$.
\end{lemma}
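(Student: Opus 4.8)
The plan is to follow the same two-step template that establishes the time-reversal symmetry of Lemma \ref{firstrem}: first show that the index reflection $\sigma:R_{\al,n}\mapsto R_{r+1-\al,n}$ is genuinely a symmetry of the recursion \eqref{renom}, and then feed this into the definition of a symmetry to obtain the stated identity for the solution written as a function of initial data.

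For the first step I would set $S_{\al,n}:=R_{r+1-\al,n}$ and verify directly that $S$ again solves \eqref{renom}. Writing $\beta=r+1-\al$, the candidate relation $S_{\al,n+1}S_{\al,n-1}=S_{\al,n}^2+S_{\al+1,n}S_{\al-1,n}$ becomes, after substitution, $R_{\beta,n+1}R_{\beta,n-1}=R_{\beta,n}^2+R_{\beta-1,n}R_{\beta+1,n}$, which is precisely \eqref{renom} at index $\beta$. The point to note is that the quadratic source term $R_{\al+1,n}R_{\al-1,n}$ is symmetric under interchange of the two neighboring indices $\al\pm 1$, so reflecting $\al\mapsto r+1-\al$ carries each instance of the recursion to another instance. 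The boundary conditions are preserved because $\sigma$ merely exchanges the two walls: $S_{0,n}=R_{r+1,n}=1$ and $S_{r+1,n}=R_{0,n}=1$, and these are automatically consistent with the source term at $\al=1$ and $\al=r$. Hence $\sigma$ is a symmetry in the sense defined before Lemma \ref{firstrem}.

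For the second step I would recall that the seed data $\bx_0=(R_{1,0},\dots,R_{r,0};R_{1,1},\dots,R_{r,1})$ determines every $R_{\al,n}$ uniquely as a rational function of $\bx_0$: solving \eqref{renom} in the form $R_{\al,n+1}=(R_{\al,n}^2+R_{\al+1,n}R_{\al-1,n})/R_{\al,n-1}$ propagates the solution forward in $n$, and the analogous inversion propagates it backward, so there is a well-defined $f$ with $R_{\al,n}=f(\bx_0)$. Applying the defining property of a symmetry gives $\sigma(R_{\al,n})=f(\sigma(\bx_0))$. On the left, $\sigma(R_{\al,n})=R_{r+1-\al,n}$; on the right, $\sigma$ reverses the order of the components within each half of $\bx_0$, yielding $f(R_{r,0},\dots,R_{1,0};R_{r,1},\dots,R_{1,1})$. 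This is exactly \eqref{rem2}, completing the argument.

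There is essentially no serious obstacle here: the content is the symmetry-verification of the first step, which is immediate from the invariance of \eqref{renom} under $\al\mapsto r+1-\al$. The only subtlety worth stating explicitly is the well-definedness of $f$ — namely that $\bx_0$ is valid initial data uniquely determining the solution — since it is precisely this fact that lets the abstract symmetry statement be transferred to the concrete functional identity, and it is the same input already used for Lemma \ref{firstrem}.
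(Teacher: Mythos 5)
Your proposal is correct and follows the same route as the paper, which simply asserts that $\sigma(R_{\al,n})=R_{r+1-\al,n}$ is a symmetry of \eqref{renom} and deduces the lemma from the definition of symmetry given before Lemma \ref{firstrem}; you merely spell out the verification (invariance of the recursion and exchange of the two boundary walls) that the paper leaves implicit.
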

Finally, we have the translational invariance,
$\sigma(R_{\al,n})=R_{\al,n+k}$:
\begin{lemma}\label{clustinv}
\begin{eqnarray}\label{transinv}
{\rm if}\ R_{\al,n}&=&
f(R_{1,0},R_{2,0},...,R_{r,0};R_{1,1},R_{2,1},...,R_{r,1}),
\nonumber\\
{\rm then}\ R_{\al,n+k}&=&
f(R_{1,k},R_{2,k},...,R_{r,k};R_{1,k+1},R_{2,k+1},...,R_{r,k+1}).
\end{eqnarray}
for all $n,k\in \Z$ and $\al\in I_r$.
\end{lemma}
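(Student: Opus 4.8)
The plan is to follow exactly the template already used for Lemmas \ref{firstrem} and \ref{secrem}: exhibit a symmetry of the recursion \eqref{renom}, then read off its action on the initial seed. Of the three symmetries this one is the most transparent, since \eqref{renom} is \emph{manifestly} invariant under the index shift $n\mapsto n+k$ --- every equation in the family is literally the same relation translated in $n$, so the map $\sigma(R_{\al,n})=R_{\al,n+k}$ sends solutions of the $Q$-system to solutions.

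First I would make precise the sense in which each $R_{\al,n}$ is a fixed function of the seed data at two consecutive times. Given the slice $\{R_{\al,n-1},R_{\al,n}:\al\in I_r\}$, the relation \eqref{renom} determines $R_{\al,n+1}$ upon dividing by $R_{\al,n-1}$; symmetrically, the slice $\{R_{\al,n},R_{\al,n+1}\}$ determines $R_{\al,n-1}$ upon dividing by $R_{\al,n}$. Hence the recursion can be run both forward and backward, and every variable is a well-defined rational function $R_{\al,n}=f_{\al,n}(R_{1,0},\dots,R_{r,0};R_{1,1},\dots,R_{r,1})$ of the seed $\bx_0$. The point to emphasize is that $f_{\al,n}$ is intrinsic to the recursion itself and does not depend on which particular solution one starts from.

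The remaining step is bookkeeping. Set $R'_{\al,n}:=R_{\al,n+k}$. Because \eqref{renom} is shift-invariant, the family $\{R'_{\al,n}\}$ satisfies the identical $Q$-system, and its seed at times $0,1$ is exactly $\{R_{\al,k},R_{\al,k+1}:\al\in I_r\}$. Applying the intrinsic solution map $f_{\al,n}$ to this seed therefore yields $R'_{\al,n}=f_{\al,n}(R_{1,k},\dots,R_{r,k};R_{1,k+1},\dots,R_{r,k+1})$, and unwinding the definition $R'_{\al,n}=R_{\al,n+k}$ gives precisely \eqref{transinv}.

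There is no substantive obstacle here; the only point needing care is the invertibility of the solution map in both directions, which is what allows $k$ to range over all of $\Z$ rather than only $\Z_{\geq 0}$. Once the two-directional well-definedness of $f_{\al,n}$ is recorded, the lemma is immediate, in complete parallel with the proofs of the time-reversal and reflection symmetries.
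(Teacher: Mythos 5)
Your proposal is correct and matches the paper's intent: the paper states this lemma with essentially no proof beyond observing that $\sigma(R_{\al,n})=R_{\al,n+k}$ is a symmetry of \eqref{renom}, exactly the shift-invariance argument you spell out. Your additional remark about running the recursion in both directions (so that $k$ may be any integer) is a worthwhile point of care, but it does not change the substance of the argument.
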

This lemma may also be viewed
as a special case
of the substitution property of deformed $Q$-systems defined in
\cite{DFK}. 

For any Motzkin path $\bM=(m_1,...,m_r)$, let
$f_{\al,n}^{(\bM)}(\bx)$ denote the function of $\bx$ such that
$$
R_{\al,n}=f_{\al,n}^{(\bM)}(\bx_\bM).
$$
Let $\bM+k=(m_1+k,...,m_r+k)$.
Then  Equation \eqref{transinv} can be written as
\begin{equation}\label{invtrans}
R_{\al,n}=f_{\al,n}^{(\bM_0)}(\bx_{\bM_0})=f_{\al,n-k}^{(\bM_0)}(\bx_{\bM_0+k}). 
\end{equation}
More generally, 
\begin{equation}\label{fundaMiam}
R_{\al,n+k}=f_{\al,n+k}^{(\bM)}(\bx_{\bM})=f_{\al,n+k}^{(\bM_0)}(\bx_{\bM_0})
=f_{\al,n}^{(\bM_0)}(\bx_{\bM_0+k})
=f_{\al,n}^{(\bM)}(\bx_{\bM+k}).
\end{equation}
Therefore,
\begin{thm}\label{inverposit}
Let $R_{\al,n}=f_{\al,n}^{(\bM)}(\bx_\bM)$, where $f_{\al,n}^{(\bM)}$
is a positive Laurent polynomial of $\bx_\bM$, $\al\in I_r$,
$n\in \Z$. Then as a function of
$\bx_{\bM+k}$, $k\in \Z$, $R_{\al,n}$ is also a positive Laurent polynomial,
$R_{\al,n}=f_{\al,n}^{(\bM+k)}(\bx_{\bM+k})$.
\end{thm}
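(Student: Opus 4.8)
The plan is to derive Theorem~\ref{inverposit} directly from the translational invariance of Lemma~\ref{clustinv} together with the chain of equalities already assembled in Equation~\eqref{fundaMiam}. The key observation is that the statement is not a fresh positivity claim requiring combinatorial input; rather, it asserts that positivity, once established with respect to \emph{one} initial seed $\bx_\bM$, automatically propagates to every translate $\bx_{\bM+k}$. The entire content is therefore the bookkeeping relating the functions $f_{\al,n}^{(\bM)}$ attached to different Motzkin paths under the $\Z$-action $\bM\mapsto \bM+k$.

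First I would fix $\al\in I_r$, $n\in\Z$, and $k\in\Z$, and invoke the last equality in \eqref{fundaMiam}, namely $f_{\al,n+k}^{(\bM)}(\bx_\bM)=f_{\al,n}^{(\bM)}(\bx_{\bM+k})$. The left-hand side equals $R_{\al,n+k}$ by definition, and by hypothesis the function $f_{\al,n+k}^{(\bM)}$ is a positive Laurent polynomial in the variables $\bx_\bM$. The crucial point is that \eqref{fundaMiam} exhibits $f_{\al,n}^{(\bM+k)}$ and $f_{\al,n+k}^{(\bM)}$ as \emph{the same function}, merely with its formal arguments relabelled from $\bx_\bM$ to $\bx_{\bM+k}$. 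Since the property of being a Laurent polynomial with non-negative integer coefficients is a property of the formal expression and is visibly preserved under a relabelling of the indeterminates, $f_{\al,n}^{(\bM+k)}$ inherits positivity from $f_{\al,n+k}^{(\bM)}$. I would then simply read off $R_{\al,n}=f_{\al,n}^{(\bM+k)}(\bx_{\bM+k})$, which is the assertion of the theorem.

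The one point that deserves care, and which I expect to be the main (if modest) obstacle, is justifying that \eqref{fundaMiam} really identifies the two functional expressions rather than merely their evaluated values. This rests on Lemma~\ref{clustinv}: the symmetry $\sigma(R_{\al,n})=R_{\al,n+k}$ sends the seed data $\{R_{\beta,0},R_{\beta,1}\}$ to $\{R_{\beta,k},R_{\beta,k+1}\}$, and because $\sigma$ is a symmetry in the strict sense defined before Lemma~\ref{firstrem} (if $R_{\al,n}=f(\bx)$ then $\sigma(R_{\al,n})=f(\sigma(\bx))$), the \emph{functional form} $f$ is literally unchanged under $\sigma$. One must check that this remains true not only for the base path $\bM_0$ but for a general Motzkin path $\bM$, which is exactly what the middle equalities of \eqref{fundaMiam} supply, using that $\bx_{\bM+k}$ is obtained from $\bx_\bM$ by the same shift $n\mapsto n+k$ applied coordinatewise. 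Once this identification of functional forms is in hand, no computation with the explicit Laurent monomials is needed, and positivity transports for free.

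In summary, I would present the argument as a short deduction: state the functional identity $f_{\al,n}^{(\bM+k)}=f_{\al,n+k}^{(\bM)}$ coming from \eqref{fundaMiam} and Lemma~\ref{clustinv}, observe that relabelling indeterminates preserves positivity of a Laurent polynomial, and conclude. The theorem is thus a formal corollary of translational invariance, and its role in the paper is to let all subsequent positivity arguments be carried out only on the fundamental domain of Motzkin paths with minimum node at $0$, the positivity then extending to every translate automatically.
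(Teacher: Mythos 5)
Your proposal matches the paper's treatment exactly: the paper states Theorem~\ref{inverposit} as an immediate consequence of the chain of equalities \eqref{fundaMiam}, which in turn rests on the translational invariance of Lemma~\ref{clustinv}, with no further combinatorial input. The only quibble is a harmless index shift — the functional identity is $f_{\al,n}^{(\bM+k)}=f_{\al,n-k}^{(\bM)}$ rather than $f_{\al,n}^{(\bM+k)}=f_{\al,n+k}^{(\bM)}$ — which does not affect the argument since the hypothesis is quantified over all $n\in\Z$.
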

We can thus restrict our attention to fundamental domain:
\begin{figure}
\centering
\includegraphics[width=11.cm]{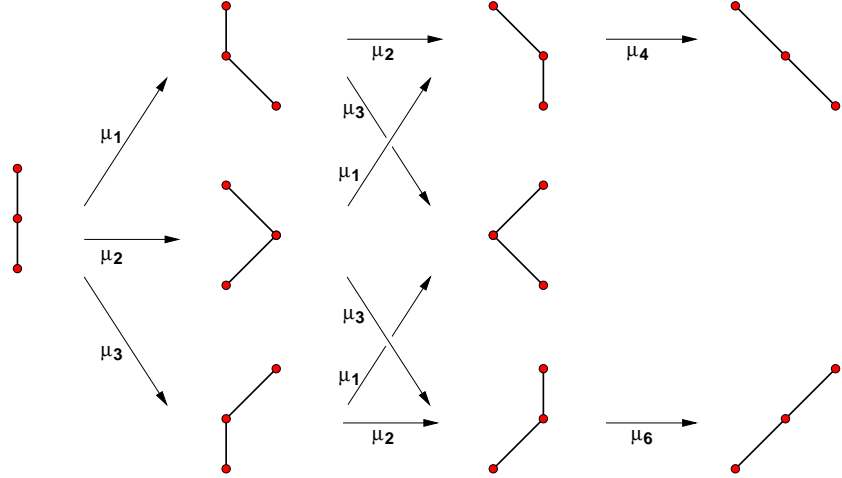}
\caption{\small The Motzkin paths for the 9 seeds in the fundamental domain 
for the $A_3$ case, and the
mutations relating them. The leftmost vertices of each Motzkin path lie on the vertical
axis $x=0$.}\label{fig:nine}
\end{figure}

\begin{defn}\label{motseed}
The fundamental domain ${\mathcal F}_r$ for the $A_r$ $Q$-system is
indexed by the Motzkin paths with $r-1$ steps of the form
$\{(m_\al,\al)\}_{\al\in I_r}$, with Min$_{\al\in I_r}(m_\al)=0$.
\end{defn}
There are exactly $3^{r-1}$ Motzkin paths in the fundamental
domain. 
As an example, Figure \ref{fig:nine} shows the $9$ Motzkin
paths of the fundamental domain for $A_3$ and the mutations relating
them, {\em c.f.} the graph ${\mathcal G}_3$ in Figure \ref{fig:slfourR}.

\subsection{Discrete Wronskians}
For any matrix $M$, let $M_{i_1,...,i_k}^{j_1,...,j_m}$ be the matrix
obtained by it by removing rows $i_1,...,r_k$ and columns $j_1,...,j_m$.

\subsubsection{Pl\"ucker relations}
Let $T$ be an
$n\times (n+k)$-matrix. The Pl\"ucker relations for the minors of $T$ are
\begin{equation}\label{plucker}
\vert T^{a_1,...,a_k}\vert \,\vert T^{b_1,...,b_k}\vert=\sum_{p=1}^k
\vert T^{b_p,a_2...,a_k}\vert  
\, \vert T^{b_1,...,b_{p-1},a_1,b_{p+1},...,b_k}\vert .
\end{equation}
In particular, when $k=2$,
\begin{equation}\label{specplu}
  \vert T^{a_1,a_2}\vert \, \vert T^{b_1,b_2}\vert =\vert T^{b_1,a_2}\vert \, 
  \vert T^{a_1,b_2}\vert+\vert T^{b_2,a_2}\vert \, \vert T^{b_1,a_1}\vert.
\end{equation}
Let $n=r+1$, $a_2=j_1$, $b_2=j_2$, and $(T)_{i,a_1}=\delta_{i,i_1}$,
$(T)_{i,b_1}=\delta_{i,i_2}$. Then Equation \eqref{specplu} gives the
Desnanot-Jacobi formula for the minors of the matrix $M=T^{a_1,b_1}$ of size
$(r+1)\times (r+1)$:
\begin{equation}\label{dj}
\vert M\vert \, 
\vert M_{i_1,i_2}^{j_1,j_2}\vert= 
\vert M_{i_1}^{j_1}\vert \, \vert M_{i_2}^{j_2}\vert
-\vert M_{i_1}^{j_2}\vert \, \vert M_{i_2}^{j_1}\vert .
\end{equation}

\subsubsection{Wronskian formula for $R_{1,n}$}
Using \eqref{renom}, it is possible to eliminate the variables
$\{R_{\alpha,n}\}_{\alpha> 1}$ in favor of $\{R_{1,n}\}$.  The
remaining equations determine $\{R_{1,n}\}$ in terms of the initial data.
As a consequence, $R_{1,n}$ satisfies a linear recursion relation with
constant coefficients.  This can then be extended trivially to all
$R_{\alpha,n}$.

Define the $\al\times \al$ matrix $M_{\al,n}$ with $(M_{\alpha,n})_{i,j}=
R_{1,n+i+j-1-\alpha}$. That is,
\begin{equation}\label{discretW} 
M_{\al,n}=
\left( \begin{matrix}
R_{1,n-\alpha+1} & R_{1,n-\alpha+2} &\cdots & R_{1,n} \cr
R_{1,n-\alpha+2} & R_{1,n-\alpha+3} &\cdots & R_{1,n+1} \cr
\vdots & \vdots & \ddots & \vdots\cr
R_{1,n}&R_{1,n+1}& \cdots &R_{1,n+\alpha-1}
\end{matrix}
\right) .
\end{equation}
and define the discrete Wronskian determinant to be $W_{\al,n}=|M_{\al,n}|$.

\begin{lemma}\label{Rwronsk}
We have $R_{\alpha,n}=W_{\alpha,n}$.
\end{lemma}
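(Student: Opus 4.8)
The plan is to prove $R_{\alpha,n} = W_{\alpha,n}$ by induction on $\alpha$, using the $Q$-system relation \eqref{renom} together with the Desnanot-Jacobi formula \eqref{dj}. The key observation is that the $Q$-system relation $R_{\alpha,n+1}R_{\alpha,n-1} = R_{\alpha,n}^2 + R_{\alpha+1,n}R_{\alpha-1,n}$ has exactly the same shape as a three-term Plücker-type relation. Specifically, rewriting \eqref{dj} it reads $|M|\,|M^{j_1,j_2}_{i_1,i_2}| = |M^{j_1}_{i_1}|\,|M^{j_2}_{i_2}| - |M^{j_2}_{i_1}|\,|M^{j_1}_{i_2}|$, which relates a size-$(r+1)$ determinant and its $(r-1)$ minor to products of two $r$-minors. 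The strategy is to match the Wronskian determinants $W_{\alpha,n}$ to these minors so that the Desnanot-Jacobi identity becomes precisely the renormalized $Q$-system.

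First I would establish the base cases. For $\alpha = 1$, the matrix $M_{1,n}$ is the $1\times 1$ matrix $(R_{1,n})$, so $W_{1,n} = R_{1,n} = R_{1,n}$ trivially. We also need the boundary convention $W_{0,n} = 1 = R_{0,n}$, where $M_{0,n}$ is the empty matrix with determinant $1$. Next I would carry out the inductive step: assuming $R_{\beta,m} = W_{\beta,m}$ for all $\beta < \alpha$ and all $m$, I apply the Desnanot-Jacobi identity \eqref{dj} to the $(\alpha+1)\times(\alpha+1)$ Hankel matrix $M_{\alpha+1,n}$, deleting the first and last rows and the first and last columns. Because $M_{\alpha+1,n}$ is a Hankel matrix (constant along antidiagonals, with $(M_{\alpha,n})_{i,j} = R_{1,n+i+j-1-\alpha}$), deleting the extreme rows/columns produces shifted copies of the smaller Hankel matrices: the central minor is $M_{\alpha-1,n}$, and the four corner $\alpha\times\alpha$ minors are $M_{\alpha,n-1}$, $M_{\alpha,n+1}$, and two copies of $M_{\alpha,n}$. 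This gives the identity
\begin{equation*}
W_{\alpha+1,n}\, W_{\alpha-1,n} = W_{\alpha,n+1}\, W_{\alpha,n-1} - W_{\alpha,n}^2 \cdot(\pm 1),
\end{equation*}
where the sign must be tracked carefully.

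The main obstacle I anticipate is precisely the bookkeeping of the signs and the exact index shifts in identifying the corner and central minors of the Hankel matrix. The Desnanot-Jacobi formula has a minus sign, whereas the renormalized $Q$-system \eqref{renom} has a plus sign; reconciling these requires verifying that the two ``mixed'' corner minors $|M^{j_2}_{i_1}|$ and $|M^{j_1}_{i_2}|$ coincide (both equal $W_{\alpha,n}$ by the Hankel symmetry) so that the subtracted term is $W_{\alpha,n}^2$, and that the overall identity rearranges to $W_{\alpha+1,n}W_{\alpha-1,n} = W_{\alpha,n}^2 + W_{\alpha,n+1}W_{\alpha,n-1}$ matching the structure of \eqref{renom} after the correct reindexing. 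I would verify the index alignment by checking that the top-left $\alpha\times\alpha$ block of $M_{\alpha+1,n}$ has entries $R_{1,n+i+j-1-(\alpha+1)}$ for $1\le i,j\le\alpha$, which equals $(M_{\alpha,n-1})_{i,j}$, and similarly track the other three corners. Once the determinant identity is shown to reproduce \eqref{renom} with matching boundary data, uniqueness of the solution to the $Q$-system recursion (given the initial data in $R_{1,*}$) forces $R_{\alpha,n} = W_{\alpha,n}$ for all $\alpha$, completing the induction.
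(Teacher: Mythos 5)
Your proposal is correct and follows essentially the same route as the paper: apply the Desnanot--Jacobi identity to the $(\alpha+1)\times(\alpha+1)$ Hankel matrix with the extreme rows and columns removed, identify the corner minors as $W_{\alpha,n\pm1}$ and $W_{\alpha,n}$ (the two mixed minors coinciding by the Hankel structure, which resolves the sign), and conclude by uniqueness of the solution to the recursion with initial data $W_{0,n}=1$, $W_{1,n}=R_{1,n}$. Your index bookkeeping checks out exactly as in the paper's proof of \eqref{desnajaco}.
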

\begin{proof}
Applying the Desnanot-Jacobi formula \eqref{dj} to the $(\al+1)\times (\al+1)$ matrix 
$M$ with entries
$$(M)_{i,j}=R_{1,n+i+j-\al-2}$$ 
with the choice of rows $i_1=1,i_2=\al+1$, and columns $j_1=1$,
$j_2=\al+1$, we have
\begin{equation}\label{desnajaco} 
W_{\alpha,n+1}W_{\alpha,n-1}=W_{\alpha,n}^2+W_{\alpha+1,n}
W_{\alpha-1,n}
\end{equation}
for any sequence $R_{1,n}$, and any $\alpha\geq 1$, with the convention that $W_{0,n}=1$
for all $n$. The sequence $W_{\al,n}$ is the unique solution to eq.\eqref{desnajaco}
such that $W_{0,n}=1$ and $W_{1,n}=R_{1,n}$. Comparing this to the Q-system \eqref{renom}, we deduce that
$R_{\alpha,n}= W_{\alpha,n}$, $\al=1,...,r$, and the Lemma follows.
\end{proof}
The boundary condition $R_{r+1,n}=1$ yields
the following polynomial
relation for $R_{1,n}$:
\begin{cor}
\begin{equation}\label{finalA}
W_{r+1,n}=\left\vert \begin{matrix}
R_{1,n-r} & R_{1,n-r+1} &\cdots & R_{1,n} \cr
R_{1,n-r+1} & R_{1,n-r+2} &\cdots & R_{1,n+1} \cr
\vdots & \vdots & \ddots & \vdots\cr
R_{1,n}&R_{1,n+1}& \cdots &R_{1,n+r}
\end{matrix}
\right\vert =1.
\end{equation}
\end{cor}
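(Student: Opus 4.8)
The plan is to derive \eqref{finalA} directly from Lemma \ref{Rwronsk} together with the boundary condition in the $Q$-system \eqref{renom}. By Lemma \ref{Rwronsk} we have the identification $R_{\al,n}=W_{\al,n}$ for $\al=1,\dots,r$, where $W_{\al,n}=|M_{\al,n}|$ is the $\al\times\al$ discrete Wronskian of the $R_{1,\bullet}$ sequence. The recursion \eqref{desnajaco} for the $W_{\al,n}$ was shown in the proof of Lemma \ref{Rwronsk} to hold for \emph{all} $\al\geq 1$ (not just $\al\leq r$), since it is a purely formal consequence of the Desnanot--Jacobi identity applied to the $(\al+1)\times(\al+1)$ Hankel matrix. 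So the strategy is simply to push the Wronskian identification one step beyond the range $\{1,\dots,r\}$, to $\al=r+1$, and then invoke the boundary value $R_{r+1,n}=1$ coming from \eqref{renom}.

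Concretely, first I would take \eqref{desnajaco} at $\al=r$, namely
\begin{equation*}
W_{r,n+1}W_{r,n-1}=W_{r,n}^2+W_{r+1,n}W_{r-1,n}.
\end{equation*}
Next I would substitute the known identifications $W_{r,n}=R_{r,n}$ and $W_{r-1,n}=R_{r-1,n}$, which are valid because both indices lie in $\{1,\dots,r\}$. Comparing this with the $Q$-system relation \eqref{renom} written at $\al=r$,
\begin{equation*}
R_{r,n+1}R_{r,n-1}=R_{r,n}^2+R_{r+1,n}R_{r-1,n}=R_{r,n}^2+R_{r-1,n},
\end{equation*}
where I have used the boundary condition $R_{r+1,n}=1$, the two right-hand sides must agree. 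Cancelling the common term $W_{r,n}^2=R_{r,n}^2$ and the common factor $W_{r-1,n}=R_{r-1,n}$ (which is a nonzero Laurent monomial expression in the initial data, hence not identically zero), I conclude $W_{r+1,n}=1$ for all $n$. This is exactly \eqref{finalA}, since $W_{r+1,n}$ is the determinant of the $(r+1)\times(r+1)$ Hankel matrix displayed there.

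The only genuine subtlety, and the step I expect to require the most care, is the justification that $W_{r-1,n}=R_{r-1,n}$ is not identically zero, so that it may be cancelled as a factor in the ring of Laurent polynomials (or rational functions) in the initial data $\bx_0$. This is where one should think of the $R_{1,\bullet}$ as formal/generic, rather than as specialized numbers: under Lemma \ref{Rwronsk} the quantity $R_{r-1,n}=W_{r-1,n}$ is a nonzero polynomial in the $R_{1,\bullet}$, so cancellation is valid as an identity of functions of the initial data. Equivalently, and perhaps cleaner to present, one can avoid cancellation altogether by arguing inductively: the recursion \eqref{desnajaco} determines the whole family $\{W_{\al,n}\}_{\al\geq 0}$ uniquely from the initial values $W_{0,n}=1$ and $W_{1,n}=R_{1,n}$, exactly as \eqref{renom} with $R_{r+1,n}=1$ determines $\{R_{\al,n}\}_{\al=1}^{r}$; matching the two recursions at each $\al$ from $1$ up to $r$ forces $R_{\al,n}=W_{\al,n}$ throughout, and then the $\al=r$ instance of \eqref{renom} together with $R_{r+1,n}=1$ pins down $W_{r+1,n}=W_{r+1,n}R_{r}R_{r-1}/(R_rR_{r-1}-\text{\dots})$—but this reduces back to the cancellation above, so the inductive framing is mostly a matter of bookkeeping rather than a new idea.
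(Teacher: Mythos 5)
Your proof is correct and is essentially the route the paper takes: the paper treats \eqref{finalA} as an immediate consequence of Lemma \ref{Rwronsk}, since the Desnanot--Jacobi recursion \eqref{desnajaco} at $\al=r$ determines $W_{r+1,n}$ by the same formula that the $Q$-system with $R_{r+1,n}=1$ determines $R_{r+1,n}$, given the already-established identifications $W_{\al,n}=R_{\al,n}$ for $\al\le r$. Your explicit remark that the cancellation of $W_{r-1,n}=R_{r-1,n}$ is legitimate because it is a nonzero element of the field of rational functions in the generic initial data is a reasonable way to make precise what the paper leaves implicit in its uniqueness argument.
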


\subsubsection{Integrals of motion}
The determinant $W_{r+1,n}$ is a discrete version of the
Wronskian determinant $W(f_1,...,f_r)=\det_{i,j}(f_i^{(j-1)})$. In the
theory of linear differential equations, the Wronskian of $r$ linearly
independent solutions to an $r$-th order linear differential equation
is a constant. This is proved by differentiating the Wronskian and
noting that a linear combination of its columns vanishes, due to the
differential equation.  Conversely, if the Wronskian is a (non-zero)
constant (so that its columns are linearly independent), there exists
a vanishing linear combination between the column vectors of its
derivative, namely $f_i^{(r)}=\sum_{j=1}^{r-1} a_j f_i^{(j-1)}$, where
the $f$'s are a linearly independent set of solutions of these
equations.

\begin{thm}\label{alphaone} 
The variables $\{R_{1,n}\}_{n\in\Z}$ satisfy a linear recursion
relation involving $r+2$ terms:
\begin{equation}\label{forec}
\sum_{m=0}^{r+1} (-1)^{m}c_{r+1-m} R_{1,n+m} =0, \quad n\in\Z,
\end{equation}
with the coefficients $c_0=c_{r+1}=1$, and with $c_1,c_2,...,c_r$ some
constant (independent of $n$) coefficients determined by the initial
conditions. 
\end{thm}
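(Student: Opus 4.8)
The plan is to exploit the Desnanot--Jacobi/Wronskian structure already established in Lemma \ref{Rwronsk} together with the boundary relation \eqref{finalA}. The key observation is that \eqref{finalA} says the $(r+1)\times(r+1)$ Wronskian $W_{r+1,n}=1$ for \emph{all} $n\in\Z$. This is precisely the discrete analogue of a constant Wronskian, and by the heuristic recalled in the paragraph preceding the theorem, a constant $(r+1)$-dimensional Wronskian should force the sequence $\{R_{1,n}\}$ to satisfy a linear recursion of order $r+1$, i.e.\ an $(r+2)$-term relation. So the strategy is to turn that analytic heuristic into an exact statement about the determinantal identities.

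Concretely, first I would fix $n$ and consider the columns of the $(r+1)\times(r+2)$ matrix $T$ with entries $(T)_{i,j}=R_{1,n+i+j}$ (a matrix with one more column than row, so it has a nontrivial right kernel). The columns of $T$ are the vectors $v_m=(R_{1,n+m},R_{1,n+m+1},\dots,R_{1,n+m+r})^t$ for $m=0,\dots,r+1$. Since these are $r+2$ vectors in an $(r+1)$-dimensional space, they are linearly dependent, and the dependence can be written using the maximal minors of $T$: by Cramer's rule the coefficient of $v_m$ in the vanishing combination is $(-1)^m$ times the $(r+1)\times(r+1)$ minor $|T^{(m)}|$ obtained by deleting column $m$. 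The crucial point is that each such deleted-column minor is, up to the translation invariance of the construction, exactly a Wronskian $W_{r+1,\cdot}$, hence equals $1$ by \eqref{finalA}, for the two extreme columns $m=0$ and $m=r+1$; the remaining minors $|T^{(m)}|$ for $1\le m\le r$ are \emph{some} determinants $c_{r+1-m}$ depending only on the initial data and independent of the overall shift, because translation invariance (Lemma \ref{clustinv}) moves $n$ without changing the numerical value of these Hankel-type minors.

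The main step is therefore to verify two things: (i) that the extreme minors $|T^{(0)}|$ and $|T^{(r+1)}|$ both equal $W_{r+1,n}=1$, which is immediate from \eqref{finalA} up to reindexing, pinning down $c_0=c_{r+1}=1$; and (ii) that the intermediate minors $c_1,\dots,c_r$ are genuinely independent of $n$. Point (ii) is the real content: I expect to prove it by showing that the relation \eqref{forec} holds simultaneously for all $n$ with a \emph{single} set of coefficients, which follows because the $(r+1)$-dimensional solution space of the recursion $W_{r+1,n}=1$ is spanned by a fixed basis of sequences, and the coefficients of the characteristic recursion are symmetric functions of that basis, hence constant. Equivalently, one can argue that if $\{R_{1,n}\}$ satisfied two different recursions of order $r+1$ the Wronskian would have to drop rank, contradicting $W_{r+1,n}=1\neq 0$.

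The hard part will be establishing the $n$-independence of the coefficients $c_j$ rigorously rather than by analogy with the differential-equations case. In the continuous setting one differentiates the Wronskian; here the discrete substitute is to use the Desnanot--Jacobi identity \eqref{dj} (as in the proof of Lemma \ref{Rwronsk}) to relate the minor obtained by deleting an interior column at shift $n$ to the same minor at shift $n+1$, and to check these ratios telescope to $1$ precisely because $W_{r+1,n}\equiv 1$. I anticipate that once the vanishing linear combination of the columns $v_0,\dots,v_{r+1}$ is written down with Cramer coefficients, the nonvanishing of the common factor $W_{r+1,n}=1$ lets one cancel cleanly and read off that the $c_j$ are the same for every $n$, yielding \eqref{forec} with constant coefficients and the stated normalization $c_0=c_{r+1}=1$.
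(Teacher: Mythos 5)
Your proposal is correct and is essentially the paper's own argument: the paper likewise derives the $(r+2)$-term dependence from the constancy of the discrete Wronskian, writing $W_{r+1,n+1}-W_{r+1,n}=0$ as a single vanishing determinant whose columns are exactly the $r+2$ consecutive Hankel columns of your rectangular matrix, pins down $c_0=c_{r+1}=1$ from $W_{r+1,n}=1$ in the same way, and is no more detailed than you are about the $n$-independence of the middle coefficients. Your explicit identification of $c_1,\dots,c_r$ as the deleted-column minors, together with the Desnanot--Jacobi route to their $n$-independence (equivalently, via $W_{r+2,n}=0$), is precisely what the paper carries out immediately afterwards in Theorem \ref{conserved} and Lemma \ref{recucj}.
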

\begin{proof}

In analogy with the continuous situation, consider the discrete
derivative $W_{r+1,n+1}-W_{r+1,n}=0$.  
Since $W_{r+1,n}$ and
$W_{r+1,n+1}$ have $r$ identical columns ($(W_{r+1,n+1})_{i,j}=(W_{r+1,n})_{i+1,j}$, $i\in I_r$),
\begin{equation}\label{difwron}
W_{r+1,n+1}-W_{r+1,n}= \det_{1\leq i,j\leq r+1}
\left( R_{1,n+i+j-r-1} -(-1)^{r}\delta_{j,r+1} R_{1,n+i-r-1}\right) =0.
\end{equation}
As a consequence, there exists a non-trivial linear combination of the
columns of this difference which vanishes. From the form of the
entries of these columns (in which the indices are shifted by $-1$
relative to each other) the coefficients of this linear combination
are independent of $n$.
\end{proof}



\section{Conserved quantities and Hard Particles}\label{cons}

\subsection{Conserved quantities of the Q-system}
Since $W_{r+1,n}=1$, it is
a conserved quantity, i.e. it is independent of $n$. More generally,
we claim that there are $r+1$ linearly independent conserved
quantities, and therefore the $Q$-system is a discrete integrable
system in the Liouville sense.

\begin{thm}\label{conserved}
The following polynomials
$$
c_{i-1}= |(M_{r+2,n})_{r+2}^{r+2-i}|,\quad i=0,...,r
$$
where $c_{r+1}=c_0=1$, are independent of $n$, and $c_0,...,c_{r}$ are the
linearly independent conserved quantities of the $A_r$ $Q$-system.
\end{thm}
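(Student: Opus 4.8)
The plan is to prove that the quantities $c_{i-1} = |(M_{r+2,n})_{r+2}^{r+2-i}|$ are independent of $n$ by recognizing them as precisely the coefficients appearing in the linear recursion \eqref{forec} of Theorem \ref{alphaone}. Theorem \ref{alphaone} already guarantees that the sequence $\{R_{1,n}\}$ satisfies a constant-coefficient linear recursion $\sum_{m=0}^{r+1}(-1)^m c_{r+1-m} R_{1,n+m}=0$; the task here is to identify the constants $c_i$ explicitly as these particular minors and to confirm their $n$-independence directly.

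First I would expand the determinant $W_{r+1,n+1}-W_{r+1,n}=0$ from \eqref{difwron} along its last column. Observe that $(M_{r+2,n})$ is the $(r+2)\times(r+2)$ matrix with entries $R_{1,n+i+j-r-2}$, so that deleting its last row and the column indexed $r+2-i$ yields an $(r+1)\times(r+1)$ minor. The key structural observation is that the final column of the difference matrix in \eqref{difwron} is the linear combination whose vanishing was asserted abstractly in the proof of Theorem \ref{alphaone}; expanding the determinant along this column produces a cofactor expansion in which each cofactor is, up to sign, exactly one of the minors $|(M_{r+2,n})_{r+2}^{r+2-i}|$. Matching the signs $(-1)^m$ and the shift conventions should then identify the coefficient of $R_{1,n+m}$ in the recursion with $(-1)^m c_{r+1-m}$, establishing that the $c_{i-1}$ defined in the statement are the recursion coefficients, hence independent of $n$.

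For the claim of linear independence and the count of exactly $r+1$ conserved quantities, I would argue that $c_0,\dots,c_r$ are algebraically distinct as Laurent polynomials in the initial data $\bx_0$ — for instance by exhibiting that each $c_i$ has a monomial not shared by the others, or by noting that they are the elementary symmetric-type functions (via the characteristic polynomial $\sum_m (-1)^m c_{r+1-m}\lambda^m$) of the $r+1$ roots governing the linear recursion, which are generically distinct. Since the recursion \eqref{forec} has order $r+1$ with leading and trailing coefficients normalized to $1$, its $r+1$ intermediate-plus-boundary coefficients furnish exactly $r+1$ functionally independent invariants, matching the Liouville-integrability count for a phase space of dimension $2r$ with this structure.

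The main obstacle I anticipate is the bookkeeping of signs and index shifts: the difference formula \eqref{difwron} carries a factor $(-1)^r\delta_{j,r+1}$, the cofactor expansion introduces its own alternating signs, and the statement's indexing $|(M_{r+2,n})_{r+2}^{r+2-i}|$ must be reconciled with the $R_{1,n+m}$ appearing in \eqref{forec}. Getting these conventions to align so that the coefficient labels truly match $c_0,\dots,c_{r+1}$ with $c_0=c_{r+1}=1$ will require careful tracking, though it is purely a matter of a clean cofactor expansion rather than a deep difficulty. The genuine conceptual content — that such constants exist at all — is already supplied by Theorem \ref{alphaone}, so this theorem is essentially the explicit realization of that existence result.
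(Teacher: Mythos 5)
Your high-level strategy --- identify the minors with the constant coefficients of the recursion \eqref{forec} whose existence Theorem \ref{alphaone} already provides --- is sound, but the determinant you propose to expand is the wrong one, and this is a genuine gap rather than sign bookkeeping. The difference in \eqref{difwron} is an $(r+1)\times(r+1)$ determinant, so a cofactor expansion along its last column produces $r\times r$ minors, which cannot be the $(r+1)\times(r+1)$ minors $|(M_{r+2,n})_{r+2}^{r+2-i}|$ named in the statement. Worse, the last column of \eqref{difwron} has entries $R_{1,n+i}-(-1)^r R_{1,n+i-r-1}$, so expanding along it yields a linear relation among the $2r+2$ quantities $R_{1,n+1},\dots,R_{1,n+r+1}$ and $R_{1,n-r},\dots,R_{1,n}$, not the $(r+2)$-term recursion \eqref{forec}; no matching of signs and index shifts will convert one into the other.

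The ingredient you are missing is the vanishing of the $(r+2)\times(r+2)$ Wronskian itself: taking $\al=r+1$ in \eqref{desnajaco} and using $W_{r+1,n}=1$ gives $W_{r+2,n}W_{r,n}=0$, hence $W_{r+2,n}=0$ since $W_{r,n}=R_{r,n}\neq 0$. The last row of $M_{r+2,n}$ is exactly $(R_{1,n},R_{1,n+1},\dots,R_{1,n+r+1})$, so expanding $W_{r+2,n}=0$ along that \emph{row} multiplies each $R_{1,n+m}$, $m=0,\dots,r+1$, by precisely one signed minor $\pm|(M_{r+2,n})_{r+2}^{j}|$, which recovers \eqref{forec} with the stated $c_i$; independence of $n$ then comes from matching with the constant coefficients of Theorem \ref{alphaone}, after normalizing by the two extreme minors, which are $W_{r+1,n-1}=W_{r+1,n}=1$. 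Your closing remarks on linear independence are no less precise than the paper's own, but without $W_{r+2,n}=0$ and the last-row expansion the identification of the $c_i$ with these particular minors is not established.
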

\begin{proof}
This follows from the fact that $W_{r+2,n}=0$, as a consequence of the
boundary condition $W_{r+1,n}=1$ and the $Q$-system relation. The
conserved quantities are the minors of the expansion of this
determinant with respect to the last row, as in Equation \eqref{forec}.
We get only $r+1$ linearly independent minors, since $c_0=W_{r+1,k-1}=1=
W_{r+1,k}=c_{r+1}$.
\end{proof}

\begin{example}\label{aonecval}
For $r=1$, we have
$$
c_1= \left\vert \begin{matrix}
R_{1,k-2} & R_{1,k}\\
R_{1,k-1} & R_{1,k+1}
\end{matrix}\right\vert= {R_{1,1}\over R_{1,0}}+{1\over R_{1,0}R_{1,1}}+{R_{1,0}\over R_{1,1}} .
$$
Using the Q-system for $A_1$ to eliminate $R_{1,k+2}=(R_{1,k+1}^2+1)/R_{1,k-1}$ and
$R_{1,k-2}=(R_{1,k+1}^2+1)/R_{1,k-1}$, we get the conservation law:
$$
c_1={R_{1,k}\over R_{1,k-1}}+{1\over R_{1,k-1}R_{1,k}} +{R_{1,k-1}\over R_{1,k}}=
{R_{1,1}\over R_{1,0}}+{1\over R_{1,0}R_{1,1}}+{R_{1,0}\over R_{1,1}} .
$$
This is a two-term recursion relation in $k$, whereas the $Q$-system is
a three-term recursion. The former is an explicit discrete ``first integral" of the latter.
\end{example}

Another way of understanding the conserved quantities of Theorem
\ref{conserved} is via the translational invariance of the cluster
algebra, expressed in Lemma \ref{clustinv}. We get the following
immediate
\begin{cor}
The quantities $c_i$, expressed in terms
of the seed $\bx_0$, are conserved, namely:
\begin{equation}\label{idenco}
c_i(R_{1,0},\ldots,R_{r,0};R_{1,1},\ldots,R_{r,1})=c_i(R_{1,k},\ldots,R_{r,k};R_{1,k+1},\ldots,R_{r,k+1})
\end{equation}
for all $k\in \Z$ and for $i=0,1,2,...,r$.
\end{cor}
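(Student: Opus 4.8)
The plan is to prove the corollary by combining the statement of Theorem \ref{conserved} with the translational invariance established in Lemma \ref{clustinv}. The content of the corollary is that the $c_i$, when written as explicit functions of the initial seed $\bx_0 = (R_{1,0},\ldots,R_{r,0};R_{1,1},\ldots,R_{r,1})$, coincide with the same functions applied to the shifted seed $\bx_{\bM_0+k} = (R_{1,k},\ldots,R_{r,k};R_{1,k+1},\ldots,R_{r,k+1})$. There are really two things to reconcile: Theorem \ref{conserved} asserts that each $c_i$ is independent of $n$ (it is a minor of $M_{r+2,n}$ that does not depend on the base point $n$ of the window used to build the Wronskian matrices), while the corollary asserts the stronger-looking fact that $c_i$ is literally the same rational function of any two consecutive $Q$-system slices.

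First I would recall that each $c_{i-1} = |(M_{r+2,n})_{r+2}^{r+2-i}|$ is by construction a polynomial in the entries $\{R_{1,m}\}$ appearing in the $(r+2)\times(r+2)$ window, and that Theorem \ref{conserved} has already shown this polynomial takes a value independent of the shift parameter $n$. The second step is to observe that, via the discrete Wronskian formula of Lemma \ref{Rwronsk}, every $R_{1,m}$ occurring in a fixed window can be expressed as a Laurent polynomial in a single seed $\bx_\bM$; in particular the $c_i$ can be written as a function $c_i(\bx_0)$ of the base seed. The third step is to invoke Lemma \ref{clustinv}: translational invariance says that if $R_{\al,n} = f(\bx_0)$ then $R_{\al,n+k} = f(\bx_{\bM_0+k})$, i.e. applying the defining function to the shifted seed reproduces the shifted solution. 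Applying this to the function $c_i(\cdot)$ itself, the shifted evaluation $c_i(\bx_{\bM_0+k})$ equals the conserved quantity built from the slices based at $k$ rather than at $0$. By the $n$-independence from Theorem \ref{conserved}, this shifted conserved quantity equals the original one, giving \eqref{idenco}.

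Concretely, the cleanest route is: apply Lemma \ref{clustinv} with the roles of $0$ and $k$ to rewrite the right-hand side of \eqref{idenco} as $c_i$ evaluated on the window based at $n=k$, which is again a minor of $M_{r+2,k}$ of the same shape; then Theorem \ref{conserved} says this minor is independent of the base point, so it agrees with the minor of $M_{r+2,0}$, which is the left-hand side. This is genuinely an ``immediate'' corollary, as the author advertises, since both ingredients are already in hand.

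The main obstacle — really the only point requiring care — is bookkeeping the index conventions so that the $2r$ entries $(R_{1,k},\ldots,R_{r,k};R_{1,k+1},\ldots,R_{r,k+1})$ are correctly identified with the shifted seed $\bx_{\bM_0+k}$ appearing in Lemma \ref{clustinv} and Equation \eqref{invtrans}, and so that the window of $\{R_{1,m}\}$-values defining $M_{r+2,n}$ at base point $n=k$ matches the translate of the window at $n=0$. Once the substitution $n \mapsto n+k$ in the Wronskian entries is matched against the seed shift $\bM_0 \mapsto \bM_0+k$, there is no further computation: the claim is forced by the already-proven $n$-independence together with translational invariance, and no new positivity or integrability input is needed.
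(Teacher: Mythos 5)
Your proposal is correct and matches the paper's intent exactly: the paper presents this as an immediate consequence of the $n$-independence in Theorem \ref{conserved} combined with the translational invariance of Lemma \ref{clustinv}, which is precisely the two-ingredient argument you give. No further comment is needed.
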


\subsection{Recursion relations for discrete Wronskians with defects}

We now derive explicit relations between the $c_i$ and the initial
data. 
It is useful to work in the context of the $A_{\infty/2}$ $Q$-system, which is
obtained from the $A_r$ system by relaxing the boundary condition
$R_{r+1,n}=1$:
\begin{equation}\label{ainfty}
r_{\al,n+1}r_{\al,n-1}=r_{\al,n}^2+r_{\al+1,n}r_{\al-1,n}, \ \ \
r_{0,n}=1, n\in \Z, \al\geq 1.
\end{equation}
Again, as in Lemma \ref{Rwronsk}, we have
$$
r_{\al,n}:=\det_{1\leq i,j\leq \al}(r_{1,n+i+j-1-\al}). 
$$

Define the $\al \times \al$ Wronskians with a ``defect" at position
$\al-m$ ($0\leq m\leq \al$, $n\in \Z$):
\begin{equation}\label{wronskdefect}
c_{\al,m,n}=\det_{1\leq i,j\leq \al} \, r_{1,n+i+s_{\al-m}(j)-\al-1} , \qquad s_m(j)=\left\{ 
\begin{matrix}
j & {\rm if}\ j\leq m\\
j+1 & {\rm if}\ j>m
\end{matrix}
\right.
\end{equation}
where $c_{0,0,n}=1$ for all $n$. Note that $c_{\al,0,n}=r_{\al,n}$ and
$c_{\al,\al,n}=r_{\al,n+1}$. In addition, $c_i= c_{r+1,i,k}$ if
$r_{\al,n}=R_{\al,n}$, that is, when we impose the condition
$r_{r+1,n}=1$.

\begin{lemma}\label{recucj}
The $c_{\al,m,n}$ satisfy the following recursion relation:
\begin{equation}\label{cjrecu}
r_{\al-1,n}r_{\al-1,n+1}c_{\al,m,n}=r_{\al,n}r_{\al-1,n+1}c_{\al-1,m,n}+r_{\al-1,n}r_{\al,n+1}
c_{\al-1,m-1,n}+r_{\al,n}r_{\al,n+1}c_{\al-2,m-1,n}.
\end{equation}
\end{lemma}
\begin{proof}
Applying \eqref{dj} to the $\al\times \al$ matrix $M$ with entries
$M_{i,j}=r_{1,n+i+j-\al}$, $i,j\in\{1,\ldots, \alpha\}$, with $i_1=1$,
$i_2=\al$, $j_1=\al-m$ and $j_2=\al$,
\begin{equation}\label{simplif}
r_{\al,n+1}c_{\al-2,m-1,n}+r_{\al-1,n+1}c_{\al-1,m,n}=r_{\al-1,n}c_{\al-1,m,n+1}.
\end{equation}
Using this to simplify the sum of the first and last terms on the
r.h.s. of \eqref{cjrecu}, must prove that
\begin{equation}\label{onemore}
r_{\al-1,n+1}c_{\al,m,n}=r_{\al,n+1}c_{\al-1,m-1,n}+r_{\al,n}c_{\al-1,m,n+1}.
\end{equation}
Multiplying \eqref{onemore} by $r_{\al,n+1}$, and using
\eqref{ainfty}, 
\begin{eqnarray*}
&&r_{\al,n+1}r_{\al-1,n+1}c_{\al,m,n}-(r_{\al,n+2}r_{\al,n}-r_{\al+1,n+1}r_{\al-1,n+1})
c_{\al-1,m-1,n}-r_{\al,n+1}r_{\al,n}c_{\al-1,m,n+1}\\
&=&r_{\al-1,n+1}(r_{\al,n+1} c_{\al,m,n}+r_{\al+1,n+1}c_{\al-1,m-1,n})
-r_{\al,n}(r_{\al,n+2}c_{\al-1,m-1,n}+r_{\al,n+1}c_{\al-1,m,n+1})\\
&=&r_{\al,n}(r_{\al-1,n+1}c_{\al,m,n+1}-r_{\al,n+2}c_{\al-1,m-1,n}-r_{\al,n+1}c_{\al-1,m,n+1}) = 0,
\end{eqnarray*}
where we have used again \eqref{simplif}, with $\al\to\al+1$,
to simplify the second line.  

The last
equation follows from 
\eqref{specplu}, using the $\al\times \al+2$
matrix $T$ with entries $T_{i,1}=\delta_{i,\al}$ and
$T_{i,j}=r_{1,n+i+j-\al-1}$ ($2\leq j\leq \al+2$ and $1\leq i\leq
\al$), with $a_1=1$, $a_2=2$, $b_1=\al+2-m$ and
$b_2=\al+2$. Then eq.\eqref{specplu} becomes
$$
r_{\al,n+2}c_{\al-1,m-1,n}=c_{\al,m,n+1}r_{\al-1,n+1}-r_{\al,n+1}c_{\al-1,m,n+1}
$$
which is the desired relation. 
\end{proof}

Define
$$
v_{\al,n}={r_{\al,n}\over r_{\al-1,n}}, \ \ \al=1,2,... \ \ n\in \Z.
$$
Equation \eqref{cjrecu} can be written as
\begin{equation}\label{recastcj}
c_{\al,m,n}=v_{\al,n}c_{\al-1,m,n}+v_{\al,n+1}
c_{\al-1,m-1,n}+v_{\al,n}v_{\al,n+1}c_{\al-2,m-1,n}.
\end{equation}
Together with the initial conditions 
$c_{0,0,n}=1$ and $c_{1,0,n}=v_{1,n}$, $c_{1,1,n}=v_{1,n+1}$,  
\eqref{recastcj} gives $c_{\al,m,n}$ as a polynomial in the variables
$\{v_{k,n},v_{k,n+1}\}_{1\leq k\leq \al}$, of total degree $\al$. 
In particular, we have
\begin{eqnarray}\label{bdryvalc}
c_{\al,0,n}=v_{1,n}v_{2,n} \cdots v_{\al,n},\quad
c_{\al,\al,n}=v_{1,n+1}v_{2,n+1} \cdots v_{\al,n+1}.
\end{eqnarray}

Next we introduce the quantities which we call {\em weights}, for
reasons which will become clear below:
\begin{equation}\label{qty}
y_{2\al-1,n}={v_{\al,n+1}\over v_{\al,n}}={r_{\al-1,n}r_{\al,n+1}\over r_{\al,n}r_{\al-1,n+1}}, 
\qquad  y_{2\al,n}={v_{\al+1,n+1}\over
v_{\al,n}}={r_{\al-1,n}r_{\al+1,n+1}\over r_{\al,n}r_{\al,n+1}},\quad
\al\geq 1.
\end{equation}
We define
\begin{equation}\label{renormC}
C_{\al,m,n}={c_{\al,m,n}\over v_{1,n}v_{2,n}\cdots v_{\al,n}}.
\end{equation}
Then \eqref{recastcj} becomes
\begin{thm}\label{diswronrec}
The quantities $C_{\al,m,n}$ of \eqref{renormC} satisfy
\begin{equation}\label{recastCj}
C_{\al,m,n}=C_{\al-1,m,n}+y_{2\al-1,n}C_{\al-1,m-1,n}+y_{2\al-2,n}C_{\al-2,m-1,n},
\end{equation}
with the $y$'s as in eq.\eqref{qty}.
\end{thm}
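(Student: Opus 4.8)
The plan is to derive \eqref{recastCj} directly from the unnormalized recursion \eqref{recastcj} by dividing through by the product $v_{1,n}v_{2,n}\cdots v_{\al,n}$ and matching each resulting term against the definition \eqref{renormC} of $C_{\al,m,n}$ and the definitions \eqref{qty} of the weights. Since \eqref{recastcj} is already in hand (it is merely Lemma \ref{recucj} rewritten via $v_{\al,n}=r_{\al,n}/r_{\al-1,n}$), no new determinantal identity is required: the content of the theorem is purely a change of variables, and the entire argument reduces to bookkeeping on the indices.

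First I would write $C_{\al,m,n}=c_{\al,m,n}/(v_{1,n}\cdots v_{\al,n})$ and substitute \eqref{recastcj} into the numerator, then treat the three terms separately. The first term $v_{\al,n}c_{\al-1,m,n}$, after cancelling the factor $v_{\al,n}$, gives exactly $c_{\al-1,m,n}/(v_{1,n}\cdots v_{\al-1,n})=C_{\al-1,m,n}$. The second term $v_{\al,n+1}c_{\al-1,m-1,n}$ produces the coefficient $v_{\al,n+1}/v_{\al,n}$ times $C_{\al-1,m-1,n}$, and here I would invoke $y_{2\al-1,n}=v_{\al,n+1}/v_{\al,n}$ from \eqref{qty} to recognize it. The third term $v_{\al,n}v_{\al,n+1}c_{\al-2,m-1,n}$ yields, after cancelling $v_{\al,n}$ and peeling off the factors $v_{1,n}\cdots v_{\al-2,n}$ needed to assemble $C_{\al-2,m-1,n}$, the coefficient $v_{\al,n+1}/v_{\al-1,n}$ times $C_{\al-2,m-1,n}$.

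The only point requiring a moment's care, and the sole place where an index slip could creep in, is identifying the coefficient of the last term. I would rewrite $v_{\al,n+1}/v_{\al-1,n}$ using the even-index weight: substituting $\al\to\al-1$ in $y_{2\al,n}=v_{\al+1,n+1}/v_{\al,n}$ gives $y_{2\al-2,n}=v_{\al,n+1}/v_{\al-1,n}$, which is precisely the coefficient obtained. Collecting the three terms then reproduces \eqref{recastCj} verbatim. I expect no genuine obstacle beyond this verification; as a consistency check I would also note the boundary values $C_{0,0,n}=1$ and, from \eqref{bdryvalc}, $C_{\al,0,n}=1$, confirming that the recursion is well-posed as a definition of the $C_{\al,m,n}$.
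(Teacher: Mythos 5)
Your proposal is correct and is essentially the same argument the paper intends: Theorem \ref{diswronrec} is obtained by dividing \eqref{recastcj} by $v_{1,n}\cdots v_{\al,n}$ and recognizing the coefficients $v_{\al,n+1}/v_{\al,n}=y_{2\al-1,n}$ and $v_{\al,n+1}/v_{\al-1,n}=y_{2\al-2,n}$ from \eqref{qty}, exactly as you do. The paper presents this as an immediate rewriting without spelling out the term-by-term cancellation; your version just makes that bookkeeping explicit, and your identification of the third coefficient via $\al\to\al-1$ in $y_{2\al,n}$ is the one delicate index check, done correctly.
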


Together with the initial condition $C_{\al,0,n}=1$, this gives
$\{C_{\al,m,n}\}$ 
as polynomials of homogeneous degree $m$ in $y_{k,n}$'s, with $1\leq
k\leq 2\al-1$. 
\begin{example}
The first few $C$'s for $\al=0,1,2,3$ read:
\begin{eqnarray*}
C_{0,0,n}&=&1,\ \
C_{1,0,n}=1,\ \
C_{1,1,n}=y_{1,n},\\
C_{2,0,n}&=&1, \ \
C_{2,1,n}=y_{1,n}+y_{2,n}+y_{3,n}, \ \
C_{2,2,n}=y_{1,n}y_{3,n}, \\
C_{3,0,n}&=&1,\ \ C_{3,1,n}=y_{1,n}+y_{2,n}+y_{3,n}+y_{4,n}+y_{5,n} ,\\
C_{3,2,n}&=&y_{1,n}y_{3,n}+y_{1,n}y_{4,n}+y_{1,n}y_{5,n}+y_{2,n}y_{5,n}+y_{3,n}y_{5,n},\ \
C_{3,3,n}=y_{1,n}y_{3,n}y_{5,n} .
\end{eqnarray*}
\end{example}

We apply the above results to the conserved quantities of the $Q$-system
of Theorem \ref{conserved}. We identify $R_{\al,n}=r_{\al,n}$ by
imposing the boundary condition
$r_{r+1,n}=1$ for all $n$. Then $v_{r+1,n}=1/r_{r,n}$ and
\begin{equation}\label{unitcond}
v_{1,n}v_{2,n}... v_{r+1,n} =1.
\end{equation}
Therefore,
\begin{equation}\label{identic}
c_i=c_{r+1,i,k}=C_{r+1,i,k}=C_{r+1,i,0} .
\end{equation}
In particular, we recover $c_0=c_{r+1}=1$ from the explicit
expressions for $c_{r+1,0,n}$ and $c_{r+1,r+1,n}$ of
Equation \eqref{bdryvalc}, together with
\eqref{unitcond}.  We note that $C_{r+1,i,k}=C_{r+1,i,0}$ are
independent of $k$ for all $j=0,1,...,r+1$, in other words we have the
conservation laws: $c_j(\{y_{\al,k}\})=c_j(\{y_{\al,0}\})$ for all
$k\in\Z$.

The identification \eqref{identic} gives an expression for $c_i$ in
terms of the initial data $\bx_0$:  By iterative use of the recursion
relations of Theorem \ref{diswronrec} for $n=0$ and $1\leq \al\leq
r+1$, we get expressions for $C_{r+1,m,0}$ as polynomials of homogeneous
degree $m$ in the weights $y_{\al,0}$, $1\leq \al\leq 2r+1$. 
These involve only the entries of $\bx_0$.

\subsection{Conserved quantities as hard particle partition functions}

The recursion relations of the previous section lead directly to an
interpretation of the quantities $c_i$ as partition functions of hard
particles on a graph, with weights which depend only on $\bx_0$.

\begin{figure}
\centering
\includegraphics[width=10.cm]{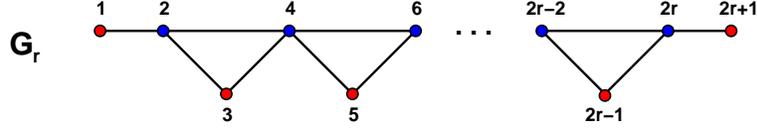}
\caption{\small The graph $G_r$,
with $2r+1$ vertices labeled $i=1,2,...,2r+1$.}
\label{fig:graphgr}
\end{figure}

Let $G_r$ be the graph of Figure \ref{fig:graphgr}.  When $r=1$, $G_1$
is the chain with 3 vertices.  To each vertex labeled $i$ in the
graph, we assign the weight $y_i$.

Let $G$ be a graph with vertices labeled by the index set $I$.

\begin{defn}\label{harpatic}
A hard particle configuration $C$ on $G$ is a subset of $I$ such that
$i,j\in C$ only if there is no edge connecting vertices $i$ and $j$ in
$G$.
\end{defn}

Let ${\rm HP}(G)$ be the set of all hard particle configurations on $G$.

If we assign a weight $y_i$ to each vertex $i\in I$, the {\em weight}
of a configuration $C$ is
$w(C)=\prod_{i\in C} y_i$.
The partition function of hard particles on $G$ is
\begin{equation}
Z^{(G)}(\{y_i\})=\sum_{C\in {\rm HP}(G)} w(C) .
\end{equation}
If we limit the summation to the set of configurations fixed
cardinality $j$, we have the $j$-particle partition function
$Z_j^{(G)}$.

In the particular case $G=G_r$ of Figure \ref{fig:graphgr}, we have
the partition function of $j$ hard particles on $G_r$, denoted by
$Z_j^{(G_r)}:=Z_j^{(G_r)}(y_1,...,y_{2r+1})$.
These satisfy recursion relations, coming from the structure of $G_r$.

\begin{thm}\label{recurelahpgr}
The partition functions $Z_j^{(G_r)}$ satisfy the following recursion
relations:
\begin{equation}\label{recuhardpart}
Z_j^{(G_r)}=Z_j^{(G_{r-1})} +y_{2r+1} Z_{j-1}^{(G_{r-1})} + y_{2r} Z_{j-1}^{(G_{r-2})}
\end{equation}
\end{thm}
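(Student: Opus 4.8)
The plan is to prove the recursion \eqref{recuhardpart} by a direct combinatorial decomposition of the hard-particle configurations on $G_r$ according to the occupation of the two ``new'' vertices $2r$ and $2r+1$ that distinguish $G_r$ from $G_{r-1}$. Looking at the graph $G_r$ in Figure \ref{fig:graphgr}, the subgraph on vertices $\{1,\dots,2r-1\}$ is exactly $G_{r-1}$, and the subgraph on $\{1,\dots,2r-3\}$ is $G_{r-2}$. The two extra vertices are attached in a specific way to the ``tail'' of $G_{r-1}$, and the whole argument rests on reading off precisely which vertices of $G_{r-1}$ are adjacent to $2r$ and $2r+1$. The first step I would carry out is to state these adjacencies explicitly from the figure.

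Next I would partition $\mathrm{HP}(G_r)$ into three classes based on whether vertices $2r+1$ and $2r$ are occupied. First, configurations in which neither $2r$ nor $2r+1$ is occupied are precisely the hard-particle configurations of the induced subgraph $G_{r-1}$, contributing $Z^{(G_{r-1})}$ to the full partition function (and $Z^{(G_{r-1})}_j$ at fixed cardinality $j$, since deleting the two empty vertices changes neither the particle count nor the weight). Second, configurations in which $2r+1$ is occupied: the hard-particle condition forces all neighbors of $2r+1$ to be empty; I expect the graph to be arranged so that occupying $2r+1$ forbids $2r$ and leaves exactly a $G_{r-1}$ worth of freedom on the remaining vertices, so that removing the particle at $2r+1$ (weight $y_{2r+1}$, reducing the count by one) gives a bijection with $\mathrm{HP}(G_{r-1})$, contributing $y_{2r+1}Z^{(G_{r-1})}_{j-1}$. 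Third, configurations in which $2r+1$ is empty but $2r$ is occupied: the neighbors of $2r$ must be vacated, and here the key point is that the forbidden neighborhood cuts the graph down to $G_{r-2}$ rather than $G_{r-1}$; removing the particle at $2r$ (weight $y_{2r}$) then gives a bijection with $\mathrm{HP}(G_{r-2})$, contributing $y_{2r}Z^{(G_{r-2})}_{j-1}$. Summing the three disjoint classes yields \eqref{recuhardpart}.

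The main obstacle, and the step that actually uses the specific structure of $G_r$ (as opposed to an arbitrary graph), is verifying the third case: I must check that once $2r+1$ is empty and $2r$ is occupied, the vertices left available are exactly those of $G_{r-2}$. This requires that $2r$ be adjacent to both $2r-1$ and $2r-2$ (the two tail vertices of $G_{r-1}$ beyond $G_{r-2}$), so that occupying $2r$ forbids both of them and the residual graph on $\{1,\dots,2r-3\}$ is $G_{r-2}$. Correspondingly, in the second case I need $2r+1$ to be adjacent to $2r$ but to leave a full $G_{r-1}$ available, which means $2r+1$ attaches only to $2r$ among the new vertices and does not touch $\{1,\dots,2r-1\}$ in a way that shrinks it below $G_{r-1}$. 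These adjacency facts are entirely determined by Figure \ref{fig:graphgr}, so the proof reduces to reading the local structure near the end of the chain correctly.

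To summarize the write-up, I would state the three adjacency facts extracted from the figure, then present the three-fold disjoint decomposition of $\mathrm{HP}(G_r)$ together with the weight-preserving, cardinality-shifting bijections in each case, and finally collect the contributions. The argument is short once the graph's local structure is pinned down; the only genuine content is the identification of the induced subgraphs $G_{r-1}$ and $G_{r-2}$ obtained after deleting the appropriate vacated neighborhoods.
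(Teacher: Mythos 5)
Your proposal is correct and is essentially identical to the paper's proof: the same three-way case analysis on the occupation of vertices $2r$ and $2r+1$, with the same identification of the residual graphs as $G_{r-1}$, $G_{r-1}$, and $G_{r-2}$ respectively. The adjacency facts you flag as needing verification ($2r+1$ adjacent only to $2r$; $2r$ adjacent to $2r+1$, $2r-1$, and $2r-2$) are exactly the ones the paper's argument relies on.
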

\begin{proof}
Depending on the occupation numbers of the last two vertices, three
situations may occur:
\begin{enumerate}
\item Vertices $2r$ and $2r+1$ are both vacant. This is a
configuration of $j$ hard particles
on $G_{r-1}$, obtained by erasing these two vertices and their adjacent edges.

\item Vertex $2r+1$ is occupied, and hence the vertex $2r$ is empty. Such configurations
are those of $j-1$ hard particles on $G_{r-1}$.

\item The vertex $2r$ is occupied, and hence  vertices $2r+1$,
$2r-1$ and $2r-2$ are empty. Such configurations are those of $j-1$
hard particles on $G_{r-2}$ obtained by erasing the vertices
$2r-2,2r-1,2r,2r+1$ and their incident edges.
\end{enumerate}
Each of these occupation states gives rise to one of the terms on the
right hand side of the equation.
\end{proof}

\begin{figure}
\centering
\includegraphics[width=14.cm]{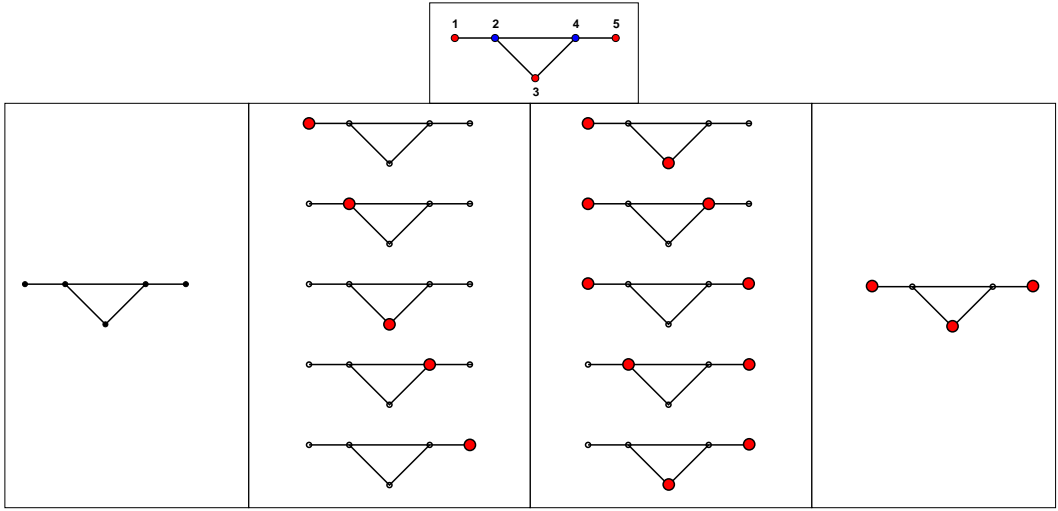}
\caption{\small The configurations of hard particles on $G_2$, with, from left to right, 
$0$, $1$, $2$ or $3$ particles. }
\label{fig:confatwo}
\end{figure}

\begin{example}\label{exgtwo}
For $r=2$, the hard particle model on $G_2$
has the partition function
\begin{equation}\label{hardgtwo}
Z^{(G_2)}(y_1,y_2,y_3,y_4,y_5)=1+(y_1+y_2+y_3+y_4+y_5)
+(y_1 y_3+y_1y_4+y_2y_5+y_3y_5+y_1y_5)+(y_1y_3y_5) ,
\end{equation}
where the various terms correspond to the configurations depicted in Fig.\ref{fig:confatwo}.
\end{example}

\begin{thm}\label{conshp}
The $j$-th conserved quantity $c_j$ of the Q-system for $A_r$ is equal
to the partition function
$Z_j^{(G_r)}(y_{1,n},...,y_{2r+1,n})$ for
$j$ hard particles on the graph $G_r$, with vertex weights $y_i\equiv y_{i,n}$ defined in 
eq.\eqref{qty}, for $i=1,2,...,2r+1$, and for any choice of $n\in \Z$.
\end{thm}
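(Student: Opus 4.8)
The plan is to identify two sequences that both satisfy the same recursion and the same initial conditions, and then invoke uniqueness. The target statement asserts that the $j$-th conserved quantity $c_j$ equals the hard-particle partition function $Z_j^{(G_r)}$. By the identification \eqref{identic}, we already know $c_j = C_{r+1,j,0}$, so it suffices to prove that $C_{r+1,j,n} = Z_j^{(G_r)}(y_{1,n},\ldots,y_{2r+1,n})$ for every $n$. Since the statement holds for any fixed $n$, I would suppress the index $n$ throughout and prove the purely combinatorial identity $C_{\al,m} = Z_m^{(G_\al)}(y_1,\ldots,y_{2\al+1})$ for all $\al \geq 0$ and $0 \leq m \leq \al$, then specialize to $\al = r+1$.

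The key observation is that the two families of quantities obey \emph{identical} recursions. On one side, Theorem \ref{diswronrec} gives
\begin{equation*}
C_{\al,m,n}=C_{\al-1,m,n}+y_{2\al-1,n}C_{\al-1,m-1,n}+y_{2\al-2,n}C_{\al-2,m-1,n}.
\end{equation*}
On the other side, Theorem \ref{recurelahpgr} gives, after shifting $r \to \al$,
\begin{equation*}
Z_m^{(G_\al)}=Z_m^{(G_{\al-1})} +y_{2\al+1} Z_{m-1}^{(G_{\al-1})} + y_{2\al} Z_{m-1}^{(G_{\al-2})}.
\end{equation*}
These are structurally the same three-term recursion, and the first step of my plan is simply to reconcile the weight indices: the subgraph $G_{\al-1} \subset G_\al$ that appears when the two rightmost vertices of $G_\al$ are vacated carries weights $y_1,\ldots,y_{2\al-1}$, so the weights attached to the recursion for $Z_m^{(G_\al)}$ written in terms of the weights of $G_\al$ match $y_{2\al+1}, y_{2\al}$, whereas the embedding into $G_{r+1}$ at the top level relabels these as $y_{2\al-1}, y_{2\al-2}$. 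I would verify carefully that with the weight assignment of eq.\eqref{qty} the two index conventions agree, so that the recursions coincide term by term.

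Granting the match of recursions, I would then complete the proof by induction on $\al$. The base cases are immediate: $C_{0,0}=1=Z_0^{(G_0)}$ (empty graph, empty configuration), and the $\al=1$ values $C_{1,0}=1$, $C_{1,1}=y_1$ match the hard-particle partition function on the three-vertex chain $G_1$ restricted to $0$ and $1$ particles. For the boundary cases of the induction one uses $C_{\al,0}=1=Z_0^{(G_\al)}$ (the empty configuration always contributes $1$) and checks the top index $m=\al$ against $Z_\al^{(G_\al)}=y_1y_3\cdots y_{2\al+1}$ using \eqref{bdryvalc}; these supply the needed boundary data so the common recursion determines both families uniquely. The inductive step is then a direct substitution: assuming the identity for indices $\al-1$ and $\al-2$, the shared recursion forces it for $\al$.

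The main obstacle I anticipate is bookkeeping with the weight indices rather than any genuine mathematical difficulty. The recursion in Theorem \ref{recurelahpgr} is phrased so that $G_{r-1}$ and $G_{r-2}$ inherit the \emph{leftmost} weights of $G_r$, while the recursion in Theorem \ref{diswronrec} grows $C_{\al,m}$ by adjoining new weights $y_{2\al-1}, y_{2\al-2}$ at the \emph{right}. Ensuring that these two descriptions refer to the same labeling of the vertices of $G_r$ as in Figure \ref{fig:graphgr}, and that the specialization $\al=r+1$ together with the constraint \eqref{unitcond} correctly produces the stated weights $y_i \equiv y_{i,n}$ for $i=1,\ldots,2r+1$, is the one place where care is required. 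Once the index dictionary is fixed, the identity of the two recursions together with matching initial and boundary data closes the argument by uniqueness, for every $n$ as claimed.
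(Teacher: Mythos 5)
Your strategy is exactly the paper's: the recursion of Theorem \ref{diswronrec} and the hard-particle recursion of Theorem \ref{recurelahpgr} are the same three-term recursion, the initial conditions agree, and uniqueness does the rest. However, the intermediate identity you state, $C_{\al,m}=Z_m^{(G_\al)}(y_1,\dots,y_{2\al+1})$, is off by one in $\al$: the correct identification is $C_{\al,m,n}=Z_m^{(G_{\al-1})}(y_{1,n},\dots,y_{2\al-1,n})$, so that specializing to $\al=r+1$ yields $Z_j^{(G_r)}$ as in the theorem (not $Z_j^{(G_{r+1})}$). With your indexing the recursions do \emph{not} coincide (the $C$ recursion adjoins weights $y_{2\al-1},y_{2\al-2}$ while yours for $Z^{(G_\al)}$ adjoins $y_{2\al+1},y_{2\al}$), and your own base case exposes the mismatch: $C_{1,1}=y_1$, whereas $Z_1^{(G_1)}=y_1+y_2+y_3$ since $G_1$ is the three-vertex chain; rather, $y_1=Z_1^{(G_0)}$ for the single-vertex graph. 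Once you shift the identification by one, the two recursions match literally ($2(r+1)-1=2r+1$, $2(r+1)-2=2r$), the base cases $C_{\al,0,n}=1=Z_0^{(G_{\al-1})}$ suffice (no separate check at $m=\al$ is needed, since the recursion with $C_{\al,-1,n}=0$ determines everything), and the argument closes exactly as in the paper.
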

\begin{proof}
The equations \eqref{recastCj} and \eqref{recuhardpart} are identical upon taking
$y_i=y_{i,n}$. Moreover the initial conditions are also identical,
as $Z_0^{(G_r)}=C_{r+1,0,n}=1$. We deduce that $Z_j^{(G_r)}=C_{r+1,j,n}=c_j$,
independently of $n$.
\end{proof}

\begin{cor}\label{hpcor}
The conserved quantities $c_j$ can be expressed in terms $\bx_0$
as the partition functions for $j$ hard
particles on $G_r$, with vertex weights
\begin{equation}\label{qtyzero}
y_{2\al-1}= {R_{\al-1,0}R_{\al,1}\over R_{\al,0}R_{\al-1,1}},\ \ 1\leq \al\leq r+1, \qquad 
y_{2\al}={R_{\al-1,0}R_{\al+1,1}\over R_{\al,0}R_{\al,1}}, \ \ 1\leq
\al\leq r. 
\end{equation}
\end{cor}

\begin{example}
In the case $G_2$ 
of Example \ref{exgtwo}, we have 
(with $y_1y_3y_5=1$): $c_0=c_3=1$, and
\begin{eqnarray}
c_1&=&{R_{2,0}\over R_{2,1}}+{R_{1,1}\over R_{1,0}}+{R_{1,0}R_{2,1}\over R_{2,0}R_{1,1}}
+{R_{2,1}\over R_{1,0}R_{1,1}}+{R_{1,0}\over R_{2,0}R_{2,1}}\label{coneatwo}\\
c_2&=&{R_{1,0}\over R_{1,1}}+{R_{2,1}\over R_{2,0}}+{R_{2,0}R_{1,1}\over R_{1,0}R_{2,1}}
+{R_{1,1}\over R_{2,0}R_{2,1}}+{R_{2,0}\over R_{1,0}R_{1,1}}\label{ctwoatwo}
\end{eqnarray}
The two integrals of motion of the $A_2$ $Q$-system correspond to writing $c_1$ and $c_2$
with the substitutions $R_{\al,i}\to R_{\al,i+n-1}$, $i=0,1$, $\al=1,2$. These yield a system of recursion
relations involving only indices $n$ and $n-1$, as opposed to the original $Q$-system, which involves
the indices $n-1,n$ and $n+1$.
\end{example}

\subsection{Generating functions}

\subsubsection{A generating function for $R_{1,n}$}

It is useful to introduce generating functions. Define
\begin{equation}
F_1^{(r)}(t)=\sum_{n\geq 0} R_{1,n} t^n 
\end{equation}
\begin{thm}\label{expressratio}
We have the relation
\begin{equation}\label{genA}
F_1^{(r)}(t)= {\sum_{j=0}^r (-1)^j d_j t^j \over 
\sum_{j=0}^{r+1} (-1)^j c_j t^j } , 
\qquad d_j=\sum_{i=0}^j R_{1,j-i} (-1)^{j-i} c_i.
\end{equation}
\begin{proof}
Consider the product of series 
$\Big(\sum_{i=0}^\infty R_{1,i} t^i \Big)\Big( \sum_{j=0}^{r+1} (-1)^j c_j t^j \Big)$.
Then all terms of order $r+1$ or higher in $t$ vanish, due to Theorem \ref{alphaone}. We are
left with the terms of order $0,1,...,r$, the term of order $j$ being exactly $(-1)^j d_j$.
\end{proof}
\end{thm}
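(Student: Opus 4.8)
The plan is to compute the generating function $F_1^{(r)}(t) = \sum_{n \geq 0} R_{1,n} t^n$ directly from the linear recursion satisfied by the sequence $\{R_{1,n}\}$. By Theorem \ref{alphaone}, the sequence satisfies the order-$(r+1)$ linear recursion $\sum_{m=0}^{r+1} (-1)^m c_{r+1-m} R_{1,n+m} = 0$ for all $n \in \Z$, with $c_0 = c_{r+1} = 1$. The standard technique for such problems is to multiply $F_1^{(r)}(t)$ by the characteristic polynomial of the recursion (written as a power series in $t$) and observe that the recursion forces all high-order coefficients to cancel, leaving only a polynomial numerator of degree $r$.

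Concretely, I would form the product $\Big(\sum_{i=0}^\infty R_{1,i}\, t^i\Big)\Big(\sum_{j=0}^{r+1} (-1)^j c_j\, t^j\Big)$ and extract the coefficient of $t^N$ for each $N$. This coefficient is $\sum_{j=0}^{\min(N,r+1)} (-1)^j c_j R_{1,N-j}$. I would then need to reindex this to match the recursion in Theorem \ref{alphaone}: setting $N = n + r + 1$ and $m = r + 1 - j$, the full sum (for $N \geq r+1$) becomes precisely $\sum_{m=0}^{r+1} (-1)^{r+1-m} c_m R_{1,n+m}$, which up to the overall sign $(-1)^{r+1}$ is exactly the left-hand side of \eqref{forec}, hence vanishes. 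Therefore every coefficient of $t^N$ with $N \geq r+1$ is zero, and the product is a polynomial of degree at most $r$ in $t$. Reading off the coefficient of $t^j$ for $0 \leq j \leq r$ gives $\sum_{i=0}^{j} (-1)^i c_i R_{1,j-i}$, which matches the stated $(-1)^j d_j$ after substituting $i \to j-i$ so that $d_j = \sum_{i=0}^j R_{1,j-i}(-1)^{j-i} c_i$.

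Dividing both sides by the characteristic series $\sum_{j=0}^{r+1}(-1)^j c_j t^j$ then yields the claimed formula \eqref{genA}, where the division is legitimate as an identity of formal power series because the denominator has constant term $c_0 = 1 \neq 0$ and is therefore invertible in $\Q[[t]]$.

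The only genuinely delicate point is the bookkeeping of indices and signs: one must check carefully that the convolution coefficient for $N \geq r+1$ coincides, up to a global sign, with a shifted instance of the recursion \eqref{forec}, and that the low-order coefficients are correctly identified with $(-1)^j d_j$. The alignment hinges on the symmetry of the coefficient pattern $c_0=c_{r+1}=1$ and on the reindexing $m = r+1-j$; once that substitution is made the vanishing is immediate from Theorem \ref{alphaone}, so there is no substantive analytic obstacle. The verification is purely a finite formal-power-series manipulation.
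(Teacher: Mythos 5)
Your proposal is correct and follows essentially the same route as the paper: multiply $\sum_{i\geq 0}R_{1,i}t^i$ by $\sum_{j=0}^{r+1}(-1)^jc_jt^j$, invoke the linear recursion of Theorem \ref{alphaone} to kill every coefficient of order $\geq r+1$, and identify the surviving low-order coefficients with $(-1)^jd_j$. The extra reindexing details you supply ($N=n+r+1$, $m=r+1-j$) are exactly the bookkeeping the paper leaves implicit, and they check out.
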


\begin{example}
For $r=1$, we have $c_1={R_{1,1}\over R_{1,0}}+{1\over R_{1,0}R_{1,1}}+{R_{1,0}\over R_{1,1}}$
from Example \ref{aonecval}, and $d_0=R_{1,0}$, 
$d_1=c_1 R_{1,0}-R_{1,1}={R_{1,0}^2+1\over R_{1,1}}$, hence
\begin{equation}\label{sltwof}
F_1^{(1)}(t)=R_{1,0}{1 -\left({R_{1,0}\over R_{1,1}}+{1\over R_{1,0}R_{1,1}}\right)t \over 
1-\left({R_{1,1}\over R_{1,0}}+{1\over R_{1,0}R_{1,1}}+{R_{1,0}\over R_{1,1}}\right) t +t^2 }
\end{equation}
\end{example}

\subsubsection{Generating function and hard particles}

\begin{thm}\label{fratio}
\begin{equation}\label{ratioF}
F_1^{(r)}(t)=R_{1,0} {Z^{(G_r)}(0,-ty_2,-ty_3,...,-ty_{2r+1})\over 
Z^{(G_r)}(-ty_1,-ty_2,-ty_3,...,-ty_{2r+1})}
\end{equation}
with $y_i$ as in \eqref{qtyzero}.
\end{thm}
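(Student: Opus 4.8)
The plan is to combine the two generating-function identities established earlier. From Theorem~\ref{expressratio}, I have
$$
F_1^{(r)}(t)=\frac{\sum_{j=0}^r(-1)^j d_j t^j}{\sum_{j=0}^{r+1}(-1)^j c_j t^j},\qquad
d_j=\sum_{i=0}^j R_{1,j-i}(-1)^{j-i}c_i.
$$
From Theorem~\ref{conshp} and Corollary~\ref{hpcor}, the denominator coefficients are exactly the hard-particle partition functions, $c_j=Z_j^{(G_r)}(y_1,\dots,y_{2r+1})$ with the weights of \eqref{qtyzero}. The first step is therefore to recognize the denominator $\sum_{j=0}^{r+1}(-1)^j c_j t^j$ as the full partition function $Z^{(G_r)}$ with each vertex weight $y_i$ replaced by $-t\,y_i$: indeed, grading $Z^{(G_r)}$ by particle number $j$ produces a factor $(-t)^j$ on the $j$-particle sector, so $\sum_j(-t)^j Z_j^{(G_r)}=Z^{(G_r)}(-ty_1,\dots,-ty_{2r+1})$. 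This identifies the denominator of \eqref{ratioF} immediately.

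The substance of the proof is then to identify the numerator. The claim is that
$$
\sum_{j=0}^r(-1)^j d_j t^j = R_{1,0}\,Z^{(G_r)}(0,-ty_2,\dots,-ty_{2r+1}),
$$
i.e. that setting $y_1=0$ in the hard-particle partition function (thereby forbidding occupation of vertex~$1$) produces precisely the numerator polynomial, up to the overall factor $R_{1,0}$. To prove this I would first give a combinatorial meaning to $Z^{(G_r)}$ with $y_1=0$: it is the partition function for hard particles on the graph $G_r$ with vertex~$1$ deleted. The second step is to compute the coefficients $d_j$ directly. Using $d_j=\sum_{i=0}^j(-1)^{j-i}R_{1,j-i}c_i$ and the hard-particle formula for $c_i$, I expect the alternating convolution with the $R_{1,m}$'s to telescope, exploiting the linear recursion \eqref{forec} satisfied by $R_{1,n}$; the recursion is what ties the $R_{1,m}$ to the same conserved quantities $c_i$ appearing in the weights, so the combination should reorganize into the $y_1=0$ partition function. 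Concretely, I would show that $d_j/R_{1,0}$ equals $Z_j^{(G_r)}(0,y_2,\dots,y_{2r+1})$ for each $j$, which then assembles into the stated numerator after the $(-t)^j$ grading.

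The cleanest route to that per-degree identity is probably to rederive both sides from a single recursion in $r$. On the partition-function side, deleting vertex~$1$ from $G_r$ leaves a graph whose hard-particle partition function satisfies a recursion analogous to Theorem~\ref{recurelahpgr}; on the $d_j$ side, I would establish the matching recursion using Theorem~\ref{diswronrec} for the $C_{\al,m,n}$ together with the definition of $d_j$. Since $R_{1,0}=r_{1,0}/r_{0,0}=v_{1,0}$ and the factor $y_1$ is $v_{1,1}/v_{1,0}$, setting $y_1=0$ corresponds combinatorially to suppressing the contribution that would otherwise begin the continued-fraction expansion at the first vertex, which is exactly the effect of restricting $d_j$ to the expansion of the solution starting from $R_{1,0}$ rather than $R_{1,1}$.

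I expect the main obstacle to be the bookkeeping in the numerator: verifying that the alternating sum defining $d_j$ collapses onto the $y_1=0$ partition function requires carefully matching the shifted-index structure of the $R_{1,n}$ recursion against the particle-deletion recursion for $G_r\setminus\{1\}$, and ensuring the overall prefactor is exactly $R_{1,0}$ and not some other monomial in the $y_i$. One must also handle the degree bound—that $\sum_j(-1)^jd_jt^j$ genuinely truncates at degree $r$, matching the fact that the $y_1=0$ graph has one fewer vertex and hence maximal particle number $r$ rather than $r+1$. Once the per-degree identity $d_j=R_{1,0}\,Z_j^{(G_r)}(0,y_2,\dots,y_{2r+1})$ is in hand, dividing the two graded sums gives \eqref{ratioF} directly.
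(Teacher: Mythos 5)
Your proposal is correct and follows essentially the same route as the paper: the denominator is read off from Corollary \ref{hpcor}, and the numerator identity $d_j=R_{1,0}\,Z_j^{(G_r)}(0,y_2,\dots,y_{2r+1})$ is proved by matching recursions, which is exactly what the paper does by introducing $D_{\al,m,n}=\sum_{i=0}^m(-1)^{m-i}C_{\al,i,n}\,r_{1,m-i}/r_{1,0}$, deriving for it (from Theorem \ref{diswronrec}) the same recursion as for $C_{\al,m,n}\big\vert_{y_1=0}$, and checking initial conditions. The degree-$r$ truncation you worry about is already supplied by Theorem \ref{expressratio}, so no extra work is needed there.
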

\begin{proof}
In the expression \eqref{genA}, the denominator is the partition
function $Z^{(G_r)}(-ty_1,...,-ty_{2r+1})$, according to Corollary
\ref{hpcor}.
The numerator of $F_1^{(r)}(t)/R_{1,0}$ is  $\sum_{j=0}^{r} (-t)^j d_j/R_{1,0}$,
where
$$
{d_j\over R_{1,0}}=\sum_{i=0}^j (-1)^{j-i}c_i {R_{1,j-i}\over R_{1,0}} .
$$
We proceed as for the $c_j$. First, we relax the condition that
 $R_{r+1,n}=1$, hence work with
the $r_{\al,n}$, the solutions of \eqref{ainfty}. 
Define
\begin{equation}\label{defD}
D_{\al,m,n}=\sum_{i=0}^m (-1)^{m-i}C_{\al,i,n} {r_{1,m-i}\over r_{1,0}} .
\end{equation}
Then $d_j/r_{1,0}=D_{r+1,j,n}=D_{r+1,j,0}$,
independently of $n$ when we impose the 
condition $r_{r+1,n}=1$, due to \eqref{identic}.
Substituting the recursion relations \eqref{recastCj} into this expression,
we obtain an analogous recursion relation for $D_{\al,m,0}$:
\begin{eqnarray*}
D_{\al,m,0}&=&\sum_{i=0}^m (-1)^{m-i} {r_{1,m-i}\over r_{1,0}}(y_{2\al-1}C_{\al-1,i-1,0}
+y_{2\al-2}C_{\al-2,i-1,0}+C_{\al-1,i,0}) \\
&=&y_{2\al-1}D_{\al-1,m-1,0}+y_{2\al-2}D_{\al-2,m-1,0}+D_{\al-1,m,0},
\end{eqnarray*}
with $C_{\al,-1,0}=0$ and $y_k:=y_{k,0}$.  The initial values of $D$
for $\al=1$ are $D_{1,0,0}=C_{1,0,0}=1$ and
$D_{1,1,0}=C_{1,1,0}-y_{1}C_{1,0,0}=y_{2}+y_{3}$. Both coincide with
the values of $C_{1,0,0}$ and $C_{1,1,0}$, respectively, when
restricted to $y_{1}=0$.  As the recursion relation for $D_{\al,m,0}$
is identical to that for $C_{\al,m,0}\vert_{y_{1}=0}$, we deduce that
$D_{\al,m,0}=C_{\al,m,0}\vert_{y_{1}=0}$ for all $\al,m$. This
relation remains true after imposing the condition \eqref{unitcond}.
Therefore
$$
d_j=R_{1,0} \, D_{r+1,j,0}=R_{1,0}\, C_{r+1,j,0}(0,y_2,...,y_{2r+1})
$$
with the $y$'s as in Corollary \ref{hpcor}. We deduce that the
numerator of $F_1^{(r)}(t)/R_{1,0}$ is equal to the denominator of
\eqref{genA}, restricted to the value $y_1=0$.
\end{proof}

\subsubsection{Translational invariance} From the translational invariance property of Lemma \ref{clustinv},  
we may easily deduce an invariance property for the generating
function $F_1^{(r)}(t)$.  Let us first write $F_1^{(r)}(t)$ as an
explicit expression
$F_1^{(r)}(t)=\Phi((R_{\al,0})_{\al=1}^r;(R_{\al,1})_{\al=1}^r;t)$
involving only the initial data $\bx_0$.

\begin{thm}\label{transtheo}
The generating function $\Phi$ satisfies the following translation property:
$$
\Phi(R_{1,0},\ldots,R_{r,0};R_{1,1},\ldots,R_{r,1};t)=\sum_{n=0}^{k-1} R_{1,n}t^n 
+t^k \Phi(R_{1,k},\ldots,R_{r,k};R_{1,k+1},\ldots,R_{r,k+1};t)
$$
for all $k\geq 0$.
\end{thm}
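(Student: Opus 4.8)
The plan is to read the asserted identity as a statement about a single formal power series in $t$ and to reduce it entirely to the translational invariance already recorded in Lemma \ref{clustinv} and equation \eqref{fundaMiam}. By construction $\Phi(\bu;t)$ is the generating function $\sum_{n\geq 0} f_{1,n}^{(\bM_0)}(\bu)\, t^n$, where $\bu=(u_1,\dots,u_r;u_1',\dots,u_r')$ is a formal $2r$-tuple of argument slots and $f_{1,n}^{(\bM_0)}$ is the universal Laurent polynomial expressing $R_{1,n}$ in terms of $\bx_0$; Theorems \ref{expressratio} and \ref{fratio} identify this series with the displayed rational function. In particular, evaluating at the genuine initial data recovers $\Phi(R_{1,0},\dots,R_{r,0};R_{1,1},\dots,R_{r,1};t)=F_1^{(r)}(t)=\sum_{n\geq 0} R_{1,n}\, t^n$.

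First I would evaluate $\Phi$ at the shifted seed. Substituting the level-$k$ data $(R_{\al,k};R_{\al,k+1})$ into the argument slots gives
$$
\Phi(R_{1,k},\dots,R_{r,k};R_{1,k+1},\dots,R_{r,k+1};t)=\sum_{n\geq 0} f_{1,n}^{(\bM_0)}(R_{1,k},\dots,R_{r,k};R_{1,k+1},\dots,R_{r,k+1})\, t^n .
$$
The key input here is equation \eqref{fundaMiam} (equivalently Lemma \ref{clustinv}): the seed $(R_{\al,k};R_{\al,k+1})$ is exactly $\bx_{\bM_0+k}$, and $f_{1,n}^{(\bM_0)}(\bx_{\bM_0+k})=R_{1,n+k}$. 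Hence the right-hand side collapses to $\sum_{n\geq 0} R_{1,n+k}\, t^n$, the generating function of the solution shifted forward by $k$.

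It then remains to assemble the two pieces. Multiplying by $t^k$ and reindexing,
$$
t^k\,\Phi(R_{1,k},\dots,R_{r,k};R_{1,k+1},\dots,R_{r,k+1};t)=\sum_{n\geq 0} R_{1,n+k}\, t^{n+k}=\sum_{m\geq k} R_{1,m}\, t^m ,
$$
so adding the finite head $\sum_{n=0}^{k-1} R_{1,n}\, t^n$ recovers $\sum_{m\geq 0} R_{1,m}\, t^m=F_1^{(r)}(t)=\Phi(R_{1,0},\dots,R_{r,0};R_{1,1},\dots,R_{r,1};t)$, which is precisely the claimed relation for every $k\geq 0$.

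The only delicate point is justifying the coefficientwise substitution in the first step: one must know that the coefficient of $t^n$ in $\Phi$ is genuinely the universal polynomial $f_{1,n}^{(\bM_0)}$ in the $2r$ seed variables, so that plugging in the level-$k$ seed and then invoking \eqref{fundaMiam} is legitimate. This is guaranteed because $\Phi$ was defined as the explicit expression of $F_1^{(r)}$ in the seed variables and because \eqref{fundaMiam} holds for every $n$; since the identity is one of formal power series in $t$ with Laurent-polynomial coefficients, no question of analytic convergence intervenes.
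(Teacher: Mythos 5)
Your argument is correct and is essentially the paper's own proof: both split the series $F_1^{(r)}(t)$ into its first $k$ terms plus $t^k\sum_{n\geq 0}R_{1,n+k}t^n$ and then invoke the translational invariance of Lemma \ref{clustinv} (in the form \eqref{invtrans}/\eqref{fundaMiam}) to identify the tail with $\Phi$ evaluated at the level-$k$ seed. Your extra remark about the coefficientwise substitution being a formal power series identity is a harmless elaboration of the same step.
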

\begin{proof}
We write $F_1^{(r)}(t)=\sum_{n=0}^{k-1}R_{1,n}t^n +t^k \sum_{n\geq 0} R_{1,n+k}t^n$,
and apply eq.\eqref{invtrans} to all $R_{1,n+k}$ in the second term.
\end{proof}

\section{Positivity: a heap interpretation}\label{posit}
We now have an expression for the generating function of $R_{1,n}$
($n\geq 0$) in terms of the ratio of two partition functions of hard
particles.  Positivity of the terms $R_{1,n}$, when expressed in terms
of the fundamental cluster variables $\bx_0$, follows from a theorem
relating this ratio to the partition function of heaps. This
interpretation also allows us to find an explicit formula for the
generating function of cluster variables.

\subsection{Heaps}\label{heapsec}

\begin{figure}
\centering
\includegraphics[width=8.cm]{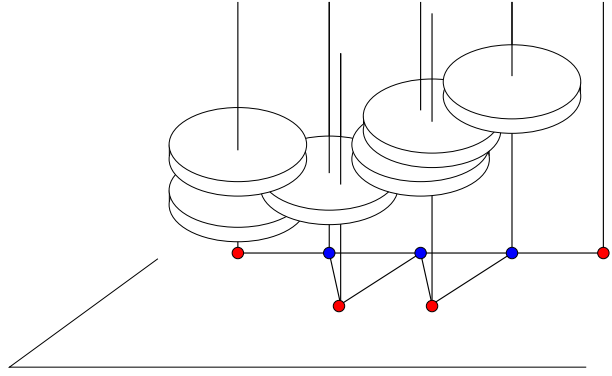}
\caption{\small A heap on the graph $G_3$. Solid discs are piled up above each vertex of $G_3$.
Diameters are such that only adjacent vertex discs may overlap.}\label{fig:heap}
\end{figure}

Given a graph $G$, heaps on $G$ are defined as follows
 (see \cite{HEAPS1} and the beautiful expository article
\cite{HEAPS2}).  

The graph $G$ is represented in the $xy$ plane in $\R^3$, and we attach
half-lines parallel to the positive $z$-axis, originating at each
vertex of $G$.  (See Figure \ref{fig:heap}). The
vertices of $G$ are endowed with a partial ordering.

On each half-line above vertex $i$, we stack an arbitrary number of
discs of radius $R_i$ and thickness $a$, with a hole at their center,
so that they can freely slide along the half-lines (gravity points
in the negative $z$-direction). The disc radii $R_i$ are such that the
distance between any pair of adjacent vertices $i,j$ of $G$ is
$<R_i+R_j$, while the distance between any pair of non-adjacent
vertices $i,j$ is $>R_i+R_j$. Thus, the order in which the various
discs are stacked matters only on neighboring half-lines (i.e. with
connected projections on $G$), but not on distant ones.

For a given stack of discs, its {\it foreground} is
the set of discs that touch the $xy$-plane.  A stack is
said to be {\em admissible} if it is empty or if its foreground is reduced
to one disc, positioned at a vertex of smallest order. (Such
configurations are also called ``pyramids" in the heap jargon
\cite{HEAPS1})
We will call such admissible stacks of discs {\em heaps} on
$G$. 

To each heap $h$ on $G$, we associate a weight $W(h)=\prod_{d\in h}
W(d)$, where the product extends over all discs $d$ of the heap, and
where the weight $W(d)=z_i$ if $d$ is stacked above vertex $i$. The
partition function for heaps on $G$ is
\begin{equation}
\Phi^{(G)}(\{z_i\})=\sum_{h\ {\rm heap}\ {\rm on}\ G} W(h) 
\end{equation}

\begin{thm}\label{heapsforar}
\begin{equation}\label{ratiophi}
\Phi^{(G)}(\{z_i\})={Z^{(G)}(0,-z_2,-z_3,\ldots )\over Z^{(G)}(-z_1,-z_2,-z_3,\ldots )}
\end{equation}
\end{thm}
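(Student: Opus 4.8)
The plan is to prove the identity $\Phi^{(G)}(\{z_i\}) = Z^{(G)}(0,-z_2,\ldots)/Z^{(G)}(-z_1,-z_2,\ldots)$ by exploiting Viennot's fundamental bijective correspondence between heaps and hard particle configurations. The key structural fact from the theory of heaps is that the full partition function of all heaps on $G$ is the reciprocal of the alternating hard-particle partition function; more precisely, the inversion lemma states
\begin{equation}\label{planinv}
\sum_{h\ {\rm heap}\ {\rm on}\ G} W(h) = \frac{1}{Z^{(G)}(-z_1,-z_2,\ldots)},
\end{equation}
where the right-hand denominator counts hard particles with alternating signs. I would establish this by the standard sign-reversing involution argument: expanding the product of the geometric-like series over half-lines and invoking the admissibility (pyramid) condition, one pairs up cancelling configurations according to whether the minimal-order vertex in the foreground is occupied, leaving only the inverse of the finite hard-particle generating polynomial.

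First I would recall Viennot's result that a general (not necessarily admissible) stack factors uniquely as a product of a maximal ``trivial'' piece supported on an independent set of vertices times a genuine pyramid rooted at a prescribed vertex; this is the heap analogue of unique factorization. Second, I would translate the admissibility constraint in Definition~\ref{harpatic}: since an admissible heap must have its foreground reduced to a single disc at a vertex of smallest order, restricting to heaps whose base lies at vertex $1$ (equivalently, suppressing contributions with a disc at vertex $1$ in the ``trivial'' factor) produces exactly the numerator $Z^{(G)}(0,-z_2,\ldots)$, in which the variable $y_1$ has been set to zero. The quotient structure then emerges: the denominator accounts for all heaps, and dividing by it isolates the generating function for admissible heaps (pyramids) based at the smallest vertex, which is precisely $\Phi^{(G)}$.

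The cleanest route is to combine these two observations. I would write any heap on $G$ uniquely as a piece that can slide down to touch the $xy$-plane at vertex $1$ composed with an arbitrary heap on the remaining graph, then resum. Formally, the generating function $\Phi^{(G)}$ for admissible heaps satisfies a relation of the form $\Phi^{(G)} = N/D$ where $D$ is the all-heaps partition function \eqref{planinv} and $N$ is the partition function of heaps avoiding vertex $1$, which again by the inversion lemma equals $Z^{(G)}(0,-z_2,\ldots)$ up to the same reciprocal, so the reciprocals cancel in the ratio and we are left with the stated quotient of hard-particle polynomials.

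The main obstacle is carefully justifying \eqref{planinv} and its vertex-$1$-avoiding analogue, i.e.\ setting up the sign-reversing involution so that exactly the hard-particle (independent-set) configurations survive. The delicate point is matching the admissibility/foreground condition to the vanishing of the weight $z_1$ in the numerator, and confirming that the partial order on vertices can be chosen so that vertex $1$ is minimal without affecting the partition function (the value is order-independent, as only adjacency matters). Once the inversion lemma is in place for both numerator and denominator, the remainder is a direct algebraic simplification.
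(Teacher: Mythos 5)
Your route is valid but genuinely different from the paper's. The paper proves \eqref{ratiophi} in one stroke: it defines a single sign-reversing involution $\varphi$ on pairs $(h,c)$ of a heap $h$ and a hard-particle configuration $c$, showing directly that $\Phi^{(G)}(\{z_i\})\,Z^{(G)}(-z_1,-z_2,\ldots)=Z^{(G)}(0,-z_2,\ldots)$; the fixed points are exactly the pairs $(\emptyset,c')$ with vertex $1$ unoccupied, which produce the numerator. You instead assemble the identity from two standard ingredients of Viennot's theory: the inversion lemma $\sum_h W(h)=1/Z^{(G)}(-z_1,-z_2,\ldots)$ over \emph{all} stacks (applied once to $G$ and once to $G\setminus\{1\}$, the latter giving $1/Z^{(G)}(0,-z_2,\ldots)$), together with the unique factorization of an arbitrary stack into a stack avoiding vertex $1$ and a pyramid whose unique minimal disc sits over vertex $1$. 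This buys modularity --- both lemmas are quotable from \cite{HEAPS1} --- at the cost of two involutions plus a factorization lemma, where the paper needs a single self-contained involution.

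Two points in your sketch need repair before it is a proof. First, the factorization must be oriented correctly: an arbitrary stack decomposes uniquely with the stack on $G\setminus\{1\}$ \emph{below} and the pyramid based at vertex $1$ \emph{on top} (the pyramid factor is the up-set generated by the discs over vertex $1$); the opposite order, which your phrase ``a piece that can slide down to touch the $xy$-plane at vertex $1$ composed with an arbitrary heap on the remaining graph'' suggests, fails already for a disc over a neighbour of $1$ lying underneath a disc over $1$. Relatedly, your formula ``$\Phi^{(G)}=N/D$ with $D$ the all-heaps partition function and $N$ the vertex-$1$-avoiding one'' is inverted: the factorization gives $(\hbox{all stacks})=\Phi^{(G)}\cdot(\hbox{stacks avoiding }1)$, i.e. $\Phi^{(G)}=\bigl(1/Z^{(G)}(-z_1,-z_2,\ldots)\bigr)\big/\bigl(1/Z^{(G)}(0,-z_2,\ldots)\bigr)$, which is \eqref{ratiophi}; taken literally your ratio is its reciprocal. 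Second, the preliminary claim that a general stack ``factors uniquely as a maximal trivial piece supported on an independent set times a genuine pyramid rooted at a prescribed vertex'' is not a correct form of any of Viennot's lemmas (a stack with two incomparable nontrivial columns is neither); what your argument actually uses, and what you should state, is the factorization $\mathcal{H}(G)\cong\mathcal{H}(G\setminus\{1\})\times\mathcal{P}_1(G)$ just described.
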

\begin{proof}
This follows from the general theory of heaps \cite{HEAPS1} \cite{HEAPS2}.
We write
\begin{eqnarray}\qquad
\Phi^{(G)}(\{z_i\}) \ Z^{(G)}(-z_1,-z_2,-z_3,\ldots )=\sum_{(h,c)} W(h)\,w(c)=Z^{(G)}(0,-z_2,-z_3,\ldots )
=\sum_{c' \ {\rm with}\atop {\rm vertex}\ 1 \ {\rm empty} }
w(c')\label{incluexclu}
\end{eqnarray}
where the sums extend over pairs $(h,c)$ made of a heap $h$ and 
a configuration $c$ of hard particles on $G$,
and configurations $c'$ of hard 
particles on $G$ such that the vertex 1
remains unoccupied.

We define an involution $\varphi$
between pairs $(h,c)$ which reverses the sign of $W(h)\,w(c)$. Let
$c\circ h$ be the heap obtained by replacing the particles of $c$ by
discs, and by adding those discs on top of $h$.  We define the
background of any heap $h$ to be the foreground of the heap obtained
by flipping the configuration upside-down. In particular, the
background of $c\circ h$ contains $c$.  Let $d$ be the disc in the
background of $c\circ h$ that has the smallest vertex index $i$.  Then
if $i$ is occupied in $c$, we form $c'$ by removing the particle at
$i$, and $h'$ by adding a disc on top of the $i$ vertex. If $i$ is
unoccupied in $c$, then we form $c'$ by adding a particle at $i$, and
$h'$ by removing the top disc at $i$.  Finally if $h$ is empty and the
vertex $1$ is unoccupied in $c$, then we leave the pair
unchanged. These rules define an involution $\varphi(h,c)=(h',c')$. By
construction, we have $W(h)\,w(c)=-W(h')\,w(c')$, if $(h',c')\neq
(h,c)$.  Hence the distinct pairs $((h,c),(h',c'))$ image of
one-another under $\varphi$ contribute zero to the sum on the
l.h.s. of eq. \eqref{incluexclu}, and we are left only with the
contribution of the fixed points of the involution.  The latter
correspond to the situation where $h=\emptyset$ and $c$ has no
particle at the vertex $1$, producing the r.h.s. of
\eqref{incluexclu}.
\end{proof}

\subsection{Positivity from heaps}

Applying Theorem \ref{heapsforar} to the case of $G=G_r$, 
and comparing the expressions \eqref{ratioF} and \eqref{ratiophi},
we arrive at the main theorem of this section, 
which allows to interpret the $R_{1,n}$ as partition functions
for heaps.

\begin{thm}\label{diskette}
The solution $R_{1,n}$ to the Q-system for $n\geq 0$ is, 
up to a multiplicative factor $R_{1,0}$,
the partition function for configurations
of heaps of $n$ discs on $G_r$ with weights $z_i=t y_i$, with $y_i$ as in \eqref{qtyzero}. 
\end{thm}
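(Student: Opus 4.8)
The plan is to combine the two explicit rational expressions for $F_1^{(r)}(t)$ that we have already established: the hard-particle formula of Theorem \ref{fratio} and the heap formula of Theorem \ref{heapsforar}. The key observation is that both expressions are ratios of the \emph{same} hard-particle partition function $Z^{(G_r)}$, evaluated at the same pair of arguments, once we match the variables correctly. So the bulk of the work is a direct comparison, with the combinatorial content already packaged in the earlier theorems.

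Concretely, the plan is as follows. First I would recall from Theorem \ref{fratio} that
$$
F_1^{(r)}(t)=R_{1,0}\,{Z^{(G_r)}(0,-ty_2,\ldots,-ty_{2r+1})\over Z^{(G_r)}(-ty_1,-ty_2,\ldots,-ty_{2r+1})},
$$
with the $y_i$ the Laurent monomials in $\bx_0$ given in \eqref{qtyzero}. Next I would apply Theorem \ref{heapsforar} with $G=G_r$ and with the specialization $z_i=t y_i$ for $i=1,\ldots,2r+1$. Since the weight of a disc at vertex $i$ is $z_i$, the right-hand side of \eqref{ratiophi} becomes exactly
$$
\Phi^{(G_r)}(\{t y_i\})={Z^{(G_r)}(0,-ty_2,-ty_3,\ldots)\over Z^{(G_r)}(-ty_1,-ty_2,-ty_3,\ldots)},
$$
which is precisely the ratio appearing in $F_1^{(r)}(t)/R_{1,0}$. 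Comparing the two displays gives $F_1^{(r)}(t)=R_{1,0}\,\Phi^{(G_r)}(\{t y_i\})$, i.e. the generating function for $R_{1,n}$ equals $R_{1,0}$ times the heap partition function. The final step is to extract the coefficient of $t^n$: a heap contributes the monomial $\prod_{d\in h} z_{i(d)}=t^{|h|}\prod_{d\in h} y_{i(d)}$, so the power of $t$ counts the total number of discs. Matching the coefficient of $t^n$ on both sides identifies $R_{1,n}/R_{1,0}$ with the sum of $\prod_{d} y_{i(d)}$ over all heaps of exactly $n$ discs on $G_r$, which is the claimed statement.

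I do not expect a genuine obstacle here, since the nontrivial combinatorics — the heap/hard-particle inclusion-exclusion involution — has already been carried out in the proof of Theorem \ref{heapsforar}. The only point requiring minor care is bookkeeping of the grading: I must check that the variable $t$ enters \emph{uniformly} as $z_i = t y_i$ so that the $z$-degree of a heap equals its disc count, and that no spurious factors of $t$ are hidden in the normalization $R_{1,0}$ (which is $t$-independent). Given that, the identification of coefficients is immediate, and positivity of $R_{1,n}$ as a Laurent polynomial in $\bx_0$ follows because each heap contributes a manifestly positive monomial $\prod_d y_{i(d)}$ in the variables \eqref{qtyzero}.
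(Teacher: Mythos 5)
Your proposal is correct and follows essentially the same route as the paper: both combine Theorem \ref{fratio} with Theorem \ref{heapsforar} under the specialization $z_i=ty_i$ and then extract the coefficient of $t^n$, which counts discs. The extra bookkeeping remarks about the uniform $t$-grading are sound but not needed beyond what the paper states.
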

\begin{proof}
Using Theorem
\ref{fratio},
we may rewrite $F_1^{(r)}(t)/R_{1,0}$ exactly in the form of the r.h.s.
of \eqref{ratiophi},
with $G=G_r$ and the weights $z_i=t y_i$.
We deduce that $F_1^{(r)}(t)/R_{1,0}$
is the generating function for heaps on $G_r$ with weight $t y_i$ per disc above the vertex $i$.
The coefficient of $t^n$ in the corresponding series corresponds to heap 
configurations with exactly $n$ discs.
\end{proof}

\begin{cor}\label{positrone}
For all $n\in\Z$,
$R_{1,n}$ is a positive Laurent polynomial of the initial seed $\bx_0$.
\end{cor}
\begin{proof} 
By Theorem \ref{diskette}, $R_{1,n}/R_{1,0}$ for $n\geq 0$ is a sum over heap configurations, each 
contributing a weight made of a product of $y_i$'s. The resulting sum is therefore a
manifestly positive polynomial of the $y_i$'s, themselves products of ratios of initial data.
For $n<0$, the result follows by applying the Lemma \ref{firstrem}.
\end{proof}

\subsection{Continued fraction expressions for generating functions}

The heap interpretation allows us to write an explicit expression for
$F_1^{(r)}(t)$ as a rational  function of $t$.

\begin{figure}
\centering
\includegraphics[width=12.cm]{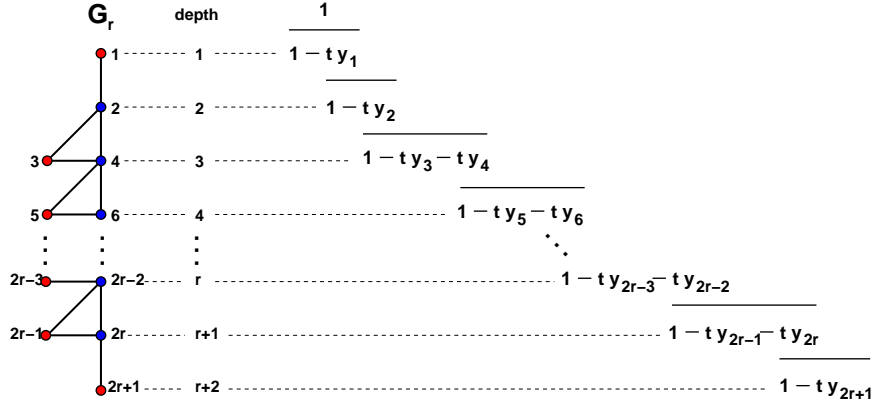}
\caption{\small The graph $G_r$, arranged into a hierarchical structure. The depth of the vertices
is indicated on the center, and the corresponding terms of the continued fraction
for the generating function $F_1^{(r)}(t)$ are displayed on the right.}\label{fig:depth}
\end{figure}

\begin{thm}\label{posita}
\begin{equation}\label{contia}
F_1^{(r)}(t)={R_{1,0}\over 1-t{y_1\over 1-t {y_2\over 1-t y_3
-t {y_4\over 1-t y_5-t {y_6\over \ddots {{}\over 1-ty_{2r+1}}}}}}}
\end{equation}
where $y_i$ are defined in \eqref{qtyzero}.
\end{thm}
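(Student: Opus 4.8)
The plan is to derive the continued fraction \eqref{contia} directly from the hard-particle structure of the graph $G_r$, exploiting the hierarchical (nested) organization of $G_r$ displayed in Figure \ref{fig:depth}. By Theorem \ref{fratio}, we already know that
\[
F_1^{(r)}(t)=R_{1,0}\,\frac{Z^{(G_r)}(0,-ty_2,\ldots,-ty_{2r+1})}{Z^{(G_r)}(-ty_1,-ty_2,\ldots,-ty_{2r+1})},
\]
so it suffices to show that this ratio of hard-particle partition functions equals the claimed finite continued fraction. The key structural input is the recursion of Theorem \ref{recurelahpgr}, namely $Z_j^{(G_r)}=Z_j^{(G_{r-1})}+y_{2r+1}Z_{j-1}^{(G_{r-1})}+y_{2r}Z_{j-1}^{(G_{r-2})}$, which reflects how $G_r$ is built by attaching the two new vertices $2r$ and $2r+1$ to $G_{r-1}$ and $G_{r-2}$.

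First I would set up the right recursion variable. Rather than recursing on the full graph, the natural object for a continued fraction is a ``rooted'' partition function that keeps track of the occupation of the topmost vertex. The cleanest approach is to peel vertices off the \emph{bottom} of the hierarchy: define, for each level $k$, the partial continued fraction $\Psi_k$ that governs heaps living on the subgraph hanging below depth $k$. Using the general heap identity of Theorem \ref{heapsforar}, $\Phi^{(G)}=Z^{(G)}(0,-z_2,\ldots)/Z^{(G)}(-z_1,-z_2,\ldots)$, the generating function $F_1^{(r)}(t)/R_{1,0}$ is a heap partition function on $G_r$ weighted by $z_i=ty_i$. I would then invoke the standard continued-fraction expansion for heap generating functions on a graph with a tree-like (or more generally hierarchical) neighbor structure: when one conditions on the lowest disc sitting above the root vertex, the heaps decompose into those above the root and those above its ``descendants,'' and this decomposition iterates. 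Concretely, the factor $1-ty_i$ at each node, and the subtracted terms $t\,y_{i+1}/(\cdots)$, encode exactly the two ways the hierarchy branches at each depth, matching the three-term recursion of Theorem \ref{recurelahpgr} once it is transcribed into generating-function language.

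The mechanical heart of the proof is therefore an induction on $r$: assuming \eqref{contia} for $G_{r-1}$ and $G_{r-2}$, show that inserting them into the recursion \eqref{recuhardpart}, after forming the generating function $\sum_j(-t)^j Z_j$ and taking the ratio, reproduces one more level of the nested fraction. To make this bookkeeping transparent I would work with the generating polynomials $P_r(t)=\sum_{j}(-t)^jZ_j^{(G_r)}(y_1,\ldots,y_{2r+1})$ and $\widehat P_r(t)=\sum_j(-t)^jZ_j^{(G_r)}(0,y_2,\ldots,y_{2r+1})$, so that $F_1^{(r)}/R_{1,0}=\widehat P_r/P_r$; the recursion \eqref{recuhardpart} becomes a linear relation among the $P$'s, and one checks that the ratio $\widehat P_r/P_r$ satisfies precisely the one-step continued-fraction recursion $1/(1-ty_1-ty_2\cdot(\text{next ratio}))$.

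The main obstacle I anticipate is organizing the two-step nature of the recursion (which refers back to both $G_{r-1}$ and $G_{r-2}$) into a clean \emph{single-step} continued-fraction recursion, since a continued fraction advances one level at a time while \eqref{recuhardpart} jumps by two. The resolution is to track the occupation of the two newly adjoined vertices separately, i.e. to introduce the intermediate ``half-level'' ratio corresponding to forbidding vertex $2r$ (equivalently, working on $G_{r-1}$ with its top vertex frozen). This splits the two-term tail $t y_{2r}/(\cdots)$ off from the $1-ty_{2r+1}$ factor and yields two alternating single steps, exactly matching the pattern of odd- and even-indexed weights in \eqref{contia}. Once this intermediate quantity is correctly defined, the induction closes routinely, and the base cases $r=0,1$ are verified directly against \eqref{sltwof}.
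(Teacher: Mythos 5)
Your route is genuinely different from the paper's: the paper proves \eqref{contia} by a direct combinatorial decomposition of heaps on $G_r$ (the unique foreground disc sits over vertex $1$, the next minimal vertex is $2$, then vertices $3$ and $4$ are adjacent so at most one of them carries the following foreground disc, and this iterates down the hierarchy of Figure \ref{fig:depth}), with no induction on $r$ and no use of hard particles. Your plan to run the algebra through Theorem \ref{fratio} and the recursion of Theorem \ref{recurelahpgr} instead is viable, but the key step is misstated in a way that would derail the induction. The recursion \eqref{recuhardpart} adjoins the vertices $2r$ and $2r+1$, i.e.\ it grows $G_r$ at the end \emph{opposite} to vertex $1$; correspondingly, passing from the right-hand side of \eqref{contia} at rank $r-1$ to rank $r$ replaces the innermost factor $1-ty_{2r-1}$ by $1-ty_{2r-1}-ty_{2r}/(1-ty_{2r+1})$, so the fraction deepens at its \emph{innermost} level. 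There is therefore no ``one-step recursion $1/(1-ty_1-ty_2\cdot(\text{next ratio}))$'' relating $\widehat P_r/P_r$ to $\widehat P_{r-1}/P_{r-1}$: deleting vertices $1$ and $2$ from $G_r$ does not produce any $G_{r'}$, and in any case the two outermost levels of \eqref{contia} read $1-ty_1/(1-ty_2/\cdots)$, not $1-ty_1-ty_2(\cdots)$. Your ``half-level'' device, aimed at turning a two-step recursion into single steps, is addressing a side issue; the real point is which end of the fraction the recursion extends.

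The fix is standard and salvages your argument. Summing \eqref{recuhardpart} against $(-t)^j$ gives $P_r=(1-ty_{2r+1})P_{r-1}-ty_{2r}P_{r-2}$, and the identical relation holds for $\widehat P_r=P_r\vert_{y_1=0}$ since the recursion never involves $y_1$. This is precisely the fundamental (continuant) recurrence satisfied by the numerators and denominators of the successive convergents of \eqref{contia}, which is extended at its deepest level each time $r$ increases by one. With the conventions $Z^{(G_0)}=1+y_1$ and $Z^{(G_{-1})}=1$ (consistent with \eqref{recuhardpart} at $r=1,2$), the initial data $P_{-1}=\widehat P_{-1}=1$, $P_0=1-ty_1$, $\widehat P_0=1$ match those of the convergents, so induction identifies $\widehat P_r$ and $P_r$ with the numerator and denominator of \eqref{contia} for all $r$, and Theorem \ref{fratio} concludes. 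With this correction your proof is complete and is an honest alternative to the paper's: it is purely algebraic and reuses machinery already established, at the cost of hiding the combinatorial reason why the resulting power series is positive.
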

\begin{proof}
Define a partial ordering on the vertices of $G_r$ by their geodesic
distance from vertex 1.  Any nonempty heap on $G_r$ is constructed by
repeating the following two steps:
\begin{enumerate}
\item Stack one disc above vertex $1$.
\item Construct a heap on the graph $G_r'=G_r\setminus \{1\}$ (the
graph $G_r$ without the vertex $1$ and its incident edges).
\end{enumerate}
Let $c(t)$ be the generating function for heaps on
$G_r'$, then clearly
$
{F_1^{(r)}(t)/ R_{1,0}}={1/( 1-t {y_1 c(t)})}.
$
Similarly, $c(t)$ is found via a similar construction of heaps on
$G_r'$. Clearly, $c(t)=1/(1-t y_2 d(t))$, where $d(t)$ is the
generating function for heaps on the graph $G_r''=G_r'\setminus \{2\}$.

To find $d(t)$, we note that $G_r''$ has two minimal vertices, but
they are connected, so that a heap on $G_r''$ may have as its
foreground either node $3$ or node $4$ but not both. Thus,
$d(t)={1/( 1-t y_3-t y_4 e(t))}$, where $e(t)$ is the generating
function for heaps on $G_r''\setminus\{3,4\}$. This procedure is
iterated (see Figure \ref{fig:depth}), resulting in Equation
\eqref{contia}.
\end{proof}

When expanded as a power series in $t$, \eqref{contia} has manifestly positive
coefficients which are Laurent polynomials in the $y_i$'s.

\subsection{Continued fraction rearrangements}
One can rewrite the continued fraction expression for
 $F_1^{(r)}(t)$ in various ways using two simple Lemmas:
\begin{lemma}\label{firstmov}
For all $a$, $b$ there is an identity of power series of $t$:
\begin{equation}
{1\over 1-t {a\over 1-t b}}=1+t{a\over 1-t a -t b}.
\end{equation}
\end{lemma}
and
\begin{lemma}\label{secmov}
For all $a$, $b$, $c$, $d$
such that $c\neq 0$ and $a+b\neq 0$,
\begin{equation}
a+{b\over 1-t {c\over 1- t d}}={a'\over 1-t{b' \over 1-t c'-t d}}
\end{equation}
where
\begin{equation}\label{transforat}
a'=a+b,\quad  b'={b c\over a+b},\quad   c'={a c\over a+b},\quad
a={a' c'\over b'+c'},\quad  b={a' b'\over b'+c'},\quad  c=b'+c'
\end{equation}
\end{lemma}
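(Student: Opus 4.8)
The plan is to verify this purely algebraic identity by reducing both sides to a single rational function of $t$ and comparing. Since both sides are rational functions whose denominators equal $1+O(t)$ and hence are invertible as power series, equality of power series is equivalent to equality as rational functions; it therefore suffices to clear the nested fractions and match numerators and denominators. I would treat the forward direction and the inverse relations of \eqref{transforat} separately.

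First I would collapse the left-hand side. Writing $1-tc/(1-td)=(1-tc-td)/(1-td)$ gives
\[
a+{b\over 1-t {c\over 1- t d}}=a+{b(1-td)\over 1-tc-td}={(a+b)-atc-(a+b)td\over 1-tc-td}.
\]
Next I would simplify the right-hand side in the same fashion, using $1-tb'/(1-tc'-td)=(1-tb'-tc'-td)/(1-tc'-td)$, so that
\[
{a'\over 1-t{b' \over 1-t c'-t d}}={a'(1-tc'-td)\over 1-tb'-tc'-td}.
\]

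The key observation that makes everything collapse is that the proposed substitution satisfies $b'+c'={bc+ac\over a+b}=c$. This matches the two denominators at once, since $1-tb'-tc'-td=1-t(b'+c')-td=1-tc-td$. For the numerators, substituting $a'=a+b$ and $c'=ac/(a+b)$ yields $a'(1-tc'-td)=(a+b)-tac-(a+b)td$, which is exactly the numerator found for the left-hand side. This establishes the forward identity.

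Finally I would confirm that the inverse formulas in \eqref{transforat} genuinely invert the forward map, which is what allows the rearrangement to be iterated in both directions later in the paper. This means solving $a'=a+b$, $b'=bc/(a+b)$, $c'=ac/(a+b)$ for $a,b,c$: the relation $b'+c'=c$ gives $c=b'+c'$, and then the ratios $b/a=b'/c'$ together with $a+b=a'$ yield $a=a'c'/(b'+c')$ and $b=a'b'/(b'+c')$ directly. The hypotheses $c\neq 0$ and $a+b\neq 0$ ensure no denominator vanishes in either direction. I do not expect any genuine obstacle; the only subtlety is the bookkeeping needed to see the two rational functions agree, and the identity $b'+c'=c$ renders that transparent.
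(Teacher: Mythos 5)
Your proof is correct. The paper states this lemma without proof, treating it as an elementary algebraic identity, and your direct verification --- collapsing both sides to rational functions, using the key relation $b'+c'=c$ to match denominators and numerators, and checking the inverse formulas --- is exactly the computation the paper leaves to the reader.
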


For example, applying Lemma \ref{firstmov} to the expression \eqref{contia}
with $a=y_1$ and $b=y_2/(1-t y_3\cdots )$, we have
\begin{equation}\label{positA}
F_1^{(r)}(t)=R_{1,0}\Big( 1 + t{y_1 \over 1- t y_1 -{t y_2\over 1-t y_3
-{t y_4 \over \ddots {{} \over 1-t y_{2r-1}-{t y_{2r}\over 1-t
      y_{2r+1} }}}}}\quad \Big)
\end{equation}
where $y_i$ are defined in \eqref{qtyzero}.

\begin{example}
For $r=1$, we have 
\begin{equation}\label{aoneflat}
F_1^{(1)}(t)=R_{1,0}\Big( 1 +{t{R_{1,1}\over R_{1,0}}\over 
1- t{{R_{1,1}\over R_{1,0}} - {{t\over R_{1,0}R_{1,1}}\over 1-t{R_{1,0}\over R_{1,1}} }}} \quad\Big)
={R_{1,0}\over 1-t {{R_{1,1}\over R_{1,0}}\over 1-t {{1\over R_{1,0}R_{1,1}}\over 1-t {R_{1,0}\over R_{1,1}}}}}
\end{equation}
\end{example}

\begin{figure}
\centering
\includegraphics[width=6.cm]{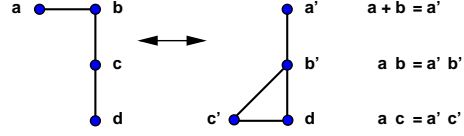}
\caption{\small The local transformation of Lemma \ref{secmov}.}\label{fig:secmov}
\end{figure}
Figure \ref{fig:secmov} is a graphical interpretation of Lemma
\ref{secmov}. It shows that the generating function of
heaps on $G$ can be rewritten as a generating function for heaps on
$G'$ with new weights. Note that $a,b,c,d$ and the associated nodes
might stand for composite generating functions and the associated
subgraphs.

Starting from the initial seed $\bx_0$ and its associated graph $G_r$,
repeated application of Lemmas
\ref{firstmov} and \ref{secmov} on the
expression \eqref{positA} produces new expressions for $F_1^{(r)}(t)$
with manifestly positive series expansions in $t$. 

Writing the expression in \eqref{positA} as
$F_1^{(r)}(t)\equiv F_{\bM_0}(\bx_0;t)$,
let $\bx_M$ be another seed in the fundamental domain. Then
we claim that there is a sequence
$\Lambda$ of applications Lemmas \ref{firstmov} and
\ref{secmov}, such that $\Lambda( F_1^{(r)}(t)) = F_\bM(\bx_\bM)$.
It turns out that we can generate in this way all the mutations of
$\bx_0$ within the fundamental domain ${\mathcal F}_r$. The proof of
this statement appears in Section \ref{pathinter}.  Here, we illustrate this
result with the
example of $A_2$.  The example of $A_3$ is given in Appendix
\ref{appendixa}.



\begin{example}\label{sectatwo}
Consider the case of $A_2$. We present all 
the rearrangements of $F_1^{(2)}(t)$ which
correspond to mutations of the weights within the fundamental domain.

\begin{figure}
\centering
\includegraphics[width=12.cm]{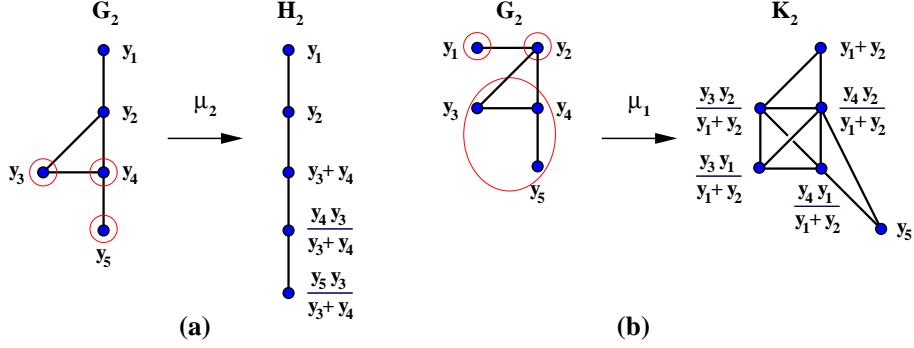}
\caption{\small The graph heap formulation of the application of Lemma \ref{secmov} 
on the generating function $F_1^{(r)}(t)/R_{1,0}$ (a), and on the generating function $\Phi(t)$ such that 
$F_1^{(r)}(t)=R_{1,0}+t R_{1,1} \Phi(t)$ (b). The circled
vertices correspond to $a,b,c$ in the Lemma \ref{secmov}. 
}\label{fig:ktwo}
\end{figure}

\begin{enumerate}
\item $F_{\bM_0}(\bx_0)$ with $\bx_0=(R_{1,0},R_{2,0};R_{1,1}R_{2,1})$:
Equation \ref{positA} is
\begin{equation}\label{exponeAtwo}
F_1^{(2)}(t)=R_{1,0} +t {R_{1,1}\over 1-t y_1 -{t y_2 \over 
1-t y_3 -
{t y_4\over 1-t y_5}}}
\end{equation}
with
\begin{equation}\label{seedone}
y_1= {R_{1,1}\over R_{1,0}},\ \ y_2={R_{2,1}\over R_{1,0}R_{1,1}},\ \ 
y_3={R_{1,0}R_{2,1}\over R_{2,0}R_{1,1}}, \ \ y_4={R_{1,0}\over R_{2,0}R_{2,1}} ,\ \ 
y_5={R_{2,0}\over R_{2,1}}.
\end{equation}

\item $F_{\mu_2(\bM_0)}(\mu_2(\bx_0))$, where
 $\mu_2(\bx_0)=(R_{1,0},R_{2,2};R_{1,1}R_{2,1})$: Use Lemma
\ref{secmov} with $a=y_3$,
$b=y_4$, $c=y_5$ and
$d=0$. Then
\begin{equation}\label{exptwoAtwo}
F_1^{(2)}(t)=R_{1,0} +t {R_{1,1}\over 1-t x_1 -{t  x_2\over 
1-{t x_3 \over 
1-{t x_4\over 1-t x_5}}}}={R_{1,0} \over 1-{t x_1 \over 1-{t x_2 \over 
1-{t x_3\over 
1-{t x_4\over 1-t x_5}}}}}
\end{equation}
where the second expression follows from application of Lemma
\ref{firstmov} with $a=x_1$. Here
\begin{eqnarray}
x_1&=& y_1= {R_{1,1}\over R_{1,0}},\quad
x_2= y_2={R_{2,1}\over R_{1,0}R_{1,1}}\nonumber \\
x_3&=&a'=a+b=y_3+y_4=
{R_{1,0}(R_{2,1}^2+R_{1,1})\over R_{2,1}R_{1,1}R_{2,0}}={R_{1,0}R_{2,2}\over R_{2,1}R_{1,1}}\nonumber \\
x_4&=&b'={b c\over a'}={y_4y_5\over y_3+y_4}=
{R_{1,0}\over R_{2,1}^2 a'}={R_{1,1}\over R_{2,1}R_{2,2}}\nonumber \\
x_5&=&c'={a c\over a'}={y_3y_5\over y_3+y_4}={R_{1,0}\over R_{1,1} a'}={R_{2,1}\over R_{2,2}} \label{seedtwo}.
\end{eqnarray}
by use of the $Q$-system.
This is an expression for the generating function in terms of
$(R_{1,0},R_{2,2};R_{1,1}R_{2,1})=\mu_2 (\bx_0)$. It has a manifestly
positive series expansion in $t$.

Figure \ref{fig:ktwo} (a) illustrates this transformation.
The
generating function is interpreted in terms of that for heaps
on $H_2$:
$$
F_1^{(2)}(t)=R_{1,0}  {Z^{(H_2)}(0,-tx_2,-tx_3,-tx_4,-tx_5)\over 
Z^{(H_2)}(-tx_1,-tx_2,-tx_3,-tx_4,-tx_5)}
$$

\item $F_{\mu_1(\bM_0)}(\mu_1(\bm_0))$, where
 $\mu_1(\bx_0)=(R_{1,2},R_{2,0};R_{1,1}R_{2,1})$:
Use Lemma \ref{secmov} on \eqref{exponeAtwo}, with 
$a=y_1$,  $b=y_2$, 
$c=y_3+y_4/(1-t y_5)={R_{1,0}\over R_{2,0}R_{2,1}R_{1,1}} \Big(R_{2,1}^2+
R_{1,1}/(1-t{R_{2,0}\over R_{2,1}})\Big)$, 
and $d=0$ (see  Fig.\ref{fig:ktwo} (b)):
\begin{equation}\label{expfourAtwo}
F_1^{(2)}(t)=R_{1,0}+t {R_{1,1} \over 1-{t z_1 \over 1-t {z_2+{z_6\over 1-t z_5}
\over 1- tz_3-t{z_4\over 1-t z_5}}}}
\end{equation}
with
\begin{eqnarray}
z_1&=&a'=a+b=y_1+y_2={R_{1,1}^2+R_{2,1}\over R_{1,0}R_{1,1}}={R_{1,2}\over R_{1,1}}\nonumber \\
z_2+{z_6\over 1-t z_5}&=&b'={b c\over a+b}=y_2\Big(y_3+{y_4\over 1-t y_5}\Big)
={1\over R_{2,0}R_{1,2}R_{1,1}}\left(R_{2,1}^2+{R_{1,1}\over 1-t {R_{2,0}\over R_{2,1}}}\right)\nonumber  \\
\qquad z_3+{z_4\over 1-t z_5}&=&c'={a c\over a+b}=y_1\Big(y_3+{y_4\over 1-t y_5}\Big)
= {R_{1,1}\over R_{2,0}R_{2,1}R_{1,2}}\left(R_{2,1}^2+{R_{1,1}\over 1-t {R_{2,0}\over R_{2,1}}}\right)\label{seedthree}.
\end{eqnarray}
where we have used the $Q$-system.
This yields positivity of all $R_{1,n}$ in terms of $(R_{1,2},R_{2,0};R_{1,1}R_{2,1})=\mu_1(\bx_0)$,
by noting that $R_{1,0}=(R_{1,1}^2+R_{2,1})/R_{1,2}$.

Figure \ref{fig:ktwo} (b) illustrates this transformation.
\eqref{expfourAtwo} may be rewritten in terms of the generating function of heaps on $K_2$ as:
$$
F_1^{(2)}(t)=R_{1,0}+R_{1,1} t
{Z^{(K_2)}(0,-tz_2,-tz_3,-tz_4,-tz_5,-tz_6)\over
Z^{(K_2)}(-tz_1,-tz_2,-tz_3,-tz_4,-tz_5,-tz_6)}.
$$
Note that the graph $K_2$ has one more node than $G_2$ and $H_2$, but
the weights depend on the same number ($4$) of free
parameters, since $z_1z_3z_5=1$, and
$z_2z_4=z_3z_6$.

\end{enumerate}

\begin{figure}
\centering
\includegraphics[width=10.cm]{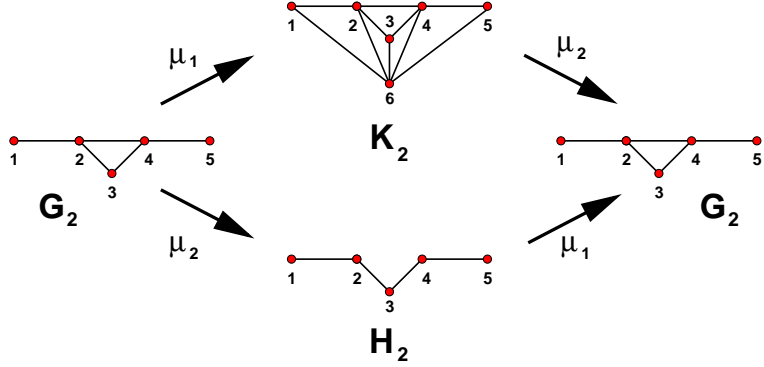}
\caption{\small The graphs encoding the $R_{1,n}$'s for the case $A_2$, and the corresponding
mutations of cluster variables.}\label{fig:slthree}
\end{figure}

The rearranged expressions for $F_1^{(2)}(t)$ considered above have
been related to mutations of cluster variables in the fundamental
domain ${\mathcal F}_2$, as well as to configurations of heaps on particular
graphs, see Figure \ref{fig:slthree}.

\end{example}

\section{Path generating functions}\label{heappa}
In order to prove the claim of the last section, it is simpler to work with
a path interpretation. There exist certain bijections between the
partition function of heaps on a graph $G$, and the partition function
of paths on an associated weighted rooted graph
$\widetilde{G}$. Such bijections are standard in the theory of heaps
\cite{HEAPS1}. 

We will establish this bijection in the case of $G_r$. Then, we
construct bijections between Motzkin paths representing cluster
variables in the fundamental domain, and weighted graphs, on which the path
partition function gives the generating function $F_1^{(r)}(t)$, in terms
of the new cluster variables. For completeness, we give the bijection with
the related graphs for heaps at the end of this section.

\subsection{From heaps on $G_r$ to paths on $\widetilde{G}_r$}\label{gpaths}
We start with the graph $G_r$ defined in Figure \ref{fig:graphgr}.
\begin{figure}
\centering
\includegraphics[width=13.cm]{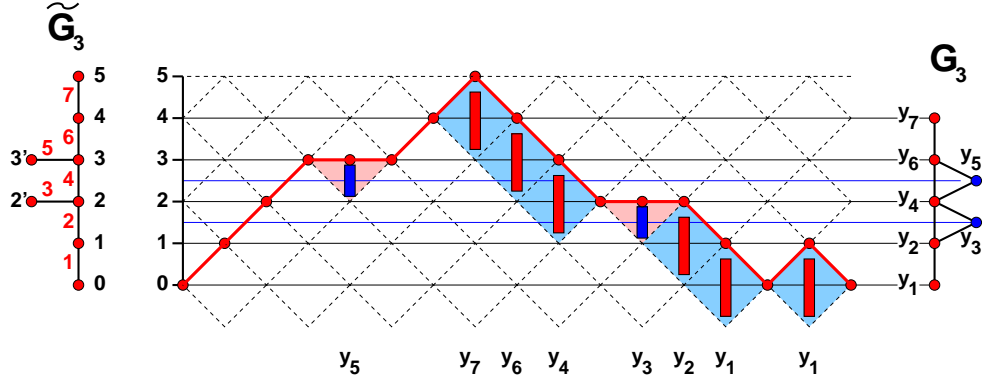}
\caption{\small A heap on $G_3$ with 8 discs, and the corresponding
lattice path of length $16$. The heap has small discs above vertices
$3$ and $5$ of $G_3$, and large discs above the other vertices.  The
large discs are in bijection with the descending steps, and the small
discs are in bijection with the horizontal steps.  This path has
weight $y_1^2y_2y_3y_4y_5y_6y_7$.  On the left is the path graph
${\tilde G}_3$, with labeled vertices (in black) and labeled edges (in
gray).}\label{fig:grheapath}
\end{figure}

\begin{defn}
The graph ${\wG}_r$ is a vertical chain
of $r+3$ vertices labeled $0,1,2,...,r+2$, with $r+2$ vertical edges
$(i,i+1)$  ($0\leq i\leq r+1$), together with $r-1$ vertices
$2',3',...,r'$ and $r-1$ horizontal edges $(i,i')$ ($2\leq i \leq r$). It is rooted at its
bottom vertex $0$.
(See the left of Fig.\ref{fig:grheapath} for the $r=3$ example.) 
\end{defn}

The graph ${\wG}_r$ is ``dual'' to $G_r$, in the sense that its
edges are in bijection with the vertices of $G_r$. We may denote the edges
by the same labels, $i=1,2,...,2r+1$.

\begin{defn}\label{wGrweights}
The weights of $\wG_r$ are defined as follows. There is a weight $y_i$
for each step along the edge $i$ which goes towards the root vertex
$0$, and a weight $1$ to all others.  That is, the step $i'\to i$ has
weight $y_{2i-1}$, the step $i\to i-1$ has weight $y_{2i}$ if
$1<i<r+1$, step $1\to 0$ has weight $y_1$, and step $r+2\to r+1$ has
weight $y_{2r+1}$.
\end{defn}

Paths along $\wG_r$, from the root to the root, also referred to as $\wG_r$-paths
below,
can be represented on a two-dimensional lattice as
follows. Paths of length $2n$ start at the point $(0,0)$, end at the
point $(2n,0)$, and cannot go below 0 or above $r+2$. They may contain
steps of the type $(i,j)\to (i+1,j\pm 1)$ (if $j\pm1$ is in the range
$0,...,r+2$), and steps $(i,j)\to (i+2,j)$ ($2\leq j\leq r$).

\begin{thm}\label{grheapath}
The heaps on $G_r$ with $n$ discs are in bijection with the
$\wG_r$-paths of length $2n$, and their partition functions
are equal, with weights as in Definition \ref{wGrweights}.
\end{thm}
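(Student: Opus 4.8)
The plan is to establish a weight-preserving bijection between heaps of $n$ discs on $G_r$ and $\wG_r$-paths of length $2n$ from the root to the root, and then conclude that the two partition functions coincide because the bijection preserves weights. The key structural observation, visible in Figure \ref{fig:grheapath}, is that a disc in a heap should be read off from the descending and horizontal steps of the path: descending steps (those moving toward the root) will correspond to discs, while ascending steps carry weight $1$ and serve only to ``reach'' the level where a disc sits. Since each descending step along edge $i$ carries precisely the weight $y_i$ attached to vertex $i$ of $G_r$ in Definition \ref{wGrweights}, a length-$2n$ path that descends $n$ times will automatically have total weight $\prod_i y_i^{(\text{number of discs above }i)}$, matching $W(h)$.

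First I would describe the forward map, from a heap $h$ to a path. Reading the heap from bottom to top according to the partial order on $G_r$ (geodesic distance from vertex $1$, as used in the proof of Theorem \ref{posita}), I would encode the discs as descending/horizontal steps and insert the necessary ascending/horizontal steps so that the walk stays in the strip $0\le j\le r+2$ and returns to $0$. The linear chain part of $G_r$ (vertices $1,\dots$ giving the vertical edges of $\wG_r$) produces genuine up/down steps, while the branch vertices $2',\dots,r'$ produce the horizontal steps $(i,j)\to(i+2,j)$ permitted in the statement; a small disc above vertex $i'$ becomes a horizontal step, a large disc above a chain vertex becomes a descending unit step. The admissibility condition on heaps (the foreground reduces to a single disc at a vertex of smallest order) is exactly what forces the path to be a legitimate excursion returning to the root, and the non-overlap condition (only adjacent vertices may overlap) is what guarantees the steps assemble consistently without violating the height constraints.

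The inverse map reads a $\wG_r$-path and reconstructs the heap: each descending or horizontal step is converted back into a disc placed above the corresponding vertex, with the order of the steps recording the vertical stacking order. The main thing to verify is that this is genuinely a bijection: one must check that two distinct heaps cannot yield the same path, and that every admissible path arises from a unique heap. Here I would lean on the standard heap-to-path correspondence from Viennot's theory (cited as \cite{HEAPS1}), which provides exactly such bijections between heap partition functions and path (or ``chemin'') partition functions on a dual rooted graph; the content of the theorem is to identify $\wG_r$ and its weights as the correct dual object for $G_r$.

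The step I expect to be the main obstacle is the careful bookkeeping at the branch vertices $2',3',\dots,r'$, where the graph $G_r$ is not a simple chain and the dual graph $\wG_r$ acquires its horizontal edges. One must check that the length accounting is correct --- that $n$ discs really correspond to a path of length $2n$ and not some other multiple --- and that horizontal steps (which change position but not height, and are allowed only for $2\le j\le r$) are placed so as to respect both the admissibility of the heap and the confinement of the path to the strip. In particular, verifying that the height constraint $0\le j\le r+2$ on the path corresponds precisely to the adjacency structure of $G_r$ (so that no forbidden overlaps occur and no configuration is lost or double-counted) is the technical heart of the argument; once that matching of constraints is pinned down, the weight preservation and hence the equality of partition functions follows immediately from Definition \ref{wGrweights}.
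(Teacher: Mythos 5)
Your proposal follows essentially the same route as the paper: a weight-preserving bijection sending large discs (over the backbone vertices of $G_r$) to descending unit steps and small discs (over the odd interior vertices) to the double horizontal steps of a $\wG_r$-path, with weight preservation then immediate from Definition \ref{wGrweights}. The one piece you flag as the ``technical heart'' but do not carry out --- why a given heap determines a \emph{unique} admissible path --- is exactly what the paper supplies via its decomposition of the heap into successive shells, placing the discs of each shell bottom-to-top and left-to-right on the tilted square lattice and filling the gaps with ascending steps; with that decomposition in hand the rest of your argument goes through as stated.
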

\begin{proof}
Given a heap on the graph $G_r$, we associate a large disc to each
vertex along the ``backbone'' $\{1,2,4,...,2r-2,2r,2r+1\}$. We
associate a small disc to the other vertices, $\{3,5,...,2r-1\}$. The
overlap between discs is given by the graph $G_r$, namely two discs
overlap if and only if the corresponding vertices of $G_r$ are
connected via an edge. These discs are represented by bars in the
plane in Figure
\ref{fig:grheapath}. 

We draw the bars on the faces of a tilted square lattice, in such a
way that they are in bijection with the descending (for a large disc)
or horizontal (for a small disc) steps of the two-dimensional
representation of a $\wG_r$-path.  In this picture, the descending
steps of the path are the north-east edges of the square faces in
which the large discs sit, while the horizontal steps are the
horizontal diagonals of the square faces in which the small discs sit.

There is a unique way of placing the discs with this constraint.  We
decompose the heap into successive shells. Each shell has faces
containing the discs placed from the bottom to the top and from
left to right, the spaces in-between being covered with only up
steps. In this way, each large disc corresponds to a descending step
and each small one to a double horizontal step of the path.  The
correspondence of weights is clear: descending steps receive the
weights of the corresponding large discs, while the horizontal steps
receive the weights of the corresponding small discs.

Conversely, given a $\wG_r$-path, we associate bijectively a large disc
to each descending step and a small disc to each horizontal step.  The
resulting configuration is a heap over $G_r$, as the path always ends
with a descending step corresponding to a disc over vertex 1 of $G_r$.
\end{proof}

\begin{cor}\label{grpathrone}
For $n\geq 0$, the solution $R_{1,n}$ of \eqref{renom},
expressed in terms of the variables $\bx_0$, is $R_{1,0}$ times the
partition function of $\wG_r$-paths of length $2n$, 
with weights as in Definition \ref{wGrweights}.
\end{cor}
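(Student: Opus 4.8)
The final statement to prove is Corollary \ref{grpathrone}, which expresses $R_{1,n}$ (for $n\geq 0$) as $R_{1,0}$ times the partition function of $\wG_r$-paths of length $2n$. The plan is to combine the two main results that immediately precede it: Theorem \ref{diskette}, which identifies $R_{1,n}/R_{1,0}$ with the partition function of heaps of $n$ discs on $G_r$ (with weights $z_i = ty_i$), and Theorem \ref{grheapath}, which establishes a weight-preserving bijection between heaps on $G_r$ with $n$ discs and $\wG_r$-paths of length $2n$. The corollary is essentially a transitive composition of these two identifications, so the proof should be short.

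First I would recall that by Theorem \ref{diskette}, $R_{1,n}/R_{1,0}$ equals the coefficient of $t^n$ in the heap partition function $\Phi^{(G_r)}(\{ty_i\})$, i.e. the sum of weights over all heaps on $G_r$ consisting of exactly $n$ discs, where a disc above vertex $i$ carries weight $y_i$ (the factor $t^n$ tracking the total number of discs). Next I would invoke Theorem \ref{grheapath} to transport this sum across the bijection: each heap with $n$ discs corresponds to a unique $\wG_r$-path of length $2n$, and the bijection is weight-preserving with the edge weights of Definition \ref{wGrweights}. Summing over the two sides of the bijection gives that the $n$-disc heap partition function on $G_r$ equals the partition function of $\wG_r$-paths of length $2n$. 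Multiplying through by $R_{1,0}$ yields the claim.

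The step I expect to require the most care is the bookkeeping of the degree in $t$, namely verifying that ``$n$ discs'' on the heap side corresponds exactly to ``length $2n$'' on the path side and that the weights match up after setting $z_i = ty_i$. In the bijection of Theorem \ref{grheapath}, each large disc becomes one descending step and each small disc becomes a double horizontal step, so a heap of $n$ discs maps to a path whose length (counted as lattice steps) is $2n$ once the ascending steps that fill the shells are accounted for; the power of $t$ is thus a uniform grading on both sides. Provided one keeps the weight dictionary of Definition \ref{wGrweights} consistent with the $z_i = ty_i$ assignment of Theorem \ref{diskette}, the two results chain together cleanly and the corollary follows. There is no genuine new obstacle here beyond ensuring these gradings and weight conventions are aligned; the substantive work has already been done in establishing Theorems \ref{diskette} and \ref{grheapath}.
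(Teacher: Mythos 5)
Your proposal is correct and matches the paper's (implicit) argument exactly: the corollary is stated without proof immediately after Theorem \ref{grheapath}, precisely because it is the composition of Theorem \ref{diskette} (heaps of $n$ discs on $G_r$ compute $R_{1,n}/R_{1,0}$) with the weight-preserving bijection of Theorem \ref{grheapath} between such heaps and $\wG_r$-paths of length $2n$. Your attention to the grading (one disc per power of $t$ versus two lattice steps per descending or double-horizontal step) is the right point to check, and it goes through as you describe.
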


\begin{figure}
\centering
\includegraphics[width=12.cm]{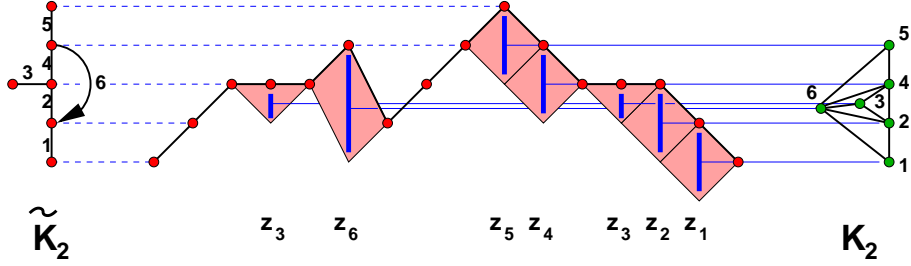}
\caption{\small The bijection between heaps on $K_2$ and paths on 
the ``dual" graph ${\tilde K}_2$. The discs of the heap give rise to polygons, whose
top right edges are exactly the descending steps of the path. We have indicated the weights
$z_i$ of the various discs, that are transferred to the descending steps of the path. We have
indicated the vertex labels on $K_2$, and the corresponding 
dual edge labels on ${\tilde K}_2$.}\label{fig:bijpatheap}
\end{figure}

The partition function for heaps $F_1^{(r)}$ on other the graphs
corresponding to fraction rearrangements can also be written in terms
of paths on graphs. For example, it is easy to see that the path graph
$\widetilde{H}_r$ corresponding to the heap graph $H_r$ (the chain
with $2r+1$ vertices) is a vertical chain with $2r+2$ vertices, with
edge $(i,i+1)$ of $\widetilde{H}_r$ corresponding to vertex $i+1$ in
$H_r$. The edge weights for descending edges are the same as the
corresponding vertex. Ascending edges have weight 1.

\begin{example}
Consider the case of $A_2$ (see Fig.\ref{fig:slthree}). We have a path
formulation of the partition function in terms of paths on $\wG_2$ as
a function of $\bx_0$. Paths on the graph $\widetilde{H}_2$ correspond
to the generating function in terms of $\mu_2(\bx_0)$.
The path
formulation for the cluster variable
$\mu_1(\bx_0)$, corresponding to
heaps on $K_2$, is the partition function for graphs on the graph
$\widetilde{K}_2$ in Figure \ref{fig:bijpatheap}.
Note that the edge labeled 6 is an oriented edge (with weight $z_6$).
\end{example}





In general, it is quite complicated to work out the direct bijection
between $G$ and $\wG$. Instead, we introduce a bijection between
Motzkin paths in the fundamental domain and path graphs, and a
bijection between the same Motzkin paths and heap graphs. This
establishes an identification between path and heap partition
functions in the cases of interest.

\subsection{Motzkin paths and path graphs}\label{seedmotz}

\subsubsection{The graphs $\Gamma_\bM$}\label{targraweight}\label{tardef} 

For any Motzkin path $\bM$ in the fundamental domain, we associate a
graph $\Gamma_\bM$ by (1)
decomposing $\bM$ into ``strictly descending" pieces; (2) associating
a subgraph to each descending piece and (3) gluing the subgraphs.
\begin{enumerate}
\item {Decomposition of $\bM$:}
Each path $\bM$, consists of {\it strictly descending}
pieces $\bm_1,\bm_2,...,\bm_p$, where $\bm_i=\{(x,y),
(x-1,y+1),...\}$. (We consider a single vertex to be a descending
piece).  These are separated by $p-1$ {\em ascending} steps of type
(i) the step (1,1) in the plane, or (ii) the step (0,1) in the plane.

\item {Graphs for $\bm_i$:}
For each strictly descending piece $\bm$ with $k$ vertices, we
define a $\Gamma(k):=\Gamma_{\bm}$ as follows.
When $k=1$, $\bm$ is a single vertex, and
$\Gamma(1)$ is a chain of four vertices, as shown in the top left of
Figure \ref{fig:strictfive}.

\begin{figure}
\centering
\includegraphics[width=10.cm]{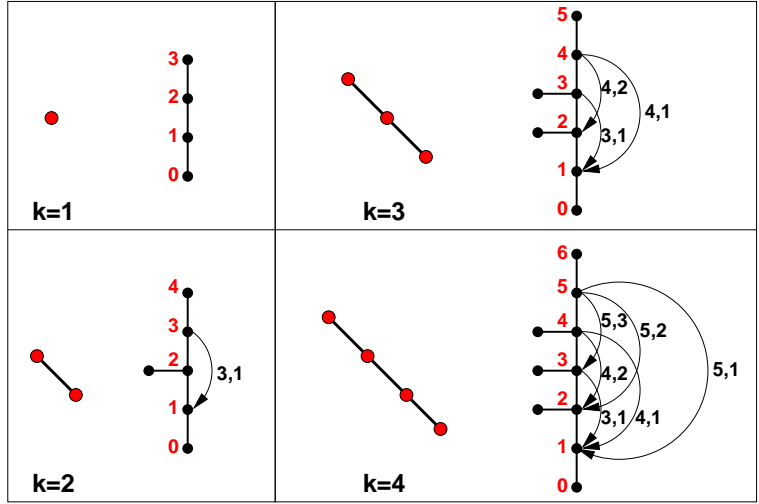}
\caption{\small The graphs $\Gamma(k)$ corresponding to strictly descending
Motzkin paths with $k=1,2,3,4$ vertices. We have indicated the extra edge labels
(in black) and the vertex labels (gray) along the vertical chain 
for the case when $i=j=0$.}\label{fig:strictfive}
\end{figure}

When $\bm$ contains $k\geq 2$ vertices, the graph
$\Gamma_\bm=\Gamma(k)$ consists of a ``skeleton'' $\wG_k$,
plus extra oriented edges $j\to i$ if
$j-i>1$, $j<k+2$ and $i>0$.  There is a total of $k(k-1)/2$ extra
oriented edges in $\Gamma(k)$, which we label by the vertices they
connect. See Figure
\ref{fig:strictfive}. 

Each $\Gamma(k)$ has four distinguished vertices denoted by
$(t,t',b',b)=(k+2,k+1,1,0)$.

\begin{figure}
\centering
\includegraphics[width=11.cm]{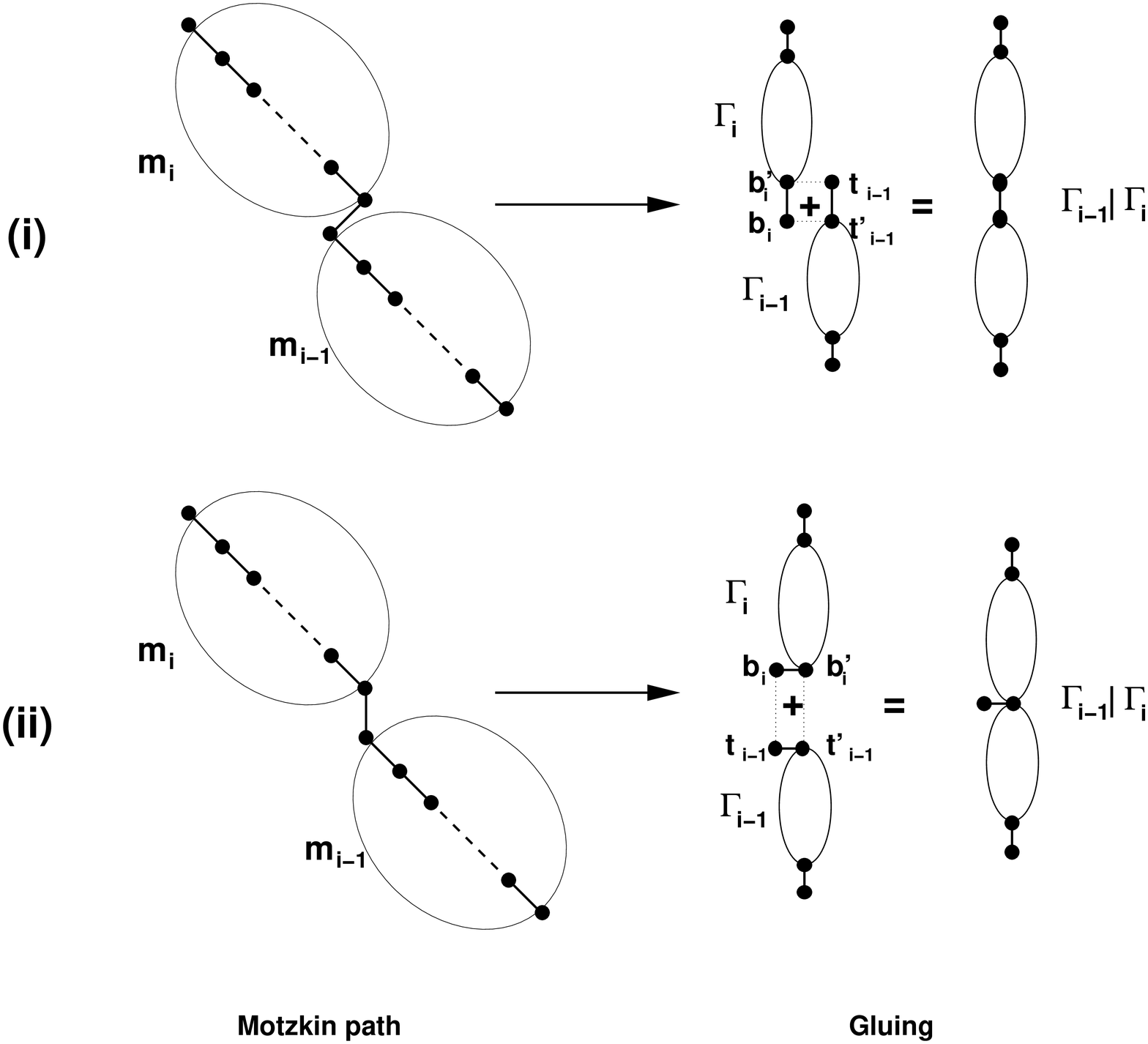}
\caption{\small The graphs $\Gamma_{\bm_{i-1}}$ and $\Gamma_{\bm_i}$ are
glued together
according to whether the pieces $\bm_{i-1}$
and $\bm_i$ are separated by a step of type (i) or (ii).
}\label{fig:gluemotz}
\end{figure}

\item {Gluing the subgraphs:}
Ascending steps of $\bM$ are of type (i) $(1,1)$ or (ii) $(0,1)$. The
graph $\Gamma_\bM$ is obtained
by gluing the graphs $\Gamma_{\bm_i}$, $i=1,2,...,p$ as follows.
Denote the edge $(t',t)$ of $\Gamma_{\bm_i}$ by $(t'_i,\rt_i)$, and so forth.
We then make the following identifications of vertices and edges
(See Figure \ref{fig:gluemotz}):

\begin{itemize}
\item {\bf Type (i):} 
Identify $\rt_{i-1}$ with $b'_i$ and $t'_{i-1}$ with $b_i$. 
The common edge is represented vertically.

\item {\bf Type (ii):} Identify $\rt_{i-1}$ with $b_i$ and 
$t'_{i-1}$ with $b'_i$.
The common edge is represented horizontally.
\end{itemize}

We denote this gluing procedure by the symbol ``$\vert$". Thus,
\begin{equation}\label{gluglu}
\Gamma_\bM=
\Gamma_{\bm_1}\vert\Gamma_{\bm_2}\vert \cdots \vert \Gamma_{\bm_k}.
\end{equation}
Finally, all vertices are renumbered sequentially from bottom to top.
\end{enumerate}

\begin{figure}
\centering
\includegraphics[width=6.cm]{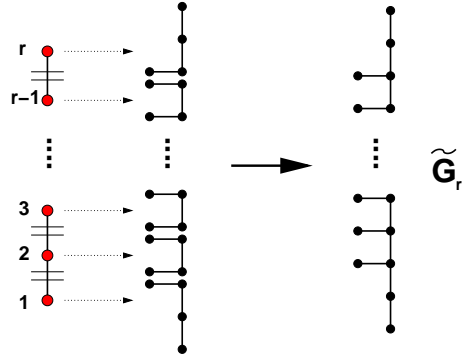}
\caption{\small The graph $\Gamma_{\bM_0}$ corresponding to the vertical
Motzkin path $\bM_0$ with $r$ vertices. The path is decomposed into
$k$ isolated vertices, each corresponding to a chain of type $\Gamma(1)$,
glued as indicated. The resulting graph is the tree
${\tilde G}_r$.}\label{fig:fold}
\end{figure}

\begin{figure}
\centering
\includegraphics[width=12.cm]{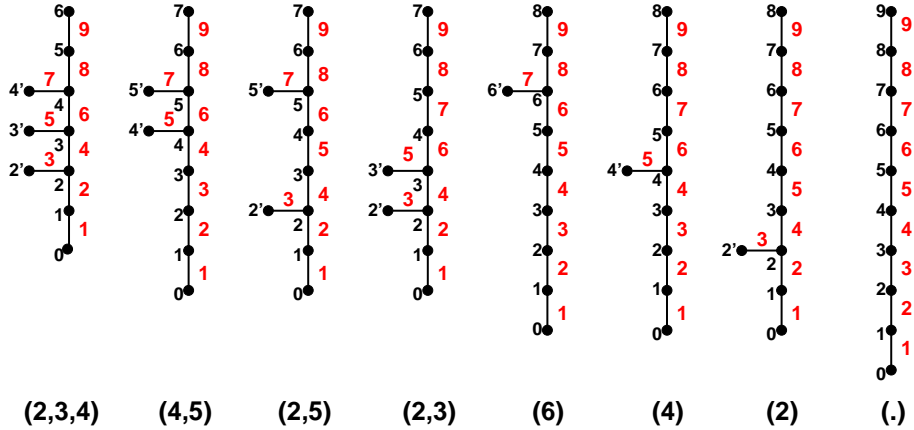}
\caption{\small The 8 trees $T_{2r+2}(i_1,i_2,...,i_s)$ for $r=4$. We have indicated the
values of the $i$'s under each tree, and the labeling of the edges (gray) and 
vertices (black).}\label{fig:treepat}
\end{figure}
\begin{example}\label{gpathex} 

Consider the case of an ascending Motzkin path containing steps of
type (i) and (ii) only. It decomposes into $r$ isolated vertices,
corresponding to $\Gamma_{\bm_i}=\Gamma(1).$ For the path $\bM_0$,
with steps only of type (ii), the result is $\Gamma_{\bM_0}=\wG_r$ (see
Fig.\ref{fig:fold}).

More generally, $\Gamma_\bM$ is obtained by gluing $r$ chains ${\Gamma}(1)$
vertically or horizontally. 

The path $\bM$ is composed of $p$ vertical chains of lengths
 $l_1,...,l_p\geq 1$, with steps of type (ii) only with $\sum_{i=1}^p
 l_i=r$. These are separated by $p-1$ steps of type (i).  Each
 vertical chain corresponds to a graph $\wG_{l_i}$, which are glued
 according to rule (i).  This results in a tree which has a
 vertical chain of length $2+\sum_i (l_i+1)$, and consecutive
 sequences of $l_i-1$ horizontal edges, separated by pairs of
 vertices.  The top two and bottom two vertices have no horizontal
 edge attached.  (see Fig.\ref{fig:treepat} for $r=4$).

The vertices $i_j$ to which the horizontal edges are attached 
$1< i_1<i_2<\cdots <i_s <
 2r-s$ where $s=r-p$, have the property that $i_1-1$ is odd, and
 $i_{j+1}-i_j$ is odd for all 
 $j<s$. In fact,
$$
\{i_1,i_2,\ldots,i_s\}=\{0,1,2,\ldots,2r-s+1\}\setminus \cup_{0\leq
j\leq p} \{j+\sum_{i=1}^j l_i,j+1+\sum_{i=1}^j l_i\}. 
$$
\end{example}
\begin{defn}\label{defn:trees}
The trees obtained in the previous example are denoted by
$T_{2r+2}(i_1,i_2,...,i_s)$.
\end{defn}
  The $2r+1$ edges of such trees are
ordered from bottom to top and labeled $1,2,..,2r+1$, including the
horizontal edges, which have labels $i_1+1,i_2+2,...,i_s+s$.

Noting that the sequence $j_a=(i_a+a-1)/2$ satisfies $1\leq
j_1<j_2<\cdots <j_s \leq r-1$ without further constraint, we see that
there are exactly $r-1 \choose s$ such trees for $r$ and $s$ fixed,
hence a total of $2^{r-1}$ when we sum over $s=0,1,2,...,r-1$. Note
that when $s=0$, $T_{2r+2}()={\tilde H}_r$, ``dual" of the chain of
$2r+1$ vertices $H_r$ introduced in Section
\ref{gpaths}.

This example is important, as it illustrates all $2^{r-1}$ possible
skeleton trees any $\Gamma_\bM$ in the fundamental domain can have:

\begin{defn}\label{skull}
The skeleton tree associated to $\bM$ is the tree $\Gamma_{\bM'}$, where
$\bM'$ is obtained from $\bM$ by replacing all the steps $(-1,1)$ by
vertical steps $(0,1)$. In other words, $\Gamma_{\bM'}$ is obtained
from $\Gamma_\bM$ by removing all its extra down-pointing edges.
\end{defn}


\subsubsection{Weights on $\Gamma_\bM$} We will compute the partition
function of paths on $\Gamma_\bM$, for which purpose, we assign a
weight to each oriented edge of $\Gamma_\bM$. An unoriented edge, in
this context, is considered to be a pair of edges oriented in opposite
directions.  The edge $(v,v')$ is considered to be an {\em ascending edge}
if the distance of $v'$ from the vertex 0 is greater than that of
$v$. Otherwise, it is a descending edge. We assign weight $1$ to all
ascending edges, and weight $y_e(\bM)$ to each descending edge $e$.

Edges are labeled as follows. Given $\Gamma_{\bM}$, consider the
skeleton tree $\Gamma_{\bM'}=T_{2r+2}(i_1,i_2,...,i_s)$ of Definition
\ref{defn:trees}. Its edges have labels $1,2,...,2r+1$, which we
retain for the graph $\Gamma_\bM$, and we assign to the descending
skeleton edge $i$ the weights $y_i$ (these are independent variables,
not necessarily related to the weights encountered earlier).

The extra descending edges of $\Gamma_\bM$ are labeled by the pairs
$(i,j)$ of vertices they connect, with $1<j+1<i\leq 2r-s$ (see Figure
\ref{fig:strictfive}).  The weights $y_{i,j}$ corresponding to
down-pointing edges $e=(i,j)$ with $i-j>1$ can be expressed in terms
of the skeleton weights.  In view of the gluing procedure, we restrict
our attention to strictly descending Motzkin paths $\bm$ with $k$
vertices (see Fig.\ref{fig:strictfive}).  There are $k(k-1)/2$
descending edges of type $(i\to j)$, $2\leq j+1 < i \leq k+1$.  Then
\begin{equation}\label{propcondi}
(y_{2j},y_{j+2,j},...,y_{k+1,j}) \propto
(y_{2j+1},y_{2j+2},y_{j+3,j+1},...,y_{k+1,j+1})\ \ \ {\rm for}\
j=1,2,...,k-1
\end{equation}
The proportionality is via overall non-vanishing scalar factors, so
these relations allow to express all the weights $y_{i,j}$ with
$i>j+1$ in terms of those of the skeleton tree,
$y_1,y_2,...,y_{2k+1}$. 
In fact, denote
\begin{equation}\label{notaweight}
y_{i+1,i}:= y_{2i}, \qquad  {\rm and}\ \  y_{i,i}:=y_{i',i}= y_{2i-1}.
\end{equation}
Then Equations \eqref{propcondi} are equivalent to
\begin{equation}\label{propconditwo}
y_{i,j}y_{m,\ell}= y_{i,\ell}y_{m,j} \ \ \ {\rm for}\ i > m\ {\rm
and}\ j>\ell
\end{equation}
That is,
\begin{equation}\label{solyij}
y_{i,j}={\prod_{\ell=j}^{i-1} y_{\ell+1,\ell} \over \prod_{\ell=j+1}^{i-1} y_{\ell,\ell}}=
{\prod_{\ell=j}^{i-1} y_{2\ell}\over \prod_{\ell=j+1}^{i-1} y_{2\ell-1} }
\end{equation}

These relations between edge weights will play a crucial role in
Section \ref{secpaths} below, when we discuss the path interpretation
of $R_{\al,n}$. They will also become clear when we express the effect
of cluster mutations on the weighted graphs.

\subsection{From Motzkin paths to heap graphs}
For completeness, we give the bijection between the path graphs
$\Gamma_\bM$ and the heap graphs dual to them. This subsection is not
necessary for further computations in this paper.  We construct a heap
graph $G_\bM$ for each Motzkin path $\bM$ via a generalized path-heap
correspondence.  

\begin{figure}
\centering
\includegraphics[width=6.cm]{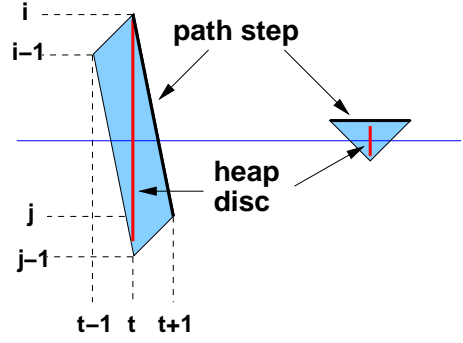}
\caption{\small The path-heap correspondence. To each descending
step $(t,i)\to (t+1,j)$ of any path on $\Gamma_\bM$, 
we associate the parallelogram indicated. The corresponding heap disc is a segment
on the vertical diagonal of the parallelogram. To each double horizontal step we associate
the triangle indicated. The corresponding heap disc is on the 
vertical median of the triangle.}\label{fig:genheapath}
\end{figure}

Given a graph $\Gamma_\bM$, we represent paths on the graph from $0$
to $0$ on the two-dimensional lattice as before. We advance by one
step in the ``time'' ($x$) direction for each step in the path, and
record the height of the vertex visited in the vertical coordinate.

Associate to each descending step $(t,i)\to (t+1,j)$ in this picture the
parallellogram with vertices $\{(t,i),(t+1,j)(t,j-1)(t-1,i-1)\}$, and
represent a vertical segment $\{(t, x), \ x\in
[j-1+\epsilon,i-\epsilon]\}$ for $\epsilon>0$ sufficiently small (see
Fig.\ref{fig:genheapath}). For double horizontal steps (steps of the
form $(t-1,i)\to (t,i)\to (t+1,i)$), we draw the half-diamond with
vertices $\{(t-1,i),(t+1,i),(t,i-1)\}$, and the segment $\{(t, x), \
x\in [i-1+\epsilon,i-\epsilon]\}$ for $\epsilon>0$ sufficiently small.
These segments represent the discs of the heap.

The heap graph $G_\bM$ encodes the overlap between these segments.
the vertices of $G_\bM$ are in bijection with the descending steps on
$\Gamma_\bM$, and the edges connect any pair of steps such that the
associated discs cannot freely slide horizontally without touching
each-other. Weights on the vertices are assigned according to edge
weights. 

To construct $G_\bM$ directly from $\bM$, we proceed as for
$\Gamma_\bM$.  We associate a graph $G_{\bm_i}$  to each strictly decending
piece $\bm_i$ of $\bM$.  We then glue these pieces according to the
type of separating step between them.

\begin{figure}
\centering
\includegraphics[width=12.cm]{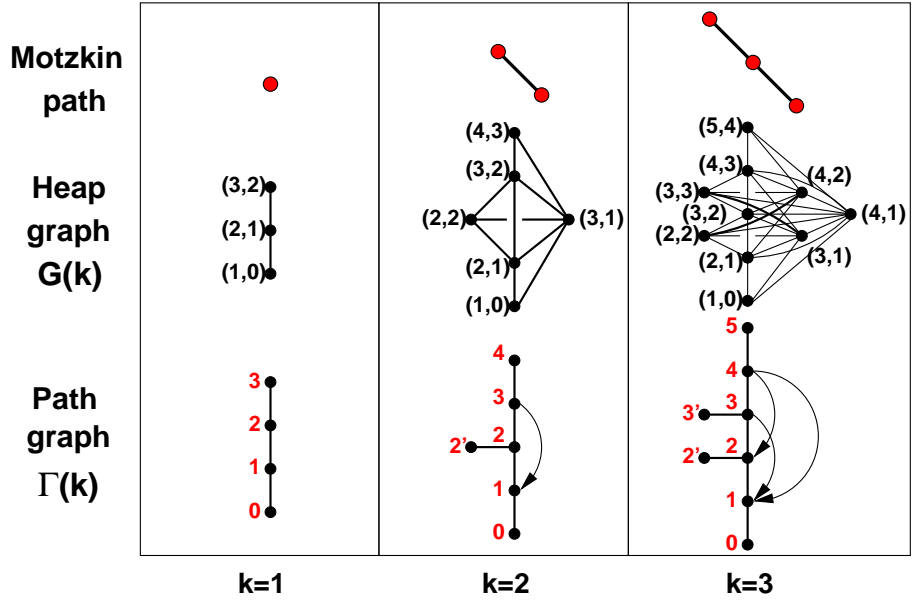}
\caption{\small The heap graphs $G(k)$ corresponding to the 
strictly descending Motzkin paths with $k=1,2,3$ vertices, together with
the corresponding path target graphs $\Gamma(k)$.
The vertices of $G(k)$ are indexed by the descending steps on $\Gamma(k)$.}\label{fig:overlapstrict}
\end{figure}

Let $\bm$ be a strictly descending Motzkin path with $k$ vertices. Let
$G_\bm=G(k)$.  Its vertices are indexed by the descending steps on
$\Gamma(k)$.    The descending
steps on $\Gamma(k)$ are indexed by $(i,j)$ for $i=j=2,3,...,k$,
$i=j+1=1,2,...,k+2$ and $k+1\geq i >j +1\geq 2$, hence $G(k)$ has a
total of $k-1+k+2+k(k-1)/2= (k+1)(k+2)/2$ vertices.  The descending
steps $k+2\to k+1$ and $1\to 0$ are singled out, and form the top and
bottom vertices of $G(k)$, denoted by $t$ and $b$.

The descending step $(i,j)$ ($i\geq j$) overlaps
with any descending step $(m,\ell)$ ($m\geq \ell$) such that $m$ or
$\ell\in [j,i]$ (or both). The set of all these overlapping descending
steps $(i,j)-(m,\ell)$ forms the edges of $G(k)$. We have represented
the graphs $G(k)$ for $k=1,2,3$ in Fig.\ref{fig:overlapstrict},
together with the path graphs $\Gamma(k)$ of Fig.\ref{fig:strictfive}
for $k=1,2,3$.

\begin{figure}
\centering
\includegraphics[width=14.cm]{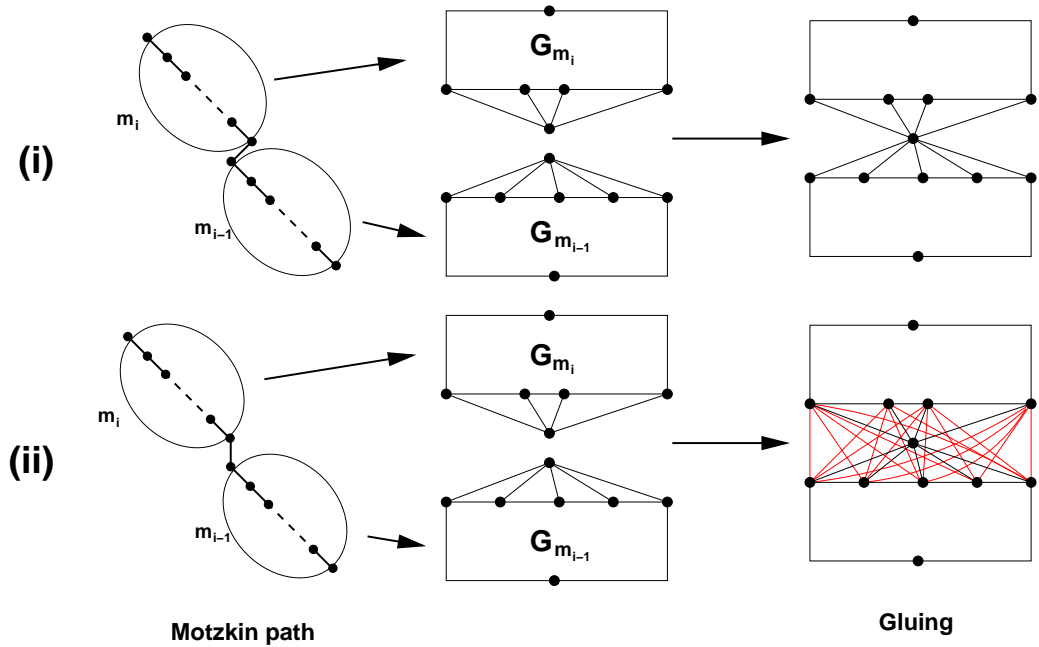}
\caption{\small The heap graphs $G_{\bm_{i-1}}$ and $G_{\bm_i}$ corresponding to the
strictly descending Motzkin paths  $\bm_{i-1}$ and $\bm_i$
are glued in two possible ways, according to whether the 
Motzkin paths are separated by (i) a step $(1,1)$ or (ii) a step $(0,1)$.
Both involve identifying the top vertex of $G_{\bm_{i-1}}$ with the bottom vertex of $G_{\bm_i}$,
but in the case (ii) all the vertices connected to the bottom vertex of $G_{\bm_i}$
must be connected to all the vertices connected to the top vertex of $G_{\bm_{i-1}}$.}\label{fig:glueheap}
\end{figure}

The graphs $G_{\bm_{i-1}}$ and $G_{\bm_i}$ are glued in two possible
ways, according to whether $\bm_{i-1}$ and $\bm_i$ are separated by a
step $(1,1)$ (type (i)) or $(0,1)$ (type (ii)) (see
Fig.\ref{fig:glueheap}):

\begin{itemize}
\item {\bf Type (i):} The top vertex $t_{i-1}$ of $G_{\bm_{i-1}}$ 
is identified with the bottom vertex $b_i$
of $G_{\bm_i}$.

\item {\bf Type (ii):} The top vertex $t_{i-1}$ of $G_{\bm_{i-1}}$ is
identified with the bottom vertex $b_i$ 
of $G_{\bm_i}$, and all the vertices connected to $t_{i-1}$ are
connected to all the vertices connected to $b_i$, via additional
edges.
\end{itemize}

\begin{example}
Consider $\mu_1(G_3)$ of
Figure \ref{fig:slfour} of Appendix \ref{appendixa} (top of second column
from left).  The corresponding Motzkin path is $\{
(1,1),(0,2),(0,3)\}$, and has two strictly descending pieces
$\bm_1=\{(1,1),(0,2)\}$ and $\bm_2=\{(0,3)\}$.  The graphs $G_{\bm_1}$
and $G_{\bm_2}$ correspond to the vertices denoted $\{1,2,3,4,5,8\}$
and $\{5,6,7\}$ respectively in Fig.\ref{fig:slfour}. As they are
glued according to the rule (ii), all the vertices connected to $5$ in
$G_{\bm_1}$, namely $4$ and $8$, are connected to all the vertices
connected to $5$ in $G_{\bm_2}$, namely $6$.
\end{example}

\begin{remark}
Let us define the overlap between descending edges of $\Gamma_{\bM}$
as follows: the descending edge $(i,j)$ ($i\geq j$) overlaps the
descending edge $(m,\ell)$ ($m\geq \ell$) if and only if $m$ or
$\ell\in [j,i]$ (or both).  We may now bypass the above gluing
procedure by directly associating to $\Gamma_\bM$ the graph $G_\bM$ as
follows: (i) the vertices of $G_{\bM}$ are in bijection with the
descending edges on $\Gamma_\bM$ and (ii) the edges of $G_\bM$ connect
all pairs of vertices such that the corresponding descending edges
overlap.
\end{remark}

The bijection between paths on $\Gamma_\bM$ and heaps on $G_\bM$
follows as before from decomposing any heap into shells of successive
foregrounds, and the fact that there is a unique way of arranging the
discs (and their surrounding parallelograms) on the square lattice
from bottom to top and left to right in each shell, and filling the
spaces in-between with ascending steps of the path.

\section{Path interpretation of $R_{1,n}$ and cluster mutations}
\label{pathinter} 
Path partition functions give a combinatorial interpretation for
$\{R_{1,n}\}$, expressed in terms of any cluster seed in the
fundamental domain.  For each Motzkin path $\bM$,
$F_1^{(r)}(t)=\sum_{n\geq 0} t^n R_{1,n}$ is a path partition function
on a graph $\Gamma_\bM$ with positive weights depending on
$\bx_\bM$. This results in a positivity theorem for all
$\{R_{1,n}\}_{n\in \Z}$, as well as explicit expressions for the
generating function.

\subsection{Path partition functions in terms of transfer matrices}

\subsubsection{Path transfer matrix}

Let $\cP_\bM^{(n)}(a,b)$ be the set of paths on the graph $\Gamma_\bM$
starting at vertex $a$ and ending at vertex $b$ with $n$ descending
steps.  The partition function is
\begin{equation}
Z_\bM^{(n)}(a,b)=\sum_{p\in \cP_\bM^{(n)}(a,b)} \prod_{e\in p} y_e(\bM).
\end{equation}
(Recall that only descending edges have non-trivial weights). Define
the generating function
\begin{equation}
Z_\bM(a,b)=\sum_{n\geq 0} t^n \, Z_\bM^{(n)}(a,b).
\end{equation}
This can be computed by use of the transfer matrix $T_\bM$, the weighted
incidence matrix of $\Gamma_\bM$, with an additional factor $t$ per
descending step. Its rows and columns are indexed by the vertices of
$\Gamma_\bM$ (the ordering on vertices is such that $i<i'<i+1$), with non-vanishing
entries$$ (T_\bM)_{i,j}=
\left\{ \begin{matrix} 
1 & \hbox{if $j\to i$ is an ascending edge},  \\
t y_{j,i} & \hbox{if $j\to i$ is a descending edge}.
\end{matrix} \right.
$$
where $y_{j,i}$ is the weight of the oriented edge  $j\to i$. 
We have
\begin{equation}\label{genefromT}
Z_\bM(a,b)= \left((I-T_\bM)^{-1} \right)_{b,a}
\end{equation} 
where $I$ is the identity matrix.

Let $F_\bM(t):= Z_\bM(0,0)$.  The entry $((I-T_\bM)^{-1})_{0,0}$
is computed by row-reduction, using upper
unitriangular row operations to obtain a lower triangular matrix, then
taking the $(0,0)$ entry of the latter.  To be precise, we iterate
the following procedure:
\begin{enumerate}
\item Let $t$ be the last row in the matrix $A$. Define the matrix
$A'$ with entries
$$
A'_{i,j}= A_{i,j}- {A_{i,\rt}A_{\rt,j}\over A_{\rt,\rt}}, \quad i<t.
$$
Then $A'_{i,\rt}=0$ for all $i< \rt$.
\item Truncate $A'$ by deleting its last row and column.
\end{enumerate}
Repeat this until the result is a $1\times 1$ matrix, which is  $1/(A^{-1})_{0,0}$.

For our particular set of graphs, the reduction procedure has a
graphical interpretation. It gives the partition function in terms of
a pruned graph with the top part removed, and replaced by a single
``loop'' with a different weight, corresponding to the partition
function on the pruned branch.

\begin{example}
The transfer matrix of the graph ${\tilde K}_2$ of
Figure \ref{fig:strictfive} is
\begin{equation}
T_{{\tilde K}_2}={\small \left( 
\begin{matrix} 
0 & t y_1 & 0       & 0       & 0            & 0 \\
1 & 0       & t y_2 & 0       & t y_{3,1} & 0 \\
0 & 1       & 0       & t y_3 & t y_4      & 0 \\
0 & 0       & 1       & 0       & 0            & 0 \\
0 & 0       & 1       & 0       & 0            & t y_5\\
0 & 0       & 0       & 0       & 1            & 0\\
\end{matrix}
\right)}
\end{equation}
with vertex order $(0,1,2,2',3,4)$.
Note that $y_{3,1}=y_2y_4/y_3$ due to  \eqref{propcondi}.
The reduction of $A=I-T_{{\tilde K}_2}$ is
\begin{eqnarray*}
I-T_{{\tilde K}_2}&\to & {\small \left(  \begin{matrix} 
1 &- t y_1 & 0       & 0       & 0             \\
-1 & 1       & -t y_2 & 0       &- t y_{3,1}  \\
0 & -1       & 1       & -t y_3 & -t y_4       \\
0 & 0       & -1       & 1       & 0             \\
0 & 0       & -1       & 0       & 1-t y_5    \\
\end{matrix} \right)} \to 
{\small \left(  \begin{matrix} 
1 &- t y_1& 0                                            & 0          \\
-1 & 1     & -t y_2 -{t y_{3,1}\over 1-t y_5}& 0    \\
0 & -1     & 1-{t y_4\over 1-t y_5}             & -t y_3       \\
0 & 0     & -1                                            & 1              \\
\end{matrix} \right)} \\
&\to &
{\small \left(  \begin{matrix} 
1 &- t y_1& 0                                             \\
-1 & 1     & -t y_2 -{t y_{3,1}\over 1-t y_5}  \\
0 & -1     & 1-ty_3 -{t y_4\over 1-t y_5}      \\
\end{matrix} \right) }\to 
{\small \left(  \begin{matrix} 
1 &- t y_1 \\
-1 & 1 -t{y_2 +{y_{3,1}\over 1-t y_5}\over 1-ty_3 -{t y_4\over 1-t y_5}} \\
\end{matrix} \right)} \\
&\to &
1-{ty_1\over 1 -t{y_2 +{y_{3,1}\over 1-t y_5}\over 1-ty_3 -{t y_4\over 1-t y_5}}} =
{1\over \left( (I-T_{{\tilde K}_2})^{-1})\right)_{0,0} }
\end{eqnarray*}
This is identical to the inverse of the factor of $t R_{1,1}$ in the second term of eq.\eqref{expfourAtwo}, 
upon identifying $z_i=y_i$, $i=1,2,3,4,5$ and $z_6=y_{3,1}$.
\end{example}

\subsubsection{Block structure}\label{blostru}

To see how this works in general, we now describe the structure of
$T_\bM$.  It consists of blocks which are put together according to
the gluing procedure.

\begin{figure}
\centering
\includegraphics[width=10.cm]{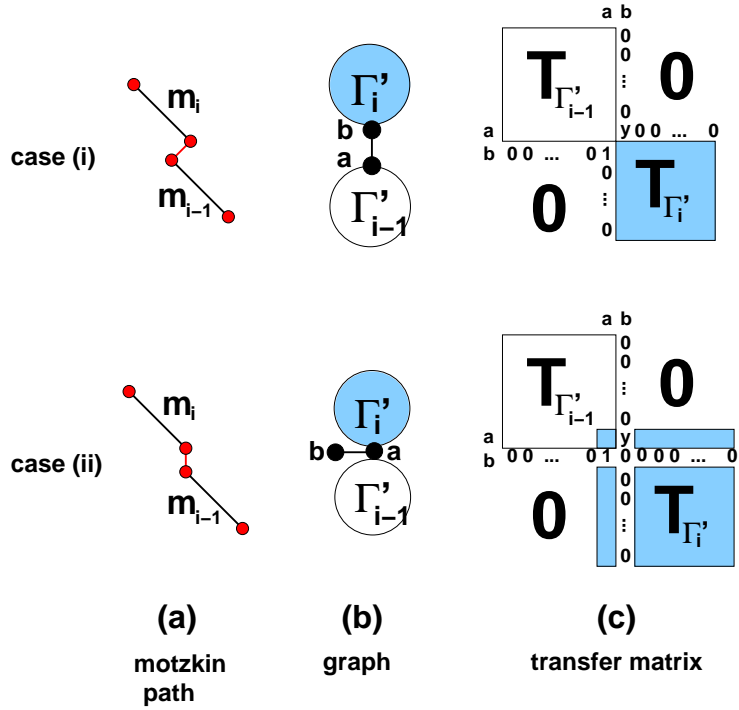}
\caption{\small Construction of the transfer matrix $T_\bM$
in the case of (i) vertical and (ii) horizontal
gluing of the subgraphs
$\Gamma_{m_{i-1}}'$ and $\Gamma_{m_i}'$. (a) Is the motzkin
path, (b) is the graph gluing and
(c) the gluing of the two diagonal blocks $T_{\Gamma_{i-1}'}$ and
$T_{\Gamma_i'}$.  We have shaded the
graph $\Gamma_i'$ in (b) and the corresponding matrix elements of the
block $T_{\Gamma_i'}$ in (c). The added element $y$ stands for the
weight $ty_{b,a}$ in both cases (i) and (ii). }\label{fig:pieces}
\end{figure}

Recall the decomposition of $\bM$ into strictly descending pieces
$\bm_i$. Let $\Gamma'_{\bm_i}$ be the graph with its bottom and top
vertices and edges removed. Define $T'_{\bm_i}$ to be the transfer
matrix of $\Gamma_{\bm_i}'$.  Let also $\Gamma_{\bm_{0}}'$ and
$\Gamma_{\bm_{p+1}}'$ denote the bottom and top vertices, glued via vertical
edges to $\Gamma'_{\bm_1}$ and $\Gamma'_{\bm_p}$. We write $\Gamma_\bM
= \Gamma'_{\bm_0} || \Gamma'_{\bm_1} || \cdots ||
\Gamma'_{\bm_{p+1}}$ to denote the gluing procedure. (Gluing via
the operation $||$ consists of adding a vertical edge or a horizontal
edge and vertex.)

The matrix $T_\bM$ is obtained by gluing the diagonal blocks
$T_{\Gamma_i'}$ according to whether the corresponding graph gluing is
via a (i) vertical or (ii) horizontal edge (see Figure \ref{fig:pieces}):

\noindent{\bf $\bullet$ Type (i):} The blocks $T_{\Gamma_{i-1}'}$ and
$T_{\Gamma_i'}$ occupy successive diagonal blocks in the matrix $T_\bM$ (vertex $t_{i-1}'$ of $\Gamma_{i-1}'$ is followed by vertex
$b'_{i}$ of $T_{\Gamma_i'}$). They are ``glued'' by the addition
of two matrix elements, $(T_\bM)_{b'_i,t'_{i-1}}=1$ and
$(T_\bM)_{t'_{i-1},b'_i}=y=ty_{b'_i,t'_{i-1}}$.

\noindent{\bf $\bullet$ Type (ii):} 
We place the two blocks $T_{\Gamma_{i-1}'}$ and $T_{\Gamma_i'}$ in the
matrix $T_\bM$ in such a way that the last row and column of
$T_{\Gamma_{i-1}'}$ (with index $t'_{i-1}$) coincide with the first
row and column of $T_{\Gamma_i'}$ (with index $b_i'$). We then insert
a new row and column labeled $b_i$ following $t'_{i-1}$, with
nonvanishing entries $(T_\bM)_{b_i,t'_{i-1}}=1$ and
$(T_\bM)_{t'_{i-1},b_i}=y=ty_{b_i,t'_{i-1}}$.

\subsection{Reduction and continued fractions}

\subsubsection{General reduction process}
Now consider the reduction procedure performed on the matrix $A=I-T_\bM$.
\begin{enumerate}
\item Let $(t',\rt)$ denote the last two row indices of $A$. 
The bottom right $2\times 2$ submatrix of $I-T_\bM$ has the form
$\left(\begin{matrix}
1 & -y \\
-1 & 1 
\end{matrix}\right)$
with $y=t y_{\rt,t'}=t y_{2r+1}$.
Reduction erases the row and column $\rt$ of $A$,
and replaces $A_{t',t'}=1\mapsto 1-y$.

\item {\bf Inductive step:}
After reduction of all rows with index $j>t'_i$, the resulting
matrix $T_{\Gamma_{\bm_0}'||\cdots ||\Gamma_i'}'$ differs from the
original only in the element $(t'_i,t'_i)$, denoted by $\gamma_i$, where
$$
(1-\gamma_i)^{-1}
=\left( (I-T_{\Gamma_{\{{t'}_i\}}\vert\vert\Gamma_{i+1}'\vert\vert 
\cdots \vert\vert \Gamma_{p+1}'})^{-1} \right)_{{t'}_i,{t'}_i} \ .
$$
We then reduce all indices $j>t'_{i-1}+1$. This results in the matrix $A'$.
Its lower right $2\times 2$ matrix, indexed by the vertices $a,b$, has the form:

\noindent{\bf Case (i):}
$\left(\begin{matrix} 1 & -y \\ -1 & 1-c
\end{matrix}\right)$ where $y=ty_{b,a}$ and 
$(1-c)^{-1}=\left( (I-T_{\Gamma_i'\vert\vert \cdots \vert\vert
\Gamma_{p+1}'})^{-1} \right)_{b,b}$. One more reduction step
eliminates row and column $b$ and gives the new entry
$A_{a,a}\mapsto  1-{y\over 1-c}$.

\noindent{\bf Case (ii):}
$\left(\begin{matrix}
1-c & -y  \\
-1 & 1
\end{matrix}\right)$. One more step in the reduction 
gives $A_{a,a}\mapsto 1-y-c$. 

In both cases, the net result is a modification of $A$
in which the bottom
right element is $1-\gamma_{i-1}$, where
$$
(1-\gamma_{i-1})^{-1}=
\left( (I-T_{\Gamma_{\{{t'}_{i-1}\}}\vert\vert\Gamma_{i}'\vert\vert 
\cdots \vert\vert \Gamma_{p+1}'})^{-1} \right)_{{t'}_{i-1},{t'}_{i-1}} \ .
$$
The bottom right element of the reduced transfer matrix is
$\gamma_{i-1}$ equal to (i) $y/(1-c)$ or (ii) $y+c $.

\end{enumerate}

\subsubsection{The case of a strictly descending Motzkin path}
Consider the transfer matrix corresponding to
$\Gamma(k)'$ (Figure \ref{fig:strictfive})).  For later use, we
use a more general matrix $T'(k)$, which has
$T(k)'_{k+1,k+1}= c$ instead of 0.  With 
vertex order $(1,2,2',3,3',...,k,k',k+1)$,
this matrix is
\begin{equation}
T'(k)={\small \left(
\begin{array}{lllllllllll}
0 & t y_2 & 0 & t y_{3,1} & 0 & t y_{4,1} & 0 & \cdots & t y_{k,1} & 0 &  t y_{k+1,1} \\
1 & 0 & t y_3 & t y_4 & 0 &  t y_{4,2} & 0 & \cdots & t y_{k,2} & 0 &  t y_{k+1,2} \\
0 & 1 & 0 & 0 & 0 & 0 & 0 & \cdots & 0 & 0 &  0 \\
0 & 1 & 0 & 0 & t y_5 & t y_6 & 0 & \cdots & t y_{k,3} & 0 &  t y_{k+1,3} \\
0 & 0 & 0 & 1 & 0 & 0 & 0 & \cdots & 0 & 0 &  0 \\
0 & 0 & 0 & 1 & 0 & 0 & t y_7 & \cdots & t y_{k,4} & 0 &  t y_{k+1,4} \\
\vdots & \vdots & \vdots & \vdots & \vdots & \vdots & \vdots & \ddots & \vdots & \vdots & \vdots \\
0 & 0 & 0 & 0 & 0 & 0 & 0 & \cdots & 0 & t y_{2k-1} & t y_{2k}\\
0 & 0 & 0 & 0 & 0 & 0 & 0 & \cdots & 1 & 0 & 0 \\
0 & 0 & 0 & 0 & 0 & 0 & 0 & \cdots & 1 & 0 & c \\
\end{array}
\right)}
\end{equation}

Reduction of the last index in $A=(I-T'(k))$
results in 
$$\left\{\begin{array}{ll} y_{k,i}\mapsto y_{k,i}+{y_{k+1,i}\over 1-c} &
\hbox{in row $i<k-1$};\\
y_{2k-2}\mapsto
y_{2k-2}+{y_{k+1,k-1}\over 1-c} & \hbox{in row $k-1$};\\
A_{kk}\mapsto1-t {y_{2k}\over 1-c} & \hbox{in row $k$}.\end{array}\right.
$$
whereas reduction of the next index changes $A_{kk}'=1-t y_{2k}/(1-c)\mapsto
1-t y_{2k-1}-t {y_{2k}\over 1-c}$. 

Let 
\begin{equation}\label{defphi}
\varphi_k(y_2,...,y_{2k};y_{3,1},y_{4,1},...,y_{k+1,k-1};c)
=\Big((I-T'(k))^{-1}\Big)_{1,1} 
\end{equation}
As a result of the two reduction steps, $\varphi_k$ is  replaced
by $\varphi_{k-1}$, with suitable substitutions of variables. 
\begin{lemma}\label{varphilemma}
The function $\varphi_k=\left((I-T'_{\Gamma(k)'})^{-1}\right)_{1,1}$
is determined by the recursion relation
\begin{eqnarray*}
&&\varphi_k(\{y_j\}_{j=2}^{2k};\{y_{j,i}\}_{2\leq i+1<j\le k+1};c)\\
&&\ \ \ \ \ \ \ \ =\varphi_{k-1}(\{y_j+\delta_{j,2k-2}{y_{k+1,k-1}\over 1-c}\}_{j=2}^{2k-2};
\{y_{j,i}+\delta_{j,k} {y_{k+1,i}\over 1-c}\}_{2\leq i+1<j\le k};t
y_{2k-1}+t {y_{2k}\over 1-c}), 
\end{eqnarray*}
with
\begin{equation}
\varphi_1(y_2;\ ;c)=\left(\left( \begin{matrix} 
1 & -t y_2 \\
-1 & 1-c
\end{matrix} \right)^{-1} \right)_{1,1}
={1\over 1-t{y_2\over 1-c}} \label{fracinit}.
\end{equation}
\end{lemma}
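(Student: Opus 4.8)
The plan is to read the recursion directly off the row-reduction (Schur-complement) procedure for $\varphi_k=\bigl((I-T'(k))^{-1}\bigr)_{1,1}$ of \eqref{defphi}, by performing exactly two elimination steps on $A=I-T'(k)$: first on the last vertex $k+1$, then on the next vertex $k'$, in the ordering $(1,2,2',\dots,k,k',k+1)$. Since row reduction from the bottom preserves the relevant entry of the inverse, it suffices to show that these two steps turn $A$ into $I-T'(k-1)$ evaluated at the substituted weights, with $c$ replaced by $c'=ty_{2k-1}+ty_{2k}/(1-c)$.

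First I would eliminate the index $k+1$. The structural point is that the row of $A$ indexed by $k+1$ has only the diagonal entry $A_{k+1,k+1}=1-c$ and a single off-diagonal entry $A_{k+1,k}=-1$, coming from the ascending skeleton edge $k\to k+1$. Hence the update $A'_{i,j}=A_{i,j}-A_{i,k+1}A_{k+1,j}/(1-c)$ alters only column $k$, replacing $A_{i,k}$ by $A_{i,k}+A_{i,k+1}/(1-c)$. Reading column $k+1$ off $T'(k)$ — whose nonzero entries are the long-range weights $-ty_{k+1,i}$ of the descending edges $k+1\to i$, together with $-ty_{2k}$ in row $k$ — this produces exactly the substitutions $y_{k,i}\mapsto y_{k,i}+y_{k+1,i}/(1-c)$ for $i<k-1$, the substitution $y_{2k-2}\mapsto y_{2k-2}+y_{k+1,k-1}/(1-c)$ in row $k-1$, and $A_{kk}\mapsto 1-ty_{2k}/(1-c)$, matching the reduction effect recorded just before the statement.

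Next I would eliminate the index $k'$. After the first step the row indexed by $k'$ is unchanged, since its column-$(k+1)$ entry $A_{k',k+1}$ was already zero; thus it again has a unique off-diagonal entry $A'_{k',k}=-1$ (the edge $k\to k'$) and diagonal $1$, while column $k'$ carries the single nonzero entry $A_{k,k'}=-ty_{2k-1}$ (the edge $k'\to k$). The elimination therefore touches only column $k$ and only the diagonal, giving $A_{kk}\mapsto 1-ty_{2k-1}-ty_{2k}/(1-c)=1-c'$. The remaining $(2k-2)\times(2k-2)$ matrix then has vertices $(1,2,2',\dots,k-1,(k-1)',k)$ with top diagonal $1-c'$, and — using the labeling conventions $y_{i+1,i}=y_{2i}$, $y_{i,i}=y_{2i-1}$ of \eqref{notaweight} — I would check that it coincides entry-by-entry with $I-T'(k-1)$ at the substituted weights. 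This yields $\varphi_k=\varphi_{k-1}$ at the substituted arguments. The base case is the direct $2\times 2$ inversion $\varphi_1=\bigl(1-c\bigr)/\bigl(1-c-ty_2\bigr)=1/\bigl(1-ty_2/(1-c)\bigr)$ of \eqref{fracinit}.

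The step I expect to require the most care is the final identification: verifying that the two eliminations create no spurious fill-in outside column $k$, so that the reduced matrix is genuinely $I-T'(k-1)$ rather than merely a matrix sharing the same $(1,1)$ entry of the inverse. This hinges entirely on the fact that rows $k+1$ and $k'$ each possess a unique off-diagonal nonzero entry, both lying in column $k$ — a reflection of the simple chain structure $k\to k'$, $k\to k+1$ at the top of the skeleton $\wG_k$, with no long-range edges emanating from $k'$ nor terminating at $k+1$ from above. Once this is confirmed, the compatibility of the updated long-range weights with the relations \eqref{propcondi}, \eqref{solyij} is automatic, and the recursion of the Lemma follows.
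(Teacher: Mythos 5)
Your proof is correct and follows essentially the same route as the paper: the paper establishes the recursion precisely by performing the two bottom-row reductions on $A=I-T'(k)$ (first the vertex $k+1$, then $k'$), recording the induced substitutions on column $k$ and the replacement $c\mapsto ty_{2k-1}+ty_{2k}/(1-c)$, and identifying the result with $I-T'(k-1)$. Your extra remark about the absence of fill-in outside column $k$ — because rows $k+1$ and $k'$ each have a single off-diagonal entry, in column $k$ — is exactly the structural fact the paper relies on implicitly.
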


\begin{example}
For $k=2$, we have
\begin{eqnarray*}
\varphi_2(y_2,y_3,y_4;y_{3,1};c)=\varphi_1\left(y_2+{y_{3,1}\over 1-c};\ ; ty_3+{ty_4\over 1-c}\right)
={1\over 1-{ty_2+{ty_{3,1}\over 1-c}\over 1-ty_3-{ty_4\over 1-c}}} 
\end{eqnarray*}
For $k=3$, we have:
\begin{eqnarray*}
\varphi_3(y_2,y_3,y_4,y_5,y_6;y_{3,1},y_{4,1},y_{4,2};c)&=&
\varphi_2\left(y_2,y_3,y_4+{y_{4,2}\over 1-c}; y_{3,1}+{y_{4,1}\over 1-c};t y_5+{ty_6\over 1-c}\right)\\
&=& {1\over 1-{ty_2+{ty_{3,1}+{ty_{4,1}\over 1-c}\over 1-t y_5-{ty_6\over 1-c} }
\over 1-ty_3-{ty_4+{ty_{4,2}\over 1-c}\over 1-t y_5-t{y_6\over 1-c}}}} \\
\end{eqnarray*}
\end{example}

Thus, $\varphi_k$ is a finite, multiply branched continued fraction.
Its power series expansion in $t$ has coefficients which are
polynomials in the weights with non-negative integer
coefficients. This expression is still valid if we relax the
proportionality conditions
\eqref{propcondi}, but if we use these
for $j=1$, we see that
\begin{equation}\label{formphi}
\varphi_k={1\over 1-{y_2\over y_3}{ty_3+ ty_4 w_k\over 1-ty_3-ty_4 w_k}}
\end{equation}
where $w_k$ does not depend on $y_2,y_3,y_4$.

\subsection{Mutations as fraction rearrangements}\label{mainpro}

\subsubsection{Mutations and Motzkin paths}

\begin{figure}
\centering
\includegraphics[width=11.cm]{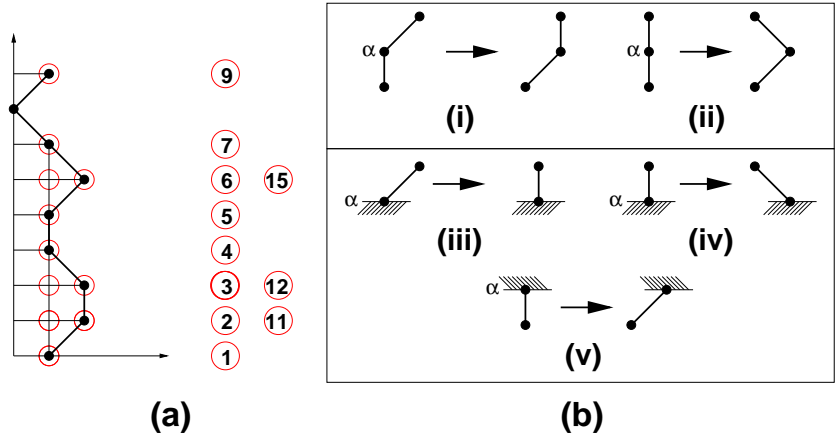}
\caption{\small A path $\bM$ with $r=9$ (a), and the
product of mutations (circled) yielding $\bM$ when applied on $\bM_0$.
In (b), the two
generic moves (i) and (ii) corresponding to the actions of the
mutation are indicated,
together with their ``boundary" version, (iii) and (iv) when $\al=1$,
and (v) when $\al=r$.}\label{fig:mudec}
\end{figure}

One can associate a unique sequence of mutations to each Motzkin path
$\bM$ using the following procedure. The
path $\bM=\{(m_\alpha,\al)\}_{\al\in I_r}$ is considered on the
lattice, and each lattice point $(x,\al)$ with $1\leq x \leq m_\al$
corresponds to a mutation $\mu_\al\ (\mu_{\al+r})$ if $x$ is odd
(even), respectively. The compound mutation $\mu$ such that
$\mu(\bx_0)=\bx_M$ is the product of these mutations read from bottom
to top, and from right to left.  For example, in Figure \ref{fig:mudec}
(a),
$\mu=\mu_{11}\mu_{12}\mu_{15}\mu_1\mu_2\mu_3\mu_4\mu_5\mu_6\mu_7\mu_9$.

This restricted set of mutations acting on $\bM_0$ yields any path
$\bM$ in the fundamental domain. We need to use only the two
elementary moves shown in Figure \ref{fig:mudec} (b) (i)--(ii), and
their ``boundary'' versions (iii)--(v). Without loss of generality we
may therefore restrict ourselves to this subset of mutations.



To see how this set of mutations acts on the partition function, we
compare the graphs and weights $(\Gamma_\bM,\by(\bM))$ and
$(\Gamma_{\bM'},\by(\bM'))$, where $\bM'=\mu_\al (\bM)$ or
$\mu_{\al+r} (\bM)$.

\begin{figure}
\centering
\includegraphics[width=16.cm]{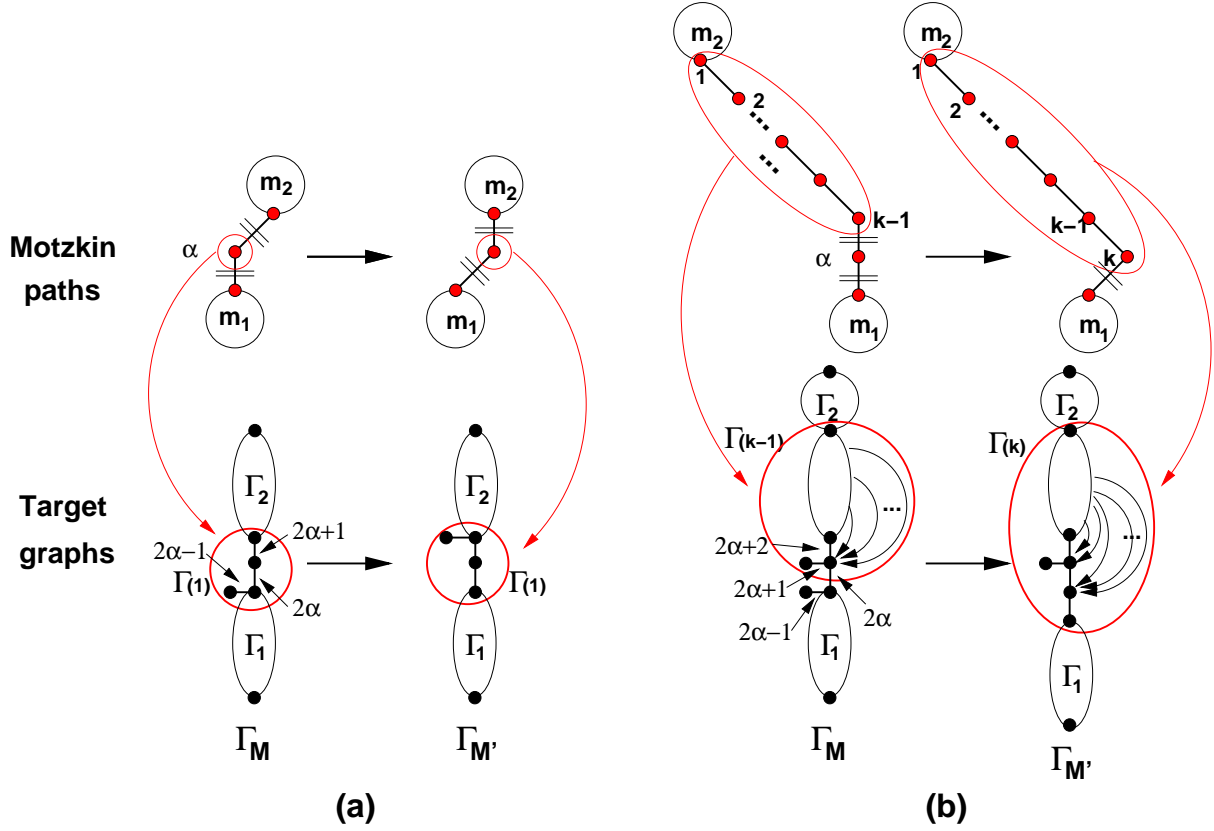}
\caption{\small The two bulk moves of Fig.\ref{fig:mudec} (b) 
acting on the paths and graphs. The edges involved in the
transformation are indicated. The mutation (a) involves the three
edges of the center piece $\Gamma(1)$, and is independent of the
structure of $\bm_1$ and $\bm_2$. Mutation (b) involves four edges,
and depends on the length of the strictly descending subpath above
vertex $\al+1$, increasing its length by 1.  On the graph it
transforms the block $\Gamma(k-1)$ into $\Gamma(k)$, creating $k$ new
descending edges.  }\label{fig:action}
\end{figure}

\subsubsection{Bulk mutations of type (a) and (b)}
Consider first the two bulk moves (i)-(ii) of Figure
\ref{fig:mudec} (b). Their effect on the
corresponding target graphs is shown in Figure \ref{fig:action} (a)
and (b).  The path $\bM$ is
decomposed into a vertex labeled $\al$, and bottom and top
pieces $\bm_1$ and $\bm_2$, corresponding to graphs $\Gamma(1)$,
$\Gamma_1$ and $\Gamma_2$ respectively.

In Figure \ref{fig:action} (a), the edge labels of the
piece $\Gamma(1)$ corresponding to the central isolated vertex $\al$
of $\bM$ are identified, they are $2\al-1,2\al$ and $2\al+1$
We find that all the independent edge weights in $\Gamma_\bM$ and
$\Gamma_{\bM'}$ are identical, except for the three weights
$y_{2\al-1},y_{2\al},y_{2\al+1}$ of $\Gamma_\bM$. These transform to
$y_{2\al-1}',y_{2\al}',y_{2\al+1}'$ in $\Gamma_{\bM'}$. As we shall
see below, the new weights correspond to the mutation of cluster variables. 

The matrix $A_\bM=1-T_{\Gamma_\bM}$, after the reduction of the block
corresponding to $\Gamma_2$, becomes:
\begin{equation}
{\small \left( 
\begin{array}{c|cc|c}
\hbox{\Large $I-T_{\Gamma_1}$} & \begin{matrix}  0 \\ \vdots \\ 0 \\
-ty_{2\al-1} \end{matrix} &  
\begin{matrix} 0 \\ \vdots \\ 0 \\ -ty_{2\al} \end{matrix} &
\hbox{\Large $0$} \\ \hline 
0\  \cdots \ 0 \ -1 & 1 & 0 & 0\ \hskip.2in  \cdots \hskip.2in \ 0 \\ 0\  \cdots
\ 0 \ -1 &0  & 1 & -ty_{2\al+1}\ 0 \  \cdots \ 0 \\ \hline
\hbox{\Large $0$}
& \begin{matrix} 0 \\ 0\\ \vdots \\ 0 \end{matrix} 
&  \begin{matrix}1 \\  0 \\ \vdots \\ 0 \end{matrix} 
&\hbox{\Large $I-T_{\Gamma_2}$} \\
\end{array}
\right)}\mapsto
{\small\left( 
\begin{array}{c|cc|c}
\hbox{\Large $I-T_{\Gamma_1}$} & \begin{matrix}  0 \\ \vdots \\ 0 \\ -ty_{2\al-1} \end{matrix} & 
\begin{matrix} 0 \\ \vdots \\ 0 \\ -ty_{2\al} \end{matrix} 
& \begin{matrix} 0 \\ \vdots \\ 0 \\ 0 \end{matrix} \\ \hline
0\  \cdots \ 0 \ -1 & 1 & 0 & 0 \\
0\  \cdots \ 0 \ -1 & 0 & 1 & -ty_{2\al+1} \\ \hline
0\  \cdots \ 0 \ \ \ \ \ 0 
& 0 & -1 & 1-c' \\
\end{array}
\right)}
\end{equation}
with $(1-c')^{-1}=\Big((I-T_{\Gamma_2})^{-1}\Big)_{b_2,b_2}$
($b_2$ is the bottom vertex of $\Gamma_2$).
Three more iterations of reduction replace the bottom right element of 
$I-T_{\Gamma_1}$ with
\begin{equation}
f=1- {ty_{2\al-1} -{t y_{2\al}\over 1-{t y_{2\al+1}\over 1-c'}}} \ .
\end{equation}
Similarly, reduction of $I-T_{\Gamma_{\bM'}}$ yields
(after reducing the $\Gamma_2$ part)
\begin{equation}
I-T_{\Gamma_{\bM'}} \mapsto{\small \left( 
\begin{array}{c|ccc}
\hbox{\huge $I-T_{\Gamma_1}$} & \begin{matrix}  0 \\ \vdots \\ 0 \\ -ty_{2\al-1}' \end{matrix} & 
\begin{matrix} 0 \\ \vdots \\ 0 \\ 0 \end{matrix} 
& \begin{matrix} 0 \\ \vdots \\ 0 \\ 0 \end{matrix} \\ \hline
0\ 0 \ \cdots \ 0 \ -1 & 1 & -ty_{2\al}' & 0 \\
0\ 0 \ \cdots \ 0 \ \ \ \ \ 0  & -1 & 1-c' & -ty_{2\al+1}' \\
0\ 0 \ \cdots \ 0 \ \ \ \ \ 0 
& 0 & -1 & 1\\
\end{array}
\right)}
\end{equation}
with $c'$ as above. Three more reduction iterations
result in the updated bottom right element of $I-T_{\Gamma_1}$,
\begin{equation}
f'=1- {ty_{2\al-1}' \over 1 -{t y_{2\al}' \over 1-t y_{2\al+1}' -c'}} \ .
\end{equation}

The partition functions of paths on $\Gamma_\bM$ and $\Gamma_{\bM'}$
must coincide, they are the same partition function expressed in terms
of different variables. They coincide if and only if
the weights $y'_{2\al-1},y'_{2\al},y'_{2\al+1}$ are such
that $f=f'$ (all the other weights are equal). Application
of the rearrangement Lemma \ref{secmov}, with
$a=y_{2\al-1}$, $b=y_{2\al}$, $c=y_{2\al+1}$ and $d=c'$, gives
$f=f'$ if and only if:
\begin{equation}\label{weightmovone}
y_{2\al-1}'=y_{2\al-1}+y_{2\al}, \quad
y_{2\al}'={y_{2\al}y_{2\al+1}\over y_{2\al-1}+y_{2\al}},
\quad y_{2\al+1}'={y_{2\al-1}y_{2\al+1}\over y_{2\al-1}+y_{2\al}} \ .
\end{equation}

We interpret this transformation as the effect of the mutation
$\mu_\al$ or $\mu_{\al+r}$ on the skeleton weights of $\Gamma_\bM$,
resulting in a rearrangement of the continued fraction $F_\bM(t)$ into
$F_{\bM'}(t)$.

The mutation in Figure \ref{fig:action} (b) is slightly more subtle,
because it depends on the length of the strictly descending subpath in
$\bm$ above $\al$, whose length is increased by 1.  All {\em
independent} edge weights are unaffected by the mutation, except those
of edges $2\al-1,...,2\al+2$ (recall that the other edge weights 
are ratios of skeleton weights $y_1,y_2,...,y_{2r+1}$).

The transfer matrix is
\begin{equation}
T_\bM={\small \left( 
\begin{array}{c|c|l}
\hbox{\Large $T_{\Gamma_1}$} & \begin{matrix}  0 \\ \vdots \\ 0 \\ ty_{2\al-1} \end{matrix} & 
\begin{matrix}\begin{matrix} 0 \\ \vdots \\ 0 \\ ty_{2\al}
\end{matrix} & \ \ \ \   \hbox{\huge $0$} \end{matrix}\\ \hline
0\ 0 \ \cdots \ 0 \ 1 & 0 & 0 \hskip.5in\cdots\hskip.5in 0 \\ \hline
\begin{matrix}  \\ \hbox{\huge $0$} \end{matrix} 
& \begin{matrix} \\ 0 \\   \vdots \\ 0 \end{matrix}  
&\hbox{\Large $T_{\Gamma_{\hbox{\small $y_{2\al+1},y_{2\al+2}$}}(k-1)\vert\Gamma_2}$} \\
\end{array}
\right)},
\end{equation}
where $\Gamma_{y_{2\al+1},y_{2\al+2}}(k-1)$ is the circled graph in
$\Gamma_\bM$, on the bottom of Fig.\ref{fig:action} (b), for which we
indicate the values of the weights of its first two lowest edges.
After reduction, we are left with $(I-T_{\Gamma_1})$, with the bottom diagonal element replaced by
\begin{equation} \label{lutfin}
f=1- ty_{2\al-1}-{ty_{2\al}\over 1-c(y_{2\al+1},y_{2\al+2})} \ ,
\end{equation}
where $(1-c(y_{2\al+1},y_{2\al+2}))^{-1}=
\Big((I-T_{\Gamma_{y_{2\al+1},y_{2\al+2}}(k-1)\vert\Gamma_2})^{-1}\Big)
_{b',b'}$, 
where $b'$ denotes the next-to-bottom vertex of $\Gamma(k-1)$.

On the other hand,
\begin{equation}
T_{\bM'}={\small \left( 
\begin{array}{c|c|l}
\hbox{\huge $T_{\Gamma_1}$} & \begin{matrix}  0 \\ \vdots \\ 0 \\ ty_{2\al-1}' \end{matrix} & 
\begin{matrix}\begin{matrix} 0 \\ \vdots \\ 0 \\ 0 \end{matrix} & \ \ \ \   \hbox{\huge $0$} \end{matrix}\\ \hline
0\ 0 \ \cdots \ 0 \ 1 & 0 & ty_{2\al}' \ 0 \   * \ 0 \ * \ 0 \  \cdots \ 0 \ * \\ \hline
\hbox{\huge $0$}
& \begin{matrix} 1 \\ 0  \\ \vdots \\ 0 \end{matrix}  
&\hbox{\huge $T_{\Gamma_{\hbox{\small $y_{2\al+1}',y_{2\al+2}'$}}(k-1)\vert\Gamma_2}$} \\
\end{array}
\right)}
\end{equation}
where the entries $*$ stand for matrix elements of the form
$ty_{j,i}'$, where the vector $(ty_{2\al}',(
ty_{j,i}' ))$ is proportional to the vector
$(ty_{2\al+1}',ty_{2\al+2}',( ty_{j,i}))$, which are the weights
appearing in the
row below it. This fact can be used to eliminate the $*$ entries using
the row below. Then the reduction gives the matrix
\begin{equation}
{\small \left(
\begin{array}{c|cc}
\hbox{\huge $I-T_{\Gamma_1}$} & \begin{matrix}  0 \\ \vdots \\ 0 \\ ty_{2\al-1}' \end{matrix} & 
\begin{matrix} 0 \\ \vdots \\ 0 \\ 0 \end{matrix} \\ \hline
0\ 0 \ \cdots \ 0 \ -1 & 1+{y_{2\al}'\over y_{2\al+1}'} & -{y_{2\al}'\over y_{2\al+1}'} \\
0\ 0 \ \cdots \ 0 \ \ \ \ 0 & -1 & 1-c(y_{2\al+1}',y_{2\al+2}') 
\end{array}
\right)}
\end{equation}
where $\Big(1-c(y_{2\al+1}',y_{2\al+2}')\Big)^{-1}=
\Big((I-T_{\Gamma_{y_{2\al+1}',y_{2\al+2}'}(k-1)\vert\Gamma_2})^{-1}\Big)_{b',b'}$,
with $c$ as in \eqref{lutfin}.  Two further
reduction steps replace the bottom right element of $(I-T_{\Gamma_1})$
by
\begin{equation}
f'=1- {ty_{2\al-1}'\over 1-{y_{2\al}'\over y_{2\al+1}'} 
{c(y_{2\al+1}',y_{2\al+2}')\over 1-c(y_{2\al+1}',y_{2\al+2}')} } \ .
\end{equation}

The partition functions for graphs on $\Gamma_\bM$ and
$\Gamma_{\bM'}$ must be equal, so we look for the transformation 
$(y_{2\al-1},y_{2\al},y_{2\al+1},y_{2\al+2})\to
(y_{2\al-1}',y_{2\al}',y_{2\al+1}',y_{2\al+2}')$ such that $f=f'$.

The function $c'=c(y_{2\al+1}',y_{2\al+2}')$ is identified by
row-reducing $(I-T_{\Gamma(k)\vert\Gamma_2})$. Reducing the $\Gamma_2$
part results in replacing the bottom right element of
$(I-T_{\Gamma(k)})$ with $1-d'$, where
$(1-d')^{-1}=\Big((I-T_{\Gamma_2})^{-1}\Big)_{b_2, b_2}$. This yields
the relation $1-{y_{2\al}'\over y_{2\al+1}'}{c'\over 1-c'}
=\varphi_{k}^{-1}(y_{2\al}',y_{2\al+1}',y_{2\al+2}',y_{2\al+3}...,y_{2\al+2k-2};
\{y_{j,i}\}; d')$, with $\varphi_{k}$ as in Lemma
\ref{varphilemma}, and we get $c'=y_{2\al+1}'+w_k
y_{2\al+2}'$, by comparison with the general expression
\eqref{formphi}. Therefore
\begin{equation}\label{finlut}
f=1- ty_{2\al-1}-{ty_{2\al}\over 1- t y_{2\al+1}- t y_{2\al+2} w_k} =
f'= 1- {ty_{2\al-1}'\over 1-{y_{2\al}'\over y_{2\al+1}'} { t y_{2\al+1}'+ t y_{2\al+2}' w_k\over
1-t y_{2\al+1}'+ t y_{2\al+2}' w_k}} \ .
\end{equation}
Applying the rearrangement Lemma \ref{secmov} with $a=y_{2\al-1}$,
$b=y_{2\al}$ and $c=y_{2\al+1}+y_{2\al+2} w_k$, we conclude that
\eqref{finlut} is satisfied if and only if
\begin{equation}\label{weightmovtwo}
y_{2\al-1}'=y_{2\al-1}+y_{2\al}, \quad y_{2\al}'={y_{2\al}y_{2\al+1}\over y_{2\al-1}+y_{2\al}},
\quad y_{2\al+1}'={y_{2\al-1}y_{2\al+1}\over y_{2\al-1}+y_{2\al}},
\quad y_{2\al+2}'={y_{2\al+2}y_{2\al-1}\over y_{2\al-1}+y_{2\al}} \ ,
\end{equation}
while all other weights $y'_i$ are equal to $y_i$. This is
interpreted as the effect of the mutation $\mu_\al$ or $\mu_{r+\al}$
on the graph weights.

\subsubsection{Boundary mutations}\label{boundamut}

We consider the mutations of Fig.\ref{fig:mudec} (b) (iii-v). 

\noindent{\bf $\bullet$ Case (v).}
We have $\al=r$, which is just case (a) of Fig.\ref{fig:action}, with
$\bm_2=\emptyset$ and $\Gamma_2$ reduced to a vertex.  The
transformation of weights is \eqref{weightmovone}.

\noindent{\bf $\bullet$ Cases (iii-iv).}
Here, $\al=1$. We take $\bm_1=\emptyset$ and $\Gamma_1$ a single
vertex in both cases (a) and (b) of Figure \ref{fig:action}.  However,
the action of $\mu_1$ or $\mu_{r+1}$ introduces a new feature, which
we call re-rooting. This is because the effect of the mutation is an
application of the rearrangement Lemma
\ref{secmov} on the corresponding weighted path partition function
$F_\bM(t)$.  This is possible only if the graph $\Gamma_\bM$ 
is rooted at vertex $b'=1$ instead of vertex $b=0$.  We write
$F_{\bM}(t)=1+t y_1(\bM) F'_{\bM}(t)$, where
$F_\bM(t)=\Big((I-T_{\bM})^{-1}\Big)_{0,0}$, while
$F'_{\bM}(t)=\Big((I-T_{\bM})^{-1}\Big)_{1,1}$.

This re-rooting must take place whenever we act via the moves (iii-iv)
as a direct consequence of the cases (a) and (b) of
Fig.\ref{fig:action}.  Indeed, $\Gamma_1$ is reduced to a vertex.  The
lower edge $2\al-1=1$ is horizontal, and the vertex common with
$\Gamma_1$ is $b'=1$, rather than $b=0$, so that $\Gamma_1=\{b'\}$.
The weight $y_1$ of the horizontal edge is associated with the step
$0\to 1$ in the re-rooted formulation.  

In general, the weighted path partition function $F_\bM(t)$,
corresponding to the Motzkin path $\bM$, is related to the initial
generating function $F_{\bM_0}(t)=R_{1,0}^{-1}F_1^{(r)}(t)$, via the
following sequence of re-rootings.
We start from the canonical sequence of Motzkin paths leading from
$\bM_0$ to $\bM=(m_1,...)$ via our restricted set of mutations.
Within this sequence, the paths $\bM^{(i_\ell)}$, $\ell=1,2,...,m_1$
are those which differ from their predecessor only in via the action
of the ``boundary'' mutations $\mu_1$ or $\mu_{r+1}$.  Note that
$m_1(\bM_\ell)=\ell$.  Only the boundary mutations $\mu_1$ and
$\mu_{r+1}$ imply a re-rooting (otherwise
$F_{\bM^{(i-1)}}=F_{\bM^{(i)}}$). Thus,

\begin{lemma}\label{reroot}
The partition function $F_\bM(t)$ is obtained from
$F_{\bM_0}(t)=R_{1,0}^{-1}F_1^{(r)}(t)$ via the sequence of
re-rootings:
$$
F_{\bM_{\ell-1}}(t)=1+t y_1(\bM_{\ell-1}) F_{\bM_\ell}(t), \ \
\ell=1,2,...,m_1
$$
with $F_\bM=F_{\bM_{m_1}}$, where $m_1=m_1(\bM)$ is the first entry of
$\bM=(m_\al,\al)_{\al=1}^r$.
\end{lemma}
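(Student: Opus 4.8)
The plan is to derive Lemma \ref{reroot} by combining a general re-rooting identity for path generating functions with the boundary-mutation analysis already carried out for the bulk moves. Throughout, recall that $F_\bM(t)=\big((I-T_\bM)^{-1}\big)_{0,0}$ and $F'_\bM(t)=\big((I-T_\bM)^{-1}\big)_{1,1}$, and that by the construction of $\Gamma_\bM$ the root $0$ is a leaf: its only neighbor is vertex $1$, reached by an ascending step $0\to 1$ of weight $1$ and left by a descending step $1\to 0$ of weight $t\,y_1(\bM)$.

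First I would establish the re-rooting identity
\[
F_\bM(t)=1+t\,y_1(\bM)\,F'_\bM(t).
\]
This is a first-return (renewal) argument at the root. Expanding $F_\bM=\sum_{n\ge 0}(T_\bM^n)_{0,0}$ and decomposing each walk $0\to 0$ by its first return to $0$, a nonempty walk must begin with $0\to 1$, execute an arbitrary walk $1\to1$ avoiding $0$, and close with $1\to 0$. If $P(t)$ is the generating function of walks $1\to1$ on $\Gamma_\bM\setminus\{0\}$, this gives $F_\bM=1/(1-t\,y_1(\bM)P)$, while counting all walks $1\to1$ in the full graph (first return to $1$, with the extra excursion $1\to0\to1$ of weight $t\,y_1(\bM)$) gives $F'_\bM=P/(1-t\,y_1(\bM)P)$; substituting proves the identity. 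Equivalently, it is the first step of the reduction procedure following \eqref{genefromT}, performed at the bottom of the matrix.

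Next I would prove the key geometric identity $F'_{\bM_{\ell-1}}(t)=F_{\bM_\ell}(t)$, which says that the boundary mutation re-roots $\Gamma_{\bM_{\ell-1}}$ from vertex $0$ to vertex $1$. The point is that the moves (iii)--(iv) are the degenerate $\al=1$ cases of the bulk moves (a)/(b) of Figure \ref{fig:action}, in which $\bm_1=\emptyset$ and $\Gamma_1$ collapses to a single vertex. Because the lowest involved edge $2\al-1=1$ is horizontal and meets $\Gamma_1$ at the vertex $b'=1$ rather than the root $b=0$, the collapsed $\Gamma_1$ is $\{b'\}=\{1\}$. Hence the reduced matrix element $f$ shown invariant by the bulk computation under the weight transformation \eqref{weightmovone} (resp. \eqref{weightmovtwo}) is the one attached at vertex $1$, namely $\big((I-T)^{-1}\big)_{1,1}$; and the same computation exhibits this quantity as the vertex-$0$-rooted generating function of the mutated graph $\Gamma_{\bM_\ell}$, in which vertex $1$ of $\Gamma_{\bM_{\ell-1}}$ has become the new attachment point and $y_1(\bM_{\ell-1})$ the new bottom-edge weight. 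Since the weight transformation is precisely the cluster mutation $\bx_{\bM_{\ell-1}}\to\bx_{\bM_\ell}$, the two partition functions agree as series in $t$.

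Combining the two displays yields $F_{\bM_{\ell-1}}=1+t\,y_1(\bM_{\ell-1})\,F_{\bM_\ell}$ for each $\ell$; iterating over the boundary mutations $\ell=1,\dots,m_1$ along the canonical sequence reaches $F_\bM=F_{\bM_{m_1}}$ and proves the lemma. The main obstacle is the second step: one must track carefully how the root detaches and re-attaches under a boundary mutation, verifying that the bulk-invariance argument, applied with $\Gamma_1=\{1\}$, genuinely computes the vertex-$1$-rooted function $F'_{\bM_{\ell-1}}$ and delivers it as the vertex-$0$-rooted function $F_{\bM_\ell}$ of the new graph, rather than merely permuting weights within a fixed rooting. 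The first step and the final assembly are routine.
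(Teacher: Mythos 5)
Your proposal is correct and follows essentially the same route as the paper, which derives the lemma from exactly the two ingredients you isolate: the identity $F_\bM(t)=1+t\,y_1(\bM)F'_\bM(t)$ with $F'_\bM(t)=\big((I-T_\bM)^{-1}\big)_{1,1}$, and the observation that the boundary mutations are the degenerate $\al=1$ cases of the bulk moves with $\Gamma_1$ collapsed to the single vertex $b'=1$, so that the invariance $f=f'$ identifies the vertex-$1$-rooted function of $\Gamma_{\bM_{\ell-1}}$ with the vertex-$0$-rooted function of $\Gamma_{\bM_\ell}$. Your first-return justification of the re-rooting identity (valid because vertex $0$ is a leaf of every $\Gamma_\bM$, attached only to vertex $1$) is a useful elaboration of a step the paper merely asserts; the only quibble is that the new bottom-edge weight of $\Gamma_{\bM_\ell}$ is $y_1(\bM_\ell)=y_1(\bM_{\ell-1})+y_2(\bM_{\ell-1})$, while $y_1(\bM_{\ell-1})$ is the weight of the eliminated excursion $1\to 0\to 1$, i.e.\ the prefactor in the re-rooting formula.
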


This allows to interpret $R_{1,n+m_1}(\bx_\bM)$, expressed as a
function of the cluster variable $\bx_\bM$, in terms of the partition
function $F_\bM(t)$. Let us denote by $F(t)\Big\vert_{t^n}$ the
coefficient of $t^n$ in the series $F(t)$.

\begin{lemma}\label{rofF}
$$
R_{1,n+m_1}=R_{1,0} F_{\bM_0}(t)\Big\vert_{t^{n+m_1}}
=R_{1,0}\, y_1(\bM_0)y_1(\bM_1)...y_1(\bM_{m_1})\, F_\bM(t)\Big\vert_{t^n}
$$
\end{lemma}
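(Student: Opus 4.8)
The plan is to obtain both equalities almost formally, the first straight from the definitions and the second by iterating the re-rooting recursion of Lemma \ref{reroot}. For the first equality, recall that by definition $F_{\bM_0}(t)=R_{1,0}^{-1}F_1^{(r)}(t)=R_{1,0}^{-1}\sum_{n\geq 0}R_{1,n}t^n$, so extracting the coefficient of $t^{n+m_1}$ gives $R_{1,0}\,F_{\bM_0}(t)\big\vert_{t^{n+m_1}}=R_{1,n+m_1}$ for every $n\geq 0$. This step uses nothing beyond the definition of the generating function.

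For the second equality I would unfold Lemma \ref{reroot}. Substituting the relation $F_{\bM_{\ell-1}}(t)=1+t\,y_1(\bM_{\ell-1})F_{\bM_\ell}(t)$ into itself for $\ell=1,2,\dots,m_1$ telescopes to
\begin{equation*}
F_{\bM_0}(t)=\sum_{k=0}^{m_1-1}t^k\prod_{\ell=1}^{k}y_1(\bM_{\ell-1})
+t^{m_1}\Big(\prod_{\ell=1}^{m_1}y_1(\bM_{\ell-1})\Big)F_{\bM}(t),
\end{equation*}
where $F_\bM=F_{\bM_{m_1}}$ and the empty product is $1$. The first sum is a polynomial in $t$ of degree $m_1-1$, while the second term carries all the dependence on $F_\bM$, weighted by the product of the $m_1$ re-rooting factors $y_1(\bM_0),y_1(\bM_1),\dots,y_1(\bM_{m_1-1})$.

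I would then read off the coefficient of $t^{n+m_1}$. Since $n\geq 0$ we have $n+m_1\geq m_1$, so the polynomial remainder, of degree $m_1-1$, contributes nothing, and
\begin{equation*}
R_{1,0}\,F_{\bM_0}(t)\big\vert_{t^{n+m_1}}
=R_{1,0}\,\Big(\prod_{\ell=1}^{m_1}y_1(\bM_{\ell-1})\Big)\,F_\bM(t)\big\vert_{t^{n}}.
\end{equation*}
Combining this with the first equality yields the claim. The only point requiring care is the index bookkeeping: one must check that the telescoped product runs over exactly the $m_1$ paths $\bM_0,\dots,\bM_{m_1-1}$ preceding $\bM$ in the canonical mutation sequence, and that the remainder has degree strictly less than $m_1$, so that it is invisible to the coefficient of $t^{n+m_1}$ for $n\geq 0$. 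There is no genuine analytic obstacle here; the lemma is a formal consequence of Lemma \ref{reroot}, and the real content of the argument resides entirely in the re-rooting identity established there.
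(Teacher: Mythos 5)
Your proof is correct and follows the same route as the paper: the paper's own (one-line) proof simply invokes Lemma \ref{reroot} and "collects the successive multiplicative weights of each re-rooting," which is exactly your telescoping argument, spelled out. Your index bookkeeping — the product running over $y_1(\bM_0),\dots,y_1(\bM_{m_1-1})$, i.e.\ $m_1$ factors — is the correct reading (consistent with the prefactor $R_{1,0}y_1(\bM_0)\cdots y_1(\bM_{m_1-1})=R_{1,m_1}$ used later in Theorem \ref{mainRone}), so the terminal index $\bM_{m_1}$ appearing in the displayed statement should be understood as $\bM_{m_1-1}$.
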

\begin{proof}
We use Lemma \ref{reroot}. The prefactor is obtained by collecting the
successive multiplicative weights of each re-rooting.
\end{proof}

\subsubsection{Main theorem}

Let us summarize the results of this section in the following:

\begin{thm}\label{sumup}
The mutation of cluster variables $\mu_\al$ or $\mu_{\al+r}:\bx_\bM \to \bx_{\bM'}$
is equivalent to a rearrangement relating the continued fractions $F_\bM\to F_{\bM'}$
that generate weighted paths on the rooted target graphs $\Gamma_\bM$ and
$\Gamma_{\bM'}$. 
The edge weights of the corresponding skeleton trees, $\by(\bM)=\{y_1(\bM),...,y_{2r+1}(\bM)\}$ 
and $\by'=\by(\bM')$
are related through either of the two transformations \eqref{weightmovone}
or \eqref{weightmovtwo}, while all other weights remain the same.
\end{thm}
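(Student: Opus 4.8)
The plan is to assemble Theorem \ref{sumup} directly from the local computations already performed in this section, treating the two bulk mutation types and the boundary cases as the constituent pieces. The overall strategy is: (1) observe that any mutation $\mu_\al$ or $\mu_{\al+r}$ in our restricted set acts on the Motzkin path $\bM$ via one of the five elementary moves of Figure \ref{fig:mudec} (b); (2) show that for each move, the graphs $\Gamma_\bM$ and $\Gamma_{\bM'}$ differ only in a localized region, with all skeleton edge weights outside this region unchanged; and (3) invoke the rearrangement Lemma \ref{secmov} to verify that the path partition functions $F_\bM(t)$ and $F_{\bM'}(t)$ agree precisely when the affected weights transform by \eqref{weightmovone} or \eqref{weightmovtwo}. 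Since both $F_\bM(t)$ and $F_{\bM'}(t)$ compute the same object --- namely $R_{1,0}^{-1}F_1^{(r)}(t)$ expressed in two sets of initial data related by mutation --- equality of the partition functions is what forces the weight transformation.

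\medskip\noindent{\bf First I would} reduce to the elementary moves. Recall from the discussion preceding the theorem that every Motzkin path $\bM$ in the fundamental domain is reached from $\bM_0$ by the canonical sequence of mutations, and that it suffices to analyze the bulk moves (i)--(ii) and their boundary versions (iii)--(v). For a bulk move of type (a), the path decomposes as $\bm_1 \,|\, \Gamma(1) \,|\, \bm_2$ around an isolated vertex $\al$, and only the three skeleton weights $y_{2\al-1}, y_{2\al}, y_{2\al+1}$ are affected; the row-reduction of $I - T_{\Gamma_\bM}$ already carried out yields the continued-fraction element $f$, and that of $I - T_{\Gamma_{\bM'}}$ yields $f'$, with all the surrounding structure encoded in the common quantity $c'$. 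Applying Lemma \ref{secmov} with $a=y_{2\al-1}$, $b=y_{2\al}$, $c=y_{2\al+1}$, $d=c'$ gives $f=f'$ exactly under \eqref{weightmovone}. For a move of type (b), the analogous reduction produces $f$ and $f'$ in terms of $w_k$ (the $y_2,y_3,y_4$-independent part of $\varphi_k$ from \eqref{formphi}), and Lemma \ref{secmov} with $c = y_{2\al+1} + y_{2\al+2} w_k$ yields $f = f'$ precisely under \eqref{weightmovtwo}.

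\medskip\noindent{\bf The main obstacle} will be the bookkeeping in the type-(b) move, specifically justifying that the extra descending edges of $\Gamma(k)$ transform consistently when the skeleton weights transform by \eqref{weightmovtwo}. The subtlety is that the non-skeleton edge weights $y_{j,i}$ are not independent: they are the monomial ratios \eqref{solyij} of skeleton weights. I would need to confirm that the $*$-entries (the $ty_{j,i}'$ appearing in the transfer matrix $T_{\bM'}$) can indeed be eliminated using the proportionality \eqref{propconditwo}, so that the reduction collapses to the stated $2\times 2$ form and the dependence on $w_k$ is genuinely through the combination $y_{2\al+1} + y_{2\al+2} w_k$ alone. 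This is where the relations \eqref{propcondi}--\eqref{solyij} are indispensable, and where one must check that the transformed weights $y'_{j,i}$ computed from \eqref{weightmovtwo} via \eqref{solyij} are consistent with the graph $\Gamma_{\bM'} = \Gamma(k)$ having one more descending edge than $\Gamma(k-1)$.

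\medskip\noindent{\bf Finally I would} dispose of the boundary cases. Case (v) with $\al=r$ is literally type (a) with $\bm_2 = \emptyset$ and $\Gamma_2$ a single vertex, so \eqref{weightmovone} applies verbatim. Cases (iii)--(iv) with $\al=1$ require the re-rooting device of Lemma \ref{reroot}: since $\Gamma_1$ degenerates to a single vertex and the lower edge $2\al-1=1$ is horizontal, the rearrangement Lemma \ref{secmov} can only be applied to the partition function rooted at vertex $b'=1$ rather than $b=0$. The identity $F_\bM(t) = 1 + t\,y_1(\bM)\,F'_\bM(t)$ relates the two rootings, and Lemma \ref{rofF} then absorbs the accumulated prefactor $y_1(\bM_0)\cdots y_1(\bM_{m_1})$. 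With the re-rooting understood, the same application of Lemma \ref{secmov} as in the bulk case establishes $f=f'$ and hence the weight transformation. Collecting these cases, every mutation in the restricted set is shown to correspond to a continued-fraction rearrangement of the type \eqref{weightmovone} or \eqref{weightmovtwo}, which is exactly the assertion of the theorem.
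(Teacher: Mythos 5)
Your proposal is correct and follows essentially the same route as the paper: Theorem \ref{sumup} is stated there explicitly as a summary of the preceding subsections, and your assembly — bulk moves (a) and (b) via row-reduction of $I-T_\bM$ and Lemma \ref{secmov} (with $c=y_{2\al+1}$ resp. $c=y_{2\al+1}+y_{2\al+2}w_k$), the elimination of the $*$-entries through the proportionality relations \eqref{propcondi}--\eqref{solyij}, and the boundary cases handled by degeneration and re-rooting — is exactly the paper's argument.
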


\subsection{Weights and the mutation matrix $B$}
There is a simple expression for the edge weights of $\Gamma_\bM$ in
terms of the cluster variables $\bx_\bM$ and the mutation matrix
$B(\bM)$ at the seed $\bM$.  To specify all the edge weights
$\Gamma_\bM$, one need only specify
$y_1(\bM),y_2(\bM),...,y_{2r+1}(\bM)$, due to the relations
\eqref{propcondi} for the other weights.

\begin{thm}\label{weightexpression}
The weights $y_1(\bM),y_2(\bM),...,y_{2r+1}(\bM)$ of the skeleton tree
$\Gamma_\bM$ are the following Laurent monomials of the cluster
variable $\bx_\bM$:
\begin{eqnarray}
y_{2\al-1}(\bM)&=& 
{\lambda_{\al,m_\al}\over \lambda_{\al-1,m_{\al-1}}}\ , \ (\al=1,2,...,r+1  ),
\label{oddy}\\
y_{2\al}(\bM)&=& {\mu_{\al+1,m_\al+1}\over \mu_{\al,m_{\al}}}
\left(1-\delta_{m_\al,m_{\al+1}+1}(1-
{\lambda_{\al+1,m_{\al+1}}\over \lambda_{\al+1,m_{\al}}})\right)
\label{eveny} \\
&&\hskip1in \times 
\left(1-\delta_{m_{\al-1},m_{\al}+1}(1-
{\lambda_{\al-1,m_{\al}}\over \lambda_{\al-1,m_{\al-1}}})\right)\ ,
\ (\al=1,2,...,r), \nonumber
\end{eqnarray}
where
\begin{equation}\label{lamu}
\lambda_{\al,n}={R_{\al,n+1}\over R_{\al,n}} , \qquad
\mu_{\al,n}={R_{\al,n}\over R_{\al-1,n}}. 
\end{equation}
\end{thm}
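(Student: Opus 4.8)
The plan is to prove the formulas \eqref{oddy}--\eqref{eveny} by induction on the number of forward mutations needed to reach $\bM$ from the base path $\bM_0$, using the fact (from Lemma \ref{motseedgen} and the discussion of the moves (i)--(v) in Figure \ref{fig:mudec}) that every $\bM$ in the fundamental domain is obtained from $\bM_0$ by the restricted set of mutations $\mu_\al,\mu_{\al+r}$, each of which raises a single coordinate $m_\al$ by one unit at a local valley of the path. For the base case $\bM=\bM_0$ (all $m_\al=0$) I would evaluate the right-hand sides directly: since $\lambda_{\al,0}=R_{\al,1}/R_{\al,0}$ and every Kronecker delta $\delta_{m_\al,m_{\al+1}+1}=\delta_{0,1}$ vanishes, \eqref{oddy} collapses to $\lambda_{\al,0}/\lambda_{\al-1,0}=R_{\al-1,0}R_{\al,1}/(R_{\al,0}R_{\al-1,1})$ and \eqref{eveny} collapses to $\mu_{\al+1,1}/\mu_{\al,0}=R_{\al-1,0}R_{\al+1,1}/(R_{\al,0}R_{\al,1})$, which are exactly the weights $y_{2\al-1},y_{2\al}$ of Corollary \ref{hpcor}.

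For the inductive step I would take $\bM'=\mu_\al(\bM)$ (or $\mu_{\al+r}(\bM)$), so that $m_\beta'=m_\beta+\delta_{\beta,\al}$. The crucial observation is that such a forward mutation is admissible inside the fundamental domain only at a valley, forcing $m_\al\le\min(m_{\al-1},m_{\al+1})$ before the move; this is precisely the constraint that makes the correction factors collapse to definite values. The new cluster variable is fixed by the $Q$-system \eqref{renom}, $R_{\al,m_\al+2}=(R_{\al,m_\al+1}^2+R_{\al+1,m_\al+1}R_{\al-1,m_\al+1})/R_{\al,m_\al}$, which yields the key identity $\lambda_{\al,m_\al+1}-\lambda_{\al,m_\al}=R_{\al+1,m_\al+1}R_{\al-1,m_\al+1}/(R_{\al,m_\al}R_{\al,m_\al+1})$. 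I would then verify that the right-hand sides of \eqref{oddy}--\eqref{eveny} evaluated at $\bM'$ are obtained from those at $\bM$ by precisely the transformation \eqref{weightmovone} (when $\al$ is an isolated valley, case (a) of Figure \ref{fig:action}) or \eqref{weightmovtwo} (when $\al$ sits at the foot of a descending piece, case (b)).

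Concretely, the relation $y_{2\al-1}'=y_{2\al-1}+y_{2\al}$ amounts to checking $\lambda_{\al,m_\al+1}=\lambda_{\al,m_\al}+\lambda_{\al-1,m_{\al-1}}\,y_{2\al}(\bM)$. Here the valley condition $m_\al\le m_{\al+1}$ forces the $(\al+1)$-correction in $y_{2\al}$ to equal $1$, while $m_\al\in\{m_{\al-1}-1,m_{\al-1}\}$ forces the $(\al-1)$-correction to equal $\lambda_{\al-1,m_\al}/\lambda_{\al-1,m_{\al-1}}$ in either subcase; substituting these and the $Q$-system identity reduces both sides to the single Laurent monomial $R_{\al+1,m_\al+1}R_{\al-1,m_\al+1}/(R_{\al,m_\al}R_{\al,m_\al+1})$. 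The relations for $y_{2\al}',y_{2\al+1}'$ (and $y_{2\al+2}'$ in case (b)) are checked the same way, and one must also confirm that the neighbouring skeleton weights are genuinely unchanged once the relabeling of the skeleton tree induced by the reshaping of the path is accounted for via \eqref{propcondi}--\eqref{solyij}.

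The main obstacle is exactly this last point. The delta corrections are what simultaneously guarantee that each $y_i(\bM)$ is a Laurent monomial in $\bx_\bM$ (apparent occurrences of non-seed variables such as $R_{\al,m_\al+2}$ cancel precisely because of the corrections) and that the formulas transform compatibly with Theorem \ref{sumup}. When $\al$ lies at the foot of a strictly descending piece, raising $m_\al$ reshapes that piece and converts certain skeleton edges into extra descending edges and conversely, so the bookkeeping of which label carries which weight must be done carefully through \eqref{solyij}; verifying that this reshuffling agrees with \eqref{weightmovtwo}, rather than a routine monomial identity, is the delicate part. A conceptually cleaner route, matching the title of this subsection, is to identify $y_i(\bM)$ with the normalized coefficient variables $\hat y_j(\bM)=\prod_i (x_{\bM})_i^{B(\bM)_{ij}}$ attached to the seed and to observe that \eqref{weightmovone}--\eqref{weightmovtwo} are their $y$-mutations; the explicit monomials \eqref{oddy}--\eqref{eveny} then follow by reading off the columns of $B(\bM)$, with the deltas recording the signs of its entries.
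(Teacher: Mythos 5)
Your proposal is correct and follows essentially the same route as the paper: the weights are uniquely determined by their values \eqref{qtyzero} at $\bM_0$ together with the mutation recursions \eqref{weightmovone} and \eqref{weightmovtwo} (applied in the cases $m_{\al-1}=m_\al<m_{\al+1}$ and $m_{\al-1}=m_\al=m_{\al+1}$), and one verifies via the $Q$-system that the monomials \eqref{oddy}--\eqref{eveny} satisfy both the base case and these recursions. The paper compresses the verification into ``one checks,'' whereas you supply the key identity $\lambda_{\al,m_\al+1}-\lambda_{\al,m_\al}=R_{\al+1,m_\al+1}R_{\al-1,m_\al+1}/(R_{\al,m_\al}R_{\al,m_\al+1})$ explicitly, which is exactly the computation intended.
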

\begin{proof}
Weights are determined by their initial values
\eqref{qtyzero} at seed $\bM_0$ and by the recursion relations
\eqref{weightmovone} (in the case  $m_{\al-1}=m_\al < m_{\al+1}$) and
\eqref{weightmovtwo} (in the case $m_{\al-1}=m_\al = m_{\al+1}$) for
each mutation in our restricted set, all other weights being invariant.


Using the $Q$-system \eqref{renom}, one checks that  the relations 
\eqref{weightmovone} and
\eqref{weightmovtwo} are satisfied by
the weights
\eqref{oddy} and \eqref{eveny}. 
\end{proof}
Note that eqs.\eqref{oddy}-\eqref{eveny} involve only the
data $(R_{\al,m_\al},R_{\al,m_{\al+1}})_{\al=1}^r$, the
cluster variables at the seed $\bM$.

\begin{example}\label{tIpathweight}
Consider ascending Motzkin paths as in Example \ref{gpathex}.
The weights from Theorem
\ref{weightexpression} are 
\begin{eqnarray}
y_{2\al-1}(\bM)&=& {\lambda_{\al,m_\al}\over
\lambda_{\al-1,m_{\al-1}}}\ ,  \quad
y_{2\al}(\bM)= {\mu_{\al+1,m_\al+1}\over \mu_{\al,m_{\al}}}\ ,
 \label{exampleweights}
\end{eqnarray}
since only mutations of type (a) of the previous subsection are used.
In the particular case of $\bM=\bM_0$,
Equations \eqref{exampleweights} reduce to \eqref{qtyzero}. 

In the case of the strictly ascending path $\bM_{max}=(0,1,...,r-1)$,
with cluster variables
$(R_{\al,\al-1},R_{\al,\al})_{\al\in I_r}$, 
\begin{equation}
x_{2\al-1}:=y_{2\al-1}(\bM_{max})= {R_{\al-1,\al-2}R_{\al,\al}\over
R_{\al-1,\al-1}R_{\al,\al-1}},\quad
x_{2\al}:=y_{2\al}(\bM_{max})=  {R_{\al-1,\al-1}R_{\al+1,\al}\over
R_{\al,\al-1}R_{\al,\al} }\ .
\label{termsx}
\end{equation}
The graph $\Gamma_{\bM_{max}}={\tilde H}_r$,
the chain of $2r+2$ vertices. The partition function is
\begin{equation}\label{stielF}
F_{\bM_{max}}(t)=F_1^{(r)}(t)=\sum_{n\geq 0} t^nR_{1,n}
={R_{1,0}\over 1-t {x_1\over 1-t {x_2 \over 1- t {x_3\over  \ \ \ \ \  {\ddots \over 1-t {x_{2r}\over 1-t x_{2r+1} }}}}}}
\end{equation}
\end{example}

\begin{remark}\label{stielrem}
Recalling the discrete Wronskian expression $R_{\al,n}=\det_{1\leq
i,j\leq \al}( R_{1,n+i+j-\al-1})$, we may view the result
\eqref{termsx}-\eqref{stielF} as a particular case of a theorem of
Stieltjes \cite{STIEL1} (see also \cite{STIEL2}), which expresses the
formal power series expansion $F(\lambda)=\sum_{k\geq 0} (-1)^k
a_k/\lambda^{k+1}$ around $\infty$ for any sequence $a_k$,
$k\in\Z_{\geq 0}$, as the continued fraction
$$
F(\lambda)={1\over \beta_1 \lambda +{1\over \beta_2+{1\over \beta_3 \lambda +\cdots }}} 
$$
where $\beta_{2k}=(\Delta_k^{0})^2/(\Delta_k^1 \Delta_{k-1}^1)$, 
$\beta_{2k+1}=(\Delta_k^1)^2/(\Delta_k^0\Delta_{k+1}^0)$, and 
\begin{equation}\label{hank}
\Delta_k^m=\det_{1\leq i,j\leq k}(a_{m+i+j-2})
\end{equation}
are Hankel determinants involving the sequence $a_k$.
Indeed, writing $\lambda =-1/t$ and taking $a_k=R_{1,k}$, we easily identify the two 
continued fraction expressions $F(\lambda)$ and $F_1^{(r)}(t)$ of 
eq.\eqref{stielF} upon taking $x_k={1\over \beta_{k}\beta_{k+1}}$, while $1/\beta_1=a_0=R_{1,0}$ yields the overall
prefactor. Note that the continued fraction is actually finite, as $1/\beta_{2r+3}\propto \Delta_{r+2}^0=0$.
\end{remark}

The weights computed in Theorem \ref{weightexpression} are
simply related to the exchange matrix $B(\bM)$ of the 
cluster algebra at the seed $\bM$. 

\begin{thm}\label{bmatexpression}
The exchange matrix $B(\bM)$ at the point $\bM=\{(m_\al,\al)\}_{\al=1}^r$ reads, for $1\leq i,j\leq r$:
\begin{eqnarray}
&B(\bM)_{i,j}&=(-1)^{\lfloor {m_j+1\over 2} \rfloor} 
\left((-1)^{\lfloor {m_i\over 2} \rfloor}-(-1)^{\lfloor {m_j\over 2} \rfloor}\right)\delta_{|i-j|,1}\nonumber \\
&B(\bM)_{i,j+r}&=
(-1)^{\lfloor {m_i+1\over 2} \rfloor+\lfloor {m_j\over 2} \rfloor+1} C_{i,j}\nonumber\\
&B(\bM)_{i+r,j}&=
(-1)^{\lfloor {m_i\over 2} \rfloor+\lfloor {m_j+1\over 2} \rfloor} C_{i,j}\nonumber\\
&B(\bM)_{i+r,j+r}&=(-1)^{\lfloor {m_i\over 2} \rfloor} 
\left((-1)^{\lfloor {m_j+1\over 2} \rfloor}-(-1)^{\lfloor {m_i+1\over 2} \rfloor}\right)\delta_{|i-j|,1}
\label{Bmatrix}
\end{eqnarray}
where $C$ is the Cartan matrix of $A_r$.
\end{thm}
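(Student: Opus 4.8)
The plan is to prove the formula by induction on the elementary mutations that build $\bM$ from $\bM_0$, using the matrix mutation rule \eqref{Bmut}. Throughout I abbreviate $\sigma_\al=(-1)^{\lfloor m_\al/2\rfloor}$ and $\tau_\al=(-1)^{\lfloor(m_\al+1)/2\rfloor}$, so that the claim reads $B(\bM)_{i,j}=\tau_j(\sigma_i-\sigma_j)\delta_{|i-j|,1}$, $B(\bM)_{i,j+r}=-\tau_i\sigma_j C_{i,j}$, $B(\bM)_{i+r,j}=\sigma_i\tau_j C_{i,j}$ and $B(\bM)_{i+r,j+r}=\sigma_i(\tau_j-\tau_i)\delta_{|i-j|,1}$ for $1\le i,j\le r$. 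First I would settle the base case $\bM=\bM_0$: there all $m_\al=0$, so $\sigma_\al=\tau_\al=1$, the two diagonal blocks vanish, the upper-right block is $-C$ and the lower-left block is $C$, reproducing $B_0$ of Theorem \ref{Qcluster}. I would also record the arithmetic identity $\lfloor m/2\rfloor+\lfloor(m+1)/2\rfloor=m$ and verify skew-symmetry of the claimed $B(\bM)$ directly, the only nontrivial point being that on each off-diagonal entry of the diagonal blocks the Motzkin condition $|m_i-m_j|\le 1$ for $|i-j|=1$ forces $\sigma_i=\sigma_j$ or $\tau_i=\tau_j$, killing the would-be antisymmetry defect.

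For the inductive step, take an elementary forward mutation raising a single entry $m_\al\mapsto m_\al+1$, as in the local moves of Figure \ref{fig:mudec}(b). The essential input is the parity convention preceding Lemma \ref{motseedgen}: this mutation acts in direction $k=\al$ when $m_\al$ is even and $k=\al+r$ when $m_\al$ is odd. Combined with the floor arithmetic, raising $m_\al$ by one flips exactly one of $\sigma_\al,\tau_\al$ and fixes the other: mutating at $k=\al$ flips $\tau_\al$ (and fixes $\sigma_\al$), while mutating at $k=\al+r$ flips $\sigma_\al$ (and fixes $\tau_\al$); all other $\sigma_\beta,\tau_\beta$ are untouched. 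Because every block of $B(\bM)$ is supported on $|i-j|\le 1$ or on the tridiagonal Cartan matrix, all the affected entries involve only the indices $\al-1,\al,\al+1$ and their $+r$ shifts, so the verification is purely local, and reduces to confirming that \eqref{Bmut} reproduces precisely the single-sign flip.

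The changed entries of the formula are exactly those containing the flipped sign, and they split into two groups. Those lying in row or column $k$ — the Cartan-block entries and the entries of the matching diagonal block — simply reverse sign, in accordance with the first clause of \eqref{Bmut}; the remaining row/column-$k$ entries (coming from the other diagonal block) contain the unflipped sign, but the local constraint $m_{\al-1}=m_\al$, $m_{\al+1}\in\{m_\al,m_\al+1\}$ forces them to vanish, so their prescribed reversal is vacuous. The changed entries lying outside row/column $k$ must instead be produced by the quadratic term $\mathrm{sign}(B_{i,k})[B_{i,k}B_{k,j}]_+$ of \eqref{Bmut}. As a representative check, for $k=\al$ with $m_\al$ even the lower-right entry $(i,j)=((\al-1)+r,\al+r)$ starts at $0$ and receives $\mathrm{sign}(B_{i,\al})[B_{i,\al}B_{\al,j}]_+$ with $B_{i,\al}=-\sigma_{\al-1}\tau_\al$ and $B_{\al,j}=-2\sigma_\al\tau_\al$, giving the correction $-2$; this matches the new value $\sigma_{\al-1}(\tau'_\al-\tau'_{\al-1})=-2$ coming from $\tau_\al\mapsto-\tau_\al$, the evaluation using $C_{\al,\al}=2$, $C_{\al,\al\pm1}=-1$, $\sigma_\al^2=\tau_\al^2=1$ and $\sigma_{\al-1}=\sigma_\al$.

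The main obstacle is to check, uniformly over the finitely many local configurations (bulk moves (a),(b) and boundary moves (iii)--(v) of Figures \ref{fig:mudec}--\ref{fig:action}, each for the two parities of $m_\al$), that the entries the formula leaves unchanged receive a vanishing quadratic correction. The delicate ones are the potentially spurious far-off-diagonal entries such as $((\al-1)+r,(\al+1)+r)$, whose correction is controlled by the sign of $B_{i,k}B_{k,j}=-\sigma_{\al-1}\sigma_{\al+1}$ when $k=\al$ (respectively $-\tau_{\al-1}\tau_{\al+1}$ when $k=\al+r$). The point — and the reason the parity-dependent choice of direction is exactly right — is that in every admissible configuration the Motzkin constraint forces $\sigma_{\al-1}=\sigma_{\al+1}$ for $k=\al$ (respectively $\tau_{\al-1}=\tau_{\al+1}$ for $k=\al+r$), so $B_{i,k}B_{k,j}\le 0$, the correction is $0$, and the tridiagonal support is preserved. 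Since every seed of $\mathcal{G}_r$ is joined to $\bM_0$ by a sequence of such elementary mutations (Lemma \ref{motseedgen} and Section \ref{mainpro}), and \eqref{Bmut} is an involution so backward mutations are handled identically, the induction yields the formula for all $\bM$.
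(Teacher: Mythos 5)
Your proposal is correct and follows essentially the same route as the paper's proof: induction from $B(\bM_0)$ over the restricted forward mutations, with the parity of $m_\al$ dictating the direction $\al$ versus $\al+r$, the sign reversal in row/column $k$ accounted for by the flip of exactly one of $(-1)^{\lfloor m_\al/2\rfloor}$, $(-1)^{\lfloor (m_\al+1)/2\rfloor}$, and the quadratic term of \eqref{Bmut} shown to vanish except on the adjacent diagonal-block entries, where the Motzkin constraints $m_{\al-1}=m_\al\leq m_{\al+1}\leq m_\al+1$ force the $\pm 2$ corrections to match the formula. Your representative computations (e.g. the entry $((\al-1)+r,\al+r)$ and the vanishing of the far-off-diagonal correction via $\sigma_{\al-1}=\sigma_{\al+1}$) are the same checks the paper performs, up to choosing a different but symmetric-equivalent neighbor.
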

\begin{proof}
By induction. The theorem is true when $\bM=\bM_0$, where
$B_{\bM_0}=\left(\begin{matrix}0 & -C^t \\ C & 0\end{matrix}\right)$.

Assuming it is true for some $\bM$ with $m_{\al-1}=m_\al \leq
m_{\al+1}$, let us prove it for $\bM'$, with
$m'_\beta=m_\beta+\delta_{\beta,\al}$. We distinguish according
to the parity of $m_\al$: if $m_\al$ is even, we use the mutation
$\mu_\al$, otherwise we use $\mu_{\al+r}$.  

Assume $m_\al$ even. Under the mutation $\mu_\al$, the matrix
elements $B(\bM)_{i,\al}$ and $B(\bM)_{\al,j}$ are negated. This is
compatible with Equation \eqref{Bmatrix} by noting that $\lfloor
{m_\al'\over 2}\rfloor =\lfloor {m_\al\over 2}\rfloor$ and that
$\lfloor {m_\al'+1\over 2}\rfloor =\lfloor {m_\al+1\over 2}\rfloor+1$,
which gives the expected extra minus sign. For the
other entries of $B(M)$, we use \eqref{Bmut}:
$$
B(\bM')_{i,j}=B(\bM)_{i,j} +{\rm sgn}( B(\bM)_{i,\al})
[B(\bM)_{i,\al}B(\bM)_{\al,j}]_+
$$
with $i,j\neq \al$.  Assume $j\leq r$. Then $B(\bM)_{\al,j}\neq 0$
only if $\lfloor {m_j\over 2} \rfloor={m_\al\over 2} \pm 1$, while
$|i-\al|=|j-\al|=1$.  Due to the Motzkin path condition, this is only
possible if $m_j=m_\al-1$, but is impossible, as we must have
$m_{\al+1},m_{\al-1}\geq m_\al$. Therefore 
$B_{i,j}(\bM)=B_{i,j}(\bM')$.  Similarly, if
$i\leq r$, we have the same conclusion. 

If $i,j>r$, we write $i=i'+r$ and $j=j'+r$, with
$i',j'\leq r$. Then
$$
B(\bM)_{i'+r,\al}B(\bM)_{\al,j'+r}=-C_{i',\al}C_{\al,j'}(-1)^{\lfloor
{m_{i'}\over 2} \rfloor+\lfloor {m_{j'}\over 2} \rfloor}
$$ 
is positive only if $i'=\al$ and $j'=\al\pm 1$, in which case (i)
$m_{j'}=m_\al$ or (ii) $j'=\al+1$ and $m_{j'}=m_{\al+1}=m_\al+1$, (or
with $i'$ and $j'$ interchanged).  Then
$B(\bM)_{i'+r,\al}B(\bM)_{\al,j'+r}=2$.  In the case (i), we have
$B_{\al+r,j'+r}(\bM)=0$, hence $B_{\al+r,j'+r}(\bM')=0+2=2$, compatible
with Equation \eqref{Bmatrix}, as $\lfloor {m_\al'+1\over 2}\rfloor =\lfloor
{m_\al+1\over 2}\rfloor +1$.  In the case (ii), we have
$B_{\al+r,\al+1+r}(\bM)=-2$, hence $B_{\al+r,\al+1+r}(\bM')=-2+2=0$,
also in agreement with Equation \eqref{Bmatrix}, as ${m_{\al+1}+1\over 2}
={m_\al\over 2}+1=\lfloor {m_\al'+1\over 2}\rfloor$.

  The case of
$m_\al$ odd is treated analogously.
\end{proof}

\begin{remark}
It is interesting to note that the $B$-matrices of Theorem
\ref{bmatexpression} only have entries in
$\{0,\pm1,\pm2\}$. This is true only for the cluster subgraph
${\mathcal G}_r$, as the entries grow arbitrarily in the
complete cluster graph ${\mathbb T}_{2r}$.  We also note the
remarkable property, that the sum of the four blocks $r\times r$ of
the $B$-matrices always vanishes, namely that
$B_{i,j}+B_{i+r,j}+B_{i,j+r}+B_{i+r,j+r}=0$ for $i,j=1,2,...,r$.
\end{remark}

The sequence $\Big(
(R_{\al,m_\al})_{\al=1}^r ,(R_{\al,m_\al+1})_{\al=1}^r\Big)$ and the
sequence of cluster variables $x_\bM=(R_{\rm even},R_{\rm odd})
\equiv (x_i(\bM))_{i=1}^{2r}$  (where the $R_{\al,m}$ are listed
first for the even entries in $m$ and then for the odd ones) are
related via a permutation $\sigma_\bM$:
\begin{equation}\label{permuseed}
\sigma_\bM(\al)=\left\{ \begin{matrix} 
\al & {\rm if} \ m_\al \ {\rm is}\ {\rm even} \\
\al+r & {\rm if} \ m_\al \ {\rm is}\ {\rm odd}, \ {\rm and} \ \al\leq r\\
\al-r & {\rm if} \ m_\al \ {\rm is}\ {\rm odd}, \ {\rm and} \ \al > r
\end{matrix} \right.
\end{equation}
We have the following expression:
\begin{thm}\label{bmatweight}
The weights computed in Theorem \ref{weightexpression} are related to the exchange matrix of the
cluster algebra $B(\bM)$:
\begin{eqnarray} 
{y_{2\al-1}(\bM)\over y_{2\al}(\bM)}=\prod_{i=1}^{2r} x_i(\bM)^{B(\bM)_{i,\sigma_\bM(\al)}} ,\qquad
{y_{2\al}(\bM)\over y_{2\al+1}(\bM)}=\prod_{i=1}^{2r}
x_i(\bM)^{B(\bM)_{i,\sigma_\bM(\al+r)}},~(\al\in I_r.) \label{relaBy}
\end{eqnarray}

\end{thm}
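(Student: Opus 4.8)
The right-hand side of \eqref{relaBy} is the cluster $\hat y$-monomial $\hat y_k=\prod_{i=1}^{2r}x_i(\bM)^{B(\bM)_{i,k}}$ in the direction $k$. The plan is to show that the two directions attached to the root $\al$, namely $k=\sigma_\bM(\al)$ and $k=\sigma_\bM(\al+r)$, produce exactly the two consecutive weight ratios $y_{2\al-1}/y_{2\al}$ and $y_{2\al}/y_{2\al+1}$. By \eqref{permuseed} the index $\sigma_\bM(\al)$ equals $\al$ when $m_\al$ is even and $\al+r$ when $m_\al$ is odd, i.e.\ it is precisely the column of $B(\bM)$ negated by the forward mutation at root $\al$ (compare the proof of Theorem \ref{bmatexpression}); since the pair $\{\sigma_\bM(\al),\sigma_\bM(\al+r)\}$ is always $\{\al,\al+r\}$, the two relations in \eqref{relaBy} exhaust the two cluster directions belonging to $\al$.

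The most direct route is to evaluate both sides from the closed forms already in hand. First I would read off, from the block formulas \eqref{Bmatrix}, the support of the column $\sigma_\bM(\al)$ of $B(\bM)$: because every entry carries a factor $\delta_{|i-j|,1}$ or a Cartan entry $C_{i,j}$, only rows indexed by the roots $\al-1,\al,\al+1$ contribute, so $\hat y_{\sigma_\bM(\al)}$ is a Laurent monomial in the six variables $R_{\beta,m_\beta}$, $R_{\beta,m_\beta+1}$ with $\beta\in\{\al-1,\al,\al+1\}$. Substituting $x_{\sigma_\bM(\beta)}=R_{\beta,m_\beta}$ and $x_{\sigma_\bM(\beta+r)}=R_{\beta,m_\beta+1}$, and rewriting everything through $\lambda_{\beta,n}=R_{\beta,n+1}/R_{\beta,n}$ and $\mu_{\beta,n}=R_{\beta,n}/R_{\beta-1,n}$ of \eqref{lamu}, reduces the comparison with \eqref{oddy}--\eqref{eveny} to a matching of exponents.

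The main obstacle is reconciling how the two sides encode the local shape of $\bM$. The weight $y_{2\al}(\bM)$ in \eqref{eveny} carries the correction factors $1-\delta_{m_\al,m_{\al+1}+1}(\cdots)$ and $1-\delta_{m_{\al-1},m_\al+1}(\cdots)$, which switch on exactly when the profile descends at a neighbour of $\al$, whereas the entries \eqref{Bmatrix} see the profile only through the parities $(-1)^{\lfloor m_\beta/2\rfloor}$. I would therefore run a case analysis over the admissible local profiles $(m_{\al-1},m_\al,m_{\al+1})$ — each neighbour equal to $m_\al$ or $m_\al\pm1$ by the Motzkin constraint — together with the parity of $m_\al$, and check in each case that the $\delta$-corrections reproduce the sign pattern of the relevant column, after which the $Q$-system \eqref{renom} collapses the monomial to the stated ratio. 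This bookkeeping is the bulk of the proof; the boundary cases $\al=1$ and $\al=r$ (where a neighbour is absent and $C$ truncates) are handled separately.

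To organize the casework cleanly I would phrase it as an induction along the canonical mutation sequence from $\bM_0$ to $\bM$. The base case is immediate: at $\bM_0$ one has $\sigma_{\bM_0}=\mathrm{id}$, $x_\al=R_{\al,0}$, $x_{\al+r}=R_{\al,1}$, and the exchange matrix is $B_0$ of \eqref{initseed}; column $\al$ gives $\prod_i R_{i,1}^{C_{i,\al}}=R_{\al,1}^2/(R_{\al-1,1}R_{\al+1,1})$ and column $\al+r$ gives $\prod_i R_{i,0}^{-C_{i,\al}}=R_{\al-1,0}R_{\al+1,0}/R_{\al,0}^2$, which are respectively $y_{2\al-1}/y_{2\al}$ and $y_{2\al}/y_{2\al+1}$ computed from \eqref{qtyzero}. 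For the inductive step I would combine the weight transformations \eqref{weightmovone}--\eqref{weightmovtwo}, the matrix mutation \eqref{Bmut}, and the standard $\hat y$-rule $\hat y_k\mapsto\hat y_k^{-1}$, $\hat y_j\mapsto\hat y_j\,\hat y_k^{[B_{kj}]_+}(1+\hat y_k)^{-B_{kj}}$ for $j\neq k$, and verify that a single forward step $\bM\to\mu_\al(\bM)$ sends the ratio identified at $\al$ to its inverse (the parity of $m_\al$ flips, swapping $\sigma_\bM(\al)$ with $\sigma_\bM(\al+r)$) and adjusts the neighbouring ratios consistently; this is exactly the rearrangement underlying Theorem \ref{sumup}. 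The step reduces to the same local profile analysis, so no genuinely new difficulty arises beyond it.
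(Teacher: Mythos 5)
The paper states Theorem \ref{bmatweight} with no proof at all: it is presented as a direct consequence of the closed formulas in Theorems \ref{weightexpression} and \ref{bmatexpression}, so there is no argument in the text to compare yours against. Your first route (substitute \eqref{oddy}--\eqref{eveny} and \eqref{Bmatrix}, reduce via $\lambda,\mu$ and the $Q$-system, and run the finite case analysis over the local profiles $(m_{\al-1},m_\al,m_{\al+1})$ and the parity of $m_\al$) is exactly the verification the authors implicitly leave to the reader, and it works: the column $\sigma_\bM(\al)$ of $B(\bM)$ is supported on the rows attached to $\al-1,\al,\al+1$, so both sides are monomials in the same handful of seed variables. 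Your inductive reformulation is a genuinely nicer organization of the same computation: the base case at $\bM_0$ is checked correctly (column $\al$ of $B_0$ gives $R_{\al,1}^2/(R_{\al-1,1}R_{\al+1,1})=y_{2\al-1}/y_{2\al}$, column $\al+r$ gives $R_{\al-1,0}R_{\al+1,0}/R_{\al,0}^2=y_{2\al}/y_{2\al+1}$ from \eqref{qtyzero}), and the step only requires matching \eqref{weightmovone}--\eqref{weightmovtwo} against the standard $\hat y$-dynamics; e.g.\ from \eqref{weightmovone} one gets $y'_{2\al}/y'_{2\al+1}=(y_{2\al-1}/y_{2\al})^{-1}$ and $y'_{2\al-1}/y'_{2\al}=(y_{2\al}/y_{2\al+1})(1+y_{2\al-1}/y_{2\al})^2$, which is precisely $\hat y_k\mapsto\hat y_k^{-1}$ and the $B_{k,\al+r}=\mp2$ rule, with the index swap accounted for by the parity flip of $m_\al$ in $\sigma_\bM$. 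What this buys over brute substitution is that the neighbour checks reduce to reading off a single entry $B(\bM)_{\sigma_\bM(\al),j}\in\{0,\pm1,\pm2\}$ per case rather than recomputing whole monomials, and it makes transparent why the $\delta$-correction factors in \eqref{eveny} must appear. The remaining neighbour/boundary casework you defer is routine and does not hide any obstruction, so the proposal is a valid proof.
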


\subsection{Positivity of the cluster variables $R_{1,n}$}\label{onefposit}
The variables $R_{1,n}$ can now be shown to be positive Laurent
polynomials of any possible set of cluster variables $\bx_\bM$.  Note that the
graphs $\Gamma_\bM$ are invariant under translations of $\bM$, but the
same is not true the weights $\by(\bM)$.

\begin{thm}\label{mainRone}
For any Motzkin path $\bM=\{(m_\al,\al)\}_{\al=1}^r$ with $r$ vertices
and any $n\geq 0$, the solution $R_{1,n+m_1}(\bx_\bM)$ of the $A_r$
$Q$-system, expressed as a function of $\bx_\bM$, is equal to
$R_{1,m_1}$ times the generating function for weighted paths on
$\Gamma_\bM$, starting and ending at the root, with $n$ down steps,
and with weights $y_e(\bM)$ given by Theorem \ref{weightexpression},
namely:
$$
R_{1,n+m_1}(\bx_\bM)=R_{1,m_1}\, F_\bM(t)\Big\vert_{t^n}
$$
Moreover,  $R_{1,n+m_1}$ is expressed as an explicit Laurent polynomial 
of the cluster variable $\bx_\bM$,
with non-negative integer coefficients, for all $n\in \Z$.
\end{thm}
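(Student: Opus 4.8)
The plan is to assemble the machinery of Sections~\ref{posit}--\ref{pathinter} rather than to prove anything genuinely new: by this point the path model, the fraction-rearrangement lemmas, and the explicit weight formulas are all in place, and the theorem is their synthesis. I would start from the known base case. At $\bM=\bM_0$ one has $\Gamma_{\bM_0}=\wG_r$ (Example~\ref{gpathex}), and the weights of Theorem~\ref{weightexpression} specialize to \eqref{qtyzero} (Example~\ref{tIpathweight}); Corollary~\ref{grpathrone} then reads $R_{1,n}=R_{1,0}\,F_{\bM_0}(t)\big\vert_{t^n}$, which is exactly the claimed identity in the case $m_1=0$. The content is therefore to transport this identity to an arbitrary $\bM$ in the fundamental domain.

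To do so I would invoke the canonical mutation sequence of Section~\ref{mainpro} realizing $\bx_{\bM_0}\to\bx_\bM$. Theorem~\ref{sumup} guarantees that each bulk mutation ($\al>1$) leaves the root-to-root generating function unchanged while re-expressing its weights as the Laurent monomials of Theorem~\ref{weightexpression} in the new cluster variables, and that each boundary mutation ($\al=1$) effects the re-rooting of Lemma~\ref{reroot}. Iterating the re-rooting relation $F_{\bM_{\ell-1}}(t)=1+t\,y_1(\bM_{\ell-1})F_{\bM_\ell}(t)$ over the $m_1$ boundary steps produces Lemma~\ref{rofF},
\[
R_{1,n+m_1}=R_{1,0}\,\Big(\prod_{\ell=0}^{m_1-1} y_1(\bM_\ell)\Big)\,F_\bM(t)\big\vert_{t^n}.
\]
The one short computation I would carry out is the prefactor: since $R_{0,n}=1$ forces $\lambda_{0,\cdot}=1$, formula \eqref{oddy} gives $y_1(\bM_\ell)=\lambda_{1,\ell}=R_{1,\ell+1}/R_{1,\ell}$ with $m_1(\bM_\ell)=\ell$, so the product telescopes to $R_{1,m_1}/R_{1,0}$ and the prefactor collapses to $R_{1,m_1}$, yielding the stated formula $R_{1,n+m_1}(\bx_\bM)=R_{1,m_1}\,F_\bM(t)\big\vert_{t^n}$.

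For positivity when $n\geq 0$, I would note that every skeleton weight in Theorem~\ref{weightexpression} is a positive Laurent monomial in $\bx_\bM$ — the Kronecker-delta factors in \eqref{eveny}, when active, merely replace one monomial by another — and that by \eqref{solyij} the derived edge weights $y_{i,j}$ are products of these, hence again positive Laurent monomials. Therefore $F_\bM(t)\big\vert_{t^n}$, a sum over lattice paths each weighted by a product of such monomials, is a positive Laurent polynomial in $\bx_\bM$; multiplying by the cluster variable $R_{1,m_1}$, itself an entry of $\bx_\bM$, preserves this. Finally, to reach all $n\in\Z$ I would handle the indices below $m_1$ by the time-reversal symmetry of Lemma~\ref{firstrem}, exactly as in Corollary~\ref{positrone}: time reversal sends $R_{1,m}\mapsto R_{1,-m+1}$ and swaps the two halves of the seed, so positivity on one side transfers to the other.

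The main obstacle I anticipate is purely bookkeeping: one must verify that the canonical sequence cleanly separates into bulk steps, which only relabel weights, and exactly $m_1$ boundary steps with $m_1(\bM_\ell)=\ell$, so that the re-rooting factors telescope correctly. Once Lemmas~\ref{reroot}--\ref{rofF} are granted, no further analytic input is needed, and the theorem is essentially a corollary of the transfer-matrix reduction together with the weight formulas already established.
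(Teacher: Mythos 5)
Your argument for the identity $R_{1,n+m_1}(\bx_\bM)=R_{1,m_1}\,F_\bM(t)\big\vert_{t^n}$ and for positivity when $n\geq 0$ is exactly the paper's proof: reduce to the fundamental domain, invoke Lemma~\ref{rofF}, and telescope the re-rooting prefactor $R_{1,0}\,y_1(\bM_0)\cdots y_1(\bM_{m_1-1})$ to $R_{1,m_1}$ via \eqref{oddy}, then read off positivity from the fact that all edge weights of Theorem~\ref{weightexpression} and \eqref{solyij} are positive Laurent monomials. No complaints there.

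The one place your proposal is too loose is the extension to $n<0$. You write that time reversal ``swaps the two halves of the seed, so positivity on one side transfers to the other, exactly as in Corollary~\ref{positrone}.'' That description is literally correct only for the seed $\bx_0$ (or its translates), where Lemma~\ref{firstrem} exchanges $(R_{\al,0})$ and $(R_{\al,1})$ within the \emph{same} cluster. For a general Motzkin path $\bM$, the reflection $n\to -n$ sends the seed $\bx_\bM$ to the seed $\bx_{\bM^t}$ indexed by the \emph{reflected} path $m_\al^t=-(m_\al+1)$, whose entries are genuinely different cluster variables; so what you get is the statement ``if $R_{1,n+m_1}=f(\bx_\bM)$ then $R_{1,-n-m_1}=f(\bx_{\bM^t})$,'' not a statement about $\bx_\bM$ itself. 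To close the argument you must additionally translate $\bM^t$ back into the fundamental domain by $-m^t$ with $m^t=\min_\al m_\al^t$ (using Lemma~\ref{clustinv}, i.e.\ eq.~\eqref{fundaMiam}), check that the resulting index satisfies the hypothesis $n\geq 0$ of the first part of the theorem for the translated path, and only then conclude that $f$ is a positive Laurent polynomial. This is precisely the ``enhanced version of the reflection property'' the paper spells out; without the translation step the appeal to Corollary~\ref{positrone} does not apply to a general $\bM$. The fix is routine, but as written your last step would not compile into a proof for arbitrary seeds.
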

\begin{proof}
By Theorem \ref{inverposit}, we can restrict ourselves to the Motzkin paths of the fundamental
domain.
The first statement of the Theorem follows from Lemma \ref{rofF}, 
with the prefactor 
$R_{1,0} y_1(\bM_0) ... y_1(\bM_p)=R_{1,m_1}$, by use of \eqref{oddy} for $\al=1$.

The second statement is clear for $n\geq 0$.
It is a direct consequence of the reinterpretation of $R_{1,n+m_1}$ as the
generating function for weighted paths with n down steps, for all $n\geq 0$, by also noting that the
weights $y_e(\bM)$ are all positive Laurent monomials of the initial data, as a consequence of 
Theorem \ref{weightexpression}. 

For $n<0$, we simply use an enhanced version
of the reflection property of Lemma \ref{firstrem}. Indeed, noting that the $Q$-system 
equations are invariant under the reflection $n\to -n$, we deduce that the quantity
$R_{1,n+m_1}$, expressed as a function
of the initial data $\bx_{\bM}$, is the {\it same} as $R_{1,-n-m_1}$ expressed as a function 
of the reflected initial data $\bx_{\bM^t}$, where the reflected Motzkin path 
$\bM^t=\{(m_\al^t,\al)\}$ satisfies $m_\al^t=-(m_\al+1)$ for $\al=1,2,...,r$. In other words,
if $R_{1,n+m_1}=f(\bx_\bM)$, then $R_{1,-n-m_1}=f(\bx_{\bM^t})$.

Let us now use
the translation invariance of the $Q$-system in the form of eq.\eqref{fundaMiam}. 
We first translate the reflected Motzkin path 
$\bM^t$ back to the fundamental domain, namely
so that its lowest entry $m_\al^t$ be zero. This is done by considering
$m^t={\rm Min}_{\al=1,2,...,r}(m_\al^t)$, and the translate $\bM'=\bM^t-m^t$, with entries
$m_\al'=m_\al^t-m^t$ for all $\al$. Then for $n<0$, the quantity $R_{1,-n-m_1+m^t}$ is still
given by the same expression as before in terms of the shifted Motzkin path $\bM'$,
namely $R_{1,-n-m_1+m^t}=f(\bx_{\bM'})$. But now the first
part of the Theorem applies, as $m_1'=m_1(\bM')=-m_1-1-m^t$. Indeed, as $-n+1\geq 0$, 
we deduce that $R_{1,-n+1+m_1'}=f(\bx_{\bM'})$ is a positive Laurent polynomial of the 
reflected-translated data $\bx_{\bM'}$. As $R_{1,n+m_1}=f(\bx_{\bM})$ via the same function $f$,
we conclude that $R_{1,n+m_1}$ is also a positive Laurent polynomial
of the initial data $\bx_\bM$, for all $n<0$. This completes the proof of the Theorem.
\end{proof}





In view of the correspondence between path partition functions on
$\Gamma_\bM$ and heap partition functions on $G_\bM$, we have also
\begin{cor}\label{heapstrikeagain}
The weighted heap graph $G_\bM$ associated to the Motzkin path $\bM$ is constructed as above.
The heaps on $G_\bM$ with $n$ discs are in bijection with the weighted paths on $\Gamma_\bM$
with $n$ descending steps, starting and ending at the root. 
For any Motzkin path $\bM$ and any $n\geq 0$,
the quantity $R_{1,n+m_1}$ is expressed in terms of the cluster variable $\bx_\bM$ as $R_{1,m_1}$
times the partition function for weighted heaps with $n$ discs on $G_\bM$.
\end{cor}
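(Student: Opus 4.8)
The plan is to prove the statement in two stages: first establish the claimed path--heap bijection between $\Gamma_\bM$-paths and heaps on $G_\bM$, together with equality of their weighted partition functions, and then combine this with Theorem \ref{mainRone} to obtain the formula for $R_{1,n+m_1}$. Since the construction of $G_\bM$ and the general path--heap correspondence have already been set up in Section \ref{heappa}, the corollary is essentially an assembly of the pieces.

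For the first stage I would follow the template of Theorem \ref{grheapath}, now using the general path--heap correspondence of Figure \ref{fig:genheapath}. Represent a $\Gamma_\bM$-path from the root to the root on the tilted square lattice, advancing one unit horizontally per step and recording the height of the visited vertex. To each descending step $(t,i)\to(t+1,j)$ attach its parallelogram and the vertical segment on the median, and to each double horizontal step attach its half-diamond and segment. These segments form a pile of discs above the vertices of $G_\bM$, whose vertices are by construction indexed by the descending steps (equivalently the descending edges of $\Gamma_\bM$) and whose edges record exactly when two segments cannot slide horizontally past one another---this is precisely the overlap relation defining $G_\bM$. Hence the configuration is an admissible heap on $G_\bM$. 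Conversely, decomposing any heap on $G_\bM$ into successive shells of foregrounds, there is a unique way to place the discs and their surrounding parallelograms and half-diamonds from bottom to top and left to right within each shell, filling the gaps with ascending steps; this reconstructs a unique $\Gamma_\bM$-path. The number of discs equals the number of descending steps, and since the weight of the disc above the vertex of $G_\bM$ indexed by a descending edge $e$ is $y_e(\bM)$, the weight of a heap equals the weight $\prod_{e\in p} y_e(\bM)$ of the corresponding path $p$. Summing over configurations with $n$ discs / $n$ descending steps gives
$$
F_\bM(t) = \sum_{n\geq 0} t^n \, Z_n^{(G_\bM)}(\by(\bM)),
$$
where $Z_n^{(G_\bM)}$ is the partition function of heaps on $G_\bM$ with exactly $n$ discs and vertex weights inherited from $\by(\bM)$.

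For the second stage I would invoke Theorem \ref{mainRone}, which states $R_{1,n+m_1}(\bx_\bM) = R_{1,m_1}\, F_\bM(t)\big\vert_{t^n}$. Extracting the coefficient of $t^n$ in the identity above gives $F_\bM(t)\big\vert_{t^n} = Z_n^{(G_\bM)}(\by(\bM))$, so that $R_{1,n+m_1} = R_{1,m_1}\, Z_n^{(G_\bM)}$, which is exactly the asserted expression of $R_{1,n+m_1}$ as $R_{1,m_1}$ times the heap partition function with $n$ discs.

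The main obstacle I expect is the uniqueness claim in the shell decomposition for a general glued graph $G_\bM$, rather than the model case $G_r$ of Theorem \ref{grheapath}. One must verify that the two gluing rules (i) and (ii) for the pieces $G_{\bm_i}$ are compatible with the geometric parallelogram/triangle picture: identifying top and bottom vertices (type (i)), or additionally connecting their neighbours (type (ii)), should produce exactly the overlap graph induced by the segments of the glued $\Gamma_\bM = \Gamma_{\bm_1}\vert\cdots\vert\Gamma_{\bm_p}$. The delicate point is confirming that the ``cannot slide horizontally'' relation is preserved across the gluing interfaces, so that adjacency in $G_\bM$ coincides with overlap of the associated discs in every shell. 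Once this local-to-global compatibility is established, the bottom-to-top, left-to-right placement is forced and the bijection is rigid, and the remaining weight-matching and coefficient extraction are routine.
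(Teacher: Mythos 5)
Your proposal is correct and follows essentially the same route as the paper: the paper treats this corollary as an immediate consequence of the generalized path--heap correspondence of Section \ref{heappa} (parallelograms and half-diamonds attached to descending and double-horizontal steps, with the shell decomposition giving the inverse map) combined with Theorem \ref{mainRone}. Your explicit attention to the compatibility of the gluing rules with the overlap relation is a point the paper only asserts, so spelling it out is a reasonable addition rather than a deviation.
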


\section{Strongly non-intersecting path interpretation of $R_{\al,n}$}\label{secpaths}
We now provide a combinatorial interpretation of the determinant
expressions for $R_{\al,n}$ ($\al>1$) as partition functions of
families of strongly non-intersecting paths on $\Gamma_\bM$. For this,
we need to generalize the usual notion of non-intersecting lattice
paths. As a result we obtain a proof of the positivity of the
determinant expression for $R_{\al,n}$ with $\al>1$.

\subsection{Ascending Motzkin paths: Paths on trees $T_{2r+2}(I)$}
\label{alphagpath}
First consider the case of the ascending Motzkin paths.  The graphs
$\Gamma_\bM$ are skeleton trees, as in Section \ref{gpathex}, denoted by
$T_{2r+2}(I)$ where $I=\{i_1,i_2,\ldots,i_s\}$ with $1<i_1<i_2<\cdots
<i_s<2r-s$ and $i_1-1$, $i_2-i_1$,\ldots $i_s-i_{s-1}$ odd.  The
variables $R_{1,n}$ are partition functions of paths on these trees,
starting and ending at vertex 0.

Paths on $T_{2r+2}(I)$ are equivalent to paths on the two-dimensional
lattice, with the $y$-coordinate restricted to $0\leq y\leq
2r+1-s$, with the following possible steps: (i) $(x,y)\to(x+1,y+1)$; (ii)
$(x,y)\to(x+1,y-1)$ and (iii) $(x,y)\to (x+2,y)$ ($y\in I$).

We say that two such paths intersect if they share a vertex.

\begin{thm}\label{genealpha}
Let $n\geq \al-1$. Then $R_{\al,n}$ is $(R_{1,0})^\al$ times the
partition function of families of $\al$ non-intersecting paths on
$T_{2r+2}(I)$, starting at the points
$(0,0),(2,0),...,(2\al-2,0)$ and ending at the points
$(2n,0),(2n+2,0),...,(2n+2\al-2,0)$, with weights as in Example
\ref{tIpathweight}.
\end{thm}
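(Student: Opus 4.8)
The goal is to show that $R_{\al,n}$, which by Lemma \ref{Rwronsk} equals the discrete Wronskian $W_{\al,n}=\det_{1\le i,j\le \al}(R_{1,n+i+j-1-\al})$, can be realized as $(R_{1,0})^\al$ times a partition function of $\al$ non-intersecting paths on $T_{2r+2}(I)$. The plan is to combine three ingredients: (1) the path interpretation of each $R_{1,m}$ already established in Theorem \ref{mainRone}, (2) the Lindström--Gessel--Viennot determinant formula, and (3) the translational structure of the starting and ending points. The key observation is that the matrix entry $R_{1,n+i+j-1-\al}$ is, up to the factor $R_{1,0}$, the generating function $F_\bM(t)\big|$ for a single path on $T_{2r+2}(I)$ whose starting height is $0$ and whose ending point is shifted horizontally. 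By Corollary \ref{grpathrone} (in the ascending case, via Example \ref{tIpathweight}), $R_{1,m}/R_{1,0}$ counts weighted paths from the root back to the root with $m$ descending steps, i.e.\ lattice paths from $(0,0)$ to $(2m,0)$.

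\textbf{Reducing to a path matrix and applying LGV.} First I would rewrite the Wronskian entry $R_{1,n+i+j-1-\al}$ as $R_{1,0}$ times the weighted count of lattice paths from the source point $s_i=(2(i-1),0)$ to the sink point $t_j=(2(n+j-1),0)$, using the translation invariance of the weights and the horizontal-shift structure of the graph; the index combination $i+j$ appearing in the Wronskian is precisely what matches a source at horizontal position $2(i-1)$ to a sink at $2(n+j-1)$, since the number of down-steps along such a path is $n+i+j-1-\al$ after accounting for the height returning to $0$. This identifies $M_{\al,n}$ (divided by $R_{1,0}$) with the path matrix $(P(s_i,t_j))_{i,j}$ whose $(i,j)$ entry is the one-path generating function. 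The next step is to invoke the LGV lemma: the determinant $\det(P(s_i,t_j))$ equals the signed sum over families of paths, which collapses to the sum over \emph{non-intersecting} families precisely when every permutation other than the identity forces an intersection. For that I would verify the standard LGV compatibility condition: the sources $s_1,\dots,s_\al$ and sinks $t_1,\dots,t_\al$ are arranged so that the configuration is \emph{non-permutable}, i.e.\ any path from $s_i$ to $t_{\sigma(i)}$ with $\sigma\neq\mathrm{id}$ must cross another. With sources at $(0,0),(2,0),\dots$ and sinks at $(2n,0),(2n+2,0),\dots$ all on the same horizontal line $y=0$ and with $n\ge\al-1$ ensuring the ranges interleave correctly, this holds.

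\textbf{The main obstacle: ``strongly'' non-intersecting and the $y_{i,j}$ weights.} The genuinely delicate point—and where I expect the real work to lie—is that the paths live on a graph with \emph{horizontal double-steps} (the steps $(x,y)\to(x+2,y)$ for $y\in I$), so the naive notion of ``sharing a vertex'' is not the right intersection condition for LGV to apply cleanly. This is why the theorem speaks of \emph{strongly} non-intersecting families and why the paper must generalize LGV. The issue is that two paths can fail to share a lattice vertex yet still be forced to interact through a horizontal step occupying the same edge, or conversely an apparent vertex-crossing at a horizontal level may be resolvable. The plan is to show that the sign-reversing involution underlying LGV—swapping tails of two paths at their first point of intersection—can be defined so that it respects the extra down-pointing edges of $\Gamma_\bM$, using the weight relations \eqref{propconditwo}, i.e.\ $y_{i,j}y_{m,\ell}=y_{i,\ell}y_{m,j}$. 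These multiplicative relations among the edge weights are exactly what guarantees that swapping path segments across a horizontal step preserves the total weight, so the involution is weight-preserving and sign-reversing. Establishing that this strong non-intersection condition is the correct one, and that the involution is well-defined on it, is the crux.

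\textbf{Assembling the result.} Once the generalized LGV identity gives $\det(P(s_i,t_j)) = \sum_{\text{strongly non-intersecting families}} \prod_e y_e(\bM)$, multiplying through by $(R_{1,0})^\al$ recovers $W_{\al,n}=R_{\al,n}$ by Lemma \ref{Rwronsk}, with weights as specified in Example \ref{tIpathweight}. The condition $n\ge\al-1$ is used to guarantee that the sink block lies entirely to the right of the source block so that the identity permutation is the only non-intersecting one and the LGV cancellation is complete. Finally, positivity of $R_{\al,n}$ as a Laurent polynomial in $\bx_\bM$ follows immediately, since the surviving terms are products of the manifestly positive Laurent-monomial weights $y_e(\bM)$ from Theorem \ref{weightexpression}. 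I would present the ascending case in full and then note that the general Motzkin-path case follows by the gluing construction of $\Gamma_\bM$ together with the same strong non-intersection argument.
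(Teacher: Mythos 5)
Your overall route is the paper's route: express $R_{\al,n}$ as the Wronskian of Lemma \ref{Rwronsk}, identify each entry with a single-path generating function via Corollary \ref{grpathrone}/Example \ref{tIpathweight}, and apply Lindstr\"om--Gessel--Viennot. However, your source--sink pairing is wrong as written. With sources $s_i=(2(i-1),0)$ and sinks $t_j=(2(n+j-1),0)$, the path from $s_i$ to $t_j$ has $n+j-i$ descending steps, so your path matrix is the Toeplitz matrix $\bigl(R_{1,n+j-i}/R_{1,0}\bigr)_{i,j}$, not the Hankel matrix $\bigl(R_{1,n+i+j-1-\al}/R_{1,0}\bigr)_{i,j}$ whose determinant is $R_{\al,n}/(R_{1,0})^\al$; your claim that the number of down-steps is $n+i+j-1-\al$ fails for all but one value of $i$. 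The fix is the one the paper uses: reverse the ordering of one family, e.g.\ take $E_j=(2n+2\al-2j,0)$, so that the horizontal distance from $A_i$ to $E_j$ depends on $i+j$ and the path matrix becomes the Wronskian matrix (up to the harmless relabeling $i\mapsto\al+1-i$, $j\mapsto\al+1-j$). This reversed ordering is also exactly what makes the configuration non-permutable, so the identity is the only permutation admitting vertex-disjoint families.

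Second, you have misread where the difficulty sits. Theorem \ref{genealpha} concerns only the trees $T_{2r+2}(I)$ arising from ascending Motzkin paths, and its statement uses plain non-intersection (sharing a vertex); the ``strongly non-intersecting'' notion, the flipping involution, and the weight relations \eqref{propconditwo} are the content of the later Theorem \ref{alphageneral} for general $\Gamma_\bM$, where descending edges of height $h\ge 2$ exist. On a tree there are no such edges, and your worry about horizontal double-steps producing vertex-free crossings is unfounded: all paths live on the even sublattice $x+y\equiv 0\pmod 2$, so a path can meet the interior of another path's $(2,0)$ step only at a point of odd parity, which is never one of its vertices; hence two paths that cross must share a vertex and the classical LGV involution applies verbatim. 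Building the proof of this theorem around the generalized involution is not an error in substance (on a tree ``strongly non-intersecting'' coincides with ``non-intersecting''), but it solves a problem that does not arise here and leaves the actual short argument --- ordinary LGV on a tree --- underdeveloped.
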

\begin{proof}
We apply the Lindstr\"om-Gessel-Viennot (LGV) theorem
\cite{LGV1,LGV2}. 
Consider the partition function
$Z_{A_1,...,A_\al}^{E_1,...,E_\al}$ of non-intersecting families of
$\al$ weighted paths from the initial points $A_1,A_2,...,A_\al$ to
endpoints $E_1,E_2,...,E_\al$. We assume that if $i<j$
and $k<l$, then a path $A_i\to E_l$ must intersect
any path $A_j\to E_k$. Let $Z_{A_i\to E_j}$ be the
partition function for weighted paths from $A_i$ to $E_j$, then
\begin{equation}\label{lgv}
Z_{A_1,...,A_\al}^{E_1,...,E_\al}=\det_{1\leq i,j\leq \al} \left(
Z_{A_i\to E_j}\right)
\end{equation}

Now, $R_{1,n}$
is $R_{1,0}$ times the partition function of
paths on $T_{2r+2}(I)$ from $(0,0)$ to $(2n,0)$. Let
$A_i=(2i-2,0)$ and $E_i=(2n+2\al-2i,0)$ ($i=1,2,...,\al$). Since
${Z}_{A_i\to E_j}$ is the partition
function for paths of $2n+2\al-2i-2j+2$ time-steps, it may be
identified with $R_{1,n+\al-i-j+1}/R_{1,0}$. We conclude that
\begin{equation}
Z_{A_1,...,A_\al}^{E_1,...,E_\al}
={1\over (R_{1,0})^\al}\det_{1\leq i,j\leq \al}
\left(R_{1,n+\al-i-j+1} \right)={R_{\al,n}\over (R_{1,0})^\al} 
\end{equation}
where we have used Lemma \eqref{Rwronsk}.
\end{proof}




\begin{figure}
\centering
\includegraphics[width=11.cm]{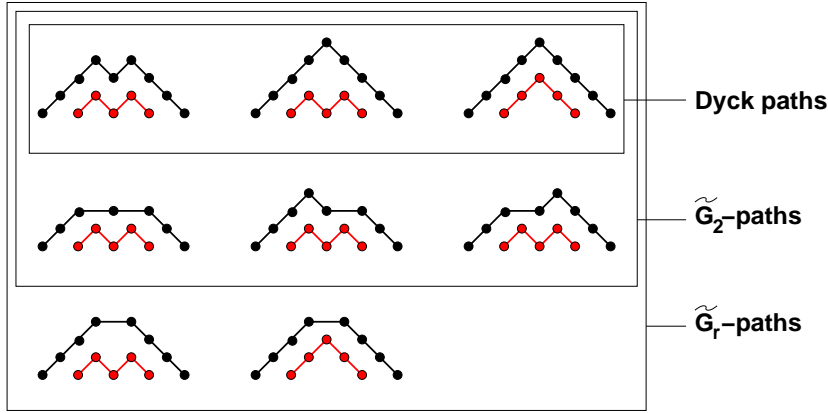}
\caption{\small The eight pairs of non-intersecting ${\tilde
G}_r$-paths of $8$ and $4$ time-steps for $r\geq 3$. The top box
contains the non-intersecting ${\tilde
H}_r$- (Dyck-) paths. There are six pairs of paths on
$\wG_2$.}\label{fig:exatwo} 
\end{figure}


\begin{example}
For $\al=2$, $n=2$ and $r\geq 3$, 
\begin{eqnarray*}
R_{2,3}=R_{1,4}R_{1,2}-(R_{1,3})^2=
&=&x_1^2x_2x_3(x_1 x_3+x_1 x_4+x_2 x_4)\\
&=&y_1^2 y_2 (y_1y_3^2+2 y_1y_3y_4+y_1y_4^2+(y_1+y_2)y_4(y_5+y_6))
\end{eqnarray*}
with $x_i$ as in \eqref{termsx} and $y_i$ as in \eqref{qtyzero}.  The
first expression corresponds to the three pairs of paths in the top of Figure
\ref{fig:exatwo}. 
The second expression
corresponds to the eight pairs of non-intersecting
${\tilde G}_r$-paths of $8$ and $4$ time-steps depicted in
Figure \ref{fig:exatwo}. We also indicated in this figure
the six pairs of non-intersecting paths on ${\tilde G}_2$, for which
no horizontal step at height $3$ is allowed, eliminating the two
configurations of the bottom row.
\end{example}

\subsection{Strongly non-intersecting paths on $\Gamma_\bM$}\label{patrep}
Theorem \ref{genealpha} can be generalized to paths on $\Gamma_\bM$ even
when $\Gamma_\bM$ is not a tree. To do this, we must generalize the notion of
non-intersecting paths, using
a representation of $\Gamma_\bM$-paths on the two-dimensional
lattice which preserves the property that the number of descending
steps is half the horizontal length of the path. (Note that this
representation is different from the one used in Figure \ref{fig:genheapath}.)

\subsubsection{Two-dimensional representation of $\Gamma_\bM$-paths}
\begin{figure}
\centering
\includegraphics[width=8.cm]{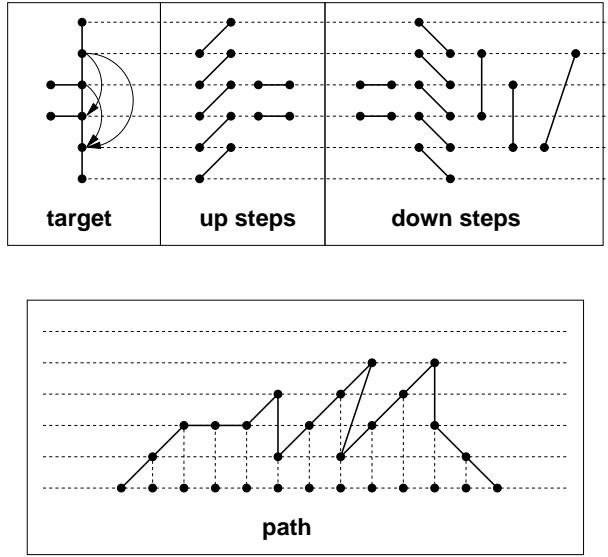}
\caption{\small The two-dimensional path representation of a 
path on
$\Gamma_\bM$ with $\bM=(2,1,0)$ (see
Figure \ref{fig:strictfive} with $r=3$).  Descents of height $2$ are
vertical (horizontal
displacement $2-h=0$), while descents of height $3$ go back one step
horizontally ($2-h=-1$). With this choice, the 
horizontal distance between the starting and ending point is twice the
number of descents ($16=2\times 8$ here).}\label{fig:pathslope}
\end{figure}

Some of the descending
steps on $\Gamma_\bM$ may have height $h>1$, in case there is an edge $j\to
i$ with $j-i>1$. When $h=0$ or $1$, each descending
step is in bijection with an ascending one, but this is not true if
$h>1$.  We require that the total horizontal displacement 
in a path $i\to i+1\to \cdots \to j-1\to j\to i$ should be
independent of the height $h=j-i$ of descending step used. Therefore, on the
two-dimensional lattice,
we draw a descent along an edge of length $h$ as a line from $(x,y)$ to
$(x-h+2,y-h)$.  Then a path which goes up $h$ single steps,
then down one step of height $h$ has horizontal length $2$,
independently of $h$.  

With this convention, the horizontal distance between the start and
end of a path from the 0 to 0 on $\Gamma_\bM$ is twice the number of
its descending steps (see Figure \ref{fig:pathslope} for an
illustration).

\begin{defn} \label{goodpath}
The two-dimensional representation of a path with $n$ descents on
$\Gamma_\bM$ is a path in $(\Z_{+})^2$, starting at $(0,0)$ and ending
at $(2n,0)$, with the possible steps:
\begin{itemize}
\item $(x,y)\to (x+1,y+1)$ whenever there is an edge $y\to y+1$ in
$\Gamma_\bM$. 
\item $(x,y)\to (x+2,y)$ whenever there is an edge $y\to y'\to
y$ in $\Gamma_\bM$.
\item $(x,y)\to (x+2-h,y-h)$, $h\geq 1$ whenever there is an edge
$y\to y-h$ in $\Gamma_\bM$.
\end{itemize}
\end{defn}
Note that these steps all preserve the parity of $x+y$, which is even,
so the path is on the even sublattice in $\Z^2$, with even $x+y$.

\subsubsection{Intersections of paths on the lattice}
The determinant $R_{\al,n}=\det_{1\leq i,j\leq
\al} (R_{1,n+\al+1-i-j})$ may be interpreted using a generalization of
the LGV formula. Intersections of the paths in Definition
\ref{goodpath} are of a more general type than the usual case.
Paths may intersect along edges, and not just at a vertex. It is clear
that edge intersections can occur only along descending edges. The
possible crossing types can be catalogued as follows.

\begin{figure}
\centering
\includegraphics[width=5.cm]{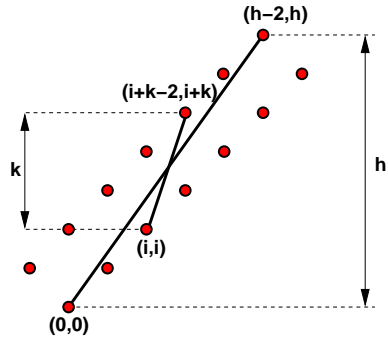}
\caption{\small The possible intersections of descending steps of two paths.
The first descent is $(h-2,h)\to(0,0)$, and the second 
is of type $(i+2-k,i+k)\to(i,i)$ with
$0\leq k <h$. There are $h-k-1$ possible values of $i$.
}\label{fig:crossings}
\end{figure}

\begin{lemma}\label{interlem}
Consider a step of path $\cP_1$ of type $(h-2,h)\to (0,0)$.
Then a descending step of a second path $\cP_2$
may cross this step only via the following $h(h-1)/2$ possible steps:
$$ 
(i+k-2,i+k)\to (i,i), \qquad {\rm for}\ \ 0\leq i\leq h-k-1, \ \ 0\leq k\leq h-1.
$$
\end{lemma}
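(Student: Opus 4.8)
The plan is to treat both descending steps as honest straight-line segments in the two-dimensional representation of Definition \ref{goodpath}, and to reduce the crossing condition to a single $2\times 2$ linear system. First I would record that a descending step of height $m$ ending at a point $(x_0,y_0)$ is the segment from $(x_0,y_0)$ to $(x_0+m-2,y_0+m)$, and therefore lies on the line $mX-(m-2)Y=m x_0-(m-2)y_0$. In particular the given step of $\cP_1$ (height $h$, ending at the origin) lies on $L_1:\ hX-(h-2)Y=0$, running between $(0,0)$ and $(h-2,h)$, while a height-$k$ step of $\cP_2$ ending at $(i,i)$ lies on $L_2:\ kX-(k-2)Y=2i$.

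Next I would intersect $L_1$ and $L_2$. Eliminating $X$ gives $Y\cdot\frac{2(h-k)}{h}=2i$, so for $k\neq h$ the two lines meet at the single point $\left(\frac{i(h-2)}{h-k},\frac{ih}{h-k}\right)$, which divides $L_1$ at parameter $s=\frac{i}{h-k}$ measured from $(0,0)$. A short computation shows the crossing divides $L_2$ at the same parameter $u=\frac{i}{h-k}$, so the crossing is \emph{aligned} ($s=u$); I would note that this alignment holds precisely because the lower endpoint $(i,i)$ of $L_2$ lies on the diagonal $X=Y$ (equivalently, the abscissa and ordinate of the top of $L_2$ differ by $2$). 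Requiring the crossing point to lie on both segments then forces $s=u\in[0,1]$, i.e. $0\le i\le h-k$, and discarding the degenerate coincidence $i=h-k$ (the shared top endpoint $(h-2,h)$) leaves exactly the steps with $0\le i\le h-k-1$.

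Finally I would enumerate. Since a genuine descent has height $k\ge 1$ and can cross $L_1$ only for $k\le h-1$, the admissible pairs are $1\le k\le h-1$ and $0\le i\le h-k-1$, giving $\sum_{k=1}^{h-1}(h-k)=\frac{h(h-1)}{2}$ steps, matching the count in the statement (the value $k=0$ would correspond to a horizontal, non-descending step and is excluded). The integrality of $i$ and $k$, together with the parity constraint $X+Y\in 2\Z$ of Definition \ref{goodpath}, guarantees that these are the only lattice-admissible members of the relevant lines.

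I expect the genuinely delicate point to be \emph{exhaustiveness}: showing that no other descending step of $\cP_2$ can meet $L_1$. A height-$k$ step whose lower endpoint is \emph{off} the diagonal still lies on a line $kX-(k-2)Y=c$ and can touch $L_1$ (for instance at an interior point of $L_1$ through the top vertex of $L_2$), so one must argue that, within the global configuration of the non-intersecting family, it suffices to record the aligned crossings produced by the sign-reversing involution underlying the generalized Lindstr\"om--Gessel--Viennot argument of Section \ref{secpaths}; these are exactly the diagonal-ending steps classified above. Making this reduction precise --- i.e. justifying that the intersections one must track are the aligned ones with lower endpoint on $X=Y$ --- is where the real content lies, the segment-crossing computation above being otherwise routine.
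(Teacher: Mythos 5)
Your segment-intersection computation is correct as far as it goes --- the aligned crossing at parameter $s=u=i/(h-k)$ and the constraint $0\le i\le h-k$ are exactly right, and since the paper offers no written proof (it only points to Figure \ref{fig:crossings}), carrying this out explicitly is worthwhile. The problem is in how you resolve the off-by-one in the index set. The lemma as printed is internally inconsistent: the range $0\le i\le h-k-1$, $0\le k\le h-1$ has $h(h+1)/2$ elements, not $h(h-1)/2$; the caption of Figure \ref{fig:crossings} (``there are $h-k-1$ possible values of $i$'') and Lemma \ref{listooclose} (whose flipped pairs are indexed by $1\le i\le j=i+k<h$) show the intended set is $1\le i\le h-k-1$ with $0\le k\le h-1$. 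You chose the opposite correction: you keep $i=0$ and discard $k=0$. Both sets have $h(h-1)/2$ elements, so the count does not detect the difference, but they are genuinely different --- for $h=2$ they are disjoint. Your set is the wrong one for what the lemma is used for. The case $i=0$ (like $i=h-k$, which you do discard --- the two endpoints are treated inconsistently) is a configuration in which the two steps merely share the vertex $(0,0)$; this is a vertex intersection, handled by the ordinary LGV tail-swap in the involution $\Phi$ of Section \ref{secpaths}, and the flip of Definition \ref{flipdef} acts as the identity on it ($e_1'=e_1$, $e_2'=e_2$), so listing it as an edge crossing would create spurious fixed points. Conversely, the $k=0$ steps are the double horizontal steps $(i-2,i)\to(i,i)$, which traverse the descending edge $i'\to i$ of $\Gamma_\bM$ with weight $y_{i,i}=y_{2i-1}$ (see \eqref{notaweight}); for $1\le i\le h-1$ such a step crosses the segment from $(h-2,h)$ to $(0,0)$ transversally at the non-lattice point $(i(h-2)/h,\,i)$ and shares no vertex with it, so it can only be resolved by flipping --- indeed Lemma \ref{flipweight} uses precisely the weight $y_{i+k+v,i+v}=y_{i+v,i+v}$ of this edge. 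Omitting these breaks the cancellation behind Theorem \ref{alphageneral}.

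The exhaustiveness issue you defer to the global LGV structure actually follows from your own setup once you invoke the parity constraint of Definition \ref{goodpath}. If the second descent ends at $(a,b)$ with $a+b$ even, put $d=(b-a)/2\in\Z$; your two linear equations give $t-s=d$, and if either parameter lies in the open interval $(0,1)$ then $|d|<1$, forcing $d=0$, i.e.\ $b=a$. The only remaining possibilities are $\{t,s\}\subset\{0,1\}$ with $d=\pm1$, which are again shared endpoints. In particular no vertex of $\cP_2$ can sit in the interior of the step of $\cP_1$ (an interior lattice point of that segment would have odd coordinate sum), so the scenario you worry about cannot occur, and the diagonal condition on the lower endpoint is automatic rather than something to be extracted from the involution. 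With that observation, and with the index set corrected as above, your argument becomes a complete proof.
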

A generic case is represented in Fig.\ref{fig:crossings}.




\subsubsection{A weight preserving involution on families of paths}

\begin{figure}
\centering
\includegraphics[width=12.cm]{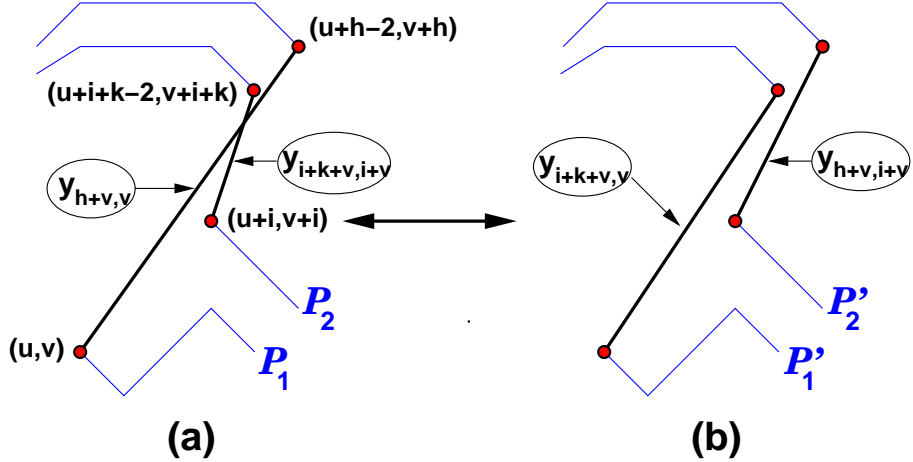}
\caption{\small A typical edge intersection of $\Gamma_\bM$-paths
(a) and the result of the flipping operation on it (b). We have the
weights of the steps. We have the identity of path weights, since
$y_{h+v,v}y_{i+k+v,i+v}= y_{h+v,i+v}y_{i+k+v,v}$.  The paths in (b)
are said to be ``too close'' to each other.}\label{fig:flipping}
\end{figure}

Given the list of edge intersections in Lemma \ref{interlem}, we
define a weight-preserving involution on families of paths which we call {\em
flipping}.  Suppose two paths, $\cP_1$ and $\cP_2$, intersect along an
edge $e_1=(u+h-2,v+h)\to (u,v)$ of $\cP_1$ and $e_2=(u+i+k-2,v+i+k)\to
(u+i,v+i)$ of $\cP_2$.  Suppose $\cP_i=L_i\cup e_i\cup R_i$ ($i=1,2$),
where $L_i$ is the subpath of $\cP_i$ before the vertex $(u+h-2,v+h)$,
and so forth.

\begin{defn}\label{flipdef} 
The flipping of $\cP_1$ and $\cP_2$ w.r.t. the intersection along the edges
$e_1$ and $e_2$ the configuration of two new paths $\cP_1',\cP_2'$
such that $\cP_i'=L_i\cup e_i'\cup R_i$, where
$e_2'=(u+i+k-2,v+i+k)\to (u,v)$ and
$e_1'=(u+h-2,v+h)\to (u+i,v+i)$.
\end{defn}

Flipping is illustrated in Figure \ref{fig:flipping}. 
The weight of two paths $(\cP_1,\cP_2)$ is equal to the product of
the weights of $\cP_1$ and $\cP_2$.

\begin{lemma}\label{flipweight}
The weight of the two paths $(\cP_1,\cP_2)$ is equal to that
of the flipped paths $(\cP_1',\cP_2')$ of Definition \ref{flipdef}.
\end{lemma}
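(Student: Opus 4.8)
The plan is to reduce the statement to the single multiplicative relation \eqref{propconditwo} among the edge weights of $\Gamma_\bM$. First I would record what flipping does to the edge content of the pair. Writing $W(\cP_1,\cP_2)=W(\cP_1)W(\cP_2)$ as a product over all edges traversed by the two paths, and recalling that ascending edges carry weight $1$ while only descending edges carry nontrivial weights, I observe that the flipping operation of Definition \ref{flipdef} leaves the subpaths $L_1,L_2,R_1,R_2$ intact and merely redistributes them between the two paths, while replacing the descending pair $\{e_1,e_2\}$ by $\{e_1',e_2'\}$. Hence the combined multiset of edges used by $(\cP_1',\cP_2')$ differs from that of $(\cP_1,\cP_2)$ only in that $e_1,e_2$ are exchanged for $e_1',e_2'$, so that $W(\cP_1,\cP_2)=W(\cP_1',\cP_2')$ is equivalent to the local weight identity $w(e_1)\,w(e_2)=w(e_1')\,w(e_2')$.

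Next I would compute these four weights by reading off the top and bottom vertices of each descending edge. The step $e_1=(u+h-2,v+h)\to(u,v)$ descends from vertex $v+h$ to vertex $v$, so $w(e_1)=y_{v+h,v}$; likewise $w(e_2)=y_{v+i+k,v+i}$, while the flipped edges give $w(e_1')=y_{v+h,v+i}$ and $w(e_2')=y_{v+i+k,v}$. The required identity is therefore
\begin{equation*}
y_{v+h,v}\,y_{v+i+k,v+i}=y_{v+h,v+i}\,y_{v+i+k,v},
\end{equation*}
which is exactly the relation displayed in the caption of Figure \ref{fig:flipping}.

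To finish, I would invoke \eqref{propconditwo}, which asserts $y_{a,b}\,y_{c,d}=y_{a,d}\,y_{c,b}$ whenever $a>c$ and $b>d$. Taking $a=v+h$, $c=v+i+k$, $b=v+i$, $d=v$, the hypotheses become $v+h>v+i+k$ and $v+i>v$, i.e. $i+k<h$ and $i>0$; both follow from the range $1\le i\le h-k-1$ attached to the admissible crossings in Lemma \ref{interlem}. The relation then yields $y_{v+h,v+i}\,y_{v+i+k,v}=y_{v+h,v}\,y_{v+i+k,v+i}$, which is precisely the identity above. The degenerate case $i=0$ reduces flipping to a relabelling of the two descents and is trivially weight preserving.

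The only genuine obstacle is that \eqref{propconditwo} (equivalently \eqref{propcondi} and \eqref{solyij}) was derived for the edge weights of a single strictly descending piece $\bm$, so I must verify that $e_1,e_2$ and the flipped edges $e_1',e_2'$ all lie within one block $\Gamma(k)$ of $\Gamma_\bM$ and hence obey those proportionality relations. This is automatic from the geometry: since $e_2$ spans the height interval $[v+i,v+i+k]\subseteq[v,v+h]$ contained in the span of $e_1$, and $e_1$ is an edge of a single descending piece, the vertices $v,\dots,v+h$ all belong to that piece; the flipped edges span $[v+i,v+h]$ and $[v,v+i+k]$, again inside $[v,v+h]$, so all four weights are governed by \eqref{propconditwo}. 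The same range constraints from Lemma \ref{interlem} also guarantee that $e_1'$ and $e_2'$ are genuine descending edges, of heights $h-i\ge 1$ and $i+k\ge 1$, completing the argument.
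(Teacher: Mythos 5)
Your proof is correct and follows essentially the same route as the paper: reduce the claim to the local identity $w(e_1)w(e_2)=w(e_1')w(e_2')$ (since all other edges are merely redistributed between the two paths) and then verify that identity via the relation \eqref{propconditwo}. The extra checks you supply — that the hypotheses $i>0$ and $i+k<h$ of \eqref{propconditwo} follow from Lemma \ref{interlem}, and that all four descending edges lie in a single block $\Gamma(k)$ so that \eqref{propconditwo} applies — are sound and fill in details the paper leaves implicit.
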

\begin{proof} 
$$w(e_1)w(e_2)=y_{h+v,v}y_{i+k+v,i+v}=
y_{h+v,i+v}y_{i+k+v,v}=w(e_i')w(e_2'),$$
%
By virtue of Equation \eqref{propconditwo}.
The rest of the weights of the path configurations are unchanged by
the flipping operation, so the Lemma follows.
\end{proof}

For the graph $\Gamma_\bM$, there is a finite list of all possible
results of the flipping procedure.

\begin{lemma}\label{listooclose}
Given $\Gamma_\bM$, the results of flipping an
intersection between a pair of paths is a pair of
paths which include pairs of steps of the form
$((u+h-2,v+h)\to (u+i,v+i), 
(u+j-2,v+j)\to (u,v))$ for any $1\leq i \leq j <h$, where there is a
descending edge $v+h\to v$ in $\Gamma_\bM$.
\end{lemma}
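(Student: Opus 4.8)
The plan is to run the flipping operation of Definition \ref{flipdef} on the complete list of edge crossings catalogued in Lemma \ref{interlem}, and simply read off the two descending steps it produces. First I would normalize the picture by translating so that the height-$h$ descent of $\cP_1$ is the edge $e_1 = (u+h-2, v+h) \to (u, v)$; this edge exists exactly because $\Gamma_\bM$ contains the descending edge $v+h \to v$, which is the hypothesis of the Lemma. By Lemma \ref{interlem}, every descending step of a second path $\cP_2$ that crosses $e_1$ has the form $e_2 = (u+i+k-2, v+i+k) \to (u+i, v+i)$, with $(i,k)$ ranging over the $h(h-1)/2$ admissible values listed there.

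Next I would apply the flip. By Definition \ref{flipdef} it replaces $e_1, e_2$ by $e_1' = (u+h-2, v+h) \to (u+i, v+i)$ and $e_2' = (u+i+k-2, v+i+k) \to (u, v)$. Writing $j = i+k$, these are precisely the two steps $\big((u+h-2, v+h) \to (u+i, v+i),\ (u+j-2, v+j) \to (u, v)\big)$ appearing in the statement. The substitution $(i,k) \mapsto (i, j=i+k)$ then carries the admissible index set of Lemma \ref{interlem} onto $\{1 \leq i \leq j < h\}$: indeed $j = i + k \leq h-1$ gives $j < h$, while the crossing conditions give $i \leq j$, and conversely $k = j - i$ recovers an admissible crossing for each such $(i,j)$. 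I would track the boundary values carefully (for instance the extremal $i$, and whether $j=i$ occurs), since it is here that the precise range $1 \leq i \leq j < h$ is pinned down, and confirm that each flip yields exactly one such pair and that every such pair is realized.

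The step I expect to require the most care is verifying that the flipped configuration is again a genuine pair of paths on $\Gamma_\bM$, i.e. that the new edges $v+h \to v+i$ (height $h-i$) and $v+j \to v$ (height $j$) are actually present in the graph. This is where the block structure of Section \ref{tardef} is essential: the edge $v+h \to v$ sits inside a single strictly descending block $\Gamma(\kappa)$, whose extra oriented edges are, by construction, all descending edges $j' \to i'$ with $j'-i' > 1$ between its interior vertices, the height-one descents being skeleton edges. Since the heights $v, v+1, \ldots, v+h$ all lie in this block, every edge $v+h \to v+i$ and $v+j \to v$ with $1 \leq i \leq j < h$ is indeed present, so the flipped steps are legal. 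Weight preservation requires no new argument: it is exactly Lemma \ref{flipweight}, which rests on the relation \eqref{propconditwo} among the same block's edge weights.
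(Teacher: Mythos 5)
Your proposal is correct and follows essentially the same route as the paper: catalogue the admissible crossings via Lemma \ref{interlem}, apply the flip of Definition \ref{flipdef} and reparametrize $(i,k)\mapsto(i,j=i+k)$ to land on the stated index set $1\leq i\leq j<h$, and justify that the flipped edges $v+h\to v+i$ and $v+j\to v$ exist by the block construction of $\Gamma_\bM$ in Section \ref{tardef}. Your write-up is in fact more explicit about the index bookkeeping and the legality of the new edges than the paper's two-sentence argument, but the underlying idea is identical.
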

\begin{proof}
By construction of $\Gamma_\bM$ (see Section \ref{tardef}), if there
exists a descending edge $v+h\to v$ on $\Gamma_\bM$, then for all
$1\leq i \leq j <h$, the edges $v+j\to v+i$ exist on $\Gamma_\bM$, as
well as $v+h\to v+i$ and $v+j\to v$.  The list of the definition
therefore exhausts all possible cases of flippings of intersections
along edges.
\end{proof}

\begin{defn}\label{tooclose}
Two paths obtained as the result of the flipping of an intersection
are called ``too close'' to each other.
\end{defn}
For example, the paths in Figure \ref{fig:flipping} (b) are too close.

Conversely, we may define the flipping of a pair of paths which are
too close to each other (with respect to the pair of edges that are
too close) as the inverse of the transformation of Definition
\ref{flipdef}.  The flipping thus defined is an involution.

\begin{defn}\label{stronginter}
Two paths are said to be strongly non-intersecting if (i) they do
not intersect and (ii) they are not too close.
\end{defn}

\subsubsection{Generalization of LGV}
We have the formula
\begin{equation}\label{deteralphaR}
{R_{\al,n+m_1}\over (R_{1,m_1})^\al}=\det_{1\leq i,j\leq \al} 
\left({R_{1,n+m_1+\al+1-i-j}\over R_{1,m_1}}\right)
\end{equation}
where $n+m_1\geq \al-1$. Using Theorem \ref{mainRone} and the path
presentation above,
$R_{1,n+m_1+\al+1-i-j}/R_{1,m_1}$ is 
the sum over
weighted $\Gamma_\bM$-paths from
$A_i=(2i-2,0)$ to $E_j=(2n+2\al-2j,0)$.

The proof of the LGV formula \eqref{lgv} uses an involution on
configurations of paths, which leaves their weight invariant up to a
sign.  This cancels various pairs of intersecting paths in the
determinant expansion.  This involution is defined as follows: it
leaves non-intersecting configurations invariant, otherwise,
it interchanges the beginnings (until the first intersection) of the
two leftmost paths that intersect first.  The only terms not cancelled
in the determinant correspond to the non-intersecting path
configurations.

The determinant
\eqref{deteralphaR} is also as a sum over families of paths from
$\{A_i\}_{i=1}^\al$ to the $\{E_j\}_{j=1}^\al$, with their weight
multiplied by the signature of the permutation $\sigma$ such that
$A_i$ is connected to $E_{\sigma (i)}$.

We define an involution $\Phi$ as follows. If a configuration has at
least one intersection or edges which are too close to each other,
then $\Phi$ ``flips" the beginnings of the two leftmost paths which
intersect or are too-close, whichever comes first. That is, we either
interchange the beginnings of paths if there is an intersection along a
common vertex, or we apply the flipping procedure described above.
Configurations which have no intersections nor edges which are too close
to each other are invariant under $\Phi$.

The map $\Phi$ is clearly an involution. Applying Lemma
\ref{flipweight}, the total weight of a flipped
configuration is invariant.  As for the ordinary case, $\Phi$
pairs up terms in the expansion of the determinant which cancel
each other, and we are left only with the $\Phi$-invariant configurations. 
Therefore the determinant \eqref{deteralphaR} is equal to the partition
function of strongly non-intersecting paths of Definition
\ref{stronginter}.

\begin{thm}\label{alphageneral}
For any Motzkin path $\bM$, and for $n+m_1\geq \al-1$, 
$R_{\al,n+m_1}/(R_{1,m_1})^\al$ is the partition function of 
configurations of $\al$ strongly non-intersecting paths
on $\Gamma_\bM$ starting at the points $A_i=(2i-2,0)$, $i=1,2,...,\al$
and ending at the points $E_j=(2n+2\al-2j,0)$, $j=1,2,...,\al$.  The
weights are expressed in terms of the Motzkin path as in
Theorem \ref{weightexpression}, and \eqref{solyij}.
\end{thm}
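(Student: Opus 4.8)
The plan is to run a signed-involution argument in the spirit of Lindstr\"om--Gessel--Viennot, adapted to the edge-crossings catalogued in Lemma \ref{interlem}. Starting from the discrete Wronskian identity \eqref{deteralphaR}, I would first expand the determinant as
\begin{equation*}
\frac{R_{\al,n+m_1}}{(R_{1,m_1})^\al}=\sum_{\sigma\in S_\al}\sgn(\sigma)\prod_{i=1}^\al \frac{R_{1,n+m_1+\al+1-i-\sigma(i)}}{R_{1,m_1}},
\end{equation*}
and then invoke Theorem \ref{mainRone} together with the two-dimensional representation of Definition \ref{goodpath} to read each factor $R_{1,n+m_1+\al+1-i-\sigma(i)}/R_{1,m_1}$ as the partition function of weighted $\Gamma_\bM$-paths from $A_i=(2i-2,0)$ to $E_{\sigma(i)}=(2n+2\al-2\sigma(i),0)$ (the horizontal span $E_{\sigma(i)}-A_i=2(n+\al+1-i-\sigma(i))$ being exactly twice the number of descents). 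Thus the right-hand side becomes a signed sum over families $(\cP_1,\dots,\cP_\al)$ of $\Gamma_\bM$-paths with $\cP_i:A_i\to E_{\sigma(i)}$, weighted by $\sgn(\sigma)\prod_i w(\cP_i)$.

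Next I would define the sign-reversing involution $\Phi$. On a family that is strongly non-intersecting (Definition \ref{stronginter}) $\Phi$ acts as the identity; otherwise, scanning left to right, I locate the first $x$-coordinate at which two paths either share a vertex or carry a pair of descending edges that are ``too close'', and among the paths meeting there I select the two leftmost offenders. For an ordinary vertex crossing I interchange their tails past that vertex, exactly as in the classical argument; for a too-close pair I apply the flipping of Definition \ref{flipdef}, which recombines the two descending edges and the subsequent tails. In both cases the two endpoints are exchanged, so $\sigma$ is postcomposed with a transposition and $\sgn(\sigma)$ changes sign. That the total weight is preserved is the content of Lemma \ref{flipweight} (vertex swaps preserve weight trivially, and the flipping of too-close edges preserves it by \eqref{propconditwo}). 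Consequently the noninvariant families cancel in pairs, and only the $\Phi$-invariant, i.e.\ strongly non-intersecting, families survive.

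It remains to identify these survivors. Because $A_1<A_2<\cdots<A_\al$ and $E_1>E_2>\cdots>E_\al$ along the axis, and every $\Gamma_\bM$-path is a nonnegative excursion, any strongly non-intersecting family must realize the identity permutation $\sigma=\mathrm{id}$ (any inversion would force either a shared vertex or, by Lemma \ref{listooclose}, a too-close pair). Hence each surviving term carries $\sgn(\sigma)=+1$, and the determinant equals the manifestly positive partition function of $\al$ strongly non-intersecting $\Gamma_\bM$-paths from $\{A_i\}$ to $\{E_j\}$, with the weights of Theorem \ref{weightexpression} and \eqref{solyij}, as claimed.

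The delicate point---the step I expect to be the main obstacle---is verifying that $\Phi$ is genuinely an involution in the presence of higher descents. One must check that flipping the \emph{leftmost} offending pair produces a configuration whose leftmost offending pair is again the same pair, so that a second application of $\Phi$ restores the original family; this requires that the flip introduce no new intersection or too-close pair to the left of the processed event. Here the precise bookkeeping of Lemma \ref{interlem} and the completeness statement of Lemma \ref{listooclose}---that the flip of a genuine edge-intersection is exactly a too-close pair and conversely---are what guarantee the bijectivity between intersecting and too-close configurations and close the argument.
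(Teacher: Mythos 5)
Your proposal is correct and follows essentially the same route as the paper: expand the Wronskian determinant \eqref{deteralphaR} via Theorem \ref{mainRone} as a signed sum over families of $\Gamma_\bM$-paths, then cancel offending families with a sign-reversing involution that combines the classical tail-swap at vertex intersections with the flipping of Definition \ref{flipdef} for too-close descending edges, using Lemma \ref{flipweight} for weight preservation. The ``delicate point'' you flag (that the flip does not create new offenses to the left, so $\Phi$ is genuinely an involution) is precisely the step the paper passes over with ``$\Phi$ is clearly an involution,'' so your caution is warranted but does not constitute a divergence in method.
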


\subsection{Positivity of $R_{\al,n}$ for all $\al,n$}\label{positfinal}

Theorem \ref{alphageneral} implies that $R_{\al,n+m_1}$ is a positive
Laurent polynomial of any initial data, provided $n+m_1\geq \al-1$,
since all the weights in Theorem \ref{weightexpression} and
\eqref{solyij} are positive Laurent monomials of the initial
data $\bx_\bM$.

We now consider the case when $n+m_1<\al-1$.  The determinant in
eq.\eqref{deteralphaR} involves some quantities $R_{1,m}$ with indices
$m<0$, for which we have no interpretation as partition functions for
paths.  In order to generalize the result for arbitrary $n\in \Z$,
we relate the expressions $R_{\al,n+m_1}$ in the range $n+m_1< \al-1$ to
some expressions $R_{\al',n'+m_1'}$ with $n'+m_1'\geq \al'-1$.

\begin{figure}
\centering
\includegraphics[width=6.cm]{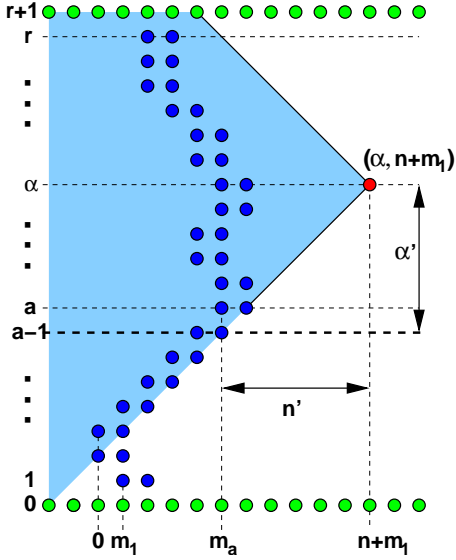}
\caption{\small The generic situation $m_\al+1<n<\al-1$ in the $(m,\al)$ plane. 
We have represented
the light-cone of values determining $R_{\al,n+m_1}$ (shaded area
inside the wedge on the left of the point $(\al,n+m_1)$), 
as well as the initial data, in the form of
two parallel Motzkin paths 
$\bM=\{(m_\beta,\beta)\}_{\beta=1}^r$ and $\{(m_{\beta}+1,\beta)\}_{\beta=1}^r$. 
The light-cone cuts out a portion $\bM'=\{(m_\beta,\beta)\}_{\beta=a}^r$ of $\bM$
so that $R_{\al,n+m_1}$ only depends on the corresponding initial data. We may
therefore truncate the picture by taking as new origin the (dashed) line
$\beta=a-1$, and interpret $R_{\al,n+m_1}$ as the solution $R_{\al',n'+m_a}$
of the $A_{r-a+1}$ $Q$-system, with initial data indexed by $\bM'$, and with
$\al'=\al-a+1$ and $n'=n+m_1-m_a$.}\label{fig:lightcone}
\end{figure}

Consider the case when $m_\al+1< n+m_1<\al-1$. Due to the structure
of the $Q$-system, each $R_{\beta,n}$ is inductively obtained from the values $R_{\beta,n-1}$,
$R_{\beta,n-2}$, $R_{\beta+1,n-1}$ and $R_{\beta-1,n-1}$. 
Consequently, $R_{\al,n+m_1}$ is a function only of the initial
data that are contained in a ``light-cone" of values of $(\beta,m)$, such that
$\al-n-m_1+k\leq \beta \leq \al+n+m_1-k$, for $k<n+m_1$ (see Fig.\ref{fig:lightcone}). 

In fact,  
$R_{\al,n+m_1}$ depends only on the initial
data $\bx_{\bM'}$ corresponding to a subset  $\bM'=\{(m_\al,\al)\}_{\al=a}^r$ of the
Motzkin path $\bM$, namely such that $(a,m_a+1)$ lies on the boundary of the light-cone, i.e.
$a=\al-n-m_1+k$ and $m_a+1=k$. For the sake of this calculation we may freely modify 
the values of $R_{a-1,m}$
and set them to $1$, as they are not involved in the expression of $R_{\al,n+m_1}$.
This has the effect of transforming the problem into one for $A_{r'}$, with $r'=r-a+1$.

More precisely, $R_{\al,n+m_1}$, as a function of a subset of the initial data
$\bx_\bM$,
may be reinterpreted as the solution
$R_{\al',n'+m_1'}'$ of the $A_{r'}$ $Q$-system,
expressed in terms of the initial data $\bx_{\bM'}$, with $m_\beta'=m_{a+\beta-1}$,
$\beta=1,2,..,r-a+1$. In this new expression, we have 
$n'+m_1'=n+m_1=\al-a+m_a+1=\al'+m_a\geq \al'$.

Note that the minimum $m'$
of $m_\beta$ on the interval $[a,r]$ may be strictly positive. 
In that case, we must use the translational invariance property of 
Lemma \ref{clustinv} (see also eq.\eqref{invtrans}) to first 
translate both the Motzkin path $\bM'$ and the index $n'$ by $-m'$.
We get $n'-m'+m_1'=\al'+m_1'-m'\geq \al'$, and the Theorem  
\ref{alphageneral} can be applied to conclude that $R_{\al',n'-m'+m_1'}'$
is a positive Laurent polynomial of the translated initial data $\bx_{\bM''}$
corresponding to $m_\beta"=m_\beta'-m'$ for all $\beta$. By Lemma
\ref{clustinv}, we find that $R_{\al',n'+m_1'}'$ is a positive Laurent polynomial
of the un-translated data $\bx_{\bM'}$.

We deduce that $R_{\al,n}$ is a positive Laurent polynomial of the
initial data $\bx_\bM$ for all values of $n> m_\al+1$. 

Finally, the positivity result is extended to $n< m_\al$ 
(including negative values of $n$) by use of
the corresponding invariance $n\to -n$ of the $Q$-system (see Lemma \ref{firstrem}), 
and the same trick as in the proof of Theorem \ref{mainRone}, involving the 
reflected-translated Motzkin path $\bM'$.
We deduce the final:

\begin{thm}\label{cestlaluttefinale}
The solution $R_{\al,n}$ of the $Q$-system for $A_r$ is a Laurent polynomial with 
non-negative integer coefficients of any initial data $\bx_\bM$ indexed
by any Motzkin path $\bM$, for all $n\in \Z$.
\end{thm}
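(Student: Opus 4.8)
The plan is to prove Theorem~\ref{cestlaluttefinale} by reducing the general case to the already-established positivity for $R_{1,n}$ (Theorem~\ref{mainRone}) together with the strongly-non-intersecting-path interpretation of $R_{\al,n}$ (Theorem~\ref{alphageneral}), handling the ``easy'' range first and then bootstrapping the remaining ranges via the symmetries of the $Q$-system. The key observation is that Theorem~\ref{alphageneral} already delivers positivity of $R_{\al,n+m_1}$ as a Laurent polynomial of $\bx_\bM$ whenever $n+m_1\geq \al-1$, since the weights of Theorem~\ref{weightexpression} and the induced weights \eqref{solyij} are manifestly positive Laurent monomials in the initial data. So the entire content of the final theorem beyond Theorem~\ref{alphageneral} is the extension to the complementary range $n<\al-1+\,$(shifts) and to negative $n$.

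First I would dispose of the range $n+m_1<\al-1$ by the light-cone argument: because the $Q$-system recursion \eqref{renom} expresses $R_{\beta,n}$ through $R_{\beta,n-1},R_{\beta,n-2},R_{\beta\pm1,n-1}$, the quantity $R_{\al,n+m_1}$ depends only on the initial data $\bx_{\bM'}$ lying in a wedge $\bM'=\{(m_\beta,\beta)\}_{\beta=a}^r$, where $a=\al-n-m_1+k$ and $m_a+1=k$ pin the boundary of the cone. Setting the irrelevant values $R_{a-1,m}=1$ reinterprets $R_{\al,n+m_1}$ as a solution $R'_{\al',n'+m_1'}$ of the $A_{r'}$ $Q$-system with $r'=r-a+1$ and $\al'=\al-a+1$, and one checks the crucial bound $n'+m_1'=\al'+m_a\geq\al'>\al'-1$, placing us back in the regime where Theorem~\ref{alphageneral} applies. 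If the minimum of $m_\beta$ on $[a,r]$ is positive, I would first translate $\bM'$ and the index $n'$ down by that minimum using the translational invariance of Lemma~\ref{clustinv} (equivalently \eqref{invtrans}); this preserves the inequality, gives positivity over the translated data $\bx_{\bM''}$, and then Lemma~\ref{clustinv} transports positivity back to $\bx_{\bM'}$, hence to $\bx_\bM$.

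Having covered all $n>m_\al+1$, I would finish by extending to $n\leq m_\al$, including negative $n$, using the reflection $n\to -n$ symmetry of the $Q$-system (Lemma~\ref{firstrem}) exactly as in the proof of Theorem~\ref{mainRone}: $R_{\al,n+m_1}$ as a function of $\bx_\bM$ equals $R_{\al,-n-m_1}$ as a function of the reflected data $\bx_{\bM^t}$ with $m_\al^t=-(m_\al+1)$, after which one translates $\bM^t$ back into the fundamental domain and applies the already-proven positivity there. Combining the three regimes yields positivity of $R_{\al,n}$ as a Laurent polynomial in $\bx_\bM$ with non-negative integer coefficients for every $\al\in I_r$ and every $n\in\Z$.

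The main obstacle I anticipate is bookkeeping rather than conceptual: one must verify carefully that the light-cone truncation genuinely produces a \emph{bona fide} $A_{r'}$ $Q$-system on the truncated Motzkin path $\bM'$ (so that Theorem~\ref{alphageneral} is literally applicable), and that the three successive reductions---truncation, translation into the fundamental domain, and reflection---compose consistently so that the boundary inequality $n'+m_1'\geq\al'-1$ is actually achieved in each case. The subtlety is that the various shifts interact with which entries $m_\beta$ attain the minimum and with the parity conventions governing the mutations $\mu_\al$ versus $\mu_{\al+r}$; I would track these indices explicitly to confirm the inequalities rather than rely on the generic picture of Figure~\ref{fig:lightcone}.
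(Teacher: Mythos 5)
Your proposal is correct and follows essentially the same route as the paper: Theorem \ref{alphageneral} for the range $n+m_1\geq\al-1$, the light-cone truncation to an $A_{r'}$ $Q$-system (with translation via Lemma \ref{clustinv} when needed) for the intermediate range, and the reflection $n\to-n$ of Lemma \ref{firstrem} as in the proof of Theorem \ref{mainRone} for the remaining values. The index bookkeeping you flag ($a=\al-n-m_1+k$, $m_a+1=k$, giving $n'+m_1'=\al'+m_a\geq\al'$) is exactly the verification the paper carries out.
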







\section{Asymptotics}
\label{asympto}
In this section, we consider two limiting cases of the results:
Solutions in the the limit $r\to \infty$, corresponding to the
$Q$-sytem of $A_{\infty/2}$, and solutions $R_{1,n}$ when
$n\to\infty$. In the latter case, we are interested in the asymptotic
behavior of the number of paths contributing to the partition function
$R_{1,n}$ as $n\to\infty$.

\subsection{The limit $A_{\infty/2}$}

In Equation \eqref{renom}, retaining only the boundary condition at
$\al=0$, $R_{0,n}=1$ ($n\in \Z$) but dropping the condition at
$\al=r+1$, gives us the solutions for the algebra $A_{\infty/2}$. All
of the results of the previous sections generalize in a
straightforward way.

The initial seed is an infinite sequence,
$(R_{\al,m_\al},R_{\al,m_\al+1})_{\al\in \Z_+}$ indexed by
semi-infinite Motzkin paths $\bM$.
For example,
for $\bM_0=(0,0,...)$, and the corresponding heap graph is
$G_{\infty/2}$, generalizing Figure \ref{fig:depth}.  Then $R_{1,n}$ as a
function of $\bx_{\bM_0}$ is $R_{1,0}$ times the generating function
for weighted heaps on $G_{\infty/2}$, $F_1(t)$ obtained as the limit
of \eqref{contia}:
\begin{equation}\label{ainfcont}
F_1(t)=1+t {y_1\over 1-t y_1-t{y_2\over 1-t y_3-t{y_4\over 1-t
y_5-t{y_6\over {}\ \ {\ddots \over 1-t y_{2\al-1}-t{y_{2\al}\over {} \
\ \ \ddots } } } } } }
\end{equation}
Using the rearrangement Lemmas \ref{firstmov} and
\ref{secmov} we can write the expressions for the generating function
in terms of other initial seeds.
For instance, the generating function corresponding to the ``maximal" Motzkin
path $\bM_{\max}$ with $m_\al=\al-1$ is the continued fraction:
\begin{equation}\label{htcont}
{1\over 1-t {y_1\over 1-t {y_2\over 1-t {y_3\over 1-t{y_4\over 1-t {y_5\over 1-t{y_6\over {}\ \ 
{\ddots  \over 1-t {y_{\al}\over {} \ \ \ \ddots } } } } } } } } }
\end{equation}

The limit of the generating function corresponding to
the strictly descending Motzkin path with $r$ vertices, of the form
$1/(1-t y_1 \varphi_r)$, with $\varphi_r$ as in Lemma
\ref{varphilemma} is a ``continued fraction" with infinitely many
branchings, a sort of self-similar object.

\subsection{Numbers of configurations}

We consider the number of configurations contributing to $R_{1,n}$ in
general.  For this purpose, set $R_{\al,m_\al}=R_{\al,m_{\al+1}}=1$
($\al=1,2,...,r$). This implies that all the edge weights are
$y_{e}=1$.  Therefore, $R_{\al,n}$ is a non-negative integer equal
counts the numbers of configurations of the related statistical model.

For example, when $\bM=\bM_0$,
\begin{lemma}
For the initial data $R_{\al,0}=R_{\al,1}=1$, $\al=1,2,...,r$,
the generating function $F_1^{(r)}(t)=\sum_{n\geq 0} R_{1,n}t^n$ reads:
\begin{equation}
F_1^{(r)}(t)=1+t {P_{r}(t)\over P_{r+1}(t)}, \ {\rm with}\ \ 
P_m(t)=t^{m\over 2} U_m\left( {1\over \sqrt{t}}-\sqrt{t}\right)
\end{equation}
where $U_m$ are the Chebyshev polynomials of the second kind, with 
$U_m(2\cos \theta)={\sin (m+1)\theta \over \sin \theta}$. The corresponding limit $r\to \infty$
for $A_{\infty/2}$ reads:
\begin{equation}\label{schroeder}
F_1^{(\infty)}(t)=1+tz(t)={3-t -\sqrt{1-6t+t^2}\over 2}=1+t+ 2  t^2 + 6 t^3 + 22 t^4 + 90 t^5 + 394 t^6 + \cdots
\end{equation}
where $z(t)={1-t -\sqrt{1-6t+t^2}\over 2t}$ is the generating function
of the large Schroeder numbers.
\end{lemma}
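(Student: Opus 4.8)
The plan is to specialize the weights to the value $1$, collapse the continued fraction for $F_1^{(r)}(t)$ into a periodic one, and match that periodic fraction against the three-term recursion obeyed by the polynomials $P_m$; the limit $r\to\infty$ then comes from a single quadratic fixed-point equation.

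First I would substitute $R_{\al,0}=R_{\al,1}=1$ into the weight formulas \eqref{qtyzero}. Since each $y_i$ is a ratio of products of the initial data, every weight equals $1$ and $R_{1,0}=1$. Feeding $y_i=1$ into the flat continued fraction \eqref{positA} (equivalently \eqref{contia}) collapses it to the periodic fraction $F_1^{(r)}(t)=1+t/H_{r+1}(t)$, where the $H_k$ are determined by $H_1=1-t$ and $H_k=1-t-t/H_{k-1}$, and $H_{r+1}$ carries exactly $r+1$ nested levels, one for each odd-indexed weight $y_1,y_3,\dots,y_{2r+1}$ appearing on the diagonal of \eqref{positA}.

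Next I would record the recursion for $P_m(t)=t^{m/2}U_m\!\left(1/\sqrt t-\sqrt t\right)$. Starting from the Chebyshev recursion $U_{m+1}(w)=w\,U_m(w)-U_{m-1}(w)$ with $w=1/\sqrt t-\sqrt t$ and multiplying by $t^{(m+1)/2}$, the identity $t^{(m+1)/2}w=(1-t)t^{m/2}$ yields
\[
P_{m+1}(t)=(1-t)\,P_m(t)-t\,P_{m-1}(t),\qquad P_0=1,\ P_1=1-t.
\]
A one-line induction then gives $H_k=P_k/P_{k-1}$: the base case is $H_1=(1-t)/1=P_1/P_0$, and substituting $H_{k-1}=P_{k-1}/P_{k-2}$ into $H_k=1-t-t/H_{k-1}$ and clearing denominators produces exactly the recursion for $P_k$. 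Hence $F_1^{(r)}(t)=1+t/H_{r+1}=1+t\,P_r/P_{r+1}$, which is the first claim. For the limit, I would note that the periodic fraction converges coefficientwise (the coefficient of $t^n$ stabilizes once $r\ge n$), so $H_\infty$ satisfies the fixed-point equation $H=1-t-t/H$, i.e. $H^2-(1-t)H+t=0$. Selecting the branch analytic at $t=0$ with $H_\infty(0)=1$ gives $H_\infty=\tfrac12\bigl(1-t+\sqrt{1-6t+t^2}\bigr)$; rationalizing $t/H_\infty$ produces $tz(t)=\tfrac12\bigl(1-t-\sqrt{1-6t+t^2}\bigr)$, whence $F_1^{(\infty)}=1+tz(t)=\tfrac12\bigl(3-t-\sqrt{1-6t+t^2}\bigr)$, whose expansion recovers the large Schröder numbers $1,2,6,22,90,394,\dots$.

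Every step here is routine once set up, so there is no deep obstacle; the two places needing care are purely bookkeeping. The first is confirming that setting $y_i=1$ in \eqref{positA} yields the fully periodic fraction $H_{r+1}$ with precisely $r+1$ levels, so that the index shift between $P_r$ and $P_{r+1}$ comes out correctly rather than off by one. The second is choosing the correct root of the quadratic in the limit, which is pinned down unambiguously by the normalization $H_\infty(0)=1$ forced by the constant term of the continued fraction.
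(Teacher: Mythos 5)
Your proof is correct and follows essentially the same route as the paper: set all weights to $1$ in the continued fraction, identify the resulting levels with the ratio $P_k/P_{k-1}$ via the three-term recursion $P_{m+1}=(1-t)P_m-tP_{m-1}$, and obtain the $r\to\infty$ limit from the quadratic fixed-point equation (your version of that equation, $H^2-(1-t)H+t=0$ with $H=1/z$, is the correct one). The only differences are cosmetic: you start from \eqref{positA} rather than \eqref{contia}, and you spell out the bookkeeping that the paper leaves implicit.
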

\begin{proof}
We use the expression \eqref{contia} with all weights
equal to 1. Then there is a recursion
relation, $P_{r+1}(t)=(1-t)P_r(t)-t P_{r-1}(t)$, with $P_{0}(t)=1$ and
$P_1(t)=1-t$. The limit $z(t):=\lim_{r\to\infty} P_r(t)/P_{r+1}(t)$
therefore satisfies $z=1-t+{t\over z}$. It also coincides
with the continued fraction \eqref{ainfcont} with $y_i=1$ for all $i$.
\end{proof}

The rate of growth of $R_{1,n}$, considered as the number of paths
of length $2n$ on $\Gamma_{\bM_0}$, can also be analyzed.  The radius of
convergence of the series $F_1^{(r)}(t)$ is given by the smallest root
of the denominator of the fraction, which is $U_{r+1}\big({1\over
\sqrt{t}}-\sqrt{t}\big)$. As the zeros of the Chebyshev polynomial
$U_m$ are $2\cos\,\pi {k\over m+1}$, $k=1,2,...,m$, when
$n\to\infty$
\begin{equation}
R_{1,n} \sim C_r \times \left(\cos\big({\pi\over r+2}\big)+\sqrt{\cos^2\big({\pi\over r+2}\big)+1}
\right)^{2n} 
\end{equation}
for some constant $C_r$.

The number of paths on $\Gamma_{\bM_{max}}$ is also simple to compute,
as it is the number of Dyck paths of length $2n$, which are limited
by height $2r+1$:
\begin{equation}
F_1^{(r)}(t)={1\over \sqrt{t}}
{U_{2r+1}\left({1\over \sqrt{t}}\right) \over U_{2r+2}\left({1\over \sqrt{t}}\right)}
\end{equation}
In the limit $r\to \infty$,
\begin{equation}
F_1^{(\infty)}(t)={1 -\sqrt{1-4t}\over 2t}=1+t+ 2  t^2 + 5 t^3 + 14
t^4 + 42 t^5 + 429 t^6 + \cdots
\end{equation}
is the generating function of the Catalan numbers $c_n={1\over
n+1}{2n\choose n}$.

As a result of the correspondence to domino tilings in the next
section, this function counts the number of domino tilings of the
(possibly truncated) halved Aztec diamond with $d$ tiles missing.  The
function $R_{\al,n}$ counts the number of its indented versions. 

The large $n$ behavior is $R_{1,n}\sim C_r'
\big(2\cos({\pi\over 2r+3})\big)^{2n}$, for some constant $C_r'$.

Now consider the expression as a function of the initial seed
corresponding to $\bM_2$, with $m_\al=r-\al$ (the maximal descending
Motzkin path), where $R_{1,n+r-1}/R_{1,r-1}$ is as in Lemma
\ref{varphilemma}. 
\begin{eqnarray*}
F_1^{(r)}(t)=\sum_{j=0}^{r-2}R_{1,j}t^j +t^{r-1} R_{1,r-1}\Phi_r(t;\by),\qquad
\Phi_r(t;\by)={1\over 1-t y_1 \varphi_r(\{y_j\};\{y_{i,j}\};c)},
\end{eqnarray*}
where $\varphi_r$ is defined in Lemma \ref{varphilemma}, and the
arguments are the weights $y_i=y_i(\bM_2)$, $i=2,3,...,2r$,
$y_{i,j}=y_{i,j}(\bM_2)$, $2\leq j+1<i\leq r+1$, and
$c=ty_{2r+1}(\bM_2)$. 
\begin{lemma}
Setting $R_{\al,r-\al}=R_{\al,r-\al+1}=1$ ($\al\in I_r$),
\begin{equation}
\Phi_r(t)=1+t {V_r(t)\over V_{r+1}(t)}, \ \ V_m(t)=(-1)^mU_{2m}(\sqrt{t}).
\end{equation}
\end{lemma}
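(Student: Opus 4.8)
The plan is to reduce everything to a single three-term (Chebyshev) recursion. First I would record that, once $R_{\al,r-\al}=R_{\al,r-\al+1}=1$ for all $\al$, the weight formulas of Theorem \ref{weightexpression} together with \eqref{solyij} force every edge weight of $\Gamma_{\bM_2}=\Gamma(r)$ to equal $1$; in particular $y_{2\al-1}=y_{2\al}=y_{i,j}=1$ and $c=ty_{2r+1}=t$. Hence $\Phi_r(t)=1/(1-t\varphi_r)$, with $\varphi_r$ the branched continued fraction of Lemma \ref{varphilemma} evaluated at unit weights, is a rational function of $t$ alone, and the whole statement becomes an identity between two explicit rational functions.

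Next I would establish the recursion satisfied by the right-hand side. In the paper's normalization $U_m(2\cos\theta)=\sin((m+1)\theta)/\sin\theta$ one has $U_0=1$, $U_1(x)=x$ and $U_{m+1}=xU_m-U_{m-1}$, whence $U_{n+2}(x)=(x^2-2)U_n(x)-U_{n-2}(x)$. Setting $x=\sqrt t$ and multiplying by $(-1)^{m+1}$ gives, for $V_m=(-1)^mU_{2m}(\sqrt t)$,
\[ V_{m+1}=(2-t)V_m-V_{m-1},\qquad V_0=1,\ V_1=1-t. \]
Writing $\psi_r:=(\Phi_r(t)-1)/t$, the claimed identity $\Phi_r=1+tV_r/V_{r+1}$ is equivalent to $\psi_r=V_r/V_{r+1}$, which by the $V$-recursion is in turn equivalent to the Möbius/continued-fraction statement
\[ \psi_r=\frac{1}{(2-t)-\psi_{r-1}},\qquad \psi_1=\frac{1-t}{1-3t+t^2}=\frac{V_1}{V_2}. \]

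The heart of the proof is to verify this last recursion from the geometry of $\Gamma(r)$. The quantity $\psi_r=\varphi_r\Phi_r$ is the descent generating function for paths on $\Gamma(r)$ that return to vertex $1$, so I would argue by induction on $r$, peeling the top level of $\Gamma(r)$ via the transfer-matrix reduction of Section \ref{pathinter}. Adding one vertex to the strictly descending Motzkin path enlarges the skeleton by one level and creates a whole new fan of long-range descending edges; the decisive point is that all of these edges carry weight $1$, forced by the proportionality relations \eqref{propconditwo}/\eqref{solyij}, and it is exactly this homogeneity that makes the reduced continued fraction have constant coefficients. Carrying this out should collapse the branched fraction of Lemma \ref{varphilemma} to the single scalar step $1/\varphi_{r+1}=2-\varphi_r/(1-t\varphi_r)$, equivalently $\psi_{r}=1/((2-t)-\psi_{r-1})$.

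I expect this reduction to be the main obstacle, because the recursion of Lemma \ref{varphilemma} does not preserve ``all weights equal to $1$'': it produces modified weights $1+1/(1-c)$ and an update $c\mapsto t+t/(1-c)$, so a naive induction on unit weights breaks down. The fix I would pursue is to carry a refined inductive hypothesis in which all skeleton and long-range weights remain equal to a common value while only the scalar $c$ is updated, and to use \eqref{propconditwo} to collapse every branch of $\varphi_k$ into that single scalar; one then checks that the two reduction steps of Lemma \ref{varphilemma} implement precisely the Möbius map above. Finally, the base case $r=1$ is immediate: $\Gamma(1)$ is the four-vertex chain, $\Phi_1(t)=(1-2t)/(1-3t+t^2)=1+t(1-t)/(1-3t+t^2)$, in agreement both with $V_1/V_2$ and (since $\bM_2=\bM_0$ when $r=1$) with the preceding lemma. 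Combining the base case with the inductive verification of the recursion yields $\psi_r=V_r/V_{r+1}$, hence the stated formula.
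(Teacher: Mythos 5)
Your proposal takes the same route as the paper's proof: specialize all weights to $1$, collapse the branched fraction of Lemma \ref{varphilemma} to a scalar M\"obius recursion, linearize it into $V_{m+1}=(2-t)V_m-V_{m-1}$ with $V_0=1$, $V_1=1-t$, and identify the solution as $(-1)^mU_{2m}(\sqrt{t})$; the paper writes the M\"obius step as $\theta_m=(t+\theta_{m-1})/(1-t-\theta_{m-1})$ with $\varphi_m\vert_{\by=\mathbf{1}}=1/(1-\theta_m)$, and your $\psi_r=1/((2-t)-\psi_{r-1})$ is the same map after the substitution $\theta_{r+1}=\psi_r-1$. You correctly single out the one delicate point, which the paper passes over in silence: the recursion of Lemma \ref{varphilemma} does not preserve unit weights. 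However, the invariant you propose --- ``all skeleton and long-range weights remain equal to a common value'' --- is not the one that propagates: after one reduction step only $y_{2k-2}$ and the $y_{k,i}$ have changed (all to $1+\frac{1}{1-c}$), while every other weight is still $1$. The invariant that does close the induction is: at stage $\varphi_m$ the weights $y_{m+1,i}$ attached to the current top vertex all share a common value $u$, the scalar argument is $c=tu$, and every remaining weight equals $1$. One reduction step then sends $u\mapsto u'=1+u/(1-tu)$ and $c\mapsto tu'$, and this is conjugate to the paper's $\theta$-recursion via $\theta=tu/(1-tu)$ (indeed $t+\theta=tu'$ and $1-t-\theta=1-tu'$), so unwinding down to $\varphi_1(u_1;\ ;tu_1)$ yields exactly your M\"obius map. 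With that correction the induction closes, and the rest of your argument (base case $\Phi_1=(1-2t)/(1-3t+t^2)=1+tV_1/V_2$ and the Chebyshev identification) is correct.
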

\begin{proof}
Setting the weights $y$'s to $1$ in $1/(1-t
\varphi_r)$,  using the recursive definition of $\varphi_r$ of Lemma
\ref{varphilemma} with
$\varphi_m(\{y_j\};\{y_{i,j}\};ty_{2m+1})\vert_{\by=1}={1\over
1-\theta_m(t)}$ ($m\geq 0$), $\varphi_0=1$ and $\theta_0=0$, we have
$\theta_m={t+\theta_{m-1}\over 1-t-\theta_{m-1}}$.  Let
$1-t-\theta_m={V_{m+1}\over V_m}$, then we have a three-term recursion
relation for $V_m$: $V_{m+1}=(2-t)V_m-V_{m-1}$, where $V_0=1,V_1=1-t$
(or equivalently $V_{-1}=V_0=1$). The solution is
$V_m(t)=(-1)^mU_{2m}(\sqrt{t})$, by iterating the Chebyshev recursion
relation $U_{m+1}(x)=xU_m(x)-U_{m-1}(x)$, to get
$U_{n+1}=(x^2-2)U_{n-1}-U_{n-3}$, and identifying the initial
conditions.
\end{proof}

The asymptotic behavior of $R_{1,n}$ with these boundary conditions
is found from the smallest root of the denominator
$U_{2r+2}(\sqrt{t})$.  Then
\begin{equation}\label{ratogro}
R_{1,n} \sim C_r'' \times {1\over \left(2 \sin\big({\pi\over
2(2r+3)}\big)\right)^{2n} } 
\end{equation}
as $n\to \infty$, for some constant $C_r''$.

In this case, the limit $r\to\infty$ of $\Phi_r$ is
ill-defined. The Motzkin path $\bM_2$ has $m_1=r-1\to \infty$,
hence does not have a good limit in our picture.  More
importantly, the graph $\Gamma_{\bM_2}$ has an infinite number of
incoming edges at each vertex $1,2,3...$. Hence, as we count paths
according to their numbers $n$ of down steps, we get an infinite
number of paths from $0$ to $0$ as soon as $n\geq 2$. This can also be
seen in the fact that the rate of growth in eq.\eqref{ratogro}
diverges as $2r/\pi$.

\begin{example}
In the cases $r=1,2,3$, we have 
\begin{eqnarray*}
\Phi_1(t)&=&{1\over 1-{t\over 1-{t\over 1-t}}}
=1+t{1-t\over 1-3t+t^2}
=1-t{U_2(\sqrt{t})\over U_4(\sqrt{t})}\\
\Phi_2(t)&=&{1\over 1-{t \over 1-{t+{t\over 1-t}\over 1-t-{t\over 1-t}}}}
=1+t{1-3t+t^2\over 1-6t+5t^2-t^3}
=1-t{U_4(\sqrt{t})\over U_6(\sqrt{t})}\\
\Phi_3(t)&=&{1\over 1-{t \over 1-{t+{t+{t\over 1-t}\over 1-t-{t\over 1-t}}\over 
1-t-{t+{t\over 1-t}\over 1-t-{t\over 1-t}}}}}
=1+t{1-6t+5t^2-t^3\over  1-10t+15t^2-7t^3+t^4}
=1-t{U_6(\sqrt{t})\over U_8(\sqrt{t})}\\
\end{eqnarray*}

\end{example}

More generally, for arbitrary choices of $\bM$ and the associated initial conditions 
$R_{\al,m_\al}=R_{\al,m_\al+1}=1$, we have $F_1^{(r)}(t)=L_r(t)/K_{r+1}(t)$, where $L_m(t),K_m(t)$
are polynomials of degree $m$ with integer coefficients depending on $\bM$, and $K_m(0)=1$
and $K_m(t)\sim (-t)^m$ for large $t$.
Indeed, from the results of Section \ref{cons},
we know that $K_{r+1}(t)$ is the generating 
function for hard particles on $G_\bM$, with weight $-t$
per particle. So the empty configuration contributes $K_{r+1}(0)=1$.
Due to connectivity of $G_\bM$, the maximally occupied hard-particle
configuration corresponds
to all odd vertices $2i+1$, $i=0,1,...,r$ being occupied by a particle (these are the
duals of all the skeleton-tree edges of $\Gamma_\bM$, with odd labels). The corresponding weight
is therefore $(-t)^{r+1}$. 

For $r$ large,  it is always possible to reexpress $F_1^{(r)}(t)$ for any Motzkin path $\bM$
obtained from $\bM_0$ via finitely many mutations in the form of a rational fraction of the
variables $t$ and $P_{r-1}(t)/P_r(t)$. Indeed, the branchings of the continued fraction
expression for $F_1^{(r)}(t)$ all include some ``tails" of the form $1/(1-t y_a -t (y_b/(1-t y_c...)))$
which, when we set all $y_i=1$, reduce to some ratio $P_m(t)/P_{m+1}(t)$. By use
of the 3-term recursion relation for $P_m$, this can always be rewritten as a rational
fraction of $t$ and $P_{r-1}/P_r$. Then, in the case of $A_{\infty/2}$, upon taking
the limit $r\to\infty$, we see that, as $P_{r-1}/P_r\to z(t)$, $F_1^{(r)}(t)$ takes
the general form
$A(t)+B(t) \sqrt{1-6t+t^2}$, where $A$ and $B$ are rational fractions of $t$ with integer coefficients.

For instance let us take $\bM=\mu_1\bM_0$. Then we have
\begin{eqnarray*}
F_1^{(r)}(t)&=& R_{1,0}+ {t R_{1,1} \over 1-t {y_1\over 1-t 
{y_2+{y_{3,1}\over 1-t y_5 -t{y_6\over 1-t y_7-\cdots}}
\over 1-t y_3-t {y_4\over 1-t y_5 -t{y_6\over 1-t y_7-\cdots}}}}}\Bigg\vert_{\by=\bf 1}\\
&=&2+{t\over 1-{t\over 1-t\left(1+{P_{r-2}(t)\over P_{r-1}(t)}\right){P_{r-1}(t)\over P_r(t)}}}=
2+t +{t^2\over 2-t-{P_{r-1}(t)\over P_r(t)}}\\
\end{eqnarray*}
where we have used the $Q$-system to rewrite $R_{1,0}={R_{1,1}^2+R_{2,1}\over R_{1,2}}=2$,
and the recursion relation for $P$ to eliminate $P_{r-2}$.
In the limit $r\to\infty$, this yields for the $A_{\infty/2}$ $Q$-system with initial conditions
$R_{1,1}=R_{1,2}=1$ and $R_{\al,0}=R_{\al,1}=1$ for all $\al\geq 2$:
$$
F_1^{(\infty)}(t)=2+t+ {t^2\over 2-t-z(t)}= 2+t+t^2{1-5t+2t^2+\sqrt{1-6t+t^2}\over 1-7t+5t^2-t^3}
$$

\section{The relation to domino tilings}\label{aztec}
The solutions of the $A_\infty$ $T$-system,
also known as the octahedron equation \cite{RR,FZLaurent,KT},
\begin{equation}\label{Tsystem}
T_{i,j,k+1} T_{i,j,k-1} = T_{i,j+1,k}T_{i,j-1,k} - T_{i+1,j,k}T_{i-1,j,k}
\end{equation}
were given in \cite{SPY} in terms of the partition function for domino
tilings of the Aztec diamond and its generalizations. The $A_r$
$T$-system (the same equation, with additional boundary conditions) is
the fusion relation satisfied by the transfer matrix of the
generalized, inhomogeneous Heisenberg spin chain \cite{KR}.  The
$Q$-system \eqref{qsys} is a limit of the $T$-system, with the index
$j$ dropped, whereas the renormalized $Q$-system \eqref{renom} is
obtained by formally dropping the index $i$.  Therefore one should be
able to recover the solutions of the $Q$-system from those of the
$T$-system.

Here, we give the explicit connection between the path formulation of
$R_{\al,n}$ and domino tilings.  In
particular, we express $R_{\al,n}(\bx_\bM)$ for any Motzkin path
$\bM$, as partition functions for tilings of certain domains of the
square lattice by means of $2\times 1$ and $1\times 2$ dominos, and of
rigid ``defect" pairs of square tiles $1\times 1$.

\subsection{Paths and matchings of the Aztec diamond}\label{matchdiam}

In this subsection, we consider the solutions $R_{\al,n}$ as functions
of $\bx_{0}.$ For a restricted subset of the indices $(\al,n)$
there is an alternative combinatorial interpretation of
$R_{\al,n}$, related to the results of \cite{SPY} on the solutions of
the octahedron equation. This relation holds only for a restricted
subset, because the boundary conditions of \cite{SPY} are incompatible
with our truncation\footnote{This truncation is also different from
the truncation considered in \cite{AH} for the so-called bounded
octahedron recurrence.} $R_{0,n}=R_{r+1,n}=1$. The connection is therefore
valid only sufficiently far away from these boundaries.

Consider a system of equations with indices $(\al,n)\in \Z\times \Z$
(the $A_\infty$ $Q$-system):
\begin{equation}\label{octa}
\rho_{\al,n+1}\rho_{\al,n-1}=\rho_{\al,n}^2+\rho_{\al+1,n}\rho_{\al-1,n},
\ \al,n\in \Z 
\end{equation}
 To write $\rho_{\al,n}$ as a function of
$\{\rho_{\al,0},\rho_{\al,1}\}_{\al\in\Z}$ we use the recursion
relation
$$
\rho_{\al,n+1}={\rho_{\al,n}^2+\rho_{\al+1,n}\rho_{\al-1,n}\over
\rho_{\al,n-1} } .
$$
By induction, $\rho_{\al,n}$ depends only on the subset of the initial data
$\{\rho_{\beta,0},\rho_{\beta,1}\}$ with $\al-n+1\leq \beta \leq
\al+n-1$. Hence, if we identify $\rho_{\al,i}=R_{\al,i}$ ($i=0,1$), then
$\rho_{\al,n}=R_{\al,n}$ if
\begin{equation}\label{restrinal}
n\leq {\rm Min}(\al,r+1-\al).
\end{equation}
We will show that $R_{\al,n}$, with indices in the set, is
the generating function for positively
weighted matchings of the Aztec diamond.

Recall the interpretation of $\rho_{\al,n}$ in terms of partition
functions of matchings of Aztec diamonds \cite{SPY}.  The Aztec
diamond centered at $(\al,0)$ of radius $n$ is the set $ {\mathcal
A}_{\al,n} =\left\{ (\beta,\gamma)\in \Z^2, \, \vert \,
|\al-\beta|+|\gamma|< n\right\} $ with boundary $ {\mathcal B}_{\al,n}
=\left\{ (\beta,\gamma)\in \Z^2, \, \vert \, |\al-\beta|+|\gamma|=
n\right\} $ We denote by $A_{\al,n}$ and $B_{\al,n}$ the subsets of
the square lattice with vertices in $\Z^2+({1\over 2},{1\over 2})$
made of squares centered on the points of ${\mathcal A}_{\al,n}$ and
${\mathcal B}_{\al,n}$ respectively.

Consider the matchings, or compact dimer coverings, of
$A_{\al,n}$. These are configurations of occupation of edges
(including their vertices) of $A_{\al,n}$ by dimers, where each vertex
is covered by exactly one dimer.  Each square $(\beta,\gamma)\in
A_{\al,n}$ has either 2,1, or 0 edges covered, and we define
$\epsilon(\beta,\gamma)=-1,0,1$, respectively. Each square
$(\beta,\gamma)\in B_{\al,n}$ has either $0$ or $1$ edge covered,
and we  define $\epsilon(\beta,\gamma)=0,1$ respectively. Let
$$
\theta_{\al,n}(\beta,\gamma)=\left\{\begin{matrix} 0 & {\rm if}\,
\al+\beta+\gamma+n=0\, {\rm mod}\, 2\\ 
1 & {\rm otherwise} \end{matrix}
\right.
$$
Then the generating function for matchings of $A_{\al,n}$ is
\begin{equation}\label{match}
M_{\al,n}=\sum_{{\rm matchings}\, {\rm of}\, A_{\al,n}} 
\prod_{(\beta,\gamma)\in {\mathcal A}_{\al,n}\cup {\mathcal B}_{\al,n}} 
\Big(\rho_{\beta,\theta_{\al,n}(\beta,\gamma)}\Big)^{\epsilon(\beta,\gamma)}.
\end{equation}
The following is proved in \cite{SPY}:
\begin{thm}
The solution $\rho_{\al,n}$ of the system \eqref{octa}, expressed as a
function  of
 $\{\rho_{\al,0},\rho_{\al,1}\}_{\al\in\Z}$, is equal to 
the generating function for matchings of $A_{\al,n}$ \eqref{match}.
\end{thm}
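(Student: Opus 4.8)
The plan is to show that the matching generating function $M_{\al,n}$ of \eqref{match} satisfies both the recurrence \eqref{octa} and the correct data on the two initial rows $n=0,1$. Since \eqref{octa} determines $\rho_{\al,n}$ uniquely from $\{\rho_{\al,0},\rho_{\al,1}\}_{\al\in\Z}$ — it expresses $\rho_{\al,n+1}$ rationally in terms of the rows $n$ and $n-1$, and symmetrically $\rho_{\al,n-1}$ in terms of the rows $n$ and $n+1$, so the two initial slices propagate to all of $\Z\times\Z$ — it will then follow that $M_{\al,n}=\rho_{\al,n}$ identically. Thus the theorem reduces to a base case and a recurrence.

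First I would verify the base cases. For $n=0$ the graph associated to $A_{\al,0}$ is forced, and the single contribution coming from $\mathcal B_{\al,0}=\{(\al,0)\}$ carries exponent $\epsilon=1$ and parity $\theta_{\al,0}(\al,0)=0$ (since $2\al\equiv 0\bmod 2$), so the weight read off from \eqref{match} is exactly $\rho_{\al,0}$. For $n=1$ the one-cell diamond $\mathcal A_{\al,1}=\{(\al,0)\}$ together with its four boundary cells admits a unique forced perfect matching whose weight, unwound through the exponents $\epsilon(\beta,\gamma)$ and the parity function $\theta_{\al,1}$, equals $\rho_{\al,1}$. Both are direct computations once the weighting conventions of \eqref{match} are spelled out.

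The heart of the proof is the quadratic relation
$$M_{\al,n+1}M_{\al,n-1}=M_{\al,n}^2+M_{\al+1,n}M_{\al-1,n}.$$
I would obtain it by graphical condensation (in the spirit of Kuo's condensation, or equivalently by urban renewal / spider moves). One realizes the five generating functions $M_{\al,n\pm1}$, $M_{\al,n}$, $M_{\al\pm1,n}$ as partition functions of perfect matchings of nested Aztec-diamond graphs that differ only by the deletion or contraction of boundary vertices, and applies the condensation identity, which is precisely a Desnanot--Jacobi-type quadratic relation for matching partition functions. Identifying the four corner operations with the shifts $n\to n\pm1$ and $\al\to\al\pm1$, and using the bipartite structure to fix the global sign to $+$, reproduces \eqref{octa}. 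Equivalently, one builds $A_{\al,n+1}$ out of $A_{\al,n}$ and $A_{\al,n-1}$ by applying an urban-renewal move at each face, each move realizing a single instance of the recurrence at the level of weighted matchings, and then composes these moves.

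The main obstacle is the weight bookkeeping in this condensation step: one must check that the exponents $\epsilon(\beta,\gamma)\in\{-1,0,1\}$ and the parity assignment $\theta_{\al,n}(\beta,\gamma)$ transform consistently as the diamond grows, so that the local weight $\rho_{\beta,\theta_{\al,n}(\beta,\gamma)}$ produced by each move carries exactly the index shift demanded by \eqref{octa}, and that the sign emerging from the condensation is $+$ rather than $-$. Once this is confirmed for a single face move, the inductive step closes and the theorem follows from uniqueness of the solution of \eqref{octa} with prescribed initial rows. The complete verification of these conventions is carried out in \cite{SPY}.
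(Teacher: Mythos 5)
The paper does not actually prove this theorem: it is quoted from Speyer's work \cite{SPY} ("The following is proved in \cite{SPY}"), so there is no internal proof to compare against. Your sketch is, in outline, exactly the argument of \cite{SPY}: show that the matching generating functions $M_{\al,n}$ satisfy the octahedron-type quadratic relation by graphical condensation, match the two initial slices, and invoke uniqueness of the solution of \eqref{octa} given the rows $n=0,1$ (which is indeed immediate, since the relation expresses $\rho_{\al,n+1}$ rationally in the two preceding rows for generic data). Two caveats. First, your base case at $n=0$ does not survive the paper's conventions as literally written: $\mathcal A_{\al,0}$ is empty, the only matching is the empty one, and a boundary square with no covered edge receives $\epsilon=0$, hence weight $1$ rather than $\rho_{\al,0}$; more generally the $n=0,1$ cases are definitional anchors whose verification hinges on the precise (and here rather loosely transcribed) conventions of \cite{SPY}, so they need more care than "a direct computation" — note that Example \ref{aztex} only tests the formula at $n=3$. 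Second, the heart of the matter — that the exponents $\epsilon(\beta,\gamma)$ and the parity function $\theta_{\al,n}$ transform under condensation so as to reproduce \eqref{octa} with a $+$ sign — is exactly what you defer to \cite{SPY}, so as written the proposal is a correct road map rather than a self-contained proof. Given that the paper itself simply cites \cite{SPY} at this point, that level of detail is acceptable in context, but you should be explicit that the condensation bookkeeping, not the uniqueness argument, is where all the work lives.
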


\begin{figure}
\centering
\includegraphics[width=8.cm]{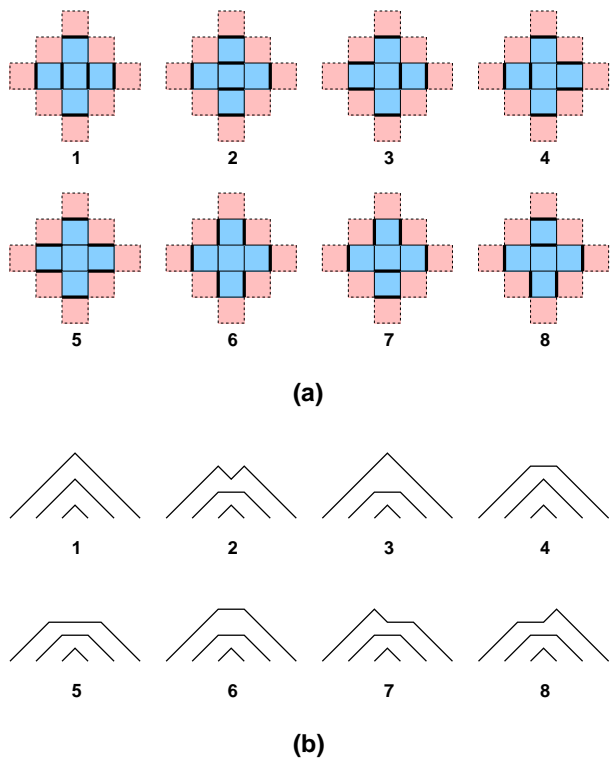}
\caption{\small (a) The $8$ matchings of $A_{\al,3}$
and (b) the non-intersecting $G_r$-paths from $(0,0),(2,0),(4,0)$ to 
$(6,0),(8,0),(10,0)$, where $r\geq 4$.
The labeling corresponds to the weights $a_i$, $i=1,2,...,8$.}\label{fig:exaztec}
\end{figure}

\begin{example}\label{aztex}
Figure \ref{fig:exaztec} shows, for $n=3$, (a) the eight matchings of
the Aztec diamond $A_{\al,3}$, centered at $(\al,0)$ after rotation by
$\pi/4$.  The corresponding weights of eq.\eqref{match} are (we denote
by $a_i$ the weight of the configuration labeled $i$ in
Fig.\ref{fig:exaztec}):
\begin{eqnarray*}
&&a_1={R_{\al-2,1}R_{\al,1}R_{\al+2,1}\over R_{\al-1,0}R_{\al+1,0}}, \ \
a_2={R_{\al-1,1}^2R_{\al+1,1}^2\over R_{\al,0}^2R_{\al,1}}, \ \
a_3={R_{\al-1,1}^2R_{\al+2,1}\over R_{\al-1,0}R_{\al+1,0}}, \\
&&a_4={ R_{\al-2,1}R_{\al+1,1}^2\over R_{\al-1,0}R_{\al+1,0}},\ \ 
a_5={R_{\al,1}^3\over R_{\al,0}^2}, \ \ 
a_6={R_{\al-1,1}^2R_{\al+1,1}^2\over R_{\al-1,0}R_{\al,0}R_{\al+1,0}}, \ \
a_7=a_8={R_{\al-1,1}R_{\al,1}R_{\al+1,1}\over R_{\al,0}^2}
\end{eqnarray*}
and we have $R_{\al,3}=\sum_{i=1}^8 a_i$. Note that this expression is valid only for $\al=2,3,...,r-1$,
$r\geq 3$, and
provided we set $R_{0,n}=R_{r+1,n}=1$ for all $n$.
This can be compared to our previous result for $R_{\al,3}$ as solution of the 
$A_r$ $Q$-system, obtained by cutting out
the initial data at $\beta=\al-3$, hence working instead with $A_{r'}$, $r'=r-\al+3$.
This allows to reinterpret $R_{\al,3}$ for $A_r$ as $R_{3,3}'$ for $A_{r-\al+3}$, with
initial data $R_{\al',i}'=R_{\al'+\al-3,i}$, $i=0,1$ and $\al'=1,2,...,r-\al+3$. As such it is
interpreted as $(R_{\al-2,0})^3$ ($=(R_{1,0}')^3$) times
the generating function for triples of non-intersecting $G_r$-paths
(for any $r\geq 4$) from the points
$(0,0),(2,0),(4,0)$ to the points $(6,0),(8,0),(10,0)$, but with the weights
$y_1'={R_{\al-2,1}\over R_{\al-2,0}}$ and 
$y_{2\beta -1}'= {R_{\beta+\al-3,1}R_{\beta+\al-4,0}\over R_{\beta+\al-3,0}R_{\beta+\al-4,1}}$ for $\beta\geq 2$
and $y_{2\beta}'={R_{\beta+\al-4,0}R_{\beta+\al-2,1}\over R_{\beta+\al-3,0}R_{\beta+\al-3,1}}$ for $\beta\geq 1$.
There are exactly $8$ such triples, and their weights (multiplied by $R_{\al-2,0}^3$)
match one by one the weights $a_i$ above.
They are represented in Fig.\ref{fig:exaztec} (b).
\end{example}

\subsection{Lattice paths and domino
tilings}\label{pathtil} 

\begin{figure}
\centering
\includegraphics[width=6.cm]{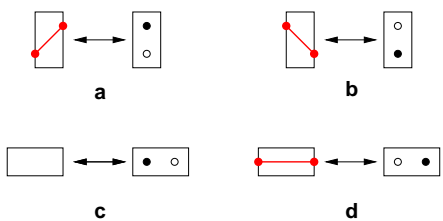}
\caption{\small A bijection between lattice paths and 
domino tilings. The dual square lattice is bicolored,
so there are 4 possible domino tiles,
labeled $a,b,c,d$.  Each corresponds to a step of path as indicated:
$a: (1,1)$, $b:(1,-1)$, $c:$ no step, and $d:(2,0)=(1,0)+(1,0)$,
double horizontal step.}\label{fig:dominobij}
\end{figure}

There is a standard bijection between domino tilings and $\wG_r$
lattice paths.  (see e.g. \cite{KJ}).  Consider the dominos as tiling
a chessboard. Then there are four types of
$2\times 1$ or $1\times 2$ dominos (see Figure
\ref{fig:dominobij}). These are in bijection with the
four path steps: $(1,1)$, $(1,-1)$, $(2,0)=(1,0)+(1,0)$ or no step at
all. Thus, 
a domino tiling of a domain $\mathcal D$ can be rephrased in terms of
a configuration of non-intersecting paths connecting points on the
boundary $\partial \mathcal D$.
\begin{figure}
\centering
\includegraphics[width=8.cm]{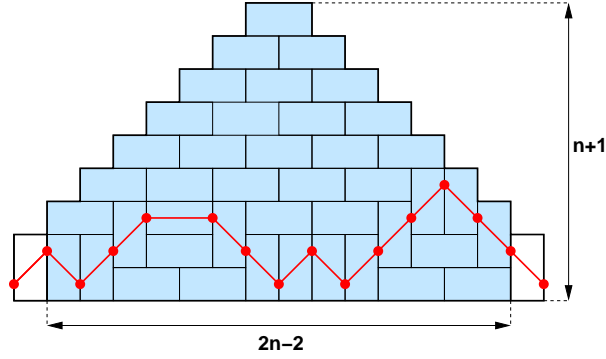}
\caption{\small The bijection between paths of length
$2n$ steps from $(0,0)$ to $(2n,0)$ and domino tilings of a ``halved"
Aztec diamond $HA_n$, of width $2n$ and height $n+1$.}
\label{fig:ronedomino}
\end{figure}
\begin{figure}
\centering
\includegraphics[width=10.cm]{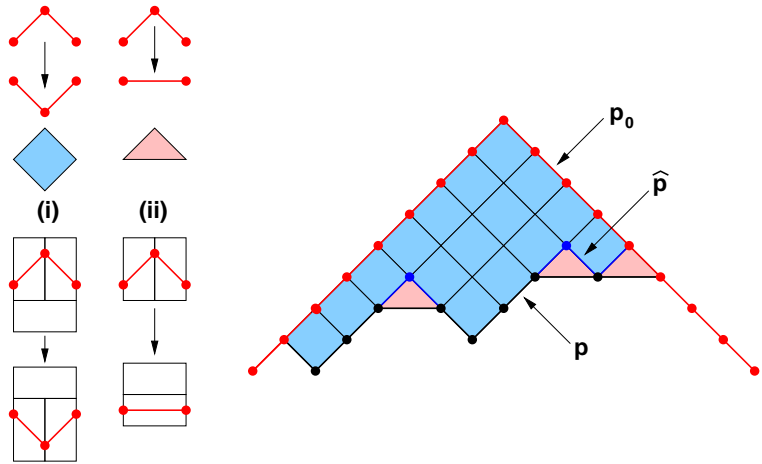}
\caption{\small The local moves (i-ii) which give
any Aztec path $p$ from the maximal path $p_0$, and an example of such a
transformation. First one goes from $p_0$ to the Dyck path $\hat{p}$
closest to $p$ by a sequence of ``box removals" (i), before getting to
$p$ via ``half-box removals" (ii).}
\label{fig:locaz}
\end{figure}
Conversely, consider $\wG_\infty$-lattice paths in the non-negative integer
quadrant $\Z_+^2$ from $(0,0)$ to $(2n,0)$ of length
$2n$. (We call these Aztec paths in this section.)
\begin{lemma}\label{onealema}
Aztec paths are in bijection with domino tilings of the ``halved"
Aztec diamond $HA_n$ represented in Figure \ref{fig:ronedomino}, of width
$2n$ and height $n+1$, with a floor at height $h=-1/2$, a
ceiling at height $h=n+1/2$, and a white face on the bottom left.
\end{lemma}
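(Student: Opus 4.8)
The plan is to realize the claimed bijection as the standard dimer-to-path correspondence, specialized to the region $HA_n$, and then to verify that the boundary data of $HA_n$ pin down a single path with the asserted endpoints. First I would fix the chessboard two-coloring of the faces of $HA_n$ so that the white face sits at the bottom left, as prescribed; this coloring makes the four domino types $a,b,c,d$ of Figure \ref{fig:dominobij} well defined according to the orientation of a domino and the color of its distinguished square. The local rule of Figure \ref{fig:dominobij} then overlays on each domino a single path step --- $(1,1)$ for $a$, $(1,-1)$ for $b$, the empty step for $c$, and the double horizontal step $(2,0)$ for $d$ --- and these segments concatenate across shared edges into a family of pairwise non-crossing lattice paths in $\Z_+^2$.

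The heart of the argument is a \emph{frozen boundary} analysis. I would show that the floor at height $h=-1/2$, the ceiling at height $h=n+1/2$, and the white corner at the bottom left force all dominoes adjacent to the boundary, so that exactly one of the glued paths is non-trivial: it is constrained to begin at the lower-left corner $(0,0)$ and to terminate at the lower-right corner $(2n,0)$, while every domino not traversed by it is uniquely determined. The width $2n$ and height $n+1$ enter precisely here, guaranteeing that an Aztec path of $2n$ steps fits inside $HA_n$ compatibly with floor and ceiling, and conversely that the complement of the path admits a unique (frozen) completion to a full tiling.

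To establish that the map is a bijection I would exhibit the inverse explicitly: given an Aztec path from $(0,0)$ to $(2n,0)$, place on each of its steps the domino dictated by the dictionary of Figure \ref{fig:dominobij}, and then fill the complementary region by the unique forced tiling identified above. That these two constructions are mutually inverse is a local check, since both the forward local rule and its inverse act edge-by-edge and agree on the frozen complement. A convenient way to organize the verification, following \cite{KJ}, is to generate any Aztec path from the maximal path $p_0$ by the elementary local moves of Figure \ref{fig:locaz} (box removals, passing through the closest Dyck path $\hat p$, and then half-box removals) and to check that each move corresponds to a local rearrangement of a bounded number of dominoes, reducing the global statement to finitely many local configurations.

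The step I expect to be the main obstacle is the frozen-boundary bookkeeping: confirming that the specific truncation producing the \emph{halved} diamond --- a single white corner together with the asymmetric floor and ceiling heights --- indeed forces a unique path with endpoints exactly $(0,0)$ and $(2n,0)$, rather than leaving residual freedom near the corners or admitting spurious closed loops. Once the boundary is shown to freeze the complement and to preclude closed cycles, the bijection follows routinely from the local dictionary.
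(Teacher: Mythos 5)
Your proposal is correct and follows essentially the same route as the paper: the local domino--step dictionary of Figure \ref{fig:dominobij}, the frozen complement forced by the floor, ceiling and corner coloring, the generation of an arbitrary Aztec path from the maximal path $p_0$ via the local moves of Figure \ref{fig:locaz}, and the boundary-coloring argument showing a path can only enter at the lower left and exit at the lower right. Your explicit flagging of the need to exclude closed loops and residual corner freedom is a point the paper handles only implicitly through the same coloring constraints, but it does not change the argument.
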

\begin{proof}
Any Aztec path may be obtained from the ``maximal'' Aztec path $p_0$
($n$ up steps followed by $n$ down steps) using the
procedure shown
in Figure \ref{fig:locaz}.


Given the path $p_0$, use the tiles $a,b,d$ of Figure
\ref{fig:dominobij} to tile the region traversed by the path. One must
then tile $HA_{n-1}'$, the half Aztec diamond below this region, with
width $2n-2$ and with the bottom row removed.  The bottom left and
right corner tiles are frozen, and are of type $c$, as are all the
tiles touching the left and right boundaries of $HA_{n-1}'$. By
induction, the entire domain $HA_{n-1}'$ must be tiled with $c$ tiles,
a unique configuration. Therefore there is a unique tiling of $HA_n$
associated to the maximal path $p_0$.

For any other path $p$, we use the sequence of moves in Figure
\ref{fig:locaz},
from $p_0$ to $p$, and apply the corresponding transformations on the
associated tiling by use of the bijection of Fig.\ref{fig:dominobij}.
This produces a unique tiling of $HA_n$ for each Aztec path $p$.

Conversely, given any tiling of $HA_n$, there is a unique path
associated to it via the map in Figure \ref{fig:dominobij}. A
path can only enter the domain from its lower left end ($1\times 2$
domino of type $a$ or $2\times 1$ domino of type $d$), as all faces
touching the left boundary are black, hence the tile $d$ cannot be
used. Similarly, the path can only exit via the lower right end
($1\times 2$ domino of type $b$ or $2\times 1$ domino of type $d$).
We deduce that there is a unique path associated to the tiling, that
goes from $(0,0)$ to $(0,2n)$.
\end{proof}

We assign weights to the steps of Aztec paths 
according to the height $h$ at which the step starts:
\begin{itemize}\item
Weight $1$ for steps of type $(1,1)$, irrespectively of $h$.
\item
Weight $z_{2h-2}$ for a step $(1,-1)$ from height $h$ to $h-1$, $h\geq 2$;
weight $z_1$ for a step $(1,-1)$ from $h=1$ to $h=0$.
\item weight $z_{2h-1}$ for a step $(2,0)$ at height $h$, $h\geq 2$;
weights $w_0,w_1$ for steps $(2,0)$ at heights $h=0$ and $h=1$, respectively.
\end{itemize}
Domino tilings receive weights according to the bijection of Figure
\ref{fig:dominobij}, with domino $c$ having weight $1$. We will refer
to those as weighted domino tilings in the following.

\subsection{$Q$-system solutions as functions of $\bx_0$ and domain tilings}
Recall the interpretation of $R_{1,n}$ as the partition function of
weighted paths on ${\tilde G}_r$.  These are weighted Aztec paths with
the following identification of weights: $z_{2r+1}=w_0=w_1=z_j=0$
$(j\geq 2r+3)$, $z_i=y_i$ $(i=1,2,...,2r)$ and $z_{2r+2}=y_{2r+1}$,
with $y_j$ as in \eqref{qtyzero}.  
The condition $z_j=0$ for $j\geq 2r+4$ corresponds
to truncating the tiled domain to the inside of a strip of height
$r+3$, with a ceiling at height $h=r+5/2$, with floor at height
$h=-1/2$. The conditions $w_0=w_1=z_{2r+1}=z_{2r+3}=0$ forbid the use
of the tile $d$ in the two bottom ($-1/2\leq h\leq 3/2$) and top ($r+1/2\leq
h\leq r+5/2$) rows.

\begin{figure}
\centering
\includegraphics[width=10.cm]{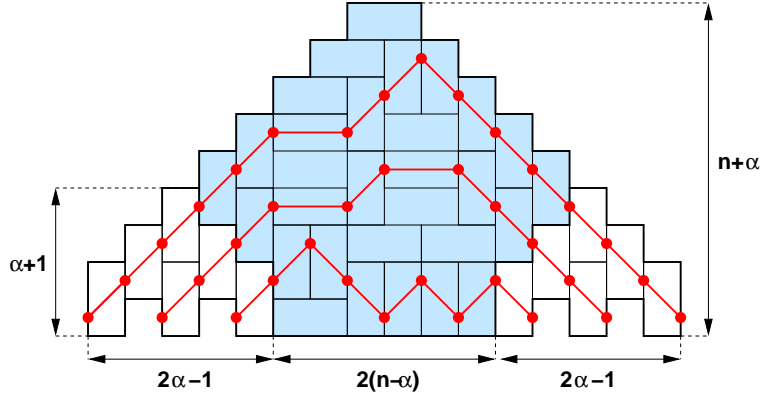}
\caption{\small The bijection between families of $\al$ Aztec paths on
$\Z_+^2$ from $(0,0),(2,0),...,(2\al-2,0)$ to
$(2n,0),(2n+2,0),...,2n+2\al-2,0)$ and the domino tilings of the
indented halved Aztec diamond $IHA_{n,\al}$ of width $2n+2\al-2$ and
height $n+\al$. Here, $\al=3$.}
\label{fig:ralphadomino}
\end{figure}

\begin{lemma}\label{twoalema}
The families of $\al$ non-intersecting Aztec paths on $\Z_+^2$
from $(0,0),(2,0),...,(2\al-2,0)$ to $(2n,0),(2n+2,0),...,2n+2\al-2,0)$, with steps $(1,1)$, $(1,-1)$ 
and double steps $(2,0)=(1,0)+(1,0)$, are in bijection with domino tilings of the ``indented halved" 
Aztec diamond $IHA_{n,\al}$ represented in Fig.\ref{fig:ralphadomino}. 
\end{lemma}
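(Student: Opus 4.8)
The plan is to generalize the single-path argument of Lemma \ref{onealema} to families of $\al$ non-intersecting paths. First I would pin down the combinatorics of the family: since the $\al$ paths are non-intersecting and start (resp. end) at the collinear points $(2i-2,0)$ (resp. $(2n+2\al-2j,0)$), the natural LGV pairing makes them nested, the $i$-th path running from $(2i-2,0)$ to $(2n+2\al-2i,0)$ with path $1$ outermost and path $\al$ innermost. The extremal member of the set of such families is the maximal family in which every path is the full mountain (all up-steps followed by all down-steps), path $i$ reaching height $n+\al-2i+1$; this is the unique family in which each path is simultaneously as high as the nesting constraint allows.

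As in the proof of Lemma \ref{onealema}, I would tile the maximal family by placing the tiles $a,b,d$ of Figure \ref{fig:dominobij} along each mountain, and then argue that everything else is frozen. The region above the outermost path (below the ceiling), the region below the innermost path, and each annular channel lying strictly between two consecutive nested mountains have staircase boundaries all of whose exposed faces are of a single color, so the local correspondence of Figure \ref{fig:dominobij} admits only the frozen tile $c$ there; by the same corner-propagation induction used in Lemma \ref{onealema}, each such region is covered in exactly one way. The union of the swept mountains and the frozen regions is precisely the indented halved Aztec diamond $IHA_{n,\al}$ of width $2n+2\al-2$ and height $n+\al$ depicted in Figure \ref{fig:ralphadomino}, and the maximal family corresponds to a unique tiling.

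Next I would reach an arbitrary non-intersecting family from the maximal one using the local moves of Figure \ref{fig:locaz} (box removals and half-box removals) applied to one path at a time, proceeding from the innermost path outward, with the restriction that a move lowering a path is performed only when it keeps that path from touching the one above it. Each move is a single local retiling under the bijection of Figure \ref{fig:dominobij}, so every non-intersecting family yields a well-defined tiling of $IHA_{n,\al}$, and the assignment is injective since two distinct families differ by at least one move and hence in at least one tile. Conversely, from any tiling of $IHA_{n,\al}$ the boundary coloring forces a path to enter only at the lower-left of each of the $\al$ source corners and leave only at the lower-right of each sink corner (the tile $d$ being forbidden along the staircase boundaries), so reading the tiling through Figure \ref{fig:dominobij} produces exactly $\al$ lattice paths with the prescribed endpoints, and these cannot share a vertex without forcing two dominos to overlap, hence are non-intersecting.

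The step I expect to be the main obstacle is showing that the complementary channels between the nested paths are genuinely rigid and that their shapes, together with the notches at the $\al$ sources and $\al$ sinks, glue together to reproduce exactly the indented diamond $IHA_{n,\al}$ --- neither a larger domain nor one with gaps. Concretely, one must verify that vertex-disjointness of the paths is equivalent to tileability of each channel by frozen $c$-tiles alone, a local check at every reentrant corner of the staircase boundaries; once this is established, all remaining steps reduce to the already-proved single-path bijection.
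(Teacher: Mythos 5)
Your proposal is correct and follows essentially the same route as the paper: reduce to the all-maximal nested family, tile it and freeze the complementary regions, reach an arbitrary non-intersecting family by the local moves of Fig.\ref{fig:locaz} applied path by path from the innermost outward, and recover the $\al$ paths from a tiling via the coloring constraints at the $\al-1$ indentations on each side. The extra care you take with the rigidity of the channels between consecutive nested mountains is a sound elaboration of what the paper leaves implicit.
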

\begin{proof}
Along the same lines as for the $\al=1$ case of Lemma \ref{onealema}. 
We first associate a tiling to any configuration of $\al$ non-intersecting Aztec paths.
We simply use the fact that
any configuration of $\al$ non-intersecting Aztec paths may be obtained from the configuration
where all paths are maximal via successive
applications of the local moves of Fig.\ref{fig:locaz}. We apply the construction
of the proof of Lemma \ref{onealema} to the bottom-most path, then to the next bottom-most,
etc.

Conversely, to construct the path configurations from the tilings, we note that
the two successions
of $\al-1$ indentations of the bottom boundary (left and right) impose the presence of $\al-1$ tiles $a$ 
and one $a$ or $d$ on the left
and $\al-1$ tiles $b$ and one $b$ or $d$ on the right, and that paths can enter or exit nowhere
else on the boundary, due to coloring constraints. This
corresponds to exactly $\al$ paths touching the boundary of the domain, entirely determined by
the tiling.
\end{proof}
Given that $R_{\al,n}$ is the partition function of $\al$
non-intersecting Aztec paths, with the identification of weights as
above, this gives the relation to domino tilings.

For sufficiently large $r$, the restriction of the weights effectively
reduces the domain to be tiled to the shaded region in
Figure \ref{fig:ralphadomino}, as the tiling of the rest of the domain is
entirely fixed (only dominos $a$ on the left, and $b$ on the right).
The case $\al=1$ is also depicted with the same convention in
Fig.\ref{fig:ronedomino}.

In the particular case when $\al=n$, and $r\geq 2n-1$, the shaded
domain to be tiled is exactly the Aztec diamond itself (with width and
height $2n-2$), with no constraints.
It is interesting to compare the
partition function $D_n(\by)$ 
for the weighted tilings
of this domain to the partition function for weighted Aztec diamond matchings \eqref{match}. 
On one hand, we know that it is equal to
\begin{equation}
D_n(y_1,y_2,...,y_{2r+1})={R_{n,n}\over R_{1,0}^n y_1^n y_2^{n-1}y_4^{n-2}\cdots y_{2n-2}}
\end{equation}
where we have used the interpretation of $R_{n,n}/R_{1,0}^n$ as generating function
for ${\tilde G}_r$-paths, applied the path-tiling bijection of Section \ref{pathtil},
and removed the contributions of the tiles outside of the shaded domain.

As discussed in Sections \ref{positfinal} and \ref{matchdiam}, the solution $\rho_{\al,n}$ 
of the $A_\infty$ $Q$-system for pairs $\al,n$ 
obeying \eqref{restrinal} is identified with $R_{n,n}'$, solution of the $A_{r}$ $Q$-system for any $r\geq 2n-1$,
and with initial data $R_{\beta,0}'=R_{\al-n+\beta,0}$ and $R_{\beta,1}'=R_{\al-n+\beta,1}$ for $\beta=1,2,...,r$, 
namely with the weights (see also Example \ref{aztex}):
\begin{eqnarray*}
y_{2\beta -1}'&=& {R_{\beta+\al-n,1}R_{\beta+\al-n-1,0}\over R_{\beta+\al-n,0}R_{\beta+\al-n-1,1}} \ {\rm for}\  
\beta=1,2,...,r+1\\
y_{2\beta}'&=&{R_{\beta+\al-n-1,0}R_{\beta+\al-n+1,1}\over R_{\beta+\al-n,0}R_{\beta+\al-n,1}} \ {\rm for} \  
\beta=1,2,...,r\\
\end{eqnarray*}
So we get two different ways of computing the same partition function $\rho_{\al,n}$ for Aztec diamond matchings,
one via the expression \eqref{match}, the other as 
$R_{n,n}'$. 

As is apparent from the Example
\ref{aztex}, the weights leading to the expression \ref{match} and
those of the families of non-intersecting paths 
are in bijection, but are incompatible with our weighted
bijection. Indeed, if we rotate all the matchings of
Fig.\ref{fig:exaztec} (a) clockwise by a quarter-turn, we find that
the corresponding dual domino tilings match the configurations of
non-intersecting paths of Fig.\ref{fig:exaztec} (b) via our bijection
only in the cases $2,3,4,7,8$, while some permutation of $1,5,6$ must
be applied. This suggest perhaps that other natural bijections should
exist.

\subsection{Solutions of the $Q$-system as functions of $\bx_\bM$ 
and tilings of domains with defects} We now ask for a tiling
interpretation for the weighted $\Gamma_\bM$-paths, describing
$R_{\al,n}$ as a function of the 
seed data $\bx_\bM$.

Consider first $\bM_{max}$, with $m_\al=\al-1$, where $R_{1,n}$ is a
partition function of weighted Dyck paths on the strip $0\leq y \leq
2r+1$ from $(0,0)$ to $(2n,0)$. Comparing this with the situation
described in Lemma \ref{onealema}, we can make a direct connection by
forbiding the steps $(2,0)$, or by using tiles $a,b,c$ only.  Thus,
$R_{1,n}/R_{1,0}$ is the generating function for tilings
of the domain $HA_n$ by means of tiles $a,b,c$ only, and with weights
$1$ per tiles $a$ or $c$, and $y_i$ per tile $b$ with center at height
$i-1/2$, for $i=1,2,...,2r+1$.

More generally,
let
$\Gamma_\bM=T_{2r+2}(i_1,...,i_s)$, a skeleton tree as in Definition \ref{defn:trees}.
As before we choose the weights $w_0=w_1=0$,
$z_j=0$ for $j\geq 2(2r+1-s)-1$, $z_{2(2r-s)}=y_{2r+1}(\bM)$, and
$z_{2j-1}=0$ for all indices $j\in [2,2r-s-1]\setminus
\{i_1,i_2,...,i_s\}$, corresponding to missing horizontal edges at
height $j$ in $\Gamma_\bM$. The remaining $z_i$'s are identified with
$y_j(\bM)$, in increasing order of indices.  Again, the vanishing
conditions impose some truncation to a strip $h\in [-1/2,2r-s+3/2]$,
and forbid the use of tiles $d$ except at heights $i_1,i_2,...,i_s$.

Next, consider the case of the Motzkin path
$\bM(k):=\mu_1\mu_2...\mu_k (\bM_0)$, with $m_1=m_2=\cdots =m_k=1$,
and $m_j=0$ for $j>k$. The graph $\Gamma_{\bM(k)}$ is ${\tilde G}_r$
with one additional down-pointing edge $k+2\to k$, assuming that
$r\geq k+1$.

\begin{figure}
\centering
\includegraphics[width=10.cm]{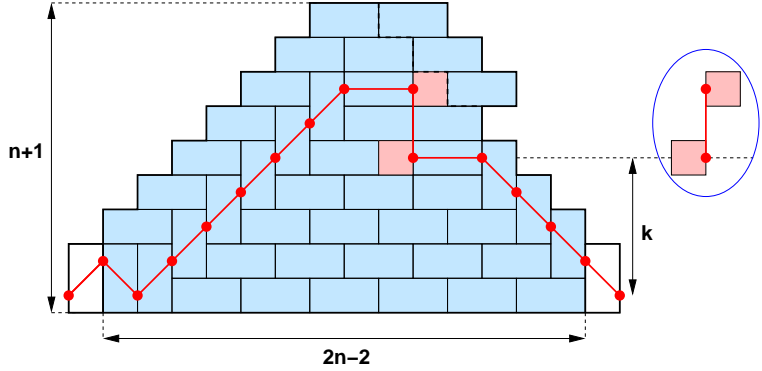}
\caption{\small The bijection between modified Aztec $A_{\bM(k)}$-paths
from $(0,0)$ to $(2n,0)$ and the
domino tilings of a domain made of the half Aztec diamond
(dashed lines), enhanced as indicated, tiled by
means of dominos and
rigid pairs of $1\times 1$ squares with centers at height $k+2$ and
$k$ (depicted in the medallion).}
\label{fig:mdomain}
\end{figure}

We define modified Aztec paths ($A_{\bM(k)}$-paths) as the
corresponding $\Gamma_{\bM(k)}$-paths in the $\Z_+^2$ quadrant
(without the usual restriction due to $r$ being finite).  It has an extra
step $(0,-2)$ only from height $h=k+2$ to height $h=k$. The weighting
is as in Section \ref{pathtil}, and the extra step receives a weight
$z_{k+2,k}$.  Paths from $(0,0)$ to $(2n,0)$ are in bijection with
tilings of the domain depicted in Fig.\ref{fig:mdomain}, by means of
the domino tiles of
Fig.\ref{fig:dominobij}, plus rigid pairs of $1\times 1$
defects tiles (see the medallion in Fig.\ref{fig:mdomain}), with centers
at positions $(t-1/2,k)$ and $(t+1/2,k+2)$ with $k+2\leq t\leq 2n-k$,
$t$ integer.  The ``maximal" path $p_0$ now involves a
succession of $n+1$ up steps $(1,1)$, followed by $n-k$ down steps
$(1,-1)$, then one down step $(0,-2)$ and finally $k$ down steps
$(1,-1)$. The local moves (i) and (ii) of Fig.\ref{fig:locaz} must now
be supplemented by new moves expressing the reduction of the special
down steps $(0,-2)$.

The partition function for weighted $A_{\bM(k)}$-paths is identified
with $R_{1,n}$ expressed as a function of
$\bx_{\bM(k)}$ (up to a multiplicative factor of $R_{1,0}$), choosing
the weights $w_0=w_1=z_{2k+1}=z_{2k+3}=0$,
$z_j=0$ for $j\geq 2r+4$, $z_{2k+2}=y_{2k+1}$, $z_j=y_j$ for
$j=1,2,...,2k$, and
$z_{k+2,k}=y_{k+2,k}=y_{2k}y_{2k+2}/y_{2k+1}$, where the $y_j\equiv
y_j({\bM(k)})$. Equivalently, $R_{1,n}/R_{1,0}$ is the partition
function for weighted tilings of the domain of
Figure \ref{fig:mdomain}, delimited by the tiles corresponding to the
maximal path $p_0$, and by the floor at height $h=-1/2$.

For an arbitrary Motzkin path $\bM$, the $A_\bM$ Aztec paths
are the $\Gamma_{\bM}$-paths without restriction due to $r$.  

\begin{figure}
\centering
\includegraphics[width=8.cm]{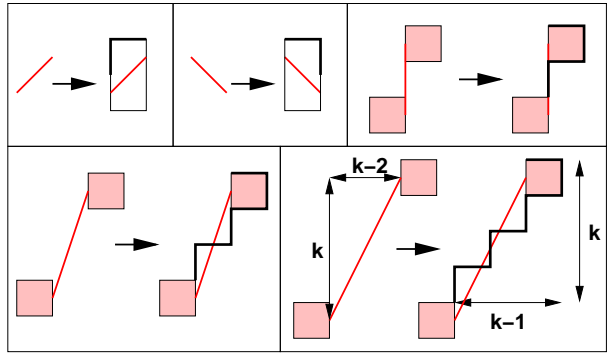}
\caption{\small The local rules for constructing the domain ${\mathcal D}_\bM^{(n)}$ 
out of the maximal $A_\bM$-path.}
\label{fig:rulbound}
\end{figure}

\begin{defn}
The domain ${\mathcal D}^{(n)}_\bM$ is defined as follows. Starting
from $\Gamma_\bM$, remove any down-pointing edges $i\to j$
if $\Gamma_\bM$ has an edge $i'\to j'$ with $[j,i]\subset[j',i']$
(strict inclusion). 
This leaves us with a set of
``maximal" down-pointing edges. Define the ``maximal" path
from $(0,0)$ to $(2n,0)$, which has no horizontal step, and goes as
far up and to the right as possible, namely starts with a maximal
number of steps $(1,1)$, descends via $n$ maximal descending steps.
The domain ${\mathcal D}^{(n)}_\bM$ is constructed by
associating boundary pieces to each step, as shown in
Figure \ref{fig:rulbound}. Finally, the domain is completed by a
horizontal line at height $h=-1/2$.
\end{defn}

\begin{lemma}
The $A_\bM$-paths from $(0,0)$ to $(2n,0)$ are in bijection with
tilings of the domain ${\mathcal D}^{(n)}_\bM$ by means
of the usual $2\times 1$ and $1\times 2$ tiles, plus rigid pairs of
square $1\times 1$ tiles centered at points of the form
$(t-{i-j-1\over 2},j)$ and $(t+{i-j-1\over 2},i)$ for all the pairs
$i,j$ of vertices connected by down-pointing edges on $\Gamma_\bM$.
\end{lemma}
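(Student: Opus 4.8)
The plan is to follow the template established in the proofs of Lemma \ref{onealema} and Lemma \ref{twoalema}, extending the path--tile dictionary of Figure \ref{fig:dominobij} to accommodate the descending steps of height $h>1$ that occur on $\Gamma_\bM$. First I would fix the augmented dictionary: the three ordinary steps $(1,1)$, $(1,-1)$, $(2,0)$ correspond to dominos $a,b,d$ exactly as before, while a descending step of height $h=i-j>1$, which in the two-dimensional representation of Definition \ref{goodpath} runs from $(x,i)$ to $(x-h+2,j)$, corresponds to a rigid pair of $1\times 1$ defect tiles. A direct check confirms that the separations of this pair, horizontal $i-j-1$ and vertical $i-j$, encode exactly the slope of a height-$(i-j)$ descent, consistently with the $h=2$ medallion of Figure \ref{fig:mdomain}; this ``rigidity'' is the geometric encoding of the fixed height of the edge $i\to j$, and it pins the centers to the claimed positions $(t-\frac{i-j-1}{2},j)$ and $(t+\frac{i-j-1}{2},i)$.

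Next I would establish a base case by constructing the tiling associated to the maximal $A_\bM$-path $p_0$, which starts with a maximal run of steps $(1,1)$ and then descends via $n$ maximal descending steps. As in Lemma \ref{onealema}, tiling the strip swept out by $p_0$ with the dictionary tiles (now including defect pairs for the maximal descending edges) leaves a sub-domain below the path whose tiling is forced: the bicoloring of the dual lattice together with the boundary shape built by the rules of Figure \ref{fig:rulbound} freezes every remaining tile, so there is exactly one tiling of ${\mathcal D}^{(n)}_\bM$ attached to $p_0$. This simultaneously verifies that those boundary rules produce a domain that is actually tileable.

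I would then promote this to a bijection by the standard local-move argument. Every $A_\bM$-path is reachable from $p_0$ by the box- and half-box-removal moves of Figure \ref{fig:locaz}, supplemented by new moves that reduce the height-$>1$ descents (generalizing the $(0,-2)$ move underlying Figure \ref{fig:mdomain}); each such move on the path corresponds, through the augmented dictionary, to a purely local retiling, so the assignment $p\mapsto(\text{tiling})$ is well defined and injective. For the inverse, given any tiling of ${\mathcal D}^{(n)}_\bM$, the coloring constraints along the boundary force a path to enter only at $(0,0)$ and exit only at $(2n,0)$, and reading off steps via Figure \ref{fig:dominobij}, with each defect pair read as a height-$>1$ descent, recovers a unique $A_\bM$-path.

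The main obstacle will be the consistency of defect tiles when $\Gamma_\bM$ carries several down-pointing edges whose intervals $[j,i]$ are nested or overlapping. Here the distinction in the definition of ${\mathcal D}^{(n)}_\bM$ is essential: only the \emph{maximal} down-pointing edges are used to shape the boundary, whereas defect pairs must be permitted in the interior for \emph{every} down-pointing edge $i\to j$ of $\Gamma_\bM$ (including the non-maximal ones, which are exactly the edges constrained by the weight relations \eqref{propconditwo}). I must therefore check that each admissible interior placement $(t-\frac{i-j-1}{2},j),(t+\frac{i-j-1}{2},i)$ stays inside the domain over the allowed range of $t$, and that no two defect pairs can geometrically collide. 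Settling this compatibility is the delicate combinatorial point; once it is established, the forcing argument and the local-move correspondence go through exactly as in the single-edge case of Figure \ref{fig:mdomain}.
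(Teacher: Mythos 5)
Your proposal is correct and follows exactly the route the paper takes (indeed the paper states this lemma without a written proof, leaving it as the evident extension of Lemmas \ref{onealema} and \ref{twoalema} and of the $\bM(k)$ discussion around Figure \ref{fig:mdomain}): augment the dictionary of Figure \ref{fig:dominobij} with rigid defect pairs for descents of height $h>1$, anchor the bijection at the maximal path whose complementary tiling is forced, generate all paths by the local moves of Figure \ref{fig:locaz} supplemented by defect-reduction moves, and invert via the bicoloring constraints on the boundary. The compatibility point you flag for nested down-pointing edges is real but routine: by the construction of $\Gamma_\bM$ in Section \ref{tardef}, every non-maximal edge $[j,i]$ lies inside a maximal edge $[j',i']$ whose descent shapes the boundary of ${\mathcal D}^{(n)}_\bM$, so the corresponding interior defect placements are automatically contained in the domain.
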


\begin{figure}
\centering
\includegraphics[width=13.cm]{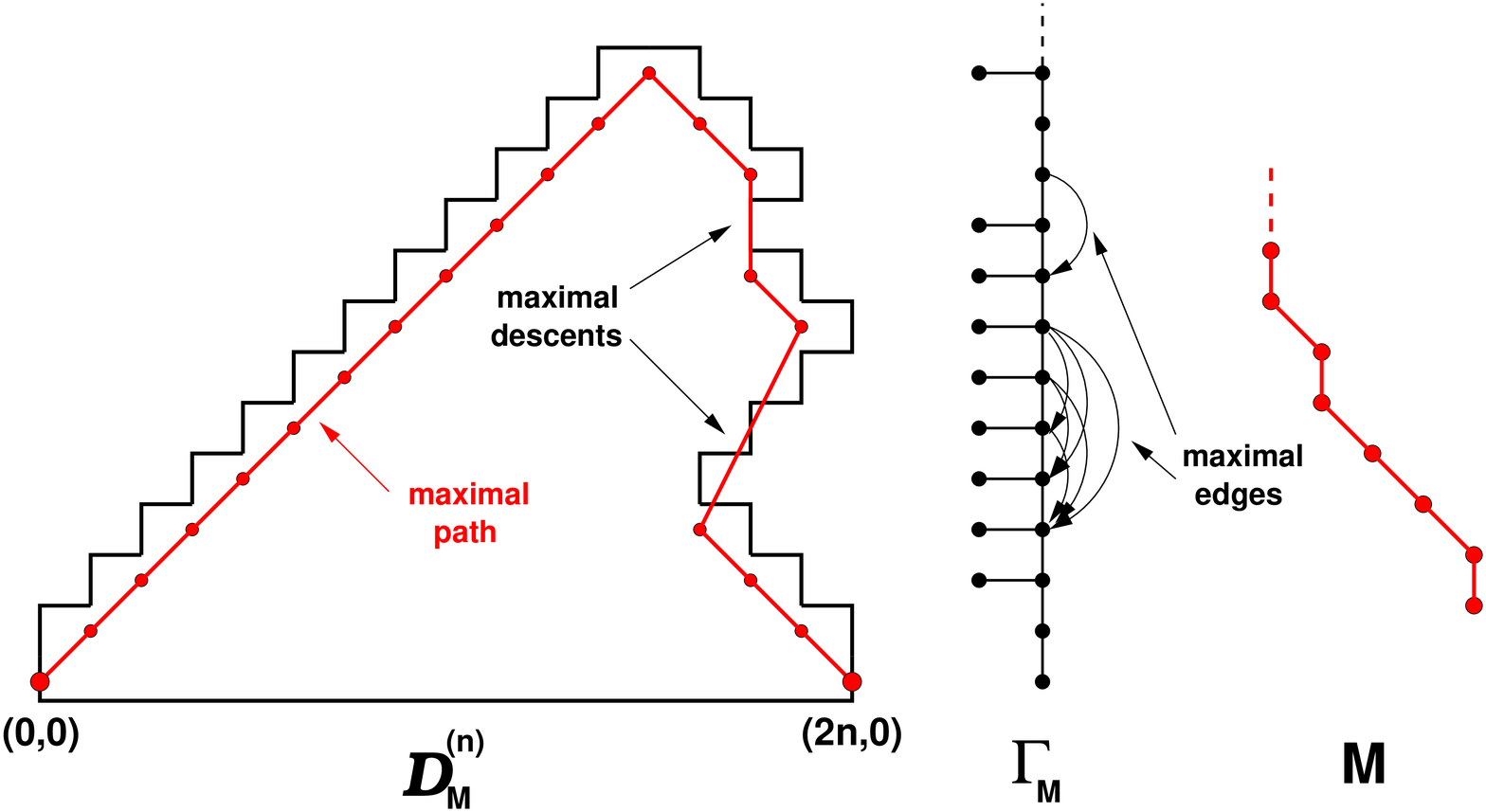}
\caption{\small A typical domain ${\mathcal D}_\bM^{(n)}$, with its
maximal $A_\bM$-path (in red), the target graph $\Gamma_\bM$
and the Motzkin path $\bM$.
We have indicated the two maximal edges, along which the maximal path takes its maximal descents.}
\label{fig:boundom}
\end{figure}

We refer to Figure \ref{fig:boundom} for an example.
We can now say that $R_{1,n+m_1}/R_{1,m_1}$ is the partition function
of tilings of ${\mathcal D}_\bM^{(n)}$ as described above.

\begin{figure}
\centering
\includegraphics[width=8.cm]{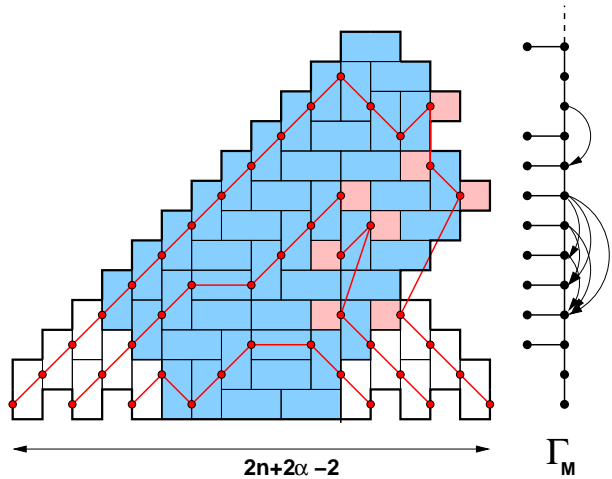}
\caption{\small The indented domain ${\mathcal D}_\bM^{(n)}$, serving for the representation
of $R_{\al,n}$ in terms of tilings, and the target graph $\Gamma_\bM$
(here $\al=3$).  Note that the tiling involves defect pairs
corresponding to descents of $\Gamma_\bM$, and that horizontal edges
(tiles $d$) are forbidden at heights where $\Gamma_\bM$ has no
horizontal edges. Outside of the shaded blue domain, all the tiles are
fixed by the indentations and the absence of $d$ tile in the bottom
row.}
\label{fig:indentdom}
\end{figure}

Finally, recall that $R_{\al,n+m_1}/(R_{1,m_1})^n$ is the generating
function for families of $\al$ strongly non-intersecting
$\Gamma_\bM$-paths, starting at $(0,0),...,(2\al-2,0)$ and ending at
$(2n,0),...,(2n+2\al-2,0)$. These paths can be represented as tilings
of the domain ${\mathcal D}_\bM^{(n)}$ with $\al-1$ additional
indentations on the bottom left and bottom right boundary (see
Fig.\ref{fig:indentdom}), by means of dominos and rigid defect pairs
corresponding to down-pointing edges of $\Gamma_\bM$. The
strong non-intersection of the paths imposes extra constraints on the
tilings, by forbiding some local configurations.





\section{Conclusion}\label{conc}


In this paper, we found a simple structural explanation for the
cluster mutations, which allow for sweeping the set of possible
initial data for the $Q$-system, in terms of simple local rearrangements
of the continued fractions that generate the $R_{1,n}$'s. This local
move allowing for generating mutations is reminiscent of the local
``Yang-Baxter"-like relation used in \cite{FZposit} in the context of
total positivity of the Grassmannian, as expressed through local
positive transfer matrices for networks. We do not yet fully undertand
this relation, although a partial explanation is found in \cite{DFK09}.

In view of Example \ref{tIpathweight}, we can think of the
constructions of this paper as generalizations of the Stieltjes
theorem \cite{STIEL1}, that now allow to re-express the series
$F(\lambda)=\sum_{k\geq 0}(-1)^k a_k/\lambda^{k+1}$ in different ways
as (mutated) possibly multiply branching continued fractions, whose
coefficients are particular combinations of Hankel determinants
involving the sequence $a_k$, each such rewriting corresponding to a
Motzkin path. To make the contact with our results, we simply have to
take $t=-1/\lambda$, $a_k=R_{1,k}$ and to identify
$R_{\al,n}=\Delta_\al^{n-\al+1}$ with the Hankel determinants of
eq.\eqref{hank}. For each Motzkin path $\bM$, we find a new continued
fraction expression for $F(\lambda)$ involving only the Hankel
determinants corresponding to the cluster variable $\bx_\bM$, via the
weights $y_i(\bM)$.







Equations called $Q$-systems exist for all simple Lie algebras. We
have checked that in these cases, $R_{1,n}$ also satisfies linear
recursion relations with constant coefficients.  We expect the
constructions of the present paper to generalize to all these cases.
In particular, we expect cluster positivity to be a consequence of the
LGV formula applied to counting possibly interacting families of
non-intersecting paths.

The $Q$-system is a specialization of the $T$-sytem, a discrete
integrable system with one additional parameter. It was shown in
\cite{DFK08} that $T$-systems can be considered as cluster algebras,
in general, of infinite rank. The statistical models in this paper are
particularly well-suited to the solution of the $T$-system \cite{DFK09a}.


In the case of the cluster variables corresponding to seeds of the
cluster algebra outside the subgraph $\mathcal G_r$, we do not have an
interpretation in terms of a statistical model. Outside this subgraph,
the exchange matrix $B$ has entries which grow arbitrarily. This
seems to suggest that the evolution in directions leading outside
$\mathcal G_r$ is not integrable.

\appendix
\section{The case of $A_3$ in the heap formulation.}\label{sectathree}

\begin{figure}
\centering
\includegraphics[width=14.cm]{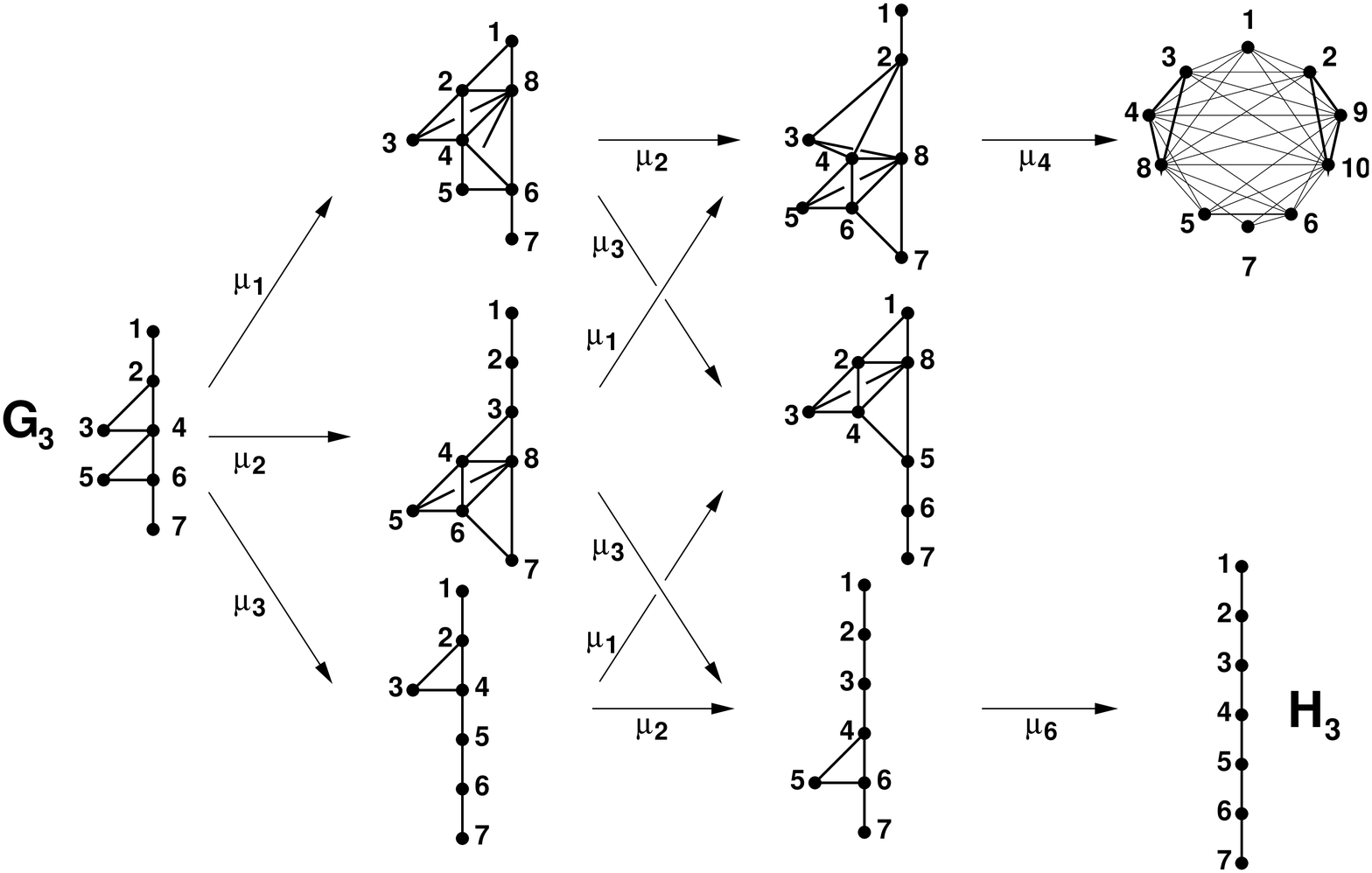}
\caption{\small The heap graphs corresponding to the $R_{1,n}$'s for the case $A_3$, 
and the corresponding mutations of seeds.}\label{fig:slfour}
\end{figure}

In this appendix, we detail the heap interpretation of the solution $R_{1,n}$ of the $A_3$ $Q$-system,
as expressed in terms of the various cluster variables corresponding to the 9 seeds of the fundamental domain.
We apply the same systematic method as in Example \ref{sectatwo}.
For simplicity, we have just depicted in Fig.\ref{fig:slfour} the heap graphs, together with the mutations
relating them, for each cluster variable of the 
fundamental domain. The corresponding rearrangements of the generating 
function $F_1^{(3)}(t)$ and new weights are given below,
with the labeling of vertices indicated in Fig.\ref{fig:slfour}.
These are straightforwardly generated by carefully
following the successive applications of Lemmas \ref{firstmov} and \ref{secmov}.

\noindent{\bf Initial cluster variable: $G_3$.} 
We start from the graph $G_3$ and the associated heap generating function
$F_1^{(3)}(t)=1+t y_1 \Phi^{(G_3)}(t)$, with
$$
\Phi^{(G_3)}(t)={1\over 1-t y_1-t{y_2\over 1-t y_3 -t{y_4\over 1-t y_5 -t{y_6\over 1-t y_7}}}}
$$
with the $y$'s as in \eqref{qtyzero}, namely
$$
y_1={R_{1,1}\over R_{1,0}},y_2={R_{2,1}\over R_{1,0}R_{1,1}},y_3={R_{1,0}R_{2,1}\over R_{2,0}R_{1,1}},
y_4={R_{1,0}R_{3,1}\over R_{2,0}R_{2,1}},
y_5={R_{2,0}R_{3,1}\over R_{3,0}R_{2,1}},y_6={R_{2,0}\over R_{3,0}R_{3,1}},y_7={R_{3,0}\over R_{3,1}} .
$$
Note that the weights are related via $y_1y_3y_5y_7=1$.

\noindent{\bf Mutation $\mu_1 (G_3)$.}
We apply Lemma \ref{secmov} to vertices $1$, $2$ and the structure attached to $3$. This gives
$F_1^{(3)}(t)=1+t y_1 \Phi^{(\mu_1 (G_3))}(t)$, with
$$
\Phi^{(\mu_1(G_3))}(t)={1\over 1-t {z_1\over 1-t{z_2 +{z_8\over 1-t z_5 
-t{z_6\over 1-t z_7}} \over 1-t z_3-t{z_4\over 1-t z_5 -t{z_6\over 1-t z_7}}}}}
$$
with 
$$
z_1=y_1+y_2,  z_2={y_2 y_3\over y_1+y_2}, 
z_8={y_2 y_4\over y_1+y_2},  z_3={y_1 y_3\over y_1+y_2},  z_4={y_1 y_4\over y_1+y_2},
z_5=y_5, z_6=y_6, z_7=y_7
$$
namely, upon using the $Q$-system relation $R_{1,0}R_{1,2}=R_{1,1}^2+R_{2,1}$:
\begin{eqnarray*}
&&z_1={R_{1,2}\over R_{1,1}},z_2={R_{2,1}^2\over R_{2,0}R_{1,1}R_{1,2}},
z_3={R_{1,1}R_{2,1}\over R_{2,0}R_{1,2}},
z_4={R_{1,1}^2R_{3,1}\over R_{2,0}R_{2,1}R_{1,2}},\\
&&z_5={R_{2,0}R_{3,1}\over R_{3,0}R_{2,1}},
z_6={R_{2,0}\over R_{3,0}R_{3,1}},z_7 ={R_{3,0}\over R_{3,1}},
z_8={R_{3,1}\over R_{2,0}R_{1,2}} .
\end{eqnarray*}
Note that there is one more $z$ than $y$'s, but that we now have two relations
$z_1z_3z_5z_7=1$ and $z_3z_8=z_2z_4$.

\noindent{\bf Mutation $\mu_2 (G_3)$.}
We apply Lemma \ref{secmov} to vertices $3$, $4$ and the descendent
structure attached to $4$. This gives
$F_1^{(3)}(t)=1+t y_1 \Phi^{(\mu_2 (G_3))}(t)$, with
$$
\Phi^{(\mu_2 (G_3))}(t)={1\over 1-t x_1-t {x_2\over 1-t {x_3\over 1-t{x_4+{x_8\over 1-t x_7}\over 
1-t x_5-t {x_6\over 1-t x_7}}}}}
$$
with
$$
x_1=y_1,x_2=y_2,x_3=y_3+y_4,x_4={y_4y_5\over y_3+y_4},x_8={y_4y_6\over y_3+y_4},
x_5={y_3y_5\over y_3+y_4},x_6={y_3y_6\over y_3+y_4},x_7=y_7 .
$$
namely, upon using the $Q$-system relation $R_{2,0}R_{2,2}=R_{2,1}^2+R_{1,1}R_{3,1}$:
\begin{eqnarray*}
&&x_1={R_{1,1}\over R_{1,0}},x_2={R_{2,1}\over R_{1,0}R_{1,1}},
x_3={R_{1,0}R_{2,2}\over R_{1,1}R_{2,1}},x_4={R_{1,1}R_{3,1}^2\over R_{3,0}R_{2,1}R_{2,2}},\\
&&x_5={R_{2,1}R_{3,1}\over R_{3,0}R_{2,2}} ,
x_6={R_{2,1}^2\over R_{3,0}R_{3,1}R_{2,2}},x_7={R_{3,0}\over R_{3,1}},x_8={R_{1,1}\over R_{3,0}R_{2,2}} .
\end{eqnarray*}
As for the $z$'s above, there is one more $x$ than $y$'s, but they obey two relations:
$x_1x_3x_5x_7=1$ and $x_5x_8=x_4x_6$.

\noindent{\bf Mutation $\mu_3 (G_3)$.}
This is part of the sequence of graphs of Example \ref{gpathex}. It is obtained by applying the 
Lemma \ref{secmov} to the vertices $5,6,7$ of $G_3$. This gives
$F_1^{(3)}(t)=1+t y_1 \Phi^{(\mu_3 (G_3))}(t)$, with
$$
\Phi^{(\mu_3 G_3)}(t)={1\over 1-t  \, t_1 -t{t_2\over 1-t\, t_3 -t{t_4 \over 1-t {t_5\over 1-t {t_6\over 1-t \, t_7}}}}}
$$
with
$$
t_1=y_1,t_2=y_2,t_3=y_3,t_4=y_4,t_5=y_5+y_6,t_6={y_6y_7\over y_5+y_6},t_7={y_5y_7\over y_5+y_6} .
$$
namely, upon using the $Q$-system relation $R_{3,0}R_{3,2}=R_{3,1}^2+R_{2,1}$:
$$
t_1={R_{1,1}\over R_{1,0}},t_2={R_{2,1}\over R_{1,0}R_{1,1}},
t_3={R_{1,0}R_{2,1}\over R_{2,0}R_{1,1}},t_4={R_{1,0}R_{3,1}\over R_{2,0}R_{2,1}},
t_5={R_{2,0}R_{3,2}\over R_{2,1}R_{3,1}},t_6={R_{2,1}\over R_{3,1}R_{3,2}},t_7={R_{3,1}\over R_{3,2}} .
$$
Note that the $t$ satisfy the relation $t_1t_3t_5t_7=1$.

\noindent{\bf Mutation $\mu_1 \mu_2 (G_3)$.}
We apply Lemma \ref{secmov} to the vertices $1$, $2$, and the substructure attached to $3$
in the graph $\mu_2 (G_3)$ (see Fig.\ref{fig:slfour}). This gives 
$\Phi^{(\mu_1\mu_2 (G_3))}(t)=\Phi^{(\mu_2 (G_3))}(t)$, with
\begin{equation}\label{muonetwo}
\Phi^{(\mu_1\mu_2 (G_3))}(t)={1\over 1-t {u_1\over 1-t {u_2\over 1-t u_3
-t{u_4+{u_8\over 1-t u_7}\over 1-t u_5-t {u_6\over 1-t u_7}}}}}
\end{equation}
with
$$
u_1=x_1+x_2,u_2={x_2x_3\over x_1+x_2},u_3={x_1 x_3\over x_1+x_2},
u_4=x_4,u_5=x_5,u_6=x_6,u_7=x_7,u_8=x_8 .
$$
namely, upon using the $Q$-system relation $R_{1,0}R_{1,2}=R_{1,1}^2+R_{2,1}$:
\begin{eqnarray*}
&&u_1={R_{1,2}\over R_{1,1}},u_2={R_{2,2}\over R_{1,1}R_{1,2}},
u_3={R_{1,1}R_{2,2}\over R_{1,2}R_{2,1}},
u_4={R_{1,1}R_{3,1}^2\over R_{3,0}R_{2,1}R_{2,2}}\\
&&u_5={R_{2,1}R_{3,1}\over R_{3,0}R_{2,2}},u_6={R_{2,1}^2\over R_{3,0}R_{3,1}R_{2,2}},
u_7={R_{3,0}\over R_{3,1}},u_8={R_{1,1}\over R_{3,0}R_{2,2}} .
\end{eqnarray*}
Note that the $u$'s satisfy the two relations $u_1u_3u_5u_7=1$ and $u_5u_8=u_4u_6$.
It is instructive to see how to arrive at the same result from the sequence of mutations
$\mu_2\mu_1(G_3)=\mu_1\mu_2(G_3)$. We now start from the graph $\mu_1 (G_3)$,
and apply the Lemma \ref{secmov} with (i) the vertex 2 and its substructure, (ii) the vertex
8 and its substructure common with that of vertex 2
(iii) its attached substructure not common with that of the vertex 2, namely with
\begin{eqnarray*}
a&=&{z_2\over 1-t z_3-t {z_4 \over 1-t z_5 -t {z_6\over 1-t z_7}}}\\
b&=&{z_8\over 1-t z_3-t {z_4 \over 1-t z_5 -t {z_6\over 1-t z_7}}}\\
c&=&z_5+{z_6\over 1-t z_7}\\
\end{eqnarray*} 
The result is
\begin{equation}\label{mutoone}
\Phi^{(\mu_2\mu_1 (G_3))}(t)={1\over 1-t {u_1'\over 1-t {u_2'\over 
\left(1-t z_3-t {z_4 \over 1-t z_5 -t {z_6\over 1-t z_7}} \right)\left(
1-t{u_4'+{u_8'\over 1-t u_7'}\over 1-t u_5'-t {u_6'\over 1-t u_7'}}\right)}}}
\end{equation}
with
\begin{eqnarray*}
&&u_1'=z_1=y_1+y_2=u_1,u_2'=z_2+z_8=y_2{y_3+y_4\over y_1+y_2}={x_2x_3\over x_1+x_2}=u_2,\\
&&u_4'={z_8z_5\over z_2+z_8}={y_4y_5\over y_3+y_4}=x_4=u_4,
u_8'={z_8z_6\over z_2+z_8}={y_4y_6\over y_3+y_4}=x_8=u_8,\\
&&u_5'={z_2z_5\over z_2+z_8}=x_5=u_5,
u_6'={z_2z_6\over z_2+z_8}={y_3y_6\over y_3+y_4}=x_6=u_6,u_7'=z_7=x_7=u_7 ,
\end{eqnarray*}
where we have used the expressions of the $z$'s and the $x$'s in terms
of the $y$'s. Applying again 
Lemma \ref{secmov} to the first factor under $u_2'$, with
$a=z_3$, $b=z_4$, $c=z_5+z_6/(1-t z_7)$, we may rewrite:
\begin{equation}\label{substru}
z_3+{z_4 \over 1-t z_5 -t {z_6\over 1-t z_7}}={u_3'\over 1-t 
{u_4''\over 1-t u_5''-t{u_6''\over 1-t u_7''}}}
\end{equation}
with
\begin{eqnarray*}
&&u_3'=z_3+z_4=y_1{y_3+y_4\over y_1+y_2}={x_1x_3\over x_1+x_2}=u_3,
u_4''={z_4z_5\over z_3+z_4}={z_8z_5\over z_2+z_8}=u_4'=u_4,\\
&&u_5''={z_3z_5\over z_3+z_4}={z_2z_5\over z_2+z_8}=u_5'=u_5,
u_7''=z_7=u_7 ,
\end{eqnarray*}
where we have used the relation $z_3/z_4=z_2/z_8$ between the $z$'s.
Substituting this into \eqref{mutoone}, we finally recover eq.\eqref{muonetwo}. 
This case is made particularly cumbersome
by the fact that the vertex 2 contains a substructure 
(which has not yet been the case so far), encoded in the fraction \eqref{substru}.
From this example, we learn that the mutation $\mu_2$ consists actually of two simultaneous 
applications of the Lemma \ref{secmov}, one ``usual" and one within the substructure attached
to the initial vertex 2.

\noindent{\bf Mutation $\mu_1 \mu_3 (G_3)$.}
We apply Lemma \ref{secmov} to the vertices $1$, $2$, and the substructure attached to $2$
in the graph $\mu_3 (G_3)$ (see Fig.\ref{fig:slfour}). This gives 
$\Phi^{(\mu_1\mu_3 (G_3))}(t)=\Phi^{(\mu_3 (G_3))}(t)$, with
\begin{equation}\label{muonethree}
\Phi^{(\mu_1\mu_3 (G_3))}(t)={1\over 1-t {w_1\over 1-t\left({w_2+{w_8\over 1-t {w_5\over 1-t {w_6
\over 1-t w_7}}}\over 1-t w_3-t {w_4\over  1-t {w_5\over 1-t {w_6
\over 1-t w_7}}}}\right)}}
\end{equation}
with
$$
w_1=t_1+t_2,w_2={t_2 t_3\over t_1+t_2},w_8={t_2 t_4 \over t_1+t_2},w_3={t_1t_3\over t_1+t_2},
w_4={t_1t_4\over t_1+t_2},w_5=t_5,w_6=t_6,w_7=t_7 .
$$
namely, upon using the $Q$-system relation $R_{1,0}R_{1,2}=R_{1,1}^2+R_{2,1}$:
\begin{eqnarray*}
&&w_1={R_{1,2}\over R_{1,1}},w_2={R_{2,1}^2\over R_{2,0}R_{1,1}R_{1,2}},
w_3={R_{3,1}\over R_{1,2}R_{2,0}},w_4={R_{1,1}^2R_{3,1}\over R_{2,0}R_{2,1}R_{1,2}},\\
&&w_5={R_{2,0}R_{3,2}\over R_{2,1}R_{3,1}},w_6={R_{2,1}\over R_{3,1}R_{3,2}},
w_7={R_{3,1}\over R_{3,2}},w_8={R_{3,1}\over R_{2,0}R_{1,2}} .
\end{eqnarray*}
Note the relations: $w_1w_3w_5w_7=1$ and $w_2w_4=w_3w_8$. For completeness, let us compute
the function $\Phi^{(\mu_3\mu_1(G_3))}=\Phi^{(\mu_1 (G_3))}$, and check that 
indeed $\mu_1\mu_3(G_3)=\mu_3\mu_1(G_3)$ yield the same result. 
We now apply the Lemma \ref{secmov}  with the vertices 5, 6 and 7 of the graph $\mu_1(G_3)$,
with respectively $a=z_5$, $b=z_6$ and $c=z_7$, while $d=0$.
This gives 
\begin{equation}\label{muthreeone}
\Phi^{(\mu_3\mu_1(G_3))}={1\over 1-t {w_1'\over 1-t{w_2' +{w_8'\over 
1-t {w_5'\over 1-t{w_6'\over 1-t w_7'}}} \over 1-t w_3'-t{w_4'\over 
1-t {w_5'\over 1-t{w_6'\over 1-t w_7'}}}}}}
\end{equation}
with
\begin{eqnarray*}
&&w_1'=z_1=y_1+y_2=t_1+t_2=w_1,\ \ w_2'=z_2={y_2y_3\over y_1+y_2}={t_2 t_3\over t_1+t_2}=w_2 \\
&&w_8'=z_8={y_2 y_4\over y_1+y_2}={t_2t_4\over t_1+t_2}=w_8,\ \ 
w_3'=z_3={y_1 y_3\over y_1+y_2}={t_1t_3\over t_1+t_2}=w_3, \\
&&w_4'=z_4={y_1 y_4\over y_1+y_2}={t_1t_4\over t_1+t_2}=w_4, \ \ 
w_5'=z_5+z_6=y_5+y_6=t_5=w_5, \\
&&w_6'={z_6 z_7\over z_5+z_6}={y_6y_7\over y_5+y_6}=t_6=w_6,\ \ 
w_7'={z_5z_7\over z_5+z_6}={y_5y_7\over y_5+y_6}=t_7=w_7,
\end{eqnarray*}
where the various identifications are made
by use of the expressions of the $z$'s and $t$'s in terms of the $y$'s. 
We deduce that the expressions \eqref{muonethree} and \eqref{muthreeone} are identical.

\noindent{\bf Mutation $\mu_2 \mu_3 (G_3)$.}
This is part of the sequence of graphs of Example \ref{gpathex}. It is obtained by applying the 
Lemma \ref{secmov} to the vertices $3,4$ and the structure attached to $5$
of the graph $\mu_3 (G_3)$ (see Fig.\ref{fig:slfour}). This gives
$\Phi^{(\mu_2\mu_3 (G_3))}(t)=\Phi^{(\mu_3 (G_3))}(t)$, with
\begin{equation}\label{mutwothree}
\Phi^{(\mu_2\mu_3 (G_3))}(t)={1\over 1-t s_1 -t {s_2\over 1-t {s_3 \over 1-t 
{s_4 \over 1-t s_5-t {s_6\over 1-t s_7}}}}}
\end{equation}
with
$$
s_1=t_1,s_2=t_2,s_3=t_3+t_4,s_4={t_4t_5\over t_3+t_4},
s_5={t_3 t_5\over t_3+t_4},s_6=t_6,s_7=t_7 .
$$
namely, upon using the $Q$-system relation $R_{2,0}R_{2,2}=R_{2,1}^2+R_{1,1}R_{3,1}$:
$$
s_1={R_{1,1}\over R_{1,0}},s_2={R_{2,1}\over R_{1,0}R_{1,1}},s_3={R_{1,0}R_{2,2}\over R_{1,1}R_{2,1}},
s_4={R_{1,1}R_{3,2}\over R_{2,1}R_{2,2}},s_5={R_{2,1}R_{3,2}\over R_{3,1}R_{2,2}},
s_6={R_{2,1}\over R_{3,1}R_{3,2}},s_7={R_{3,1}\over R_{3,2}} .
$$
Note the relation $s_1s_3s_5s_7=1$. For completeness, let us compute
the function $\Phi^{(\mu_3\mu_2 (G_3))}=\Phi^{(\mu_2 (G_3))}$, and check that 
indeed $\mu_2\mu_3 (G_3)=\mu_3\mu_2 (G_3)$ yield the same result. 
We now apply the Lemma \ref{secmov}  on the graph $\mu_2(G_3)$,
with (i) the vertex 4 and its substructure (ii) the vertex 
8 and its substructure
common to vertex 4, and the substructure of vertex $8$ not common to vertex 4. 
This amounts to taking $a=x_4/(1-tx_5-t x_6/(1-t x_7))$, $b=x_8/(1-tx_5-t x_6/(1-t x_7))$ 
and $c=x_7$, while $d=0$.
This yields 
\begin{equation}\label{muthreetwo}
\Phi^{(\mu_3\mu_2 (G_3))}={1\over 1-t s_1'-t{s_2'\over 1-t {s_3'\over
1-t {s_4'\over \left(1-t x_5-t {x_6\over 1-t x_7}\right)\left(1-t {s_6'\over 1-t s_7'} \right)}}}}
\end{equation}
with
\begin{eqnarray*}
&&s_1'=x_1=y_1=t_1=s_1, \ s_2'=x_2=y_2=t_2=s_2, \ s_3'=x_3=y_3+y_4=t_3+t_4=s_3,\\
&&s_4'=x_4+x_8=y_4{y_5+y_6\over y_3+y_4}={t_4t_5\over t_3+t_4}=s_4, \ 
s_6'={x_7 x_8\over x_4+x_8}={y_6y_7\over y_5+y_6}=t_6=s_6 , \\ 
&&s_7'={x_7 x_4\over x_4+x_8}={y_5y_7\over y_5+y_6}=t_7=s_7 ,
\end{eqnarray*}
where identifications are made by expressing the $x$'s, $s$'s and $t$'s in terms of the $y$'s.
As in the case of $\mu_2\mu_1 (G_3)$, we see that we must still apply the Lemma
\ref{secmov} to the first factor under $s_4'$ in \eqref{muthreetwo}, which corresponds to the
substructure attached to the initial vertex 4. This allows to rewrite
$$
x_5+{x_6\over 1-t x_7}= {s_5''\over 1-t {s_6''\over 1-t s_7''}}
$$
with
\begin{eqnarray*}
&&s_5''=x_5+x_6=y_3{y_5+y_6\over y_3+y_4}={t_3 t_5\over t_3+t_4}=s_5, \ \ 
s_6''={x_6x_7\over x_5+x_6}={y_6y_7\over y_5+y_6}=t_6=s_6\\
&&s_7''={x_5x_7\over x_5+x_6}={y_5y_7\over y_5+y_6}=t_7=s_7 .
\end{eqnarray*}
Substituting these into \eqref{muthreetwo} allows to identify it with \eqref{mutwothree}.

\noindent{\bf Mutation $\mu_6 \mu_2 \mu_3 (G_3)=H_3$.}
This is part of the sequence of graphs of Example \ref{gpathex}. It is obtained by applying the 
Lemma \ref{secmov} to the vertices $5,6,7$ of the graph $\mu_2\mu_3 (G_3)$ (see Fig.\ref{fig:slfour}). 
This gives
$\Phi^{(\mu_6\mu_2\mu_3 (G_3))}(t)=\Phi^{(\mu_2\mu_3( G_3))}(t)$, with
$$
\Phi^{(\mu_6\mu_2\mu_3 (G_3))}(t)={1\over 1-t r_1-t {r_2\over 1-t {r_3 \over 1-t 
{r_4 \over 1-t {r_5\over 1-t {r_6\over 1-t r_7}}}}}}
$$
with
$$
r_1=s_1,r_2=s_2,r_3=s_3,r_4=s_4,r_5=s_5+s_6,
r_6={s_6s_7\over s_5+s_6},r_7={s_5s_7\over s_5+s_6} .
$$
namely, upon using the $Q$-system relation $R_{3,1}R_{3,3}=R_{3,2}^2+R_{2,2}$:
$$
r_1={R_{1,1}\over R_{1,0}},r_2={R_{2,1}\over R_{1,0}R_{1,1}},r_3={R_{1,0}R_{2,2}\over R_{1,1}R_{2,1}},
r_4={R_{1,1}R_{3,2}\over R_{2,1}R_{2,2}},r_5={R_{2,1}R_{3,3}\over R_{2,2}R_{3,2}},
r_6={R_{2,2}\over R_{3,2}R_{3,3}},r_7={R_{3,2}\over R_{3,3}} .
$$
Note the relation $r_1r_3r_5r_7=1$.

\noindent{\bf Mutation $\mu_4 \mu_1 \mu_2 (G_3)$.} 
It is obtained by applying the 
Lemma \ref{secmov} to the vertices $1,2$ and the structure attached to $2$
of the graph $\mu_1\mu_2 (G_3)$ (see Fig.\ref{fig:slfour}). 
Writing $\Phi^{(\mu_1\mu_2 (G_3))}(t)=1+t u_1 \Psi^{(\mu_1\mu_2 (G_3))}(t)$, we have
$\Psi^{(\mu_4\mu_1\mu_2 (G_3))}(t)=\Psi^{(\mu_1\mu_2 (G_3))}(t)$, with
\begin{equation}\label{mostricky}
\Psi^{(\mu_4\mu_1\mu_2 (G_3))}(t)={1\over 1-t {v_1\over 1-t\left( {v_2+{v_9+{v_{10}\over 1-t v_7}\over
1-t v_5 -t {v_6\over 1-t v_7}}\over 1-t 
\left( v_3 +{v_4+{v_8\over 1-t v_7}\over 1-t v_5-t {v_6\over 1-t v_7}} \right) } \right)}}
\end{equation}
with
\begin{eqnarray*}
&&v_1=u_1+u_2,v_2={u_2u_3\over u_1+u_2},v_9={u_2 u_4\over u_1+u_2},
v_{10}={u_2u_8\over u_1+u_2}, v_3={u_1 u_3\over u_1+u_2},\\
&&v_4={u_1 u_4\over u_1+u_2},
v_5=u_5,v_6=u_6,v_7=u_7,v_8={u_1u_8\over u_1+u_2} .
\end{eqnarray*}
namely, upon using the $Q$-system relation $R_{1,1}R_{1,3}=R_{1,2}^2+R_{2,2}$:
\begin{eqnarray*}
&&v_1={R_{1,3}\over R_{1,2}},v_2={R_{2,2}^2\over R_{2,1}R_{1,2}R_{1,3}},
v_3={R_{1,2}R_{2,2}\over R_{2,1}R_{1,3}}, v_4={R_{1,2}^2R_{3,1}^2\over R_{3,0}R_{2,1}R_{2,2}R_{1,3}},
v_5={R_{2,1}R_{3,1}\over R_{3,0}R_{2,2}}\\
&&v_6={R_{2,1}^2\over R_{3,0}R_{3,1}R_{2,2}},v_7={R_{3,0}\over R_{3,1}},
v_8={R_{1,2}^2\over R_{3,0}R_{2,2}R_{1,3}},v_9={R_{3,1}^2\over R_{3,0}R_{2,1}R_{1,3}},
v_{10}={1\over R_{3,0}R_{1,3}} .
\end{eqnarray*}
Note that as we have 3 more $v$'s than $y$'s, we expect four relations between them.
These read:  $v_1v_3v_5v_7=1$, $v_2v_4=v_3v_9$, $v_8v_9=v_4v_{10}$ and
$v_2v_8=v_3v_{10}$.

We conclude that for $r=3$, $R_{1,n}$ is a
positive Laurent polynomial of all the mutations of the initial data $\bx_0$ in the fundamental
domain ${\mathcal F}_3$.


\label{appendixa}
\section{The case of $A_3$ in the path formulation.}\label{sectafour}

\begin{figure}
\centering
\includegraphics[width=13.cm]{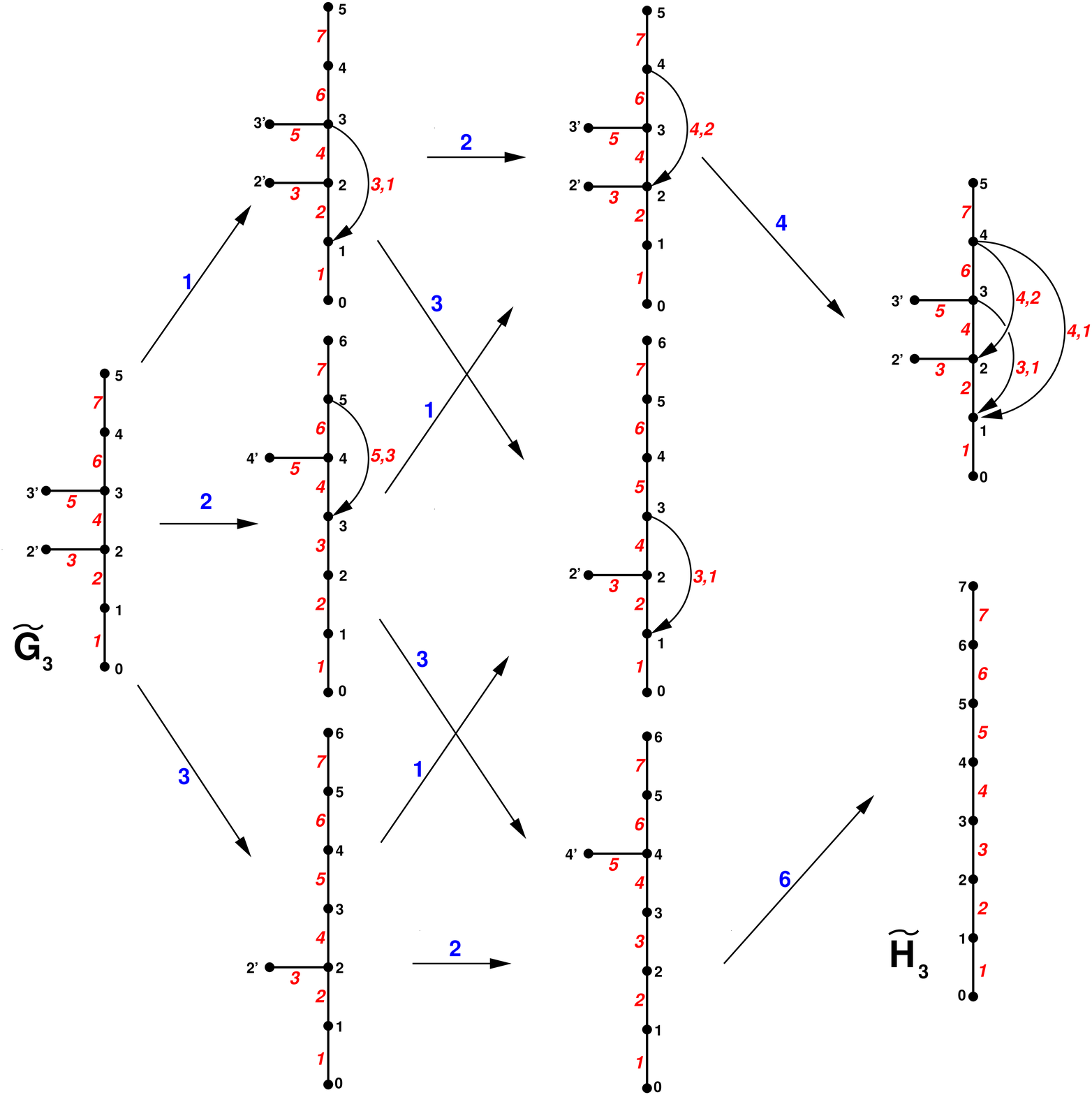}
\caption{\small The target graphs $\Gamma_\bM$ for the nine seeds of the case $A_3$
(labelled by the 9 Motzkin paths of Fig.\ref{fig:nine}), and the corresponding
mutations. We have indicated vertex and edge labels.}\label{fig:pathslfour}
\end{figure}

In this appendix, we detail the path interpretation of the solution $R_{1,n}$ of the $A_3$ $Q$-system,
as expressed in terms of the various initial data corresponding to the 9 cluster variables of the fundamental domain
${\mathcal F}_3$, labelled by the 9 Motzkin paths of Fig.\ref{fig:nine}.

As a direct illustration of the constructions of Section \ref{pathinter}, we list below for each of the
nine cluster variables the corresponding transfer matrix $T_\Gamma$, and the generating function
$\left((I-T_\Gamma)^{-1}\right)_{0,0}$, in terms of dummy weights $y_i$ and $y_{i,j}$,
that are actually shorthand notations for the weights $y_i(\bM)$ and their redundant
counterparts, to be extracted from Theorem
\ref{weightexpression} and eq.\eqref{solyij}. 
The corresponding graphs are represented in Fig.\ref{fig:pathslfour},
together with the mutations relating the associated cluster variables.

\noindent{\bf For $\bM_0=\bf \{(0,1),(0,2),(0,3)\}$:} The corresponding target 
graph is $\Gamma_{\bM_0}={\tilde G}_3$.
The transfer matrix and the associated generating function are:
$$
T_{\Gamma_{\bM_0}}={\small \left( 
\begin{matrix}
0&ty_1&0&0&0&0&0&0\\
1&0&ty_2&0&0&0&0&0\\
0&1&0&ty_3&ty_4&0&0&0\\
0&0&1&0&0&0&0&0\\
0&0&1&0&0&ty_5&ty_6&0\\
0&0&0&0&1&0&0&0\\
0&0&0&0&1&0&0&ty_7\\
0&0&0&0&0&0&1&0
\end{matrix} 
\right)},\quad \left((I-T_{\Gamma_{\bM_0}})^{-1}\right)_{0,0}=
{1\over 1-t {y_1\over 1-t {y_2\over 1-t y_3 -t {y_4\over 1-t y_5 -t{y_6\over 1-t y_7}}}}}
$$

\noindent{\bf For $\bM_1=\bf \mu_1\bM_0=\{(1,1),(0,2),(0,3)\}$,}
with $y_{3,1}=y_2y_4/y_3$:
$$
T_{\Gamma_{\bM_1}}={\small \left( 
\begin{matrix}
0&ty_1&0&0&0&0&0&0\\
1&0&ty_2&0& ty_{3,1}&0&0&0\\
0&1&0&ty_3&ty_4&0&0&0\\
0&0&1&0&0&0&0&0\\
0&0&1&0&0&ty_5&ty_6&0\\
0&0&0&0&1&0&0&0\\
0&0&0&0&1&0&0&ty_7\\
0&0&0&0&0&0&1&0
\end{matrix} 
\right)},\quad\left((I-T_{\Gamma_{\bM_1}})^{-1}\right)_{0,0}=
{1\over 1-t {y_1\over 1-t {y_2+{y_{3,1}\over 1-t y_5-t{y_6\over 1-t y_7}}\over
1-t y_3-t{y_4\over  1-t y_5-t{y_6\over 1-t y_7}}}}}.
$$

\noindent{\bf For $\bM_2=\bf \mu_2\bM_0=\{(0,1),(1,2),(0,3)\}$,} with $y_{5,3}=y_4y_6/y_5$:
$$
T_{\Gamma_{\bM_2}}={\small \left( 
\begin{matrix}
0&ty_1&0&0&0&0&0&0\\
1&0&ty_2&0&0&0&0&0\\
0&1&0&ty_3&0&0&0&0\\
0&0&1&0&ty_4&0&t y_{5,3}&0\\
0&0&0&1&0&ty_5&ty_6&0\\
0&0&0&0&1&0&0&0\\
0&0&0&0&1&0&0&ty_7\\
0&0&0&0&0&0&1&0
\end{matrix} 
\right)}\quad\left((I-T_{\Gamma_{\bM_2}})^{-1}\right)_{0,0}={1\over 1-t {y_1\over 1-t{y_2\over 1-t{y_3\over 
1-t{y_4+{y_{5,3}\over 1-t y_7}\over 1-t y_5-t{y_6\over 1-ty_7}}}}}}.
$$

\noindent{\bf For $\bM_3=\bf \mu_3\bM_0=\{(0,1),(0,2),(1,3)\}$:}
$$
T_{\Gamma_{\bM_3}}={\small \left( 
\begin{matrix}
0&ty_1&0&0&0&0&0&0\\
1&0&ty_2&0&0&0&0&0\\
0&1&0&ty_3&ty_4&0&0&0\\
0&0&1&0&0&0&0&0\\
0&0&1&0&0&ty_5&0&0\\
0&0&0&0&1&0&ty_6&0\\
0&0&0&0&0&1&0&ty_7\\
0&0&0&0&0&0&1&0
\end{matrix} 
\right)},\quad\left((I-T_{\Gamma_{\bM_3}})^{-1}\right)_{0,0}=
{1\over 1-t{y_1\over 1-t {y_2\over 1-t y_3-t{y_4\over 1-t {y_5\over 1-t {y_6\over 1-t y_7}}}}}}.
$$

\noindent{\bf For $\bM_{2,1}=\bf
  \mu_2\mu_1\bM_0=\{(1,1),(1,2),(0,3)\}$:} With $y_{4,2}=y_4y_6/y_5$,
$$
T_{\Gamma_{\bM_{2,1}}}={\small \left( 
\begin{matrix}
0&ty_1&0&0&0&0&0&0\\
1&0&ty_2&0&0&0&0&0\\
0&1&0&ty_3&ty_4&0&ty_{4,2}&0\\
0&0&1&0&0&0&0&0\\
0&0&1&0&0&ty_5&ty_6&0\\
0&0&0&0&1&0&0&0\\
0&0&0&0&1&0&0&ty_7\\
0&0&0&0&0&0&1&0
\end{matrix} 
\right)},\quad
\left((I-T_{\Gamma_{\bM_{2,1}}})^{-1}\right)_{0,0}=
{1\over 1-t{y_1\over 1-t {y_2\over 1-t y_3-t{y_4+{y_{4,2}\over 1-t y_7}\over 1-t y_5-t{y_6\over 1-ty_7}}}}}.
$$

\noindent{\bf For $\bM_{3,1}=\bf
  \mu_3\mu_1\bM_0=\{(1,1),(0,2),(1,3)\}$:} With $y_{3,1}=y_2y_4/y_3$, 
$$
T_{\Gamma_{\bM_{3,1}}}={\small \left( 
\begin{matrix}
0&ty_1&0&0&0&0&0&0\\
1&0&ty_2&0& ty_{3,1}&0&0&0\\
0&1&0&ty_3&ty_4&0&0&0\\
0&0&1&0&0&0&0&0\\
0&0&1&0&0&ty_5&0&0\\
0&0&0&0&1&0&ty_6&0\\
0&0&0&0&0&1&0&ty_7\\
0&0&0&0&0&0&1&0
\end{matrix} 
\right)},\quad
\left((I-T_{\Gamma_{\bM_{3,1}}})^{-1}\right)_{0,0}=
{1\over 1-t{y_1\over 1-t{y_2+{y_{3,1}\over 1-t{y_5\over 1-t{y_6\over 1-t y_7}}}\over
1-t y_3-t{y_4\over 1-t{y_5\over 1-t{y_6\over 1-t y_7}}}}}}
$$

\noindent{\bf For $\bM_{3,2}=\bf \mu_3\mu_2\bM_0=\{(0,1),(1,2),(1,3)\}$:}
$$
T_{\Gamma_{\bM_{3,2}}}={\small \left( 
\begin{matrix}
0&ty_1&0&0&0&0&0&0\\
1&0&ty_2&0&0&0&0&0\\
0&1&0&ty_3&0&0&0&0\\
0&0&1&0&ty_4&0&0&0\\
0&0&0&1&0&ty_5&ty_6&0\\
0&0&0&0&1&0&0&0\\
0&0&0&0&1&0&0&ty_7\\
0&0&0&0&0&0&1&0
\end{matrix} 
\right)},\quad
\left((I-T_{\Gamma_{\bM_{3,2}}})^{-1}\right)_{0,0}=
{1\over 1-t{y_1\over1-t{y_2\over 1-t{y_3\over 1-t{y_4\over 1-t y_5-t{y_6\over 1-t y_7}}}}}}.
$$

\noindent{\bf For $\bM_{4,2,1}=\bf \mu_4\mu_2\mu_1\bM_0=\{(2,1),(1,2),(0,3)\}$:}
With $y_{3,1}=y_2y_4/y_3$, $y_{4,2}=y_4y_6/y_5$, $y_{4,1}=y_2y_4y_6/(y_3y_5)$,
$$
T_{\Gamma_{\bM_{4,2,1}}}={\small \left( 
\begin{matrix}
0&ty_1&0&0&0&0&0&0\\
1&0&ty_2&0&t y_{3,1}&0&t y_{4,1}&0\\
0&1&0&ty_3&ty_4&0&ty_{4,2}&0\\
0&0&1&0&0&0&0&0\\
0&0&1&0&0&ty_5&ty_6&0\\
0&0&0&0&1&0&0&0\\
0&0&0&0&1&0&0&ty_7\\
0&0&0&0&0&0&1&0
\end{matrix} 
\right)},\quad\left((I-T_{\Gamma_{\bM_{4,2,1}}})^{-1}\right)_{0,0}=
{1\over 1-t {y_1\over 1-t{y_2+{y_{3,1}+{y_{4,1}\over 1-t y_7}\over 1-t y_5 -t{y_6\over 1-t y_7}}
\over 1-ty_3-t{y_4+{y_{4,2}\over 1-ty_7}\over  1-t y_5 -t{y_6\over 1-t y_7}}}}}
$$

\noindent{\bf For $\bM_{6,3,2}=\bf \mu_6\mu_3\mu_2\bM_0=\{(0,1),(1,2),(2,3)\}$:}
$$
T_{\Gamma_{\bM_{6,3,2}}}={\small \left( 
\begin{matrix}
0&t y_1&0&0&0&0&0&0\\
1&0&t y_2&0&0&0&0&0\\
0&1&0&t y_3&0&0&0&0\\
0&0&1&0&t y_4&0&0&0\\
0&0&0&1&0&t y_5&0&0\\
0&0&0&0&1&0&t y_6&0\\
0&0&0&0&0&1&0&t y_7\\
0&0&0&0&0&0&1&0
\end{matrix} 
\right)},\quad
\left((I-T_{\Gamma_{\bM_{6,3,2}}})^{-1}\right)_{0,0}=
{1\over 1-t{y_1\over 1-t{y_2\over 1-t {y_3\over 1-t {y_4\over 1-t {y_5\over 1-t {y_6\over 1-t y_7}}}}}}}.
$$

Note finally that the graphs in Figures \ref{fig:slfour} and \ref{fig:pathslfour} are duals of each other. 
This duality
is best seen by expressing a bijection between the paths (represented with their time extension)
and the corresponding heaps, by associating a disc of the heap to each descent of the path.
The heap graph is simply the graph whose vertices stand for the discs and whose edges indicate
the pairs of overlapping discs.\label{appendixb}

\end{document}